\newcommand{\nk}{\mathbbm{k}}
\newcommand{\executeiffilenewer}[3]{%
\ifnum\pdfs

trcmp{\pdffilemoddate{#1}}%
{\pdffilemoddate{#2}}>0%
{\immediate\write18{#3}}\fi%
}
\newcommand{%
\executeiffilenewer{.svg}{.pdf}%
{inkscape -z -D --file=.svg %
--export-pdf=.pdf --export-latex}%
\input{.tex}%
}[1]{%
\executeiffilenewer{#1.svg}{#1.pdf}%
{inkscape -z -D --file=#1.svg %
--export-pdf=#1.pdf --export-latex}%
\input{#1.tex}%
}
\newcommand{\trop}[1]{\text{``}{#1}\text{''}}
\newcommand{\fps}[1]{\left\llbracket #1 \right\rrbracket}
\newcommand{\subrelcpct}{\Subset}	
\newcommand{\monid}[1]{\mc{P}(#1)}	
\newcommand{\Newton}[1]{\mc{N}(#1)}	
\newcommand{\partialNewton}[1]{\Delta\mc{N}(#1)}	
\newcommand{\VertNewton}[1]{\mc{V\!N}(#1)}
\newcommand{\nf}[1]{\wt{#1}}		
\newcommand{\mon}[1]{\mc{M}(#1)}	
\renewcommand{\setminus}{\,\backslash\,}
\title{Tropical Hopf manifolds and contracting germs}
\author[M. Ruggiero]{Matteo Ruggiero}
\address{IMJ, Universit\'e Paris $7$, B\^atiment Sophie Germain, Case $7012$, $75205$ Paris Cedex $13$, France.}
\email{ruggiero@math.univ-paris-diderot.fr}
\author[K. Shaw]{Kristin Shaw}
\address{Kristin Shaw, 
Technische Universit\"at
Berlin, MA 6-2, 10623 Berlin, Germany.}
\email{shaw@math.tu-berlin.de} 
\address{}
\email{}
\date{\today}
\begin{document}

\begin{abstract}

Classical Hopf manifolds are compact complex manifolds whose universal covering is $\nC^d \setminus \{0\}$. We investigate the tropical analogues of Hopf manifolds, and relate their geometry to tropical  contracting germs. To do this we develop a procedure called monomialization which transforms non-degenerate tropical germs into morphisms, up to tropical modification.  A link is provided between tropical  Hopf manifolds  and the analytification of Hopf manifolds over a non-archimedean field.  
We conclude by computing the tropical Picard group and $(p,q)$-homology groups.

\end{abstract}

\maketitle

\pagestyle{plain}				


\section*{Introduction}

This paper deals with a tropical analogue of the classical construction of Hopf manifolds.
In complex geometry,  Hopf manifolds are defined by the property 
that their universal covering space is $\nC^d \setminus \{0\}$.
Equivalently, they are obtained as a quotient of $\nC^d \setminus \{0\}$ by a free and properly discontinuous action of a group $G$.
When the group $G$ is infinite cyclic, the resulting Hopf manifold is called  \emph{primary}.  In this case $G$ is generated by a contracting automorphism $f:\nC^d \to \nC^d$ fixing the origin.
A complex primary Hopf manifold of dimension $n$ is diffeomorphic to $S^{2n-1} \times S^1$. 
All Hopf manifolds are finite quotients of \emph{primary} Hopf manifolds.

The construction of (primary) Hopf manifolds is \emph{local}, meaning 
up to biholomorphism the variety $X(f)$ associated to a contracting automorphism fixing the origin $f:(\nC^d,0) \to (\nC^d,0)$  depends only on the conjugacy class of the germ of $f$ at the origin.
Hence the classification of primary Hopf manifolds coincides with the classification of germs of contracting automorphisms of $(\nC^d,0)$.
This type of classification is known as Poincar\'e-Dulac normal forms, see \cite{sternberg:localcontractions,rosay-rudin:holomorphicmaps,berteloot:methodeschangementechelles}.
These normal forms allow a detailed study of  the dynamical features of contracting automorphisms, and also the geometry of Hopf manifolds.

Hopf manifolds are among the easiest examples of non-K\"ahler (hence not projective algebraic) compact manifolds.
For this and other reasons, analogous constructions have been studied in rigid geometry, 
see \cite{voskuil:nonarchhopf}.

\smallskip

Here we consider a natural construction for tropical Hopf manifolds.
Denote by $\nT = \nR \cup \{ \infty\}$ the tropical semi-field with min-plus conventions. 
To simplify notation, $\infty$ is also used to denote $(\infty, \dots, \infty) \in \nT^n$.

The first example of a  tropical Hopf manifold, is the  quotient of $\nT^d\setminus\{\infty\}$ by the translation $\tau_a: (\nT^d, \infty) \to (\nT^d, \infty)$ by a vector $a$, with strictly positive entries. Then for a neighborhood $U$ of $\infty$ we have $\tau_a(U)$ is strictly contained in $U$.

This case will be referred to as \emph{diagonal} tropical Hopf manifolds.
Monomial Hopf manifolds are quotients of $\nT^d \setminus \{\infty\}$ by a \emph{contracting automorphism} $F: \nT^d \to \nT^d$. A contracting automorphism $\tF: \nT^n \to \nT^n$ is the extension of an  integer affine map $\nR^n \to \nR^n$ which is a permutation of coordinates composed with a translation, such that $F(U) \subset U$ for any neighborhood $U$ of $\infty$.

More generally a  tropical Hopf data is a pair $(X, F)$ where $X \subseteq \nT^n$ is a neighborhood of $\infty$ of a  tropical subvariety 
obtained from $\nT^d$ via a  tropical modification, (we call $X$ a germ of a tropical subvariety)
and $F: \nT^n \to \nT^n$ is a contracting automorphism of $\nT^n$ such that $F(X) \subset  X$. 
A tropical Hopf data $(\tX, \tF)$ produces a tropical Hopf manifold $S(\tX, \tF)$ by considering the quotient of $\tX \setminus \{\infty\}$ by the action of $\tF$. 

Call a subset $\tV \subseteq \nT^n$ a \emph{cone} if there is a vector with strictly positive entries $w \in (\nR^*_{+})^n$ such that if $p \in X$  then $p + \lambda w \in \tX$ for any $\lambda \in \nT$. Then it is possible to characterize tropical Hopf data as follows.

\begin{introthm}\label{thm:introgeneralHopf}
If $(\tX, \tF)$ is a tropical Hopf data then $X$ is a neighborhood of a cone tropical variety $\tV$. 
In particular, the map $\tF$ is a global automorphism of the tropical variety $\tV$. 

Also, the tropical Hopf manifold obtained from $(\tX, \tF)$  is the finite quotient of a tropical modification of a diagonal tropical Hopf manifold.
\end{introthm}

From the dynamical point of view, we are also interested in the relation between tropical Hopf manifolds and contracting automorphisms defined over a field $\nK$ with non-archimedean valuation.
Assume we have a contracting automorphism $\af:(\nA_\nK^d,0) \to (\nA_\nK^d,0)$, and denote by $\tf:(\nT^d, \infty) \to (\nT^d,\infty)$ its tropicalization. We shall call such a tropical map a \emph{weakly non-degenerate contracting} germ.
In general,   $\tf$ 
need  not be an automorphism of $\nT^d$. 
In this case, we may  ask if there is a tropical  modification $\pi:\tX \to \nT^d$ so that $\tf$ \emph{lifts} to a map $\tF:\tX \to \tX$ which is an automorphism. 

In general we see that this is not possible, at least with only a finite number of modifications, see Example \ref{ex:noncompactHopf}.
Nonetheless, we show that it is always possible to lift a map in a non-dynamical way.
In other words, we show that there is a (canonical) way to represent any (weakly non-degenerate) tropical germ $\tf:(\nT^d, \infty) \to (\nT^d, \infty)$ as a morphism  $\tF: \tX \to \tY$ where $\tX, \tY$ are  modifications of $\nT^d$ and $\tF$ is an isomorphism in a neighborhood of $\infty$.
See Definition \ref{def:modification} for modifications, strict transforms and the exceptional locus.

\begin{introthm}\label{thm:intromonomialization}
Let $\tf:(\nT^d,\infty)\to (\nT^d,\infty)$ be a tropical
weakly non-degenerate germ.
Then there exists two regular tropical modifications $\pi:\tX \to (\nT^d,\infty)$ and $\eta:\tY \to (\nT^d,\infty)$, and a tropical isomorphism $\tF:\tX \to \tY$ such that $\tf \circ \pi = \eta \circ \tF$ on the strict transform $\on{St}(\pi)$ in a suitable neighborhood of $\infty$.
\end{introthm}

If the two modifications $\pi$ and $\eta$ coincide, and $\tF$ is a contracting automorphism, then $(\tX,\tF)$ is a (virtually regular) Hopf manifold (see Example \ref{ex:weight2monom}).
However, in general it is not possible to choose modifications $\pi$ and $\eta$ so that they coincide. In this case we obtain the following result. 

\begin{introthm}\label{thm:intrononcompact}
Let $\tf:\nT^d \to \nT^d$ be a tropical weakly non-degenerate germ at $\infty$ obtained as the tropicalization of a global automorphism $\af:\nA^d \to \nA^d$.

Then there exists a (possibly infinite) sequence $(\xi^{(j)})_{j \geq 0}$ of regular tropical modifications $\xi^{(j)}:W^{(j+1)} \to W^{(j)}$, with $W^{(0)}=\nT^d$, satisfying the following properties.
Denote by $\mu^{(n)}=\xi^{(0)} \circ \cdots \circ \xi^{(n-1)}:W^{(n)} \to \nT^d$ the composition of the first $n$ modifications, and by $\mu^{(\infty)}:W^{(\infty)} \to \nT^d$ the inverse limit of $(\mu^{(n)})$.
There exists an automorphism $\thh^{(\infty)}: \tW^{(\infty)} \to \tW^{(\infty)}$ such that $\tf \circ \mu^{(\infty)} = \mu^{(\infty)} \circ \thh^{(\infty)}$ on the inverse limit $\displaystyle \lim_{\substack{\longleftarrow\\ n}} \on{St}\big(\mu^{(n)}\big)$
of the strict transforms of $\nT^d$ by $\mu^{(n)}$. 
\end{introthm}

If $\tf$ is contracting and  the sequence of modifications is finite, then  the data given by Theorem \ref{thm:intrononcompact} gives rise to a tropical Hopf manifold. If it is not the case, we can still consider the quotient of (a pointed neighborhood of $\infty$ of) $W^{(\infty)}$ by the action of $\thh^{(\infty)}$, obtaining what we call a \emph{non-compact tropical Hopf manifold}.

In dimension $2$, we give a classification of contracting germs $\tf:\nT^2 \to \nT^2$ for which the monomialization procedure from Theorem \ref{thm:intromonomialization} produces a tropical Hopf data of dimension $2$. 

\smallskip

As mentioned above, Hopf surfaces have been considered also over non-archimedean fields.
In order to make a connection between these surfaces and the tropical surfaces, we begin by looking for \emph{realizations} of the Hopf data $(\tX, \tF)$.
Let $\nK$ be an algebraically closed field, and $\nu_0$ a non-trivial valuation on $\nK$.
As an example, consider $\nK=\nC\langle t \rangle$ the field of Puiseux series over the complex numbers,
and $\nu_0$ the $t$-adic valuation.
We denote  by $\nA^n$ the affine space of dimension $n$ over the field $\nK$.

A realization of $(\tX, \tF)$ is a pair $(\aX, \aF)$ such that 
\begin{enumerate}[(a)]
\item $\aX$ is the image of an embedding $i:(\nA^d, 0) \to (\nA^n, 0)$;
\item $\aF:(\nA^n,0) \to (\nA^n,0)$ is a contracting automorphism satisfying $\aF(\aX) = \aX$;
\item  $\trp(\aX) = \tX$ and $\trp(\aF) = \tF$;
\end{enumerate}
in this case we say that $(\tX, \tF)$ is \emph{realizable} by $(\aX, \aF)$, see Definition \ref{def:realizeHopf}. 
It turns out that realizations of compact tropical Hopf manifolds do not exist except in very special cases.
For example, we prove the following in dimension $2$. 

\begin{introthm}\label{thm:introrealHopf}
Let $(\tX, \tF)$ be data for a tropical Hopf surface  where $\tX \subset \nT^n$ is not homeomorphic to $\nT^2$ and $\tF: \nT^n \to \nT^n$ is a contracting automorphism.
Suppose $(\tX, \tF)$ is realizable by a Hopf surface defined over $\nK$ by a germ $\af$ defined over $\nK$. Then $\af$ is conjugate to $(\ax, \ay) \mapsto (\alpha \ax, \beta \ay)$ where $\nu_0(\alpha), \nu_0(\beta) > 0$ and $\alpha^k = \beta^l$ for some $k, l \in \nN^*$. 
\end{introthm}

A topological classification of the compact $2$-dimensional tropical Hopf data which are realizable
is given in Theorem \ref{thm:listfinitetrophopf}.

In dimension $2$, we can also go in the other direction, meaning from any analytic Hopf data to a tropical one. 
Starting from a germ $\af: (\nA^d, 0) \to (\nA^d, 0)$ we consider a sequence of embeddings $i_k: (\nA^d, 0) \to (\nA^{n_k}, 0)$.
In general, it may be that none of these embeddings are invariant under a monomial map on $\nA^{n_k}$ which induces $\af$. 
The next theorem gives a way to go from the analytic Hopf surfaces to tropical ones even in the non-compact case.

\begin{introthm}\label{thm:introantotrop}
Let $\nK$ be a non-trivially valued algebraically closed field, and $\af:(\nA_\nK^2,0) \to (\nA_\nK^2,0)$ a germ of a contracting automorphism.
Denote by $\aX(\af)$ the analytic Hopf surface associated to $\af$.
Then $\aX(\af)$  retracts to a tropical Hopf surface, which is non-compact if $\af$ is a resonant germ.
\end{introthm}

For any embedding $i: \nA^d \to \nA^n$ one obtains a map $\rho_i: \nA^d_{an} \to \trp(i(\nA^d))$, following \cite{payne:analitlimittrop}, where $\nA^d_{an}$ denotes the analytification of affine space of dimension $d$ over $\nK$ in the sense of Berkovich.
Under suitable conditions on the tropicalization, this map admits a canonical section from a subset $\tU \subset \trp(i(\nA^d))$ back to $\nA^d_{an}$ (see  \cite{gubler-rabinoff-werner:tropicalskeletons}). 
To prove the above theorem, we first construct a possibly non-compact tropical Hopf surface as a limit of the tropicalizations of an infinite sequence of embeddings.
Then we combine the two results above and take the dynamics into account.

Lastly, we consider both the tropical Picard   and $(p, q)$-homology groups  of diagonal tropical Hopf manifolds. 
These invariants are analogous to the classical situation, in the sense described below. 

\begin{introthm}
Let $S$ be a diagonal tropical Hopf manifold, then $\on{Pic}(S) \cong \nT^*$. Moreover, every tropical line bundle on $S$ has a section. 
\end{introthm}

However, it turns out every line bundle on a  tropical Hopf manifold always has a section. This is not the case for classical Hopf manifolds \cite[Theorem 3.1]{dabrowski:modulihopfsurfaces}

Upon computing the  $(p, q)$-homology groups of diagonal tropical Hopf manifolds we see that their ranks are the same as the ranks of the Dolbeault cohomology of a Hopf manifold, \cite[Theorem 4(c)]{ise:geomhopfmfld}. Therefore, we obtain a simple counter-example to Conjecture 5.3 from \cite{mikhalkin-zharkov:eigenwave} which says that $H^{p, q}(X) \cong H^{q, p}(X)$ for a tropical manifold $X$.

\medskip

The paper is organized as follows.
In Section \ref{sec:basics}, we give some basics on tropical geometry.
Section \ref{sec:tropHopf} deals with the construction of tropical Hopf manifolds in the compact case, proving Theorem \ref{thm:introgeneralHopf}.
In Section \ref{sec:Hopfandgerms} we provide a connection  between tropical germs and tropical Hopf varieties, and prove the monomialization results Theorem \ref{thm:intromonomialization} and Theorem \ref{thm:intrononcompact}. We specialize the situation at dimension $2$ giving an explicit classification of tropical germs which can be lifted to automorphisms.
In Section \ref{sec:analytification} we study the link between tropical and analytic Hopf varieties, proving Theorem \ref{thm:introrealHopf} and Theorem \ref{thm:sectionTrop}.
Finally in Section \ref{sec:invariants} we compute some geometrical invariants for tropical Hopf manifolds.

\begin{ack}
The authors would like to thank Joe Rabinoff  and Bernard Teissier for fruitful discussions on the links between tropical geometry and valuation spaces.

The first author is supported by the ERC-starting grant project ``Nonarcomp'' no.307856.
The second author is supported by the Alexander von Humboldt Foundation. 
This work begun while the second author was visiting the \'Ecole Polytechnique, Paris. 
\end{ack}

\section{Basics}\label{sec:basics}

\subsection{Tropical algebra}

\mbox{}

The tropical semi-field is $\nT =  \nR \cup \{ \infty \}$ equipped with two operations: 
addition and minimum, which is denoted $x \wedge y := \min\{x, y\}$. 
This is isomorphic to the max-plus algebra $\nR \cup \{-\infty\}$ 
via the map $x \mapsto -x$. 
  Here we use the minimum convention because we work with fields with a non-archimedean valuation. 

A tropical polynomial function
$P: \nT^n \rightarrow \nT$ 
is defined using the above operations,
$$
P(\tx_1, \dots, \tx_n) =
 \bigwedge_{I \in \Delta} \big(a_I + \scalprod{I,\tx}\big),
$$
where $\Delta$ is a finite subset of $\nN^n$, and $\scalprod{, }$ denotes the standard inner product on $\nR^n$ extended to $\nT^n$ with the usual conventions. These are concave piecewise integer affine functions.

Let $\nK$ be a field equipped with a non-archimedean valuation $\nu_0$.
In the rest of the paper, we will consider the following examples.
\begin{itemize}
\item $\nK=\nC((t))$ the field of Laurent series over $\nC$, endowed with the $t$-adic valuation $\nu_0$ defined by $\nu_0(t)=1$.
\item Its algebraic closure $\nK=\nC\langle t \rangle$ the field of Puiseux series over $\nC$.
\item The completion of $\nC\langle t \rangle$ with respect to the $t$-adic valuation (which turns out to be also algebraically closed).
\item Let $\nk$ be any algebraically closed field endowed with the trivial valuation. We consider $\nK=\nk((t^\nR))$ (resp., $\nK=\nk((t^\nQ))$) the field of transfinite series over $\nk$ with real (resp., rational) exponents (sometimes also called \emph{Hahn series}).
They are generalized series of the form $\sum_{k \in \nR} a_k t^k$, where $a_k \in \nk$, and the set $\{k \in \nR\ |\ a_k \neq 0\}$ is well ordered (see \cite[Section 2]{teissier:amibesnonarch} and references therein).
\end{itemize}

Notice that, up to taking a field extension, we may always assume that the value group of $\nu_0$ is $\nR$.

Given 
$\aP \in \nK[\ax_1, \dots, \ax_n]$, we define its \emph{tropicalization} $\tP = \trp(\aP) \in \nT[\tx_1, \ldots, \tx_n]$ as follows :
$$
\text{if } \qquad \aP(\ax) = \sum_{I \in \Delta} \aalpha_I \ax^I, \qquad \text{then } \qquad \tP(\tx) 
 =  \bigwedge_{I \in \Delta} \big(\nu_0(\aalpha_I) + \scalprod{I,\tx}\big).
$$

\subsection{Tropical varieties}\label{subsection:tropvar}

The space $\nT=(-\infty, +\infty]$ is equipped with the topology whose open sets are generated by the euclidean topology on $\nR$ and the sets of the form $(a,+\infty]$ with $a \in \nR$. 
The tropical affine space of dimension $n$ is denoted by $\nT^n$, and we use simply $\infty$ to denote the point $(\infty, \dots , \infty)$.
The space $\nT^n$ is stratified in the following way: a point $\tx \in \nT^n$ is of \emph{sedentarity} $I \subseteq \{1, \dots, n\}$ if $\tx_i = \infty$ if and only if $i \in I$.
The subset of points of $\nT^n$ of sedentarity $I$, is denoted $\nR^{n}_I$ 
and can be identified with $\nR^{n-\abs{I}}$. 
Denote the closure of $\nR^n_I$ in $\nT^n$ by $\nT^n_I$. 

Introductions to tropical varieties in $\nR^n$  can be found in \cite{maclagan-sturmfels:book} or \cite{mikhalkin:tropapp} and tropical varieties in $\nT^n$ in \cite{brugalle-itenberg-mikhalkin-shaw:gokova}.

\begin{defi}\label{def:tropsubvariety}
A tropical subvariety of $\nR^n$ is a pure dimensional finite rational polyhedral complex $\tV \subset \nR^n$ equipped with positive integer weights on its top dimensional facets and satisfying the balancing condition (see \cite[Section 3.4]{maclagan-sturmfels:book}). 
A tropical subvariety $V$ of $\nT^n$ of sedentarity $I$ is the closure in $\nT^n$ of a tropical subvariety $V^o$ of $\nR^{n}_I$.  
A  tropical subvariety of $ \nT^n$ is a union of tropical subvarieties of $\nT^n$  of possibly different sedentarities.  
\end{defi}

Here we will use tropical stable intersection with  linear spaces to define a notion of non-singular tropical subvariety in $\nT^n$ (see \cite{mikhalkin:tropapp}, \cite{maclagan-sturmfels:book}). 

Let $\tV \subset \nT^n$ be a tropical variety and $L$ a tropical linear space of complementary dimension and empty sedentarity. For a point $x$ of empty sedentarity let $w_x(\tV \cdot L)$ denote the weight of the point $x$ in the tropical stable intersection of $\tV$ and $L$.
Also define the total intersection number to be
$$
\abs{\tV \cdot L} := \sum_{x \in \nR^n} w_x(\tV \cdot L).
$$
In what follows, we say a linear space in  $\nA^n$ is generic if upon compactifying $\nA^n$ to $\nK \nP^n$ the closure of the linear space intersects all intersections of coordinate hyperplanes in the expected dimension. 
Similarly, a generic linear space through $0$ in $\nA^n$  if upon compactifying $\nA^n$ to $\nK \nP^n$ the closure of the linear space intersects all intersections of coordinate hyperplanes in the expected dimension, with the exception of $0 \in \nA^n \subset \nK \nP^n$.

\begin{defi}\label{def:nonsingular}
Let $V \subset \nT^n$  be a tropical variety of sedentarity $\emptyset$  of codimension $k$  and $x \in V$ a point of sedentarity $\emptyset$. Let $L_x \subset \nT^n$ be the tropicalization of a generic hyperplane in $\nA^n$ with vertex at $x$ and $L_x^k $ denote the $k$th self-intersection.  Define  the local degree
at $\tx$ to be
$$
\text{deg}_{\tV}(x)= \min \{w_x(V \cdot \phi(L_x^k)) \ | \ \phi \in GL_n(\nZ) \}.
$$ 

The local degree of $V$ at $\infty$ is 
$$\text{deg}_{\tV}(\infty)= |\tV \cdot L^k | - |V \cdot L_{\infty}^k|$$
where $L \subset \nT^n$ is the tropicalization of a generic hyperplane in $\nA^n$ 
and $L_{\infty} \subset \nT^n$ is the tropicalization of a generic hyperplane in $\nA^n$ passing through $0$. 

A tropical variety $\tV \subset \nT^n$ is non-singular at a point $\tx$ of sedentarity $\emptyset$ or $x = \infty$  if 	 
$\text{deg}_{\tV}(x) = 1$.
A tropical variety $\tV \subset \nT^n$  is non-singular if $\text{codim}_{\tV}(\tV \cap \nR^n_I) = |I| $ for all $I \subsetneq \{1, \dots, n\}$ and  it is also non-singular at $\infty$ and all points of empty sedentarity. 
\end{defi}

\begin{rmk}\label{rem:nonsingular}
The condition for $\tV \subset \nT^n$ to be tropically non-singular implies that the weights on its top dimensional faces are $1$, but this is far from being sufficient. 
In fact,  a  tropical variety $\tV \subset \nT^n$ is non-singular at a point $\tx$ of sedentarity $\emptyset$ if there exists a neighborhood $\tU$ of $\tx$ in $\tV$ which coincides with a neighborhood of a tropical linear space up to the action of an element of $\text{GL}_n(\nZ)$. 
 
\end{rmk}

\begin{ex}
It is not clear how to properly define the  notion of local degree at points of sedentarity $\emptyset \subsetneq I \subsetneq [n]$.  
One suggestion is to take the minimum of the intersection multiplicities with linear spaces intersecting the tropical variety at that point. But this definition is inadequate. 

Take for example $\tV \subset \nT^3$ to be the union of two affine planes $\tV_1$, $\tV_2$ defined by $\tx_1 = 0$ and $\tx_2 = \tx_3$ respectively. These are two (non-generic) tropical linear spaces, hence at any point $\tx \in \tV_i$ we ought to have $\text{deg}_{\tV_i}(x) = 1$. But then at the point $(0, \infty, \infty) \in V_1 \cap V_2$ the local degree $\text{deg}_{\tV}(x) = 2$, however the minimum positive weight of $x$ in the stable intersection of $\tV$ with a linear space can be seen  to be $1$.
\end{ex}

\subsection{Tropicalization of affine varieties}

Let $\nA^n$ be the 
$n$ dimensional 
affine space of a field $\nK$ equipped with a non-archimedean valuation $\nu_0$. Let $\nu_0 :\nA^n \to \nT^n$ also denote the coordinate-wise valuation map, with $\nu_0(0) = \infty$.
Given a subvariety $\aV \subset \aA^n$, 
the set $\overline{\nu_0(\aV)}$ can be equipped with the structure of a finite rational polyhedral complex of real dimension $\dim(\aV)$ and it can be equipped with 
weights on the top dimensional facets making it a tropical variety in the above sense, see \cite{maclagan-sturmfels:book} or Definition \ref{def:tropaffine} below. 

The weight of a top dimensional face is given by the \emph{multiplicity of the tropicalization} (Definition \ref{def:tropmult}) at any point $\tx$ in the relative interior of the top dimensional face.
This multiplicity can be described as follows. 
Let $\mc{I} \subset \nK[\ax]$ be the defining ideal of $\aV$ where $\ax = (\ax_1, \dots, \ax_n)$. 
For any point $\tx \in \overline{\nu_0(\aV)} \cap \nR^n$ one can associate a so-called \emph{initial ideal} of $\mc{I}$ at $\tx$, denoted $\on{In}_\tx(\mc{I})$, (see \cite[Chapter 3.4]{maclagan-sturmfels:book}). 
If $\tx$ has sedentarity $I$ then $\on{In}_\tx(\mc{I}) \subset \nK[\ax_j^{\pm} | j \nin I]$. In particular, if $\mc{I}$ is an ideal contained in $\langle \ax_i, i \in I \rangle$, then for any $\tx$ of sedentarity containing $I$, we have $\on{In}_\tx(\mc{I})=0$.

\begin{defi}\label{def:tropmult}
Let $\aV \subset \nA^n$ be a subvariety and let $\tx \in \overline{\nu_0(\aV)} \subset \nT^n$ be of sedentarity $I$. Suppose $\aV=\aV(\mc{I})$ is defined by some ideal $\mc{I}$.
The \emph{multiplicity of the tropicalization} at $\tx$, denoted $m_{\aV}(\tx)$, is the number of irreducible components of the special fiber of the variety in $(\nK^*)^{n-|I|}$ defined by $\on{In}_\tx(\mc{I}) \subset \nK[\ax_j^{\pm} | j \nin I]$ counted with multiplicity. 
\end{defi}

In particular, if the initial ideal at $x$ is zero then $m_{\aV}(x) =1$. 
We can finally define the tropicalization of an affine variety.

\begin{defi}\label{def:tropaffine}
Given a subvariety $\aV \subset \nA^n$, its \emph{tropicalization} $\trp(\aV) \subset \nT^n$, is the set $\overline{\nu_0(\aV)}$ with the weight of a top 
dimensional face $E$ given by  $m_{\aV}(x)$ for $x$ in the interior of $E$. 
\end{defi}
 
The weighted rational polyhedral complex $\trp(\aV)$ satisfies the balancing condition, therefore it is a tropical subvariety of $\nT^n$.   However, not every tropical variety is obtained in this way. Determining which ones are is known as the ``realizability'' or ``approximation'' problem, and has been 
studied in various cases such as,  tropical curves in $\nR^n$ \cite{speyer:curves}, tropical linear spaces \cite{speyer:troplinspace} and curves in tropical surfaces, see \cite{bogart-katz:curves}, \cite{brugalle-shaw:obstruct}.

For $\aV \subset \nA^n$, if $\trp(\aV)$ is non-singular as described in Definition \ref{def:nonsingular}, then $m_{\aV}(F) =1$ for each top dimensional face of $\trp(\aV)$. The next proposition states that the same is true for the multiplicity of faces of $\trp(\aV)$ of higher codimension as well. 

\begin{prop}\label{prop:multiplicities}
Let $\aV \subset \nA^n$ be a tropical subvariety and suppose that $\trp(\aV)$ is non-singular at $\tx$ a point of empty sedentarity, then $m_\aV(\tx) = 1$.  
If  $\aV \subset \nA^n$ is non-singular then $m_{\aV}(\tx) =1$ for all $\tx \in \tV$. 

\end{prop}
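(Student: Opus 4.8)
The plan is to pin down the algebraic multiplicity $m_\aV(\tx)$ using the initial degeneration of $\aV$ at $\tx$, and then to bound it from below by the local degree $\text{deg}_\tV(\tx)$, so that the non-singularity hypothesis $\text{deg}_\tV(\tx)=1$ immediately forces $m_\aV(\tx)=1$. First I would treat the case where $\tx$ has empty sedentarity, so $\tx\in\nR^n$ (after a field extension we may assume the value group is $\nR$). Let $\mc{I}$ be the defining ideal of $\aV$ and let $W$ be the subvariety of the torus defined by $\on{In}_\tx(\mc{I})$; its irreducible components $W_1,\dots,W_r$ carry multiplicities $m_1,\dots,m_r$, and by Definition \ref{def:tropmult} we have $m_\aV(\tx)=\sum_i m_i$. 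Since initial degeneration preserves dimension, each $W_i$ has dimension $\dim\aV$. The key input is the compatibility of tropicalization with initial degenerations (Maclagan--Sturmfels): the star (local fan) of $\trp(\aV)$ at $\tx$ equals $\trp(W)$ as weighted balanced fans, and by linearity of tropicalization on cycles $\on{Star}_\tx(\trp\aV)=\sum_i m_i\,\trp(W_i)$.

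Next I would relate the local degree to this star. Because stable intersection at a point depends only on the stars of the two cycles there, for every $\phi\in GL_n(\nZ)$ one has $w_\tx\big(\tV\cdot\phi(L_\tx^k)\big)=w_0\big(\on{Star}_\tx(\trp\aV)\cdot\phi L\big)=\sum_i m_i\,w_0(\trp(W_i)\cdot\phi L)$, where $L$ denotes the fan of the generic linear space of dimension complementary to $\tV$. Minimising over $\phi$ and using $\min\sum\ge\sum\min$ gives
\[
\text{deg}_\tV(\tx)\;=\;\min_\phi\sum_i m_i\,w_0(\trp(W_i)\cdot\phi L)\;\ge\;\sum_i m_i\min_\phi w_0(\trp(W_i)\cdot\phi L)\;=\;\sum_i m_i\,\text{deg}(\trp W_i).
\]
Each $W_i$ is a nonempty irreducible subvariety of the torus of the correct dimension, so its tropical degree equals its algebraic degree and is at least $1$; here one uses that tropical and algebraic degree agree for subvarieties of tori (Sturmfels--Tevelev), and that the $GL_n(\nZ)$-minimum cancels the lattice-index factor, so that $\min_\phi w_0(\trp(W_i)\cdot\phi L)=\text{deg}(\trp W_i)\ge 1$. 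Hence $\text{deg}_\tV(\tx)\ge\sum_i m_i=m_\aV(\tx)\ge 1$. When $\trp(\aV)$ is non-singular at $\tx$ the left-hand side equals $1$, forcing $r=1$ and $m_\aV(\tx)=1$, which is the first assertion.

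For the global statement I would reduce every point to the empty-sedentarity case on a coordinate stratum. If $\tx$ has sedentarity $I$, then $\on{In}_\tx(\mc{I})\subset\nK[\ax_j^{\pm}\mid j\nin I]$ defines a subvariety of the torus $(\nK^*)^{n-|I|}$, so by Definition \ref{def:tropmult} the number $m_\aV(\tx)$ is computed entirely inside this smaller torus and coincides with the corresponding multiplicity of the tropical variety $\trp(\aV)\cap\nT^n_I\subset\nT^{n-|I|}$ at the image of $\tx$, a point that now has empty sedentarity (or is the cone point $\infty$ when $I=[n]$, where the ambient torus is a point and the multiplicity is automatically $1$). The hypothesis that $\aV$ is non-singular — in particular $\text{codim}_\tV(\tV\cap\nR^n_I)=|I|$ together with non-singularity at $\infty$ and at all points of empty sedentarity — guarantees that this restricted variety is non-singular at the image point, so the first part applies verbatim in $\nT^{n-|I|}$ and yields $m_\aV(\tx)=1$.

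I expect the main obstacle to be the middle step: justifying the chain relating $\text{deg}_\tV(\tx)$ to $\sum_i m_i\,\text{deg}(\trp W_i)$. This requires (i) the precise statement that $\on{Star}_\tx(\trp\aV)=\trp(\on{In}_\tx(\mc{I}))$ \emph{with multiplicities}, (ii) that stable intersection localises to stars, and most delicately (iii) that the minimisation over $GL_n(\nZ)$ exactly removes the lattice-index contribution, so that $\min_\phi w_0(\trp(W_i)\cdot\phi L)$ is the genuine degree $\ge 1$ rather than something smaller. The sedentarity reduction is comparatively routine once one verifies, as above, that $m_\aV$ is intrinsic to the stratum, with the codimension condition ensuring that the coordinate strata meet $\tV$ properly.
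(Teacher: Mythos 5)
Your treatment of the empty-sedentarity case is correct in substance but follows a genuinely different route from the paper's. The paper argues structurally: non-singularity at $\tx$ means (Remark \ref{rem:nonsingular}) that a neighborhood of $\tx$ in $\trp(\aV)$ is a unimodular image of a tropical linear space, and from this it deduces that $\on{In}_\tx(\mc{I})$ is, after the monomial change of coordinates given by $\phi \in GL_n(\nZ)$, a \emph{linear} ideal, hence defines an irreducible multiplicity-one variety. You instead prove the numerical inequality $\deg_{\tV}(\tx) \geq \sum_i m_i = m_\aV(\tx)$ via locality and bilinearity of stable intersection. That inequality does hold, but your step (iii) is false as stated: $\min_\phi w_0\big(\trp(W_i)\cdot \phi L\big)$ need not equal the degree of $\trp(W_i)$. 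For $W = \{\ay = \ax^2\} \subset (\nK^*)^2$, $\trp(W)$ is the line $\nR(1,2)$ with weight $1$, which has degree $2$, yet the minimum over $GL_2(\nZ)$ is $1$ (apply $(\tx_1,\tx_2)\mapsto(\tx_1,\tx_2-2\tx_1)$); the minimum over $GL_n(\nZ)$ is precisely what makes local degree smaller than degree. Fortunately you only need the lower bound $1$, and that holds for each fixed $\phi$ separately: $w_0\big(\trp(W_i)\cdot\phi L\big)$ is the degree of the image of $W_i$ under the monomial automorphism corresponding to $\phi^{-1}$, hence at least $1$. With that repair your first part stands, and in fact yields the clean inequality $\deg_{\tV}(\tx) \geq m_\aV(\tx)$ at every empty-sedentarity point.

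The second part, however, has a genuine gap. Your reduction hinges on the assertion that non-singularity of $\tV$ ``guarantees that this restricted variety is non-singular at the image point''. This is not a hypothesis: Definition \ref{def:nonsingular} imposes conditions only at points of empty sedentarity, at $\infty$, and proper intersection with the strata --- the paper even includes an example explaining why local degree at intermediate sedentarity is problematic to define. Nor does it follow from your first part: knowing that the initial degenerations at nearby empty-sedentarity points are irreducible and reduced does not imply the same at a boundary point, because an irreducible reduced variety can meet a coordinate stratum in a reducible or non-reduced way (e.g.\ $\{\az(\ax+\ay) = (\ax-\ay)^2\}$ meets $\{\az = 0\}$ in a double line). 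The paper's proof of the second part avoids exactly this by using the \emph{structural} output of its first part: since $\on{In}_{\tx_\emptyset}(\mc{I})$ is linear after a unimodular change, and since by proper intersection $\on{In}_\tx(\mc{I})$ is the specialization of $\on{In}_{\tx_\emptyset}(\mc{I})$ at $\ax_i = 0$ for $i \in I$, the boundary initial ideal is again linear and so has a single component of multiplicity one. Your purely numerical first part supplies no such structural information, so as written the boundary case is not proved; to complete it you must either prove the inheritance of non-singularity under restriction to strata (which is essentially the content of the boundary case itself) or upgrade your first part to the linearity statement and then run the specialization argument as the paper does.
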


\begin{proof}
Let $\mc{I}$ be the defining ideal of $\aV$ and suppose that $\trp(\aV)$ is non-singular at a point $\tx \in \nT^n$ of sedentarity $\emptyset$.
Let $\aV_{\!\tx} \subset (\nK^*)^{n-\abs{I}}$ be the variety defined by the initial ideal $\on{In}_\tx(\mc{I}) \subset \nK[\ax_j^{\pm}.$ . 
Its tropicalization $\trp(\aV_{\!\tx})$ is a fan tropical variety and a  neighborhood of $\tx \in \trp(\aV)$ coincides, as a weighted fan, up to translation with a neighborhood of the origin of the fan $\trp(\aV_x)$. Since $\trp(\aV)$ is non-singular at $x$, by Remark \ref{rem:nonsingular}, it coincides with weights with a tropical linear space up to some integer affine map $\phi$. 
Therefore, the initial ideal $\on{In}_\tx(\mc{I}) $ is defined by degree one equations in   the  monomials  $x^{\alpha _1}, \dots,  x^{\alpha} _{n}$ for some $\alpha_k \in \nZ^n$. The integer vectors $\alpha_1, \dots,  \alpha_n$ together with  the standard basis vectors $e_i$   for $i \in I$ form a $\nZ^n$ basis. Up to the coordinate change given by $\phi$ this ideal is a linear ideal in $x_1, \dots , x_n$.  Therefore, the initial ideal at $x$ has exactly one component counted with multiplicity and  $m_{\aV}(\tx) = 1$. This proves the first part of the proposition.

For the second part,  by the assumption that $\tV$ is non-singular, it intersects $\nR^n_I$ properly for all $I \neq \{1, \dots, n\}$. In this case, if a point $\tx$ of sedentarity $I$ is contained in the relative interior of a face $A$ of $\tV$, then there is a unique face  $A_{\emptyset}$ of sedentarity $\emptyset$ of $\tV$ such that $A = A_{\emptyset} \cap \nT^n_I$. Let $\tx_{\emptyset}$ be a point in the relative interior of $A_{\emptyset}$. Then the initial ideal of $\tx$ is given by specializing the initial ideal of the  point $\tx_{\emptyset}$ by setting $\ax_i = 0$ for  all $i \in I$. Since by above $\on{In}_{\tx_{\emptyset}}(\mc{I})$ is a linear ideal, it follows that $\tx$ has multiplicity $1$ as well.
If $\tV$ contains  $\infty$ the multiplicity of the initial ideal at $\infty$ is $1$ by convention. 
Thus the proposition is proved. 
\end{proof}

Given an affine variety  $\aX$ over $\nK$, a tropicalization depends on the choice of embedding to $\nA^n$.
Because of this we sometimes refine our previous notation to make the embedding explicit.

\begin{defi}
Let $i:\aX \to \nA^n$ be an affine embedding.
The \emph{tropicalization of $\aX$ with respect to $i$} is the polyhedral complex
$$
\mc{T}_i(\aX):=\trp(i(\aX))=\overline{\nu_0(i(\aX))},
$$
equipped with positive integer weights $m_{i(\aX)}(F)$  on the top dimensional faces $F$ as in Definition \ref{def:tropaffine}.   
\end{defi}

\subsection{Tropical morphisms}\label{sec:tropmorph}
Morphisms between tropical varieties $V \subset \nT^n$ and $U \subset \nT^m$ are extensions of tropical monomial maps $\nR^n \to \nR^m$, in other words  integer affine functions. Such maps are tropicalizations of toric equivariant maps $(\nK^*)^n \to (\nK^*)^n$. 
If an integral affine map $\tF : \nR^n \to \nR^m$ extends to $\nT^n \to \nT^m$  means that $\tF$ preserves the sedentarity of the points in $\nT^n$ in the sense that $s(\tx) \leq s(\tf(\tx))$.
In particular, if $n = m$ and $f$ is invertible and linear then $f$ must be a permutation of the standard basis vectors $e_1, \dots , e_n$ composed with a translation.

Notice that  tropical polynomial maps are not in general  morphisms of tropical varieties. 
The image of a tropical variety under a tropical polynomial map may not satisfy the balancing condition. 
 Our proposed solution to this is a monomialization procedure (see Section \ref{sec:monomialization}), relying on tropical modifications. 

\subsection{Tropical modifications}
Tropical modifications were first introduced in \cite{mikhalkin:tropapp}, another introduction can be found in \cite{brugalle-itenberg-mikhalkin-shaw:gokova}. 
Given a tropical subvariety $\tV \subset \nT^n$ and a tropical polynomial function $\tP: \nT^n \to \nT$, a \emph{regular tropical modification} is a map $\pi_\tP : \wt{\tV} \to \tV$, where $\wt{\tV} \subset \nT^{n+1}$ is a tropical variety to be described below and $\pi_\tP$ is simply the linear projection of $\nT^{n+1} \to \nT^n$ with kernel generated by $e_{n+1}$.
To obtain the tropical variety $\wt{\tV}$, start with the graph  $\Graph{\tP}(\tV)$ of the function $\tP$ restricted to $\tV$. The graph inherits weights on its facets from $V$. However, in general, the graph is not a tropical variety; it may not be balanced in the $e_{n+1}$ direction since $\tP$ is only piecewise integer affine.
At every codimension~$1$ face of $\Graph{\tP}(\tV)$ which fails to satisfy the balancing condition, add a new top dimensional face containing the $e_{n+1}$ direction, equipped with the unique positive integer weight so that the balancing condition is now satisfied. 
The divisor of the function $\tP$ restricted to $\tV$, $\on{div}_{\tV}(\tP) \subset \tV$, is the image under $\pi_\tP$ of these newly added faces equipped with the weights they were assigned in $\wt{\tV}$. 

\begin{defi}\label{def:modification}
Given a tropical variety $\tV \subset \nT^n$ and a tropical polynomial function $\tP: \nT^n \to \nT$, the map $\pi_\tP: \wt{\tV} \to \tV$ where $\pi_\tP$ and $\wt{\tV}$ are described above is the \emph{regular modification of $\tV$ along the function $\tP$}.
The divisor $\on{div}_{\tV}(\tP) \subset \tV$ is a tropical variety, of codimension $1$ in $V$, whose support is the points of $\tV$ where the map $\pi_\tP: \wt{\tV} \to \tV$ is not $1-1$. The weight function on the facets of $\on{div}_{\tV}(\tP)$ is induced from the weights of $\wt{\tV}$.

We denote by $\Exc{\pi_\tP}=\pi_\tP^{-1}(\on{div}_{\tV}(\tP))$ the \emph{exceptional locus} of $\pi_\tP$, which is the locus where $\pi_\tP$ is not $1$-to-$1$; and by $\Strict{\pi_\tP}=\Graph{\tP}$ the \emph{strict transform} of $\pi_\tP$, which can be seen also as the closure of $\wt{\tV} \setminus \Exc{\pi_\tP}$ in $\nT^{n+1}$. 
\end{defi}

Given two divisors $\on{div}_{\tV}(\tP_1)$ and $\on{div}_{\tV}(\tP_2)$, then $\on{div}_{\tV}(\tP_1) + \on{div}_{\tV}(\tP_2) = \on{div}_{\tV}(\tP_1+\tP_2)$. In terms of polyhedral complexes $\on{div}_{\tV}(\tP_1+\tP_2)$ is supported on the union of $\on{div}_{\tV}(\tP_1)$ and $\on{div}_{\tV}(\tP_2)$ with refinements and addition of weight functions where necessary.
 
Tropical division corresponds to usual subtraction, so a tropical rational function is the difference of two tropical polynomial functions, $\tP = \tP_1 - \tP_2$. The divisor of a rational function is a tropical cycle
defined as $\on{div}_V(\tP) = \on{div}_V(\tP_1) - \on{div}_V(\tP_2)$.  
As a cycle $\on{div}_V(\tP)$ may be effective even if $\tP_1 -\tP_2$ does not coincide with a tropical polynomial. In this case,  we can again define the modification of $V$ along the rational function $\tP$, see \cite[Example 2.28]{shaw:tropicalintersectionproduct}. These will be referred to as rational tropical modifications.   

Notice that $\pi_\tP$ is $1-1$  outside of the exceptional locus $\Exc{\pi_\tP}$, and the divisor $\on{div}_V(\tP)$ determines the polyhedral complex $\wt{\tV} \subset \nT^{n+1}$ up to translation in the $e_{n+1}$ direction.
Therefore it is common to say that we consider the modification along the divisor $\on{div}_V(\tP)$.
In general, we refer to a modification as any composition of regular or rational modifications, and to a regular modification as the composition of modifications along tropical polynomial functions. 

A modification of a tropical variety should  be thought of as taking the graph of a function restricted to a tropical variety. In the tropical world this operation changes the topology of the space, although the modification is still related to the original space via a retraction map. A concrete relation between tropical modifications along regular functions and graphs of functions restricted to varieties over $\nK$ is given by the next proposition. 

\begin{prop}\label{lem:realGraph}
Let $V=\trp(\aV) \subset \nT^n$ be the tropicalization of a variety $\aV \subset \nA^n$ and $\tP: \nT^n \to \nT$ be a tropical polynomial.
Denote by $\pi: \wt{V} \to V$ the regular modification along the function $\tP|_V$. Then for a generic choice of polynomial $\aP \in \nK[\ax_1, \dots , \ax_n]$  such that $\trp(\aP) = \tP$ we have that $\trp(\Graph{\aP|_\aV}) = \wt{\tV}$, where $\Graph{\aP|_\aV} \subset \nA^{n+1}$ denotes the graph of the restriction of $\aP$ to $\aV$. 
\end{prop}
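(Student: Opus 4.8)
The plan is to prove the equality of weighted polyhedral complexes $\trp(\Graph{\aP|_\aV}) = \wt V$ by first matching their supports and then their weights, using at the last step that both are balanced so that the exceptional weights are determined by the graph faces. Since $\trp = \overline{\nu_0(\cdot)}$, it suffices to work over points $\tx$ of empty sedentarity lying in relative interiors of faces and then pass to closures. The projection $\nA^{n+1} \to \nA^n$ forgetting the last coordinate restricts to an isomorphism $\Graph{\aP|_\aV} \cong \aV$ and tropicalizes to the modification map $\pi$, so I would analyze the fibres of $\pi$ over $V$. For $\ax \in \aV$ with $\tx = \nu_0(\ax)$, write $\ax_i = t^{\tx_i} u_i$ with each $u_i$ a unit; then $\aP(\ax) = t^{\tP(\tx)}\big(\on{in}_\tx(\aP)(\overline u) + (\text{terms of positive valuation})\big)$, where $\on{in}_\tx(\aP) = \sum_{I \in \Delta_{\min}(\tx)} \overline{\aalpha_I}\,\ax^I$, $\Delta_{\min}(\tx) = \{I : \nu_0(\aalpha_I) + \scalprod{I,\tx} = \tP(\tx)\}$, and $\overline{\aalpha_I} \in \nk^*$ is the leading coefficient of $\aalpha_I$. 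The ultrametric inequality gives $\nu_0(\aP(\ax)) \geq \tP(\tx)$, so $\trp(\Graph{\aP|_\aV})$ sits over $V$ in the half-space $\ty \geq \tP(\tx)$.

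Off the divisor, i.e. for $\tx \notin |\on{div}_V(\tP)|$, the tropical polynomial $\tP$ is affine near $\tx$ and $\Delta_{\min}(\tx)$ is a singleton, so $\on{in}_\tx(\aP)$ is a single nonzero monomial and $\nu_0(\aP(\ax)) = \tP(\tx)$ for every lift. Hence the fibre is the point $(\tx, \tP(\tx))$, and $\trp(\Graph{\aP|_\aV})$ coincides there with $\Graph{\tP}(V)$. Moreover the initial ideal of the graph at $(\tx, \tP(\tx))$ is $\on{In}_\tx(\mc I)$ together with the binomial $\ay - \overline{\aalpha_I}\,\ax^I$; since this last equation is the graph of a monomial over the initial degeneration $\aV_\tx$ defined by $\on{In}_\tx(\mc I)$, it defines an isomorphic variety and leaves the number of components unchanged, whence $m_{\Graph{\aP|_\aV}}(\tx,\tP(\tx)) = m_\aV(\tx)$. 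This is exactly the weight $\wt V$ inherits from $V$ on its graph faces.

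Over the divisor, $\tx$ lies in the relative interior of a facet where $\Delta_{\min}(\tx)$ has at least two elements, so $\on{in}_\tx(\aP)$ is genuinely non-monomial. For generic $\aP$ I would show $\on{in}_\tx(\aP)$ does not vanish identically on $\aV_\tx$ and cuts it in codimension one; the locus $\{\on{in}_\tx(\aP) = 0\} \cap \aV_\tx$ then lifts to points of $\aV$ over $\tx$ with $\nu_0(\aP(\ax)) > \tP(\tx)$, and, as $\nk$ is algebraically closed, the order of vanishing can be prescribed arbitrarily, realizing the full vertical ray $\{\tx\} \times [\tP(\tx),\infty]$ (its endpoint $\ty = \infty$ coming from lifts in $\aV \cap \{\aP = 0\}$). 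Thus the supports of $\trp(\Graph{\aP|_\aV})$ and $\wt V$ agree. Finally, both complexes are balanced in $\nT^{n+1}$ — the former as a tropicalization, the latter by construction of the modification — share the same support, and agree on every graph face; along each codimension-one face bordering an exceptional ray, the $e_{n+1}$-component of the balancing condition expresses the weight of that vertical face as a fixed combination of the shared weights and primitive directions of the adjacent graph faces, so the exceptional weights agree as well and $\trp(\Graph{\aP|_\aV}) = \wt V$.

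I expect the main obstacle to be the genericity analysis over the divisor: controlling, uniformly in $\tx$, that $\on{in}_\tx(\aP)$ is in sufficiently general position with respect to every initial degeneration $\on{In}_\tx(\mc I)$ of $\aV$ — neither vanishing identically on $\aV_\tx$ (which would collapse the graph faces) nor vanishing to an unexpected order (which would distort the exceptional locus). Establishing that a generic choice of the residues $\overline{\aalpha_I}$, subject to the fixed valuations imposed by $\trp(\aP) = \tP$, achieves this simultaneously at all relevant $\tx$ is the technical heart of the proof; once it is in hand, the support equality and the balancing argument above finish the comparison.
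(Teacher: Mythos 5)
Your proposal is correct in outline but takes a genuinely different route from the paper's. The paper argues globally and by citation: it writes $\Graph{\aP|_\aV} = \aV' \cap \hyp{\ax_{n+1}-\aP}$, where $\aV'$ is the cylinder over $\aV$, observes that the modification $\wt{\tV}$ is exactly the tropical stable intersection $\trp(\aV') \cdot \trp(\hyp{\tx_{n+1}-\tP})$, and then invokes the Osserman--Payne theorem that a stable intersection equals the tropicalization of the intersection with a generic toric translate; for a hypersurface, translating amounts precisely to changing the coefficients of $\aP$ while keeping $\trp(\aP)=\tP$, which is the genericity in the statement. You instead re-prove this special case by hand, pointwise: off the divisor the ultrametric inequality forces $\nu_0(\aP(\ax))=\tP(\tx)$ for every lift, and the initial ideal of the graph at $(\tx,\tP(\tx))$ is $\on{In}_\tx(\mc{I})$ together with a binomial cutting out the graph of a monomial over $\aV_{\tx}$, which gives the weight match (your assertion about this initial ideal is correct: no cancellation can occur between a relation in the $\ax$-variables and a multiple of the binomial, since the latter genuinely involves $\ax_{n+1}$); over the divisor you produce the vertical rays by lifting and pin down the exceptional weights via the $e_{n+1}$-component of the balancing condition, which is exactly how the modification's weights are defined. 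What each approach buys: yours is elementary and self-contained, and makes visible \emph{why} genericity is needed and where the statement fails without it; the paper's is much shorter because the single step you flag as the ``technical heart'' --- that a generic choice of residues makes the initial form of $\aP$ at $\tx$ avoid vanishing identically on any component of $\aV_{\tx}$, simultaneously for all $\tx$ in the divisor --- is precisely what the citation absorbs. For completeness, note that this flagged step does go through along the lines you indicate: only finitely many initial ideals $\on{In}_\tx(\mc{I})$ and finitely many sets of minimizing monomials occur (constancy on the cells of a common refinement of the Gr\"obner complex and the divisor structure), and for each cell the locus of bad residues is a proper closed condition, proper because monomials are units on the torus, so the linear span of the admissible initial forms of $\aP$ cannot lie inside any prime of $\on{In}_\tx(\mc{I})$; intersecting these finitely many dense open conditions yields the required genericity.
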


\begin{proof}
Consider the cylinder $\aV'$ over $\aV$ in $\nA^{n+1}$, in other words $\aV'$  it is defined by the same ideal as $\aV$ but considered over $\nK[\ax_1, \dots, \ax_{n+1}]$ instead of the polynomial ring in only $n$ variables.
Now take a function $\aP \in \nK[\ax_1, \dots , \ax_n]$  tropicalizing to $\tP$, and consider the hypersurface $\hyp{\ax_{n+1}  - \aP} \subset \nA^{n+1}$.
It is clear that the graph of $\aP|_\aV$ satisfies $\Graph{\aP|_\aV} = \aV' \cap \hyp{\ax_{n+1}  - \aP}$.
Similarly, let $\pi: \wt{\tV} \to \tV$ be the regular tropical modification along the function $\tP$, then $\wt{\tV} = \trp(\aV') \cdot  \trp(\hyp{\tx_{n+1}  - \tP})$ where ``$\cdot$" denotes again the tropical stable intersection.  By \cite[Theorem 5.3.3]{osserman-payne:tropint} the tropical stable intersection $\trp(\aV) \cdot  \trp(\aU)$ is equal to $\trp(\aV \cap \aU^{\prime})$ for $\aU^{\prime}$ a generic toric translate of $\aU$.
For hypersurfaces $\hyp{\aP}$ this boils down to making a change of coefficients of $\aP$ so that we still obtain the same tropical hypersurface.
Thus the cycle $\wt{\tV}$ which is the result of the modification $\pi$ is the tropicalization of the graph $\Graph{\aP|_\aV}$ for a generic $\aP$ tropicalizing to $\tP$.
\end{proof}
 
By repeatedly applying the above proposition we have the following:

\begin{cor}\label{cor:realMultiGraph}
For a generic choice of  functions $\aP_1, \dots , \aP_r \in \nK[\ax_1, \dots , \ax_n]$  tropicalizing to tropical polynomial functions $\tP_1, \dots , \tP_r : \nT^n \to \nT$ respectively, let $\Graph{\aP|_\nA^n} \subset \nA^{n+r}$ denote the graph of $\nA^n$ along the $r$ functions $\aP_k$, $k=1, \ldots, r$. Then $\trp(\Graph{\aP|_{\nA^n}}) = \wt{\tV}$ where $\pi: \wt{\tV} \to \nT^n$ is the composition of the regular modifications along the functions $\tP_1, \dots , \tP_r$. 
\end{cor}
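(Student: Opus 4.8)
The plan is to argue by induction on $r$, using Proposition \ref{lem:realGraph} to graph along one function at a time. For $r=1$ the statement is exactly Proposition \ref{lem:realGraph} applied to the variety $\aV=\nA^n$, for which $\trp(\nA^n)=\nT^n$ and the composition of modifications reduces to a single one along $\tP_1$.

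For the inductive step I would write $\aV^{(r-1)}:=\Graph{\aP|_{\nA^n}}\subset\nA^{n+r-1}$ for the graph of $\nA^n$ along $\aP_1,\dots,\aP_{r-1}$, and let $\pi^{(r-1)}:\wt{\tV}^{(r-1)}\to\nT^n$ be the composition of the regular modifications along $\tP_1,\dots,\tP_{r-1}$. The first observation is that, after identifying $\aP_r\in\nK[\ax_1,\dots,\ax_n]$ with its pullback to $\aV^{(r-1)}$ along the projection $\nA^{n+r-1}\to\nA^n$ forgetting the last $r-1$ coordinates, one has the identity of graphs
$$
\Graph{\aP|_{\nA^n}}=\Graph{\aP_r|_{\aV^{(r-1)}}}\subset\nA^{n+r},
$$
where on the left the graph is taken along all $r$ functions. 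The same remark holds tropically: since $\tP_r$ depends only on the first $n$ coordinates, the $r$-th modification in the composition is exactly the regular modification of $\wt{\tV}^{(r-1)}$ along $\tP_r|_{\wt{\tV}^{(r-1)}}$, producing the cycle $\wt{\tV}^{(r)}=\wt{\tV}$ of the statement. Thus graphing along several functions can be performed one coordinate at a time, both over $\nK$ and tropically.

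I would then invoke the induction hypothesis $\trp(\aV^{(r-1)})=\wt{\tV}^{(r-1)}$ and apply Proposition \ref{lem:realGraph} to the variety $\aV^{(r-1)}\subset\nA^{n+r-1}$ and the tropical polynomial $\tP_r$. For a generic $\aP_r$ tropicalizing to $\tP_r$, it gives $\trp(\Graph{\aP_r|_{\aV^{(r-1)}}})$ equal to the regular modification of $\wt{\tV}^{(r-1)}$ along $\tP_r$, which by the previous paragraph is precisely $\wt{\tV}^{(r)}$. Combined with the identity of graphs above, this yields $\trp(\Graph{\aP|_{\nA^n}})=\wt{\tV}$ and closes the induction.

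The step I expect to require the most care is the bookkeeping of genericity. The condition needed to apply Proposition \ref{lem:realGraph} at the $k$-th stage is a condition on $\aP_k$ that depends on the earlier choices $\aP_1,\dots,\aP_{k-1}$ through the variety $\aV^{(k-1)}$; concretely it is the condition that the relevant toric translate be generic, coming from \cite[Theorem 5.3.3]{osserman-payne:tropint}. Each such condition cuts out a dense open subset of the space of coefficients of polynomials tropicalizing to $\tP_k$, so I would select the $\aP_k$ successively, each one generic with respect to the finitely many constraints imposed by the previous selections. Since there are only finitely many stages, the tuples $(\aP_1,\dots,\aP_r)$ for which every stage is generic form a generic subset of the product of the coefficient spaces, and on this subset the induction applies verbatim. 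This genericity is the only delicate point; all the geometric content is already contained in Proposition \ref{lem:realGraph} together with the observation that the iterated graph is built one function at a time.
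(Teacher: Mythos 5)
Your proof is correct and is exactly the paper's approach: the paper proves this corollary simply by remarking that it follows ``by repeatedly applying'' Proposition \ref{lem:realGraph}, which is precisely the induction you carry out. Your added bookkeeping on the successive genericity conditions (each stage's condition depending on the previous choices, with finitely many stages) is the right way to make that one-line argument rigorous.
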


The above corollary will be used when we consider the monomialization of tropical germs $\nT^d \to \nT^d$ in Section \ref{sec:monomialization}.

\subsection{Abstract tropical spaces}

Tropical Hopf manifolds are not affine. To make sense of these, we need abstract tropical spaces, first introduced in \cite{mikhalkin:tropapp}.

\begin{defi}\label{def:abstractmfld}
A \emph{$d$-dimensional tropical space} $\tX$ is a Hausdorff topological space equipped with an atlas of charts $\{(U_\alpha , \phi_\alpha)\}$,
with $\phi_\alpha:\tU_\alpha \to V_{\alpha} \subseteq \nT^{N_\alpha}$ open embeddings, such that the following conditions hold.
\begin{enumerate}[(a)]
\item For every $\alpha$, the set $\tV_\alpha$ is a $d$-dimensional tropical variety in $\nT^{N_\alpha}$;
\item For any $\alpha, \beta$, the overlapping map $\psi_\alpha^\beta = \phi_\beta \circ \phi_\alpha^{-1}:\phi_\alpha(\tU_\alpha \cap \tU_\beta)\to \phi_\beta(\tU_\alpha \cap \tU_\beta)$ is the restriction of an integer affine linear map $\nT^{N_\alpha} \to \nT^{N_\beta}$ (i.e., a continuous extension of an integer affine linear map $\nR^{N_\alpha} \to \nR^{N_\beta}$). Moreover, the weights of $V_{\alpha}$ and $V_{\beta}$ must agree on the overlaps;
\item $X$ is of \emph{finite type}, i.e., there exists a \emph{finite} covering $(W_k)$ of $X$, subordinate to the covering $(U_\alpha)$ (i.e., for any $k$ there exists $\alpha$ such that $W_k \subseteq U_\alpha$), so that $\overline{\phi_\alpha(W_k)} \subset \phi_\alpha(U_\alpha) \subset \nT^{N_\alpha}$. 
\end{enumerate}

The tropical space $\tX$ is \emph{non-singular}, and sometimes called a \emph{tropical manifold}, if the $d$-dimensional tropical models $\tV_\alpha$ in condition (a) are non-singular.
\end{defi}

\begin{rmk}
In Section \ref{ssec:noncompact}, we consider ``non-compact'' Hopf manifolds, which are tropical manifolds in a weaker sense.
In particular, they admit atlases that do not satisfy the finite-type condition (c) of Definition \ref{def:abstractmfld}, but a \emph{local finite-type} condition, where the covering $(W_k)$ is only required to be locally finite. 

\end{rmk}

\section{Tropical Hopf manifolds}\label{sec:tropHopf}

This section defines tropical analogues of the classical construction of Hopf manifolds, introduced in \cite{hopf:hopfmfld}. See also \cite[Section V.18]{barth-hulek-peters-vanderven:compactcomplexsurfaces} for Hopf surfaces, and \cite{ise:geomhopfmfld,kato:subvarietieshopf} for Hopf manifolds in higher dimensions.

\subsection{Monomial case}\label{ssec:monoHopf}

As a first example,  consider the following construction.
Let $\tF:\nT^d \to \nT^d$ be a linear (invertible) monomial map, i.e., a map of the form
$$
\tF(\tx_1, \ldots, \tx_d)=(\tx_{\sigma(1)}+a_{\sigma(1)}, \ldots, \tx_{\sigma(d)}+a_{\sigma(d)}),
$$
where $\sigma$ is a permutation of $\{1, \ldots, d\}$ and $a_j \in \nR$.
Denote by $s$ the order of $\sigma$.

Assume that $\tF$ is contracting, meaning that for any neighborhood $U$ of $\infty \in \nT^d$ there exists a neighborhood $V \subseteq U$ such that $\tF(V) \subrelcpct V$ (i.e., $\tf(V)$ is relatively compact inside $V$, or equivalently $\overline{\tF(V)} \subset V$).
In this case, $\tF$ is contracting if and only if
$$
a_k^{s}:=\sum_{h=1}^s a_{\sigma^h(k)} > 0
$$
for all $k=1, \ldots, d$. One can check that the $s$-th iterate of $\tF$ has the form
$$
\tF^s(\tx_1, \ldots, \tx_d)=(\tx_1 + a_1^{s}, \ldots, \tx_d + a_d^{s}).
$$

Set $\nK=\nk((t^\nR))$ the field of transfinite series with real exponents, endowed with the $t$-adic valuation $\nu_0$ and norm $| \cdot |_0 = e^{-\nu_0(\cdot)}$.  Let  $\nA^d=\nA_\nK^d$ be  the affine space over $\nK$.
Notice that a map $\tF$ as above is the tropicalization of a linear monomial  map $\aF: \nA^d \to \nA^d$ given by
$$\aF(\ax_1, \dots , \ax_d) = (\alpha_{\sigma(1)}\ax_{\sigma(1)}, \dots , \alpha_{\sigma(d)}\ax_{\sigma(d)}),
$$
where $\nu_0(\alpha_i) = a_i$. 
The map $\aF$ is diagonalizable and moreover, $\abs{\alpha_k^{s}}_0=\abs{\prod_{h=1}^s \alpha_{\sigma^h(k)}}_0 < 1$ implies that
the eigenvalues of $\aF$ all satisfy $\abs{\lambda}_0 < 1$, which is the usual condition to be a contracting map.

We now construct a fundamental domain for the action of $F$ on $\nT^d$. 
Let $L \subset \nT^d$ be the closure in $\nT^d$ of a $\tF$-invariant affine line in $\nR^d$. 
Such a line is easily found by tropicalizing 
an eigenvector of $\aF$. 
Namely, the line $L$ is of the form,
$$
L=\left\{\frac{\tx_1+\beta_1}{a_1^{s}} = \cdots = \frac{\tx_d + \beta_d}{a_d^{s}}\right\},
$$
for suitable $\beta_1, \ldots, \beta_d \in \nR$.
Set
\begin{equation*}
U_k = \left\{\frac{\tx_k + \beta_k}{a_k^{s}} \leq \frac{\tx_j + \beta_j}{a_j^{s}} \text{ for all } j \neq k\right\} \subset \nT^d.
\end{equation*}
Notice that $\bigcup_k U_k = \nT^d$, $\bigcap_k U_k = L$, and $F(U_k) = U_{\sigma^{-1}(k)}$.

We now pick values $b_k^- < b_k^+ \in \nR$ satisfying $b^-=(b_1^-, \ldots, b_d^-) \in L$ (for example $b_k^-=-\beta_k$), and $b_k^+ = b_{\sigma(k)}^- + a_{\sigma(k)}$, so that $b^+=(b_1^+, \ldots, b_d^+) = \tF(b^-) \in L$.
Set
\begin{align*}
W_k &= \{(\tx_1, \ldots, \tx_d) \in U_k\ |\ b_k^- \leq \tx_k \leq b_k^+\},\\
W_k^+ &= \{(\tx_1, \ldots, \tx_d) \in U_k\ |\ \tx_k = b_k^+\},\\
W_k^- &= \{(\tx_1, \ldots, \tx_d) \in U_k\ |\ \tx_k = b_k^-\}.
\end{align*}
It is easy to check that $\tF(W_k^+)=W_{\sigma^{-1}(k)}^-$, and $W:=\bigcup_k W_k$ is a fundamental domain for $\tF$.

\begin{defi}
The couple $(\nT^d,\tF)$ is called a \emph{tropical (monomial) Hopf data}. The tropical  manifold $S(\nT^d,\tF)$ defined as $\nT^d \setminus \{\infty\}/\langle \tF \rangle$, or equivalently by $W/\langle \tF\rangle $, is called the \emph{tropical Hopf manifold} associated to the tropical Hopf data $(\nT^d,F)$.
If the map $\tF : \nT^d \to \nT^d$ is simply a translation, then we say $(\nT^d, \tF)$ is a tropical \emph{diagonal} Hopf data.
\end{defi}

\begin{ex}
Set $\tG(\tx_1,\tx_2) = (\tx_1+1, \tx_2+\sqrt{2})$. This translation gives rise to a tropical diagonal Hopf data $(\nT^2,\tG)$, depicted in Figure \ref{fig:diagonalhopf}.
The quotient of $\nT^2$ by this action is diffeomorphic to a compact cylinder. 

Consider now the map $\tF(\tx_1,\tx_2) = (\tx_2+3, \tx_1-1)$. It is obtained as the composition of a translation by the vector $(-1,3)$ followed by the map $(\tx_1,\tx_2) \mapsto (\tx_2,\tx_1)$. The tropical monomial Hopf data $(\nT^2,\tF)$ is depicted in Figure \ref{fig:nondiagonalhopf}. The quotient of $\nT^2$ by this action is diffeomorphic to a compact M\"obius band. 

Notice that if we iterate $\tF$ two times, we get $\tF^2(\tx_1,\tx_2)=(\tx_1+2, \tx_2+2)$, which is a contracting diagonal map.
It follows that the diagonal Hopf surface $S(\nT^2,\tF^2)$ is a $2$-to-$1$ covering of the Hopf surface $S(\nT^2,\tF)$, see Figure \ref{fig:coveringhopf}.
\end{ex}

\begin{figure}
\begin{subfigure}{.32\columnwidth}
\begin{minipage}[t]{\columnwidth}
	\def\svgwidth{\columnwidth}
	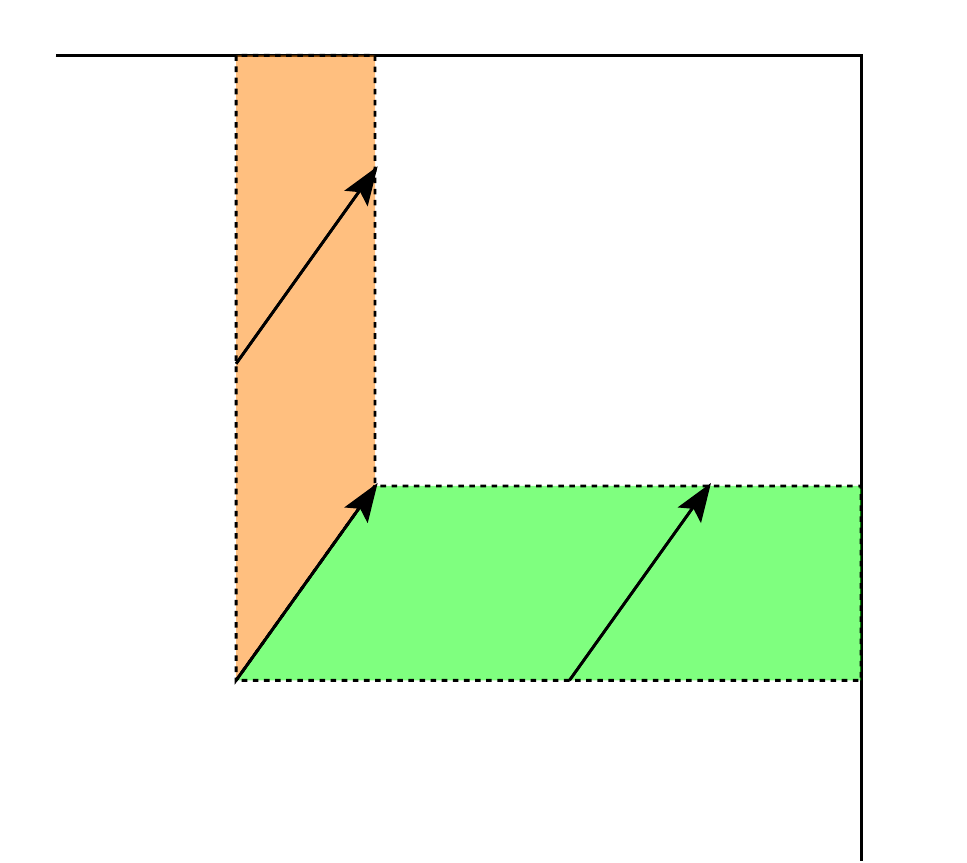
\end{minipage}
\caption{diagonal}
\label{fig:diagonalhopf}
\end{subfigure}
\begin{subfigure}{.32\columnwidth}
\begin{minipage}[t]{1\columnwidth}
	\def\svgwidth{\columnwidth}
	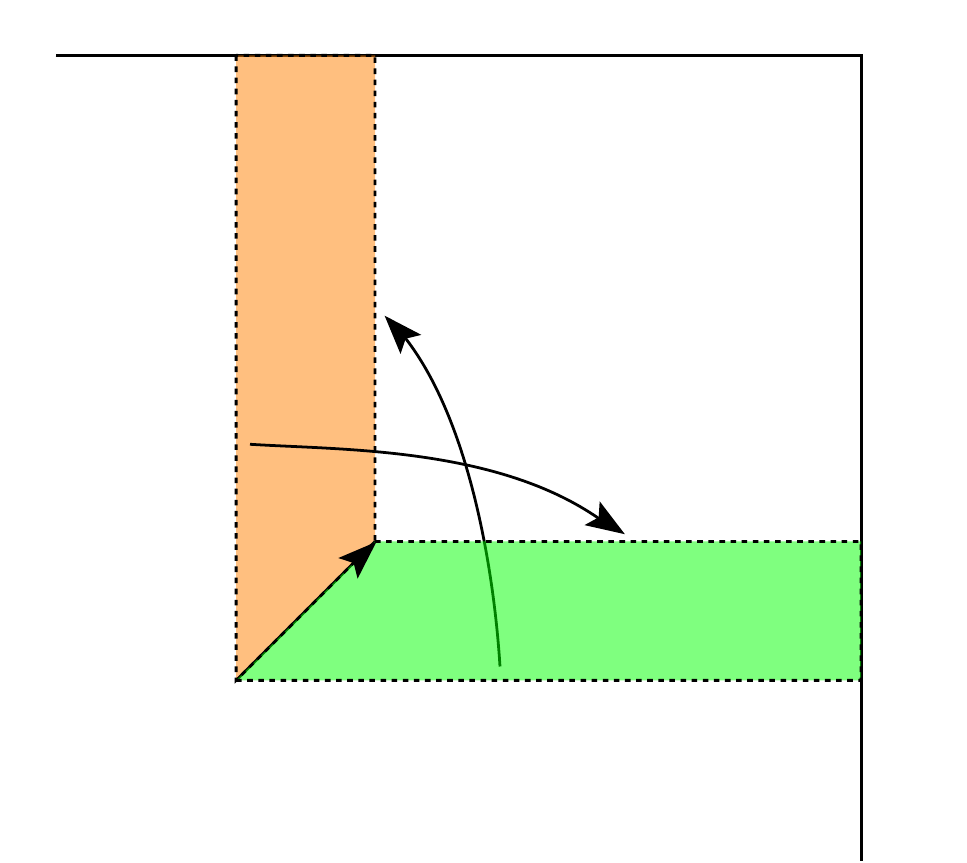
\end{minipage}
\caption{non-diagonal}
\label{fig:nondiagonalhopf}
\end{subfigure}
\begin{subfigure}{.32\columnwidth}
\begin{minipage}[t]{1\columnwidth}
	\def\svgwidth{\columnwidth}
	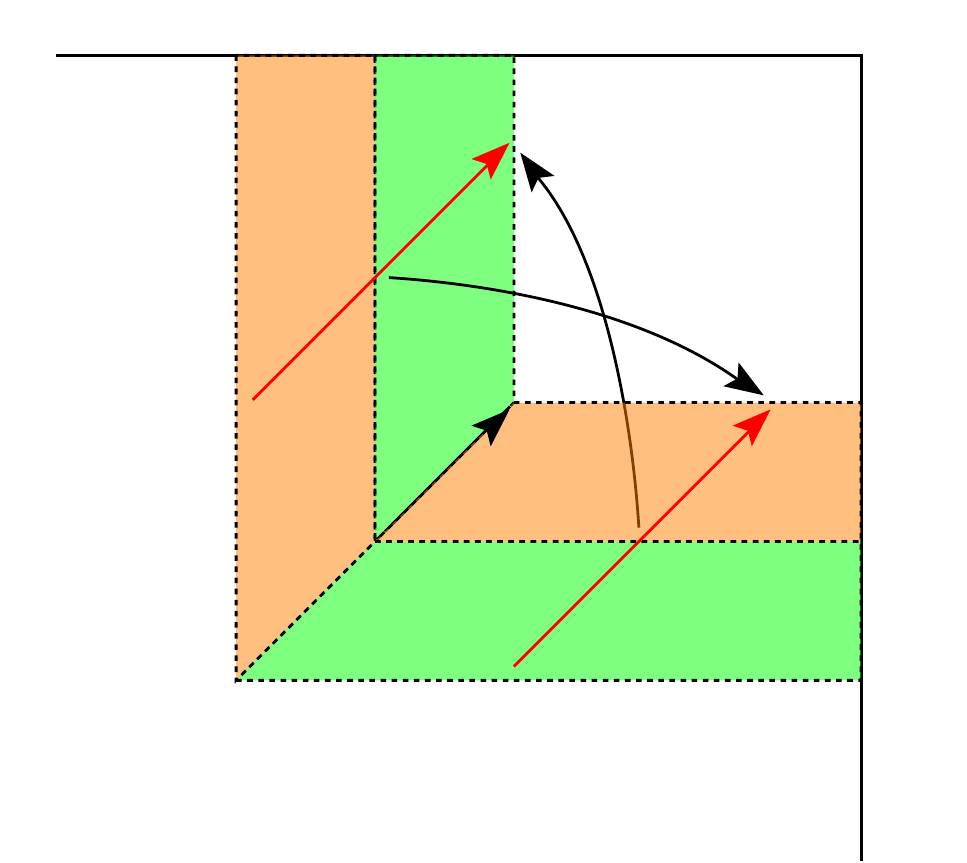
\end{minipage}
\caption{$2$-to-$1$ covering}
\label{fig:coveringhopf}
\end{subfigure}
\caption{Tropical monomial Hopf surfaces.}\label{fig:monomialhopf}
\end{figure}

\subsection{General case}

We now generalize the construction above, by starting with a \emph{non-singular germ of a  tropical subvariety} $X$ of 
$\nT^n$ 
which is invariant under the action of a monomial map $\tF: \nT^n \to \nT^n$.  
Recall the definition of non-singular tropical varieties from  Section \ref{subsection:tropvar}.
By a non-singular germ of a  tropical subvariety, we mean the intersection of a  tropical subvariety $V \subset \nT^n$ with a sufficiently small neighborhood of $\infty$ in which $V$ is non-singular. 

The tropical Hopf manifolds we construct are all submanifolds of tropical monomial Hopf manifolds. 
In  complex geometry, every Hopf manifold appears as a  submanifold of a primary Hopf manifold (see \cite[Proposition 1]{kato:subvarietieshopf}).

\begin{defi}\label{def:tropHopfdata}
A \emph{tropical Hopf data} is a couple $(\tX,\tF)$ where $\tX \subseteq \nT^n$ is 
a non-singular germ of a tropical variety obtained from a modification  $\pi: \tX \to \nT^d$   
and $\tF:(\nT^n,\infty) \to (\nT^n,\infty)$ is a linear monomial contracting map such that $\tF(\tX) \subset \tX$.
\end{defi}

\begin{defi}
Let $(\tX,\tF)$ be a tropical Hopf data, and $W$ the fundamental domain for $\tF:\nT^n \to \nT^n$ constructed as in Subsection \ref{ssec:monoHopf}.
Then $W \cap \tX$ is a fundamental domain for $\tF|_\tX$, and the \emph{tropical Hopf manifold} $S(\tX,\tF)$ associated to $(\tX, \tF)$ is   the quotient of $W \cap \tX$ by the action of $\tF|_\tX$.
\end{defi}

\begin{rmk}
In the classical setting, one can consider contracting germs $\af:(\aX,0) \to (\aX,0)$ at a normal isolated singularity $(\aX,0)$.
The existence of such a dynamical data gives restrictions on the geometry of the singularity.
Indeed, in dimension $2$ it has been proved in \cite{favre-ruggiero:normsurfsingcontrauto} that a normal surface singularity $(\aX,0)$ admitting a contracting automorphism is quasi-homogeneous.
Up to finite quotients, quasi-homogeneous surface singularities are obtained by contracting the zero-section of a negative degree line bundle over a curve $E$.
The induced ``singular Hopf manifold'' associated to such dynamical data can be a Hopf surface, a Kodaira surface, or of Kodaira dimension $1$ according to the genus of $E$ (see \cite{kato:cptcplxmanifoldsGSS}, \cite[Section V.5]{barth-hulek-peters-vanderven:compactcomplexsurfaces}).

The 
tropical constructions in this section can also be easily generalized 
to this singular setting.
If needed in the following, we shall refer to \emph{regular} tropical Hopf data as the ones as described by Definition \ref{def:tropHopfdata}, and to \emph{singular} tropical Hopf data as a couple $(\tX,\tF)$ as in Definition \ref{def:tropHopfdata}, but without the assumption of non-singularity of $\tX$.

Moreover, sometimes the tropicalization $\tX$ of a variety $\aX$ is not non-singular even if $\aX$ is non-singular.
The singularity of the tropicalization is given by a ``bad'' choice of the embedding defining it (see \cite{cueto-markwig:repairtropplanecurve} for techniques to avoid this problem for planar curves).
We refer to the case when a tropically singular $\tX$ in Definition \ref{def:tropHopfdata} comes from the tropicalization of an embedding as the \emph{virtually regular} case (see Example \ref{ex:weight2monom}).
\end{rmk}

\begin{ex}\label{ex:permutation}
Consider the non-singular tropical surface $\tX \subset \nT^n$ consisting of a single $1$-dimensional face which is the line $a(1, \dots, 1) \in \nT^n$, and $n$ faces of dimension $2$ equipped with weight $1$. These faces are defined by 
$$
A_k  := \{\tx_k \geq \tx_i = \tx_j\ |\ i \neq j \neq k \neq i\}$$ for $k = 1, \ldots, n$. The set $\tX$  is invariant under translations by $(1, \dots , 1)$.  

Consider the linear monomial map
$$
F(\tx_1, \ldots, \tx_n)=(\tx_{\sigma(1)}+a, \ldots, \tx_{\sigma(n)} + a),
$$
where $a \in (0, \infty)$ and $\sigma$ is a permutation of $\{1, \ldots, n\}$.
Then $\tF(\tX)=\tX$, moreover $\tF(A_k)=A_{\sigma^{-1}(k)}$, and $\tF$ is a contracting contracting linear monomial map. 

A fundamental domain for $\tF|_\tX$ is given by
$$
W=\bigcup_{k=1}^d W_k, \qquad W_k = \{\tx \in A_k\ |\ 0 \leq \tx_j \leq a \text{ for all } j \neq k\}.
$$
The tropical Hopf surface associated to $(X, F)$ is given by taking the quotient of $W$ (or of $\tX \setminus \{\infty\}$) by the action of $\tF$.

Notice that $\tF^s: \tX \to \tX$ is just a translation by $s(a, \dots, a)$. Therefore the tropical Hopf surface above is a quotient of $(\tX \setminus \{\infty\}) / \langle \tF^s\rangle$, which is a  tropical modification of a standard diagonal Hopf surface in the following sense.
\end{ex}

\begin{defi}\label{def:modData}
Let $(\tX, \tF)$ and $(\wt{\tX}, \wt{\tF})$ be tropical Hopf data such that $\wt{\tX} \subset \nT^{d+k}$, $\tX \subset \nT^d$ and  $\pi: \wt{\tX} \to \tX$ is a tropical modification.
If $\pi \circ \wt{\tF} = \tF \circ \pi$, then $(\wt{\tX}, \wt{\tF})$ is called a \emph{modification} of the Hopf data $(\tX, \tF)$.
\end{defi}

\begin{rmk}
If $(\wt{\tX}, \wt{\tF})$ is a modification of the Hopf data $(\tX, \tF)$ then $S(\wt{\tX},\wt{\tF})$  is a modification of the abstract tropical Hopf manifold $S(\tX, \tF)$ in the sense of \cite[Section 3.4]{mikhalkin:tropapp}.
\end{rmk} 

\begin{prop}\label{prop:finitequotient}
Any  tropical Hopf manifold is obtained as a finite quotient of a modification of a tropical diagonal Hopf manifold.
\end{prop}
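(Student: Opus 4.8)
The plan is to reduce to the diagonal case by passing to the iterate $\tF^s$, where $s$ is the order of the permutation $\sigma$ underlying the linear monomial map $\tF$. As recorded in Subsection \ref{ssec:monoHopf}, the power $\tF^s$ has trivial permutation part, so it is the pure translation by $(a_1^s, \ldots, a_n^s)$, a vector with strictly positive entries precisely because $\tF$ is contracting. Since $\tF(\tX) \subset \tX$ gives $\tF^s(\tX) \subset \tX$ and $\tF^s$ is again contracting, the pair $(\tX, \tF^s)$ is itself a tropical Hopf data in the sense of Definition \ref{def:tropHopfdata}.

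First I would show that $(\tX, \tF^s)$ is a modification of a diagonal Hopf data. By hypothesis $\tX$ is obtained from $\nT^d$ through a modification $\pi: \tX \to \nT^d$, and by Definition \ref{def:modification} such a $\pi$ is a composition of linear projections each dropping one of the $n-d$ coordinates added during the modification. A pure translation always descends to a translation under such a projection, so I would verify directly that $\pi \circ \tF^s = \tau \circ \pi$ on $\tX$, where $\tau$ is the translation of $\nT^d$ by the first $d$ entries $(a_1^s, \ldots, a_d^s)$ of the translation vector. These entries are strictly positive, so $\tau$ is contracting and $(\nT^d, \tau)$ is a diagonal Hopf data; by Definition \ref{def:modData} the commuting square $\pi \circ \tF^s = \tau \circ \pi$ then says that $(\tX, \tF^s)$ is a modification of $(\nT^d, \tau)$, whence $S(\tX, \tF^s)$ is a modification of the diagonal Hopf manifold $S(\nT^d, \tau)$.

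Next I would realize $S(\tX, \tF)$ as a finite quotient of $S(\tX, \tF^s)$. Being contracting, $\tF$ acts freely on $\tX \setminus \{\infty\}$, so $\langle \tF \rangle \cong \nZ$, the subgroup $\langle \tF^s \rangle$ has index $s$, and $\langle \tF \rangle / \langle \tF^s \rangle \cong \nZ / s\nZ$. The identification of iterated quotients then gives
$$
S(\tX, \tF) = (\tX \setminus \{\infty\}) / \langle \tF \rangle = S(\tX, \tF^s) \big/ (\nZ / s\nZ),
$$
with the residual $\nZ/s\nZ$-action free and properly discontinuous because the full $\langle \tF \rangle$-action is. Together with the previous step, this exhibits $S(\tX, \tF)$ as the finite quotient by $\nZ/s\nZ$ of a modification of the diagonal Hopf manifold $S(\nT^d, \tau)$, which is exactly the claim.

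The step deserving care is the commutation $\pi \circ \tF^s = \tau \circ \pi$. What makes it work is that raising $\tF$ to the power $s$ has eliminated the permutation of coordinates, leaving a genuine translation that descends along the coordinate projection $\pi$ no matter how $\sigma$ mixed base and modification coordinates; the analogous statement for $\tF$ itself would generally fail. I expect this to be the only real obstacle, and a mild one, with the remaining steps being formal.
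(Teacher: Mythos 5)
Your proof is correct and takes essentially the same approach as the paper's: pass to the iterate $\tF^s$ to eliminate the permutation part, use Definition \ref{def:modData} to see $(\tX,\tF^s)$ as a modification of a diagonal Hopf data on $\nT^d$, and recover $S(\tX,\tF)$ as the finite ($s$-to-$1$, i.e.\ $\nZ/s\nZ$) quotient of $S(\tX,\tF^s)$. Your explicit check of the commutation $\pi \circ \tF^s = \tau \circ \pi$ merely fills in a step the paper leaves implicit in its appeal to Definition \ref{def:modData}.
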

\begin{proof}
Let $(\tX,\tF)$ be any tropical Hopf data.
By definition, $\tF= \sigma \circ \tau$, where $\tau$ is a translation and $\sigma$ is a permutation.
If $s$ is the order of $\sigma$, then $\tF^s$ is a translation.
Notice that the tropical Hopf manifold $S(\tX,\tF^s)$ covers $s$-to-$1$ the manifold $S(\tX,\tF)$.
Hence up to finite quotients we can assume that $\tF$ is simply a translation.
Recall there is a  modification $\pi:\tX \to \nT^d$, given by the definition of the tropical Hopf data.
Then by Definition \ref{def:modData}, the Hopf data $(\tX, \tF^s)$ is a modification of $(\nT^d, \tf)$ where $\tf$ is a translation in 
$\nT^d$.  
\end{proof}

\subsection{Explicit construction}

In this section we give an explicit construction of (possibly singular) tropical Hopf data $(\tX,\tF)$.
In particular, we describe the geometry of the total spaces $\tX$ that arise in the Hopf data. 

\begin{defi}\label{def:conealongw}
Let $\tV \subset \nT^n$ be a (possibly singular) tropical  variety of dimension $d$. 
Let $w=(w_1, \ldots, w_n) \in (\nR_+^*)^{n}$ be a vector of strictly positive entries.
We say that $\tV$ is a \emph{cone} along $w$ if
$$
p \in \tV \quad \Rightarrow \quad p + \lambda w \in \tV \quad \forall \lambda \in \nT.
$$
\end{defi}

\begin{rmk}
Firstly, notice that if $\tV$ is a cone along $w$, it will be a cone along $kw$ for any $k > 0$.
Secondly, notice that a tropical  variety $\tV$ could be a cone along different directions.
A trivial example is given by $V = \nT^n$,  then $\nT^n$ is a cone along any direction $w$.

The non-uniqueness of the cone directions 
implies that $V$ contains a linearity space of dimension $\geq 2$. However when the linearity space is $1$-dimensional then by the rationality of $V$,  we can assume up to taking a scalar multiple of $w$, that it is a primitive vector of strictly positive integer entries.
\end{rmk}

Notice that if $\tV$ is a cone along the weight $w$, then the map $\tF(\tx)=\tx+\lambda w$ with $\lambda > 0$ defines a contracting map which leaves $\tX$ invariant, so that $(V,\tF)$ is a (singular) Hopf data. The next proposition also implies that, up to replacing $\tF$ by an iterate, all Hopf data are of this form.
Together Propositions \ref{prop:Hopfdatacone} and \ref{prop:finitequotient} complete the proof of Theorem \ref{thm:introgeneralHopf}.

\begin{prop}\label{prop:Hopfdatacone}
Let $(\tX,\tF)$ be a (singular) Hopf data, then $\tX$ is a neighborhood of a cone tropical variety $\tV$.
\end{prop}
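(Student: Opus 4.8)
The plan is to use the $s$-th iterate of $\tF$, which is a translation, to single out the cone direction, and then to exploit the finite-type condition on the germ $\tX$ to upgrade the resulting discrete invariance into the continuous invariance demanded by Definition~\ref{def:conealongw}. First I would reduce to the case of a translation: writing $\tF=\sigma\circ\tau$ with $\sigma$ a permutation of order $s$ and $\tau$ a translation, the iterate $\tF^{s}$ is the translation by the vector $w:=(a_1^{s},\dots,a_n^{s})$, whose entries are strictly positive precisely because $\tF$ is contracting. From $\tF(\tX)\subset\tX$ one gets $\tX+w\subseteq\tX$ near $\infty$; equivalently the contraction $\tF^{s}$ maps the germ $\tX$ into itself while fixing $\infty$. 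This $w$ is the natural candidate for the cone direction, and the whole issue is to show that $\tX$ is, near $\infty$, invariant under the full line of translations $\lambda\mapsto p+\lambda w$, and not merely under the forward semigroup $\{mw:m\ge 0\}$.

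The key step is the following. Because $\tX$ is of finite type, only finitely many faces of $\tX$ are incident to $\infty$, i.e.\ satisfy $\infty\in\overline F$: such a face meets every neighbourhood of $\infty$, while a small neighbourhood meets only finitely many faces. Translation by $w$ carries each such face toward $\infty$, hence into $\tX$, and the image again reaches $\infty$; thus $\tau=\tF^{s}$ induces a self-map $\phi$ of this finite set of faces, with $F+w\subseteq\phi(F)$ near $\infty$. This map is injective: two distinct faces whose $w$-translates landed in the same face would have the same recession cone while both reaching $\infty$, forcing them to coincide near $\infty$, which is impossible for distinct faces of a polyhedral complex. An injective self-map of a finite set is a permutation, so every face $F$ incident to $\infty$ lies on a cycle, giving $F+kw\subseteq F$ for some $k\ge 1$; since recession cones are closed under positive scaling this yields $w\in\operatorname{rec}(F)$ for \emph{every} face $F$ of $\tX$ incident to $\infty$.

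With this in hand I would define the cone $\tV$ by extending each face of $\tX$ incident to $\infty$ along the whole line $\nR w$, that is $\tV:=\bigcup\{F+\nR w\}$ over these faces. Since $w\in\operatorname{rec}(F)$, each $F+\nR w$ is continuously invariant under $w$, so $\tV$ is a cone along $w$ in the sense of Definition~\ref{def:conealongw}; it is balanced because near $\infty$ it coincides with $\tX$ and balancing is a local condition. It then remains to check that $\tX$ and $\tV$ agree on a neighbourhood $U$ of $\infty$. The inclusion $\tX\cap U\subseteq\tV$ holds because the finitely many faces not incident to $\infty$ have closures avoiding a suitable $U$, so $\tX\cap U$ involves only faces through $\infty$. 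For the reverse inclusion, a point of $\tV\setminus\tX$ lies on a backward extension $F+\lambda w$ with $\lambda<0$; following the $w$-ray from it, it enters $F$ transversally through a facet $G$, and transversality forces $w\notin\operatorname{rec}(G)$, so by the key step $G$ is not incident to $\infty$ and is therefore bounded away from $\infty$, hence so is the point. Thus $\tV\cap U=\tX\cap U$, and $\tX$ is a neighbourhood of the cone variety $\tV$.

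The main obstacle is exactly the passage from the discrete invariance $\tX+w\subseteq\tX$, which is all that the dynamical hypothesis $\tF(\tX)\subset\tX$ directly provides, to the continuous invariance $\tX+\lambda w=\tX$ required by the definition of a cone. This gap is genuine for tropical varieties that are only periodic and not conical, and here it is closed solely by the finite-type assumption on the germ, which is what forces $w$ into every recession cone of a face reaching $\infty$. Accordingly, the two points I expect to need the most care are the injectivity of $\phi$ and the claim that backward extensions leave $\tX$ through facets bounded away from $\infty$.
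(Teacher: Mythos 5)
Your opening reduction (replace $\tF$ by $\tF^s$, a translation by a strictly positive vector $w$, and note $\tX + w \subseteq \tX$ near $\infty$) is exactly the paper's first step, but your ``key step'' has a genuine gap, and it is the step carrying all the weight. The map $\phi$ on the faces incident to $\infty$ is in general not well defined: the polyhedral structure of $\tX$ is not canonical and need not be compatible with translation by $w$, so $F+w$ can be spread over several faces arbitrarily close to $\infty$. Concretely, let $\tX \subset \nT^3$ be the plane $\{\tx_1=\tx_2\}$ (a regular modification of $\nT^2$ along a monomial, hence a legitimate, even non-singular, Hopf data) with the perfectly admissible structure consisting of the facets $P_1=\{\tx_1=\tx_2,\ \tx_3\geq \tx_1\}$ and $P_2=\{\tx_1=\tx_2,\ \tx_3\leq \tx_1\}$ glued along $D=\{\tx_1=\tx_2=\tx_3\}$, and let $\tF$ be the translation by $w=(1,1,2)$, so $s=1$ and $\tF(\tX)=\tX$. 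All three faces $P_1,P_2,D$ are incident to $\infty$, yet $P_2+w=\{\tx_1=\tx_2,\ \tx_3\leq \tx_1+1\}$ meets the relative interiors of both $P_1$ and $P_2$ arbitrarily close to $\infty$, so no face $\phi(P_2)$ exists. Worse, the conclusion you want from the key step is simply false here: $w\notin\on{rec}(P_2)$ and $w\notin\on{rec}(D)$ although both faces are incident to $\infty$ --- while $\tX$ \emph{is} a cone along $w$, so the proposition itself holds. The same example contradicts your later step ``$w\notin\on{rec}(G)$ implies $G$ is not incident to $\infty$'' (take $G=D$), and your justification of injectivity is independently flawed: two distinct parallel faces, e.g.\ the lines $\{\tx_2=\tx_1\}$ and $\{\tx_2=\tx_1+1\}$ in $\nT^2$, have the same recession cone and both reach $\infty$ without coinciding near $\infty$.

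The paper closes the discrete-to-continuous gap in a way that is insensitive to the choice of face structure: fix a point $p$ near $\infty$; invariance gives $p+hw\in\tX$ for all $h\in\nN$; since $\tX$ has only finitely many faces, infinitely many points of this orbit lie in a \emph{single} face $E$ (pigeonhole); and since $E$ is convex and closed, the entire tail ray $p+\lambda w$, $\lambda$ large, lies in $E\subset\tX$. The cone $\tV$ is then obtained by extending these rays to full lines, i.e.\ allowing $\lambda\in\nT$. Note that in the example above a point $p\in P_2$ has the tail of its orbit in $P_1$, not $P_2$: the face capturing the tail is generally \emph{not} the face containing $p$, which is precisely the phenomenon a face-by-face permutation argument cannot see. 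If you wanted to keep the combinatorial flavour, you would first have to replace the given structure by one invariant under translation by $w$, but producing such a structure amounts to already knowing the cone structure; the pointwise pigeonhole-plus-convexity argument is the correct repair.
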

\begin{proof}
If $(\tX, \tF)$ is a  possibly singular tropical Hopf data then $\tX \subset \nT^{n}$ is a  neighborhood of $\infty$ of a $d$-dimensional tropical variety $V$ and $\tF=\sigma \circ \tau$, where $\sigma$ is a permutation of the coordinates $(\tx_1, \ldots, \tx_n)$ of $\nT^{n}$ and $\tau$ is a translation by a vector $a =  (a_1, \dots, a_n) \in \nR^{n}$.

Let $s$ be the order of $\sigma$, 
then $\tF^s$ is a translation by a vector $w=(w_1, \ldots, w_n)$, where $w_k = \sum_{h=1}^s \sigma^h(a_k)$.
Since $\tF$ is contracting, we infer $w_k > 0$ for all $k$.

It follows that for any $p \in \tX$, the point $p_h := p+hw \in \tX$ for any $h \in \nN$.
Since $\tX$ has a finite number of faces, it follows that there exists $m>0$ such that the points $\{p_{mh}\}$ belong to the same face $E$ of $\tX$. It follows that $p+\lambda w \in E \subset \tX$ for all $\lambda \in (0, \infty]$. Then  $\tX$ is a neighborhood of a  tropical variety $V$ which is a cone along $w$ given by allowing $\lambda \in \nT^n$. 
\end{proof}

It follows from Proposition \ref{prop:Hopfdatacone}, that if $\tX$ is the germ of a cone tropical variety $V$ and $(\tX, \tF)$ is a tropical Hopf data then the map $\tF$ is also a global automorphism of  $V$. 
Therefore throughout we assume that $\tX$ is a tropical variety in $\nT^n$ not just a germ and that $\tF$ acts on $\tX$ by automorphism. 

\begin{rmk}\label{rmk:conesasweightedprojspaces}
Let $\tX \subseteq \nT^n$ be a cone along $w \in (\nN^*)^{n}$ a primitive vector with positive integer entries.
We may consider the equivalence relation $\sim_w$ on $\nT^n \setminus \{\infty\}$ defined by
$$
(\tx_1, \ldots, \tx_n) \sim_w (\lambda w_1 + \tx_1, \ldots, \lambda w_n + \tx_n) \text{ for all } \lambda \in \nR.
$$
Quotienting by this equivalence relation we get
$$
\nP_w(\tX):=\tX \setminus \{\infty\}/\sim_w \ \subseteq \ \nT^{n} \setminus \{\infty\}/\sim_w =: \nT\nP_w^{n-1}.
$$
The latter space is the tropical analogue of the weighted projective space
$$
\nK\nP_w^{n-1} = \nA_\nK^{n} \setminus \{0\}/\sim_w, \quad (\ax_1, \ldots, \ax_n) \sim_w (\alambda^{w_1} \ax_0, \ldots, \alambda^{w_n}\ax_n) \text{ for all } \alambda \in \nK^*.
$$
Classically, weighted projective spaces are endowed with a natural orbifold structure. They are compactifications of $\nA^{n-1}$ whenever $\min\{w_i\}=1$.

As in the complex setting, the tropical weighted projective space is not a smooth tropical manifold. It is rather endowed with a ``tropical orbifold structure'' (which have been introduced to study moduli spaces of tropical curves, see e.g. \cite{abramovich-caporaso-payne:tropmodulispacecurves}).
In this case, the orbifold structure is a generalization of Definition \ref{def:abstractmfld}, where the overlapping maps are   \emph{rational} linear maps, i.e.~in $\text{GL}_n(\nQ)$, instead of integer ones.

The action $\aPhi:\nA^n \to \nA^n$ given by $\aPhi(\ax_1, \ldots, \ax_n)=(\ax_1^{w_1}, \ldots, \ax_n^{w_n})$ descends to the quotient and gives  a map $\nK\nP^{n-1} \to \nK \nP_w^{n-1}$.
Tropicalizing this situation, we get the linear map $\tPhi: \nT^n \to \nT^n$ given by the diagonal matrix with entries $w=(w_1, \ldots , w_n)$.
The map $\tPhi$ sends cones along $\one=(1, \ldots, 1)$ to cones along $w$.
The inverse of $\tPhi$ is a linear map defined over $\nQ$ and of course sends cones along $w$ to cones along $\one$. 
The maps $\tPhi$ and $\tPhi^{-1}$ preserve weights and the balancing condition. 
\end{rmk}

\begin{rmk}\label{rmk:metrictree}
Let $(\tX,\tF)$ be a regular Hopf data. By Proposition \ref{prop:Hopfdatacone}, the tropical variety $\tX$ is a cone along some direction $w$.
Consider the space $T=\nP_w(\tX)$ as described in Remark \ref{rmk:conesasweightedprojspaces}.

In dimension $d=2$, the quotient $T$ is a tree. The metric on  $\nT^d$ 
induces (not in a canonical way) a metric on $T$, which allows to distinguish between bounded and unbounded edges (called sometimes \emph{leaves}).

Analogous statements can be derived in any dimension $d \geq 3$. In this case $T$ is a complex of dimension $d-1$, homotopic to a point.
\end{rmk}

\section{Tropical Hopf manifolds and contracting germs}\label{sec:Hopfandgerms}

In this section we relate the geometry of tropical Hopf manifolds with  the dynamical features of tropical germs.

\subsection{Tropical germs}

\begin{defi}
Let $\tx=(\tx_1, \ldots, \tx_d)$ be a $d$-tuple of coordinates.
A formal series in $\nT\fps{\tx}$ is a formal tropical sum 
$$
\tphi(\tx) = \trop{\sum_{I \in \nN^d} \tphi_I \ax^I} = \bigwedge_{I \in \nN^d} (\tphi_I + \scalprod{I,\tx}),
$$
where $\tphi_I \in \nT$ for any multi-index $I \in \nN^d$.
\end{defi}
\begin{defi}
A formal power series $\tphi \in \nT\fps{\tx}$ is \emph{convergent}  at $\infty$ if there exists a neighborhood $U$ of $\infty$ such that $\tphi|_U$ is determined by a finite number of tropical monomials. We say that $\phi$ is convergent on $U$. 

Let $\tf:(\nT^d, \infty) \to (\nT^c, \infty)$ be a $c$-tuple of formal power series in $d$-variables. Denote by $\ty_1, \ldots, \ty_c$ the standard coordinates in $\nT^c$.
Then $\tf$ is called \emph{convergent} if $\ty_j \circ \tf:(\nT^d, \infty) \to (\nT, \infty)$ is convergent for every $j=1, \ldots, c$.
\end{defi}

\begin{ex}
Assume we are in dimension $d=1$.
The formal power series
$\displaystyle
\tphi_1(\tx)
=
\bigwedge_{n \geq 1} (na+n\tx)
$
is convergent, since $\tphi_1(\tx)=a+\tx$ on $\{\tx > -a\}$.
In particular, as a germ, $\tphi_1$ coincides with the translation by $a$.

Analogously, $\tphi_2(\tx)=\displaystyle\bigwedge_{n \geq 1} (-\log n + n \tx)$ is convergent.
Indeed, for all $m > n$, we have that $-\log m + m \tx > -\log n + n \tx$ if and only if $\tx > \frac{\log m - \log n}{m-n}$.
Since this quantity is bounded away from $\infty$, we infer that $\tphi_2$ is convergent.

On the other hand, $\tphi_3(\tx)=\displaystyle\bigwedge_{n \geq 1} (-n^2 + n \tx)$ is not convergent.
Indeed, for all $m > n \geq 0$, we have $m(\tx-m) \geq n(\tx-n)$ if and only if $\tx \geq m+n$.
In particular $\tphi_3(\tx)\leq n(\tx-n)$ whenever $\tx < n$.
Hence $\tphi_3(\tx) \to -\infty$ for all $\tx \in \nR$, and $\tphi_3$ is not convergent.
\end{ex}

\begin{prop}\label{prop:convergent}
A tropical formal power series $\tphi(\tx)=\displaystyle\bigwedge_I (\tphi_I + \scalprod{\tx,I})$ is convergent if and only if there exists an \emph{analytic} germ $\aphi:\nA_\nK^d \to \nK$ such that $\tphi = \trp(\aphi)$. 
In this case all formal germs tropicalizing to $\tphi$ are convergent.
\end{prop}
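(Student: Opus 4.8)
The plan is to show that both conditions in the statement are equivalent to a single growth condition on the coefficients, namely
\[
(\star)\qquad \exists\,\rho \in \nR^d \text{ with } \tphi_I + \scalprod{I,\rho} \to +\infty \ \text{ as } \abs{I} \to \infty, \qquad \abs{I}:=\scalprod{I,\one}.
\]
The point of $(\star)$ is that, once we write $\abs{\aalpha_I}_0 = e^{-\nu_0(\aalpha_I)} = e^{-\tphi_I}$, it is exactly the classical non-archimedean criterion for $\sum_I \aalpha_I \ax^I$ to converge on the polydisc $\{\abs{\ax_i}_0 \le e^{-\rho_i}\}$, since convergence there means $\abs{\aalpha_I \ax^I}_0 = e^{-(\tphi_I + \scalprod{I,\rho})} \to 0$. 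Hence an analytic germ $\aphi$ with $\trp(\aphi)=\tphi$ exists \emph{if and only if} $(\star)$ holds. Moreover $(\star)$ depends only on the valuations $\tphi_I=\nu_0(\aalpha_I)$, so once it holds it holds for every choice of lifts $\aalpha_I$; this will give the final sentence immediately.

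First I would treat the implication $(\star)\Rightarrow$ convergence at $\infty$. Fix $\rho$ as in $(\star)$, choose $\rho' > \rho$ coordinatewise, and set $U=\{\tx : \tx_i > \rho'_i\}$, a neighborhood of $\infty$. Writing $\beta_I := \tphi_I + \scalprod{I,\rho}$, condition $(\star)$ gives $\beta_I \ge \beta_{\min} > -\infty$ with only finitely many $I$ satisfying $\beta_I \le C$ for any $C$; and for $\tx \in U$ one has $\tphi_I + \scalprod{I,\tx} \ge \beta_I + \epsilon\abs{I}$ with $\epsilon := \min_i(\rho'_i - \rho_i) > 0$. The claim is that on $U$ the infimum $\tphi(\tx)=\bigwedge_I(\tphi_I + \scalprod{I,\tx})$ is attained on a fixed finite index set $\Delta$, so that $\tphi|_U = \bigwedge_{I \in \Delta}(\tphi_I + \scalprod{I,\tx})$, which is precisely convergence at $\infty$.

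The converse is elementary. Suppose $\tphi$ is convergent at $\infty$, say $\tphi|_U = \bigwedge_{I \in \Delta}(\tphi_I + \scalprod{I,\tx})$ on $U=\{\tx_i>c\}$ with $\Delta$ finite; in particular $\tphi$ is finite on $U$. Pick $c'>c$ and $\tx^0 = c'\one \in U$. If $(\star)$ failed at $\rho=\tx^0$ there would be $I_k$ with $\abs{I_k}\to\infty$ and $\tphi_{I_k}+c'\abs{I_k}\le M$; then at any $t\one$ with $c<t<c'$ (still in $U$) we would get $\tphi_{I_k}+t\abs{I_k}\le M-(c'-t)\abs{I_k}\to-\infty$, contradicting finiteness of $\tphi$ on $U$. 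Hence $(\star)$ holds at $\rho=\tx^0$. Since we may assume the value group of $\nu_0$ is $\nR$, we choose $\aalpha_I \in \nK$ with $\nu_0(\aalpha_I)=\tphi_I$ (and $\aalpha_I=0$ when $\tphi_I=\infty$); then $(\star)$ makes $\aphi=\sum_I \aalpha_I \ax^I$ converge on a polydisc around $0$, so $\aphi$ is an analytic germ, and $\trp(\aphi)=\bigwedge_I(\tphi_I+\scalprod{I,\tx})=\tphi$ by construction. The same computation, depending only on the $\nu_0(\aalpha_I)$, shows every formal germ tropicalizing to $\tphi$ converges.

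The step I expect to be the main obstacle is the finiteness of the winning set $\Delta$ in $(\star)\Rightarrow$ convergence. The naive estimate only bounds a winner $I$ at $\tx$ by a quantity that grows with $\tx$, because a term can be dominant far out in a skew direction toward $\infty$; the honest point is that sending $\tx_i \to \infty$ penalizes every monomial with $I_i>0$ and so reduces the problem to the sub-series in the variables $\tx_j$, $j\ne i$. I would therefore interpret the winners as the vertices of the Newton polyhedron $\Newton{\tphi}=\overline{\mathrm{conv}}\{(I,\tphi_I)\}+(\nR_{\ge0}^d\times\nR_{\ge0})$ exposed by normals $(\tx,1)$ with $\tx\in U$, and prove by induction on $d$ (the case $d=1$ being the monotonicity of the winning index as $\tx$ increases), together with a compactness argument over the closed set of such normal directions in $[\rho',\infty]^d$, that only finitely many vertices are exposed: the faces at infinity restrict to the lower-dimensional convergent sub-series, to which the inductive hypothesis applies.
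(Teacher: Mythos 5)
Your proposal follows essentially the same route as the paper's proof: both reduce the two directions to the coefficient growth condition $(\star)$ (the paper states it with $\rho=a(1,\dots,1)$, $a=-\log\alpha$, which is equivalent), and both obtain the final sentence of the proposition from the observation that $(\star)$ depends only on the valuations $\tphi_I$. Your converse direction (tropical convergence $\Rightarrow (\star)$ $\Rightarrow$ existence of an analytic lift) is correct, and is in fact more careful than the paper, which disposes of it in one sentence.

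The step you leave as a plan --- finiteness of the winning set $\Delta$ on $U$ --- is exactly the step the paper asserts without proof (``Then in the region \dots the germ $\tphi$ is given by a finite collection of monomials''), so you have correctly located the crux, and your worry about the naive estimate is justified. Your induction-plus-compactness scheme can be pushed through, but as sketched it needs extra care at faces at infinity where the relevant sub-series is empty (e.g.\ when every monomial involves $\tx_1$, the winners near $\{\tx_1=\infty\}$ are not governed by a sub-series of $\tphi$ itself), and it is heavier than necessary. A short argument closes the gap directly. Suppose $I \preccurlyeq J$ and $J$ wins at some $\tx \in U$; then $\tphi_J + \scalprod{J,\tx} \leq \tphi_I + \scalprod{I,\tx}$, and since $J-I \in \nN^d$ and $\tx \geq \rho'$ coordinatewise, this yields $\tphi_J + \scalprod{J,\rho'} \leq \tphi_I + \scalprod{I,\rho'}$. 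If $\Delta$ were infinite, Dickson's lemma ($\preccurlyeq$ is a well partial order on $\nN^d$) would give an infinite ascending chain $I^{(1)} \preccurlyeq I^{(2)} \preccurlyeq \cdots$ of winners, all distinct, along which $\tphi_{I^{(k)}} + \scalprod{I^{(k)},\rho'}$ is non-increasing; but $\abs{I^{(k)}} \to \infty$, and $(\star)$ holds at $\rho'$ as well (since $\rho' \geq \rho$), forcing this quantity to tend to $+\infty$ --- a contradiction. Hence $\Delta$ is finite, $\tphi|_U$ is a finite minimum, and your proof is complete.
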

\begin{proof}
Denote by $\nu_0$ the valuation in $\nK$, and by $\abs{\,\cdot\,}_0=e^{-\nu_0(\cdot)}$ the norm associated to $\nu_0$.
Since $\abs{\,\cdot\,}$ is a non-archimedean norm, $\aphi=\displaystyle\sum_I \aphi_I \ax^I$ is convergent if and only if there exists a real number $\alpha > 0$ such that $\abs{\aphi_I}_0 \alpha^{\abs{I}} \to 0$ for $\abs{I} \to +\infty$, where $I=(i_1, \ldots, i_d)\in \nN^d$ is a multi-index and $\abs{I}=i_1 + \cdots + i_d$.
Notice that in this case the convergence radius $r$ of the formal power series $\aphi$ at $0$ satisfies $r \geq \alpha$.

Suppose $\tphi$ is the tropicalization of a convergent germ $\aphi$, satisfying the above condition. Then in the region $\{\tx \in \nT^d\ |\ \tx_j \geq a \text{ for } j = 1, \ldots, d\}$, where  
$a = -log(\alpha)$,
the germ $\tphi$ is given by a finite collection of monomials.
To obtain the converse, it suffices to remark that the convergence condition for $\aphi$ depends only on the values $\tphi_I=\nu_0(\aphi_I)$.
\end{proof}

When working over a valued field, typically analytic germs are classified up to change of coordinates.
Depending on the type of  regularity desired for the change of coordinates, there are several classifications of this kind that can be found in the literature (see \cite{milnor:dyn1cplxvar} and references therein for dynamics in $1$ complex variable, \cite{sternberg:localcontractions,rosay-rudin:holomorphicmaps,berteloot:methodeschangementechelles} for contracting automorphisms, \cite{hubbard-papadopol:supfixpnt,favre:rigidgerms,buff-epstein-koch:bottchercoordinates,ruggiero:rigidgerms} for superattracting germs, \cite{abate:residualindexdynholmaps, abate-tovena:formalnormalformsholotangid} for tangent to the identity germs, and \cite{herman-yoccoz:smalldivisornonarchi, lindahl:siegellinprimechar, ruggiero:superattrdim1charp, jenkins-spallone:padictangentid} for non-archimedean dynamics).

Over the tropical semi-field $\nT$,  the only change of coordinates that act as automorphisms on a neighborhood of $\infty \in \nT^d$ are permutations of coordinates composed with a translation.
On the other hand, not all tropical monomials contribute to the value of a tropical germ in a suitably small neighborhood if $\infty$, giving us a non-trivial equivalence relation on tropical germs. 
In the following, we are interested in the behavior of tropical maps $\tf : (\nT^d, \infty) \to (\nT^c, \infty)$ on suitably small neighborhoods of $\infty$.

\begin{defi}
We refer to \emph{tropical germs} as equivalence classes of convergent tropical maps $\tf : (\nT^d, \infty) \to (\nT^c, \infty)$, where we say that $\tf$ and $\tg$ are equivalent if for any $U,V$ where $\tf$ and $\tg$ are convergent, there exists a neighborhood $W \subset U \cap V$ of $\infty$ so that $\tf|_W=\tg|_W$.
\end{defi}

Before stating a precise result, we need to fix some standard notation.

\begin{defi}
Let $I=(i_1, \ldots, i_d),J=(j_1, \ldots, j_d) \in \nN^d$ be multi-indices.
We say that $I \preccurlyeq J$ if $i_h \leq j_h$ for all $h= 1, \ldots, d$. In other words, $I \preccurlyeq J$  if and only if $J-I \in \nN^d$.
We set $$\monid{I}=\{J \in \nN^d\ |\ I \preccurlyeq J\}.$$
\end{defi}
Notice that $\preccurlyeq$ defines a partial order on $\nN^d$.
It naturally extends to a partial order on $\nR_+^d$, where $\nR_+ = [0,+\infty)$.

\begin{defi}
Let $\tx=(\tx_1, \ldots, \tx_d)$ be coordinates in $\nT^d$, and let 
$
\tphi(\tx)= \displaystyle\bigwedge_{I\in \nN^d} (\tphi_I +\scalprod{I,\tx}) \in \nT\fps{\tx}
$ be a formal power series.
Define $$
\mon{\tphi}:=\{I \in \nN\ |\ \tphi_I \neq \infty\}
$$
the set of exponents that appear with a non-trivial coefficient in the expression of $\tphi$.

The 
Newton polyhedron $\Newton{\tphi}$ of $\tphi$ is defined as the convex hull
$$
\Newton{\tphi} = \on{Conv}\left(\bigcup_{I \in \mon{\tphi}} \monid{I}\right).
$$
The
Newton diagram, denoted  $\partialNewton{\tphi}$, consists of the compact faces of the  Newton polyhedron. 
We finally set $\VertNewton{\tphi}$ as the (finite) set $\partialNewton{\tphi} \cap \mon{\tphi}$.
\end{defi}

\begin{ex}\label{ex:phinewton}
Consider the polynomial
\begin{equation}\label{eqn:phinewton}
\tphi(\tx_1,\tx_2)=(5\tx_1+\tx_2)\wedge(a+3\tx_1+2\tx_2)\wedge(\tx_1+3\tx_2) \wedge (4\tx_1+2\tx_2) \wedge (4\tx_1+5\tx_2),
\end{equation}
where $a \in \nR$ (see Figure \ref{fig:newton}).

In this case $\mon{\tphi}=\{(5,1),(3,2),(1,3),(4,2),(4,5)\}$, corresponding to the points in red, blue and green in the picture.
For $I=(4,5)$, the set $\monid{I}$ is highlighted in blue, while $\Newton{\tphi}$ is highlighted in red.
The segment between $(5,1)$ and $(1,3)$ highlighted in red is $\partialNewton{\tphi}$, while $\VertNewton{\tphi}=\{(5,1),(3,2),(1,3)\}$.
\end{ex}

\begin{figure}
\begin{minipage}[t]{.35\columnwidth}
	\def\svgwidth{\columnwidth}
	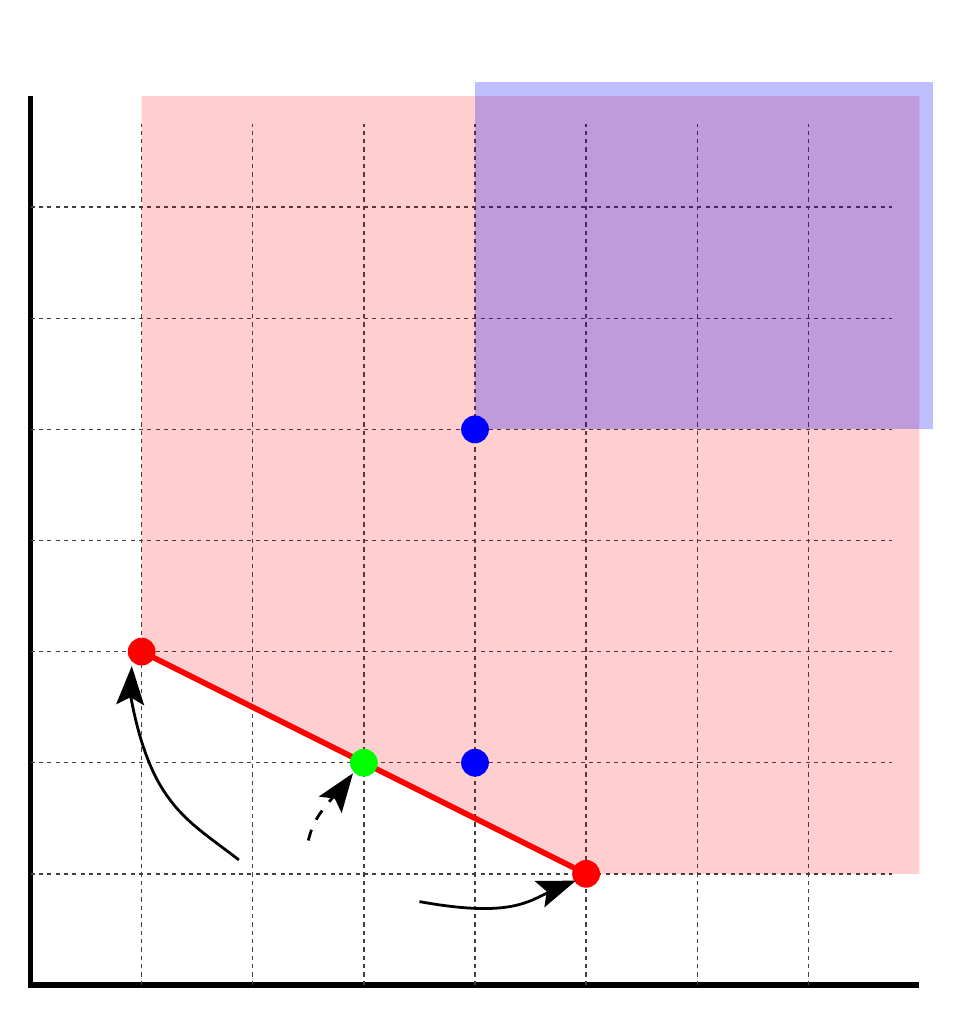
\end{minipage}
\caption{Newton polyhedron for $\phi$ from Example \ref{ex:phinewton} } \label{fig:newton}
\end{figure}

Notice that the points in $\Delta \Newton{\tphi}$ are the points of $\Newton{\tphi}$ which are minimal with respect to the order defined by $\preccurlyeq$. 

\begin{prop}\label{prop:normalform}
Let $\tx=(\tx_1, \ldots, \tx_d)$ be coordinates in $\nT^d$, and let
$$
\tphi(\tx)= \displaystyle\bigwedge_{I\in \nN^d} (\tphi_I +\scalprod{I,\tx}) : (\nT^d,\infty) \to (\nT,\infty)
$$
be a \emph{convergent} power series.
Then there exists a neighborhood $U$ of $\infty \in \nT^d$ such that
\begin{equation}\label{eqn:normalform}
\tphi(\tx) \equiv \nf{\tphi}(\tx):=\hspace{-2mm}\bigwedge_{I \in \VertNewton{\tphi}}\hspace{-3mm} \big(\tphi_I + \scalprod{I,\tx}\big),
\end{equation}
when restricted to $U$.
\end{prop}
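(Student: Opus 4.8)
The plan is to prove the two inequalities $\tphi \geq \nf{\tphi}$ and $\tphi \leq \nf{\tphi}$ separately on a neighborhood of $\infty$, the second being immediate and the first carrying all the content. Since $\VertNewton{\tphi} \subseteq \mon{\tphi}$, the tropical sum defining $\nf{\tphi}$ ranges over fewer monomials than the one defining $\tphi$, whence $\nf{\tphi}(\tx) \geq \tphi(\tx)$ for \emph{every} $\tx$, with no restriction on the neighborhood. For the reverse inequality I would first invoke convergence: on some neighborhood $U_0$ of $\infty$ we may write $\tphi|_{U_0} = \bigwedge_{J \in S}\big(\tphi_J + \scalprod{J,\tx}\big)$ for a \emph{finite} subset $S \subseteq \mon{\tphi}$. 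It then suffices to show that each $J \in S$ is dominated near $\infty$ by the normal form, i.e. $\tphi_J + \scalprod{J,\tx} \geq \nf{\tphi}(\tx)$ on some neighborhood of $\infty$; intersecting the finitely many neighborhoods and using $\tphi|_{U_0}=\bigwedge_{J\in S}(\cdots)$ yields $\tphi \geq \nf{\tphi}$.

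The geometric heart is a decomposition of an arbitrary exponent relative to the Newton diagram. I would establish that the union $\partialNewton{\tphi}$ of the compact faces of $\Newton{\tphi}$ coincides with the $\preccurlyeq$-lower boundary, namely the set of points of $\Newton{\tphi}$ admitting no strictly smaller point of $\Newton{\tphi}$; this is where the recession cone $\nR_+^d$ of $\Newton{\tphi}$ enters, through the standard fact that a face is bounded if and only if its normal cone meets the open orthant $(\nR_+^*)^d$. Combined with $\Newton{\tphi} = \partialNewton{\tphi} + \nR_+^d$ (obtained by maximizing $\scalprod{\one,v}$ among the $v \in \nR_+^d$ with $J - v \in \Newton{\tphi}$), this shows that any $J \in \mon{\tphi} \setminus \VertNewton{\tphi}$, lying in $\Newton{\tphi}$ but off every compact face, admits a decomposition $J = \sum_k \lambda_k I_k + v$ with $I_k \in \VertNewton{\tphi}$ (the vertices of the compact face beneath $J$), $\lambda_k \geq 0$, $\sum_k \lambda_k = 1$, and crucially $v \in \nR_+^d \setminus \{0\}$.

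Feeding the decomposition into the monomial and regrouping gives
$$\tphi_J + \scalprod{J,\tx} = \sum_k \lambda_k\big(\tphi_{I_k} + \scalprod{I_k,\tx}\big) + \Big(\tphi_J - \sum_k \lambda_k \tphi_{I_k} + \scalprod{v,\tx}\Big).$$
The convex combination in the first group is at least $\min_k\big(\tphi_{I_k} + \scalprod{I_k,\tx}\big) \geq \nf{\tphi}(\tx)$ since each $I_k \in \VertNewton{\tphi}$, while the bracketed term is a fixed constant plus $\scalprod{v,\tx}$, which tends to $+\infty$ as $\tx \to \infty$ precisely because $v \neq 0$ has nonnegative entries. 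Hence the bracket is nonnegative once all coordinates of $\tx$ exceed some threshold $M_J$, and there $\tphi_J + \scalprod{J,\tx} \geq \nf{\tphi}(\tx)$. Taking $U = U_0 \cap \{\tx : \tx_i > \max_{J \in S} M_J \text{ for all } i\}$ produces a neighborhood of $\infty$ on which $\tphi \geq \nf{\tphi}$, and together with the first paragraph this gives the asserted equality on $U$.

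I expect the main obstacle to be the convex-geometric step identifying $\partialNewton{\tphi}$ with the $\preccurlyeq$-lower boundary and, above all, arranging the decomposition of $J$ with $v \neq 0$. The subtlety is that a monomial can lie in $\on{Conv}(\VertNewton{\tphi})$, on a chord joining two vertices, yet sit strictly above the Newton diagram; a naive decomposition then has $v = 0$ and leaves the constant discrepancy $\tphi_J - \sum_k \lambda_k \tphi_{I_k}$ uncontrolled, and this quantity can be negative. One must therefore push $J$ down onto a genuine compact face, so that the gain $\scalprod{v,\tx} \to +\infty$ is available to absorb any such discrepancy as $\tx \to \infty$.
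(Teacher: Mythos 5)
Your proof is correct, but it takes a genuinely different route from the paper's. The paper, after the same initial reduction (convergence lets one replace $\tphi$ by a tropical polynomial), argues by duality: the hypersurface $\on{div}(\tphi)$ is dual to the regular subdivision of the Newton polytope induced by the coefficients, the connected components of its complement are linearity domains on which $\tphi$ equals a single monomial, and one then asserts that the only components meeting a small enough neighborhood of $\infty$ are those of monomials lying on $\partialNewton{\tphi}$. You replace this duality step by a direct estimate: after splitting the identity into the trivial inequality $\nf{\tphi} \geq \tphi$ and the inequality $\tphi_J + \scalprod{J,\tx} \geq \nf{\tphi}(\tx)$ for each of the finitely many surviving monomials $J$, you prove the latter via the decomposition $J = \sum_k \lambda_k I_k + v$ with $I_k \in \VertNewton{\tphi}$, $\sum_k \lambda_k = 1$, and crucially $v \in \nR_+^d \setminus \{0\}$, so that the constant discrepancy $\tphi_J - \sum_k \lambda_k \tphi_{I_k}$ is absorbed by $\scalprod{v,\tx} \to +\infty$. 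Your convex-geometric preparation is sound on both counts: $\preccurlyeq$-minimal points of $\Newton{\tphi}$ lie on compact faces (if the face through such a point in its relative interior were unbounded, one could move backwards along a recession direction of $\nR_+^d$ and contradict minimality), and conversely points of compact faces are $\preccurlyeq$-minimal (they minimize a functional with strictly positive coefficients); in particular, if the maximizer $v$ in your construction were $0$, then $J$ itself would be $\preccurlyeq$-minimal, hence in $\partialNewton{\tphi} \cap \mon{\tphi} = \VertNewton{\tphi}$, a contradiction — so $v \neq 0$ is automatic, which is precisely the subtlety you flag about monomials on chords above the diagram. What your approach buys is self-containedness and an explicit, quantitative justification of the step that the paper leaves as an assertion (why only Newton-diagram monomials are visible near $\infty$); what the paper's approach buys is brevity given the subdivision--hypersurface duality, plus finer information, namely the full decomposition of a neighborhood of $\infty$ into linearity domains, not just the identity of the surviving monomials. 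The only point worth adding to your write-up is the (trivial) case $J \in \VertNewton{\tphi}$, where the desired inequality holds by definition of $\nf{\tphi}$ without any decomposition.
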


\begin{proof}
Since $\tphi$ is convergent, there is a neighborhood $U$ of $\infty$ in which $\tphi$ is determined by just a finite number of monomials. Therefore, we may assume $\tphi$ is a tropical polynomial. 
The tropical hypersurface $V_{\tphi} = \on{div}(\tphi) \subset \nT^d$ defined by $\tphi$ 
 is a finite rational polyhedral complex of dimension $d-1$, it is dual to the regular subdivision $\mc{S}(\tphi)$ of the Newton polytope of $\tphi$
induced by its coefficients. 
The connected components of the  complement of $V_{\tphi}$ are in bijection with the integer points in the dual subdivision $\mc{S}(\tphi)$. 
On a connected component of the complement, $\tphi$ is an integer affine function corresponding to a single monomial of $\tphi$. 

In addition, we can choose a neighborhood $\wt{U}$ of $\infty$ such that the only connected components of the complement which intersect $\wt{U}$  correspond  to  
monomials which are in the Newton diagram $\partialNewton{\tphi}$. Therefore, on $\wt{U}$ the function $\tphi$ coincides with $\wt{\tphi}$ from the statement of the proposition. 
\end{proof}

\begin{ex}
Consider again $\tphi$ defined by \eqref{eqn:phinewton}, see Example \ref{ex:phinewton}.
Proposition \ref{prop:normalform} says that there exists a neighborhood $U$ of $\infty$ where $\tphi$ coincides with
$$
\nf{\tphi}(\tx_1,\tx_2)=(5\tx_1+\tx_2)\wedge(a+3\tx_1+2\tx_2)\wedge(\tx_1+3\tx_2).
$$
The monomials that appear in the normal form are drawn in red and green in Figure \ref{fig:newton}.

The red points are the corner locus of $\partialNewton{\tphi}$, and for any neighborhood $U$ of $\infty$ there always exists a point $\tx \in U$ where the minimum in $\tphi$ is attained only in the monomials corresponding to one of these points.

The green point is the point in $\partialNewton{\tphi}$ which is not a  corner. In this case, the monomial associated may not matter in the neighborhood of $\infty$, depending of its coefficient. In this case, notice (for $\tx_j \geq 0$) that:
$$
a+3\tx_1+2\tx_2 = a+\tx_1+\tx_2+(2\tx_1+\tx_2) \geq a+\tx_1+\tx_2+(4\tx_1 \wedge 2\tx_2) = a+(5\tx_1+\tx_2)\wedge(\tx_1+3\tx_2).
$$
Hence, if $a \geq 0$, we can erase this monomial and $\tphi$ coincides with $(\tx_1,\tx_2)\mapsto(5\tx_1+\tx_2)\wedge(\tx_1+3\tx_2)$, while if $a < 0$, we cannot erase this monomial: for $(\tx_1,\tx_2)=(r,2r)$ we get $5\tx_1+\tx_2 = \tx_1+3\tx_2 = 7r > a+7r = a+3\tx_1+2\tx_2$, and $(r,2r) \to \infty$ when $r \to +\infty$.
\end{ex}

As a direct consequence of Proposition \ref{prop:normalform}, we get
\begin{cor}
Let $\tx=(\tx_1, \ldots, \tx_d)$ be coordinates in $\nT^d$, and let
 $\tf=(\tf_1, \ldots, \tf_d) : (\nT^d,\infty) \to (\nT^d,\infty)$ be a 
convergent 
germ.
Then there exists a neighborhood $U$ of $\infty$ such that $\tf|_U$ coincides with
$$
\nf{\tf}(\tx)=\big(\nf{\tf_1}(\tx), \ldots, \nf{\tf_d}(\tx)\big),
$$
where $\nf{\tf_k}$ is a tropical polynomial with monomials whose exponents are in $\VertNewton{f_k}$.
\end{cor}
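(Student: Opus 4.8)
The plan is to reduce the statement to the scalar case already settled by Proposition \ref{prop:normalform}, and then to patch the $d$ coordinate functions together over a single common neighborhood of $\infty$. The only real work is bookkeeping, since all the analytic content has been absorbed into the preceding proposition.

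First I would unwind the definition of a convergent germ. By definition, $\tf=(\tf_1,\ldots,\tf_d)$ is convergent precisely when each coordinate component $\tf_k=\ty_k\circ\tf:(\nT^d,\infty)\to(\nT,\infty)$ is a convergent power series in $\nT\fps{\tx}$. Hence each $\tf_k$ individually satisfies the hypotheses of Proposition \ref{prop:normalform}. Applying that proposition to each $\tf_k$ separately produces, for every $k=1,\ldots,d$, a neighborhood $U_k$ of $\infty$ on which $\tf_k$ coincides with the tropical polynomial $\nf{\tf_k}$, whose monomials have exponents lying in $\VertNewton{\tf_k}$ --- exactly the form demanded by the statement.

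Finally I would set $U=\bigcap_{k=1}^d U_k$. Because this is a \emph{finite} intersection of neighborhoods of $\infty$, it is again an open neighborhood of $\infty$, and on $U$ every component agrees simultaneously with its normal form; thus $\tf|_U=\nf{\tf}=(\nf{\tf_1},\ldots,\nf{\tf_d})$, as claimed. The one point meriting any attention is precisely the finiteness of the index set $\{1,\ldots,d\}$, which is what guarantees that the common neighborhood $U$ remains open and still contains $\infty$; had the target been infinite-dimensional this step would fail, but here there is no genuine obstacle, and the corollary follows directly from the scalar normal-form statement.
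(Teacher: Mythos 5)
Your proof is correct and is exactly the argument the paper intends: the corollary is stated as a direct consequence of Proposition \ref{prop:normalform}, obtained by applying that proposition to each component $\tf_k$ (which is convergent by the very definition of a convergent germ) and intersecting the finitely many resulting neighborhoods of $\infty$. Nothing further is needed.
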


\subsection{Properties of tropical and analytic germs}

A tropical polynomial function $\tf:\nT^d \to \nT^c$ describes the behavior of a \emph{generic} algebraic map $\af:\nA^d \to \nA^c$ tropicalizing to it.
The same holds for tropical germs.
In some cases, if a property holds for a generic map (resp., germ) tropicalizing to $\tphi$ then it holds for \emph{all} maps (resp., germs) tropicalizing to it.
For example, by Proposition \ref{prop:convergent} if a tropical germ is convergent then every analytic germ tropicalizing to it is convergent. Another example of this is the  contracting property for  germs, see Proposition \ref{prop:contracting}.

Other times, we need to consider generic lifts.
For example, let $\tf,\tg:(\nT^d,\infty) \to (\nT^d,\infty)$ be two tropical germs.
Then for \emph{generic} lifts $\af, \ag : (\nA^d,0) \to (\nA^d,0)$ of $\tf$ and $\tg$ respectively, we have $\tg \circ \tf= \trp(\ag \circ \af)$.
To prove this, it suffices to consider the case where $\tg:(\nT^d,\infty) \to (\nT,\infty)$.
Take $\af$ and $\ag$ so that $\trp(\af)=\tf$ and $\trp(\ag)=\tg$.
Then the coefficients of $\ag \circ \af$ are some power series $\aphi_I$ on the coefficients of $\ag$ and $\af$.
For a generic choice of $\ag$ and $\af$, the valuation of such coefficients is constant, given by the minimum of the valuations of the monomials in $\aphi_I$.

For specific choices of $\ag$ and $\af$, we may have cancellations, and $\trp(\ag \circ \af) \neq \tg \circ \tf$, as the following example shows. 

\begin{ex}\label{ex:composition}
Let $\tf,\tg:(\nT^2,\infty)\to(\nT^2,\infty)$ be defined by
$$
\tf(\tx_1, \tx_2)=(\tx_1+1, \tx_1 \wedge (\tx_2+2)),\qquad \tg(\ty_1, \ty_2)=(\ty_1-1, (\ty_1-3) \wedge (\ty_2-2)).
$$
Then $\tg \circ \tf(\tx_1, \tx_2)=(\tx_1, (\tx_1-1)\wedge \tx_2)$.
Consider the lifts
$$
\af(\ax_1,\ax_2)=(t \ax_1, \alpha \ax_1+t^2\ax_2), \qquad \ag(\ay_1,\ay_2)=(t^{-1}\ay_1, t^{-3}\ay_1 + t^{-2} \ay_2)),
$$
with $\alpha \in \nC^*$.
Then
$$
\ag \circ \af(\ax_1, \ax_2)=(\ax_1, (1+\alpha)t^{-2}\ax_1 + \ax_2). 
$$
If $\alpha = -1$, then $\trp(\ag \circ \af) \neq \tg \circ \tf$, while the equality holds for $\alpha \neq 1$.
\end{ex}

We now introduce a few properties for tropical germs, and compare them to their analytic  counterparts.

\begin{defi}
Let $\tf:(\nT^d, \infty) \to (\nT^d,\infty)$ be a (formal) tropical germ. 
We say that
\begin{itemize}
\item $\tf$ is \emph{non-degenerate} if every (formal) germ $\af:(\nA^d,0) \to (\nA^d,0)$ which tropicalizes to $\tf$ is invertible.
\item $\tf$ is \emph{weakly non-degenerate} if there exists a (formal) invertible germ $\af:(\nA^d,0) \to (\nA^d,0)$ which tropicalizes to $\tf$.
\end{itemize}
\end{defi}

These two conditions depend only on the linear part of a germ $\tf$, as shown below.
For a reference on determinants and singularity of tropical matrices, see \cite{izhakian:troparithmatrixalgebra}. 

\begin{defi}\label{def:matrixnondeg}
Let $\tA=(a_{i,j}) \in \nsqmat{d}{\nT}$ be a $d \times d$ matrix with entries in $\nT$.
We recall that the \emph{tropical determinant} of $\tA$ is given by the expression
$$
\on{det}_{\nT}(\tA) 
= \bigwedge_{\sigma \in S_d} \sum_{i=1}^d a_{i,\sigma(i)}.
$$
We say that
\begin{itemize}
\item $\tA$ is \emph{non-degenerate} if there is a unique permutation $\sigma \in S_d$ so that $\on{det}_{\nT}(\tA)=\displaystyle\sum_{i=1}^d a_{i,\sigma(i)}$.
\item $\tA$ is \emph{weakly non-degenerate} if $\on{det}_{\nT}(\tA) \neq \infty$.
\end{itemize}
\end{defi}

Of course, if a map is  non-degenerate it is also weakly non-degenerate.
The definitions of non-degenerate and weakly non-degenerate given for germs and linear maps are coherent, as the following proposition states.
\begin{prop}
Let $\tf:(\nT^d, \infty) \to (\nT^d,\infty)$ be a (formal) tropical germ. 
Then $\tf$ is non-degenerate (resp., weakly non-degenerate) if and only if its linear part is.
\end{prop}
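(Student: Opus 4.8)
The plan is to reduce the statement about germs to a statement about their linear parts over $\nK$, and then to a purely combinatorial fact about lifts of the tropical matrix $\tA$. Write $\tf_k(\tx) = \bigwedge_I(\tphi_{k,I} + \scalprod{I,\tx})$ and let $\tA = (a_{k,j})$ with $a_{k,j} = \tphi_{k,e_j}$ be the linear part of $\tf$. The first observation is that a formal germ $\af$ is invertible if and only if its linear part $A = D\af(0)$ is a nonsingular matrix over $\nK$ (formal inverse function theorem), and that $\trp(\af) = \tf$ forces $\nu_0(\aalpha_{k,j}) = a_{k,j}$ entrywise, i.e. $A$ tropicalizes to $\tA$. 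Conversely, every matrix $A$ tropicalizing to $\tA$ occurs as the linear part of some lift $\af$ of $\tf$ (lift each coefficient of $\tf$ to one of the prescribed valuation, sending $\infty$ to $0$). Hence $\tf$ is non-degenerate (resp. weakly non-degenerate) if and only if every (resp. some) lift $A$ of $\tA$ is nonsingular, and it remains to prove: (i) every lift $A$ of $\tA$ satisfies $\det A \neq 0$ iff $\tA$ is non-degenerate; and (ii) some lift $A$ of $\tA$ satisfies $\det A \neq 0$ iff $\tA$ is weakly non-degenerate (Definition \ref{def:matrixnondeg}).

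For the easy implications I would use the Laplace expansion $\det A = \sum_{\sigma \in S_d}\on{sgn}(\sigma)\prod_k \aalpha_{k,\sigma(k)}$ together with the ultrametric inequality, giving $\nu_0(\det A) \geq \bigwedge_\sigma \sum_k a_{k,\sigma(k)} = \on{det}_{\nT}(\tA)$. If $\tA$ is non-degenerate, the minimum in $\on{det}_{\nT}(\tA)$ is attained by a unique $\sigma_0$, so exactly one term has minimal valuation and cannot be cancelled; thus $\nu_0(\det A) = \on{det}_{\nT}(\tA) < \infty$ and $\det A \neq 0$ for every lift. If instead $\on{det}_{\nT}(\tA) = \infty$, then every product $\prod_k\aalpha_{k,\sigma(k)}$ contains a factor of valuation $\infty$, hence is zero, so $\det A = 0$ for every lift and no invertible lift exists; this gives one direction of (ii). The remaining direction of (ii) follows by genericity: when $\on{det}_{\nT}(\tA) < \infty$, the coefficient of $t^{\on{det}_{\nT}(\tA)}$ in $\det A$ is a nonzero polynomial in the leading coefficients of the entries (distinct permutations give distinct monomials), so a generic lift has $\nu_0(\det A) = \on{det}_{\nT}(\tA) < \infty$ and is nonsingular.

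The main obstacle is the hard direction of (i): if $\on{det}_{\nT}(\tA)$ is finite but attained by at least two permutations, I must exhibit a single lift $A$ with $\det A = 0$ exactly (killing only the leading term is not enough). My plan is a cofactor construction. Choose distinct minimizing permutations $\sigma_1 \neq \sigma_2$ and an index $k_0$ with $\sigma_1(k_0)\neq\sigma_2(k_0)$, and set $j_0 = \sigma_1(k_0)$, so that $\sigma_2$ is a minimizing permutation avoiding the position $(k_0,j_0)$. Expanding along this entry, $\det A = \aalpha_{k_0,j_0}\,C + D$, where $C$ is the corresponding cofactor and $D$ is the determinant of the matrix with the $(k_0,j_0)$ entry set to $0$. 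I would fix all entries other than $(k_0,j_0)$ to be monomials $c_{k,j}t^{a_{k,j}}$ with the $c_{k,j}\in\nk^*$ chosen generically, so that the leading coefficients of both $C$ and $D$ (each a nonzero polynomial, witnessed by $\sigma_1$ restricted and by $\sigma_2$ respectively) are nonzero. A short valuation count then gives $\nu_0(C) = \on{det}_{\nT}(\tA) - a_{k_0,j_0}$ and $\nu_0(D) = \on{det}_{\nT}(\tA)$, whence $\aalpha_{k_0,j_0} := -D/C$ has valuation exactly $a_{k_0,j_0}$ and defines a legitimate lift with $\det A = \aalpha_{k_0,j_0}C + D = 0$. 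This produces a singular lift and completes (i). Assembling (i) and (ii) with the germ--linear-part reduction shows that $\tf$ is non-degenerate (resp. weakly non-degenerate) exactly when $\tA$ is, which is the assertion of the proposition.
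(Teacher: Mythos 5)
Your proof is correct and takes essentially the same route as the paper: reduce to the linear part via lifts, then compare $\nu_0(\on{det}(\aA))$ with $\on{det}_\nT(\tA)$, noting that equality holds when the tropical minimum is attained by a unique permutation. The paper compresses everything beyond this inequality into ``the statement follows directly,'' so your cofactor construction of a singular lift when two permutations attain the minimum, and the genericity argument producing a nonsingular lift in the weakly non-degenerate case, supply exactly the details the paper leaves implicit.
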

\begin{proof}
Let $\tA$ be the linear part of $\tf$, and let $\af$ be any lift of $\tf$.
Denote by $\aA$ the linear part of $\af$. Since $\trp(\af)=\tf$, we have $\trp(\aA)=\tA$.
By properties of valuations, $\nu_0(\on{det}(\aA))\geq \on{det}_\nT(\tA)$, where the equality holds when the minimum in the expression for $\on{\det}_\nT(\tA)$ is obtained by a unique monomial.
The statement of the proposition follows directly.
\end{proof}

\begin{rmk}
Notice that as soon as a tropical germ $\tf$ admits a lift $\af:(\nA^d,0) \to (\nA^d,0)$ which is invertible, the same property holds for a generic lift. Indeed, the vanishing of the determinant of the linear part of a germ $\af$ is a codimension $1$ condition in the space of germs tropicalizing to $\tf$. 
\end{rmk}

We will also need the following definition.

\begin{defi}
Let $\tA=(a_{i,j}) \in \nsqmat{d}{\nT}$ be a $d \times d$ matrix with entries in $\nT$.
We say that $\tA$ is \emph{positive} (resp., \emph{negative}) if whenever $\det_{\nT}(\tA) = \sum_{i=1}^d a_{i, \sigma(i)}$ then $\on{sign}(\sigma)$ is positive (resp., negative).
Otherwise, its sign is undetermined.
\end{defi}

\begin{ex}
The linear tropical germ $\tf:(\nT^2,\infty) \to (\nT^2, \infty)$ given by
$$
\tf(\tx_1, \tx_2)=(\tx_1 \wedge \tx_2, \tx_1 \wedge \tx_2)
$$
is weakly non-degenerate, but not non-degenerate.
Indeed, its linear part is 
$\left(\begin{array}{cc}0 & 0 \\ 0 & 0\end{array}\right)$,
 and the tropical determinant expression
$\det_{\nT}(\tf)=(0 + 0) \wedge (0 + 0)$ attains its minimum in two monomials. 
Another way to see it is that its lift $\af:(\nA_\nK^2,0) \to (\nA_\nK^2,0)$ given by
$$
\af(\ax_1, \ax_2)=(\ax_1+\ax_2, \ax_1 - \ax_2),
$$
is invertible.
However, the lift $(\ax_1,\ax_2)\mapsto(\ax_1+\ax_2,\ax_1+\ax_2)$ of $f$ is not invertible.
\end{ex}

\begin{ex}
The tropical germ $\tf:(\nT^2,\infty) \to (\nT^2, \infty)$ given by
$$
\tf(\tx_1, \tx_2)=(\tx_1 \wedge 2\tx_2, \tx_1 \wedge 3\tx_2)
$$
is not weakly non-degenerate.
Indeed, any  lift $\af:(\nA^2,0) \to (\nA^2,0)$ is of the form
$$
\af(\ax_1, \ax_2)=(\alpha \ax_1 + \beta \ax_2^2, \gamma \ax_1 + \delta \ax_2^3), 
$$
which is not invertible for any choice of $\alpha, \beta, \gamma, \delta$ (whose evaluations by $\nu_0$ equal $0$).

Notice that the linear part $(\tx_1, \tx_1)$ of $\tf$, denoted by $\tA$, has determinant $\det_\nT(\tA) = \infty$. 
\end{ex}

On the one hand, we have seen that the condition for a tropical germ $\tf$ to be weakly non-degenerate corresponds to the fact that a generic lift $\af$ of $\tf$ is invertible.
On the other hand, a weakly non-degenerate germ may  not be invertible as a map on the tropical space. Nevertheless we can define formal inverses of a tropical  germ $\tf$, by taking the tropicalization of the inverse $\af^{-1}$ of  lifts $\af$ of $\tf$.

\begin{defi}\label{def:inverse}
Let $\tf:(\nT^d,\infty) \to (\nT^d, \infty)$ be a weakly non-degenerate tropical germ.
We define a \emph{formal tropical inverse} of $\tf$ to be $\tg$ with $\tg_i = \trp(\ag_i)$ where $\ag = \af^{-1}$ and $\af: (\nA^d,0) \to (\nA^d,0)$ is a germ at $0$ satisfying $\trp(\af)=\tf$.

The \emph{canonical tropical inverse} of $\tf$ is obtained as the tropicalization $\tg=\trp(\ag)$ of the inverse $\ag=\af^{-1}$ of a \emph{generic} map $\af$ tropicalizing to $\tf$.
We denote the canonical tropical inverse of $\tf$ by $\tf^{-1}$.
Let $\pi: \tX\to \nT^d$ be the regular tropical modification along the tropical functions $\tf_1, \dots \tf_d$.
Then $\tf^{-1}$ is also obtained as $\trp(\af^{-1})$, where $\af: (\nA^d,0) \to (\nA^d,0)$ is such that $\trp(\Graph{\af}) = \tX$, where $\Graph{\af}$ denotes the graph of $\af$.
\end{defi}

Notice that formal tropical inverses need not be unique, while the canonical formal inverse is, since the tropicalizations of the inverse of a generic map $\af$ tropicalizing to $\tf$ coincide.

Moreover, a tropical germ $\tf: (\nT^d, \infty) \to (\nT^d, \infty)$ has a unique formal inverse if and only if the divisors $\on{div}(\tf_i)$ intersect properly in some neighborhood of $\infty$.
In the case of linear maps there is a third equivalent notion concerning the minors of the associated tropical matrix.
The proof of the next proposition is elementary and is omitted. 
\begin{prop}
Let $\tf: \nT^d \to \nT^d$ be a tropical linear map defined by a matrix $\tA$. The following are equivalent:
\begin{enumerate}[(a)]
\item any subset of the divisors of the functions $\tf_i$ intersect properly in a neighborhood of $\infty$;
\item for every minor of $\tA$, the minimum in the expression of the tropical determinant is attained only once;
\item the map $\tf$ has a unique formal tropical inverse.
\end{enumerate}
\end{prop}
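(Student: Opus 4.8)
The plan is to establish (a)$\Leftrightarrow$(b) by a direct combinatorial analysis, and then to obtain (a)$\Leftrightarrow$(c) essentially for free from the general germ criterion recalled just before the proposition. The whole argument reduces to one \emph{key lemma} in the square case: writing $\tf_i=\bigwedge_j(a_{ij}+\tx_j)$, each $\on{div}(\tf_i)$ is a tropical hyperplane, invariant under translation by $\one$ and hence containing $\infty$ in its closure; I claim that the $d$ hyperplanes $\on{div}(\tf_1),\dots,\on{div}(\tf_d)$ meet properly (in dimension $0$) near $\infty$ if and only if $\on{det}_{\nT}(\tA)$ is attained by a single permutation.

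First I would prove (a)$\Leftrightarrow$(b) by reducing each subset to the key lemma. Fix a subset $S$ of rows with $|S|=k$. If some $k\times k$ minor $\tA_{S,C}$ (rows $S$, columns $C$) is degenerate, I send the coordinates $\tx_j$ with $j\notin C$ to $+\infty$: then on this region each $\tf_i$, $i\in S$, is governed by its restriction to the columns $C$, the degenerate direction of the key lemma produces a locus of dimension $\ge 1$ in the $\tx_j$, $j\in C$, and together with the $d-k$ free large coordinates this gives a component of $\bigcap_{i\in S}\on{div}(\tf_i)$ of dimension $\ge d-k+1$; this is excess, so (a) fails. Conversely, if every $k\times k$ minor of $\tA_S$ is nondegenerate, then a generic lift $\aA_S$ has all maximal minors nonzero (no cancellation can occur for generic coefficients), hence full rank $k$, so the hyperplanes $\{\af_i=0\}$, $i\in S$, meet in codimension $k$; by \cite[Theorem 5.3.3]{osserman-payne:tropint} their stable tropical intersection coincides with the tropicalization of this intersection and has dimension $d-k$, giving properness. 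Ranging over all $S$ shows that (a) holds exactly when every minor of $\tA$ is nondegenerate, which is (b).

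For (a)$\Leftrightarrow$(c) I would invoke the criterion stated just before the proposition: a germ has a unique formal tropical inverse if and only if its $d$ divisors $\on{div}(\tf_i)$ intersect properly near $\infty$. Thus (c) is equivalent to properness of the \emph{full} collection of divisors, and it remains to note that this is equivalent to (a). One implication is trivial; for the other, suppose some subset $S$ has an excess component $Z$ of dimension $>d-|S|$. Since each $\on{div}(\tf_i)$ is $\one$-invariant and contains $\infty$, intersecting $Z$ successively with the remaining $d-|S|$ divisors drops the dimension by at most one at each step while always keeping $\infty$ in the intersection; hence the full intersection has a component of positive dimension and is improper. Therefore full properness $\Leftrightarrow$ (a) $\Leftrightarrow$ (b) $\Leftrightarrow$ (c).

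The main obstacle is the key lemma. Its degenerate$\Rightarrow$improper direction is explicit: from two permutations $\sigma,\tau$ attaining $\on{det}_{\nT}(\tA)$ one builds a positive-dimensional family of points at which every $\tf_i$ attains its minimum at least twice, a tropical analogue of a nontrivial kernel. The nondegenerate$\Rightarrow$proper direction carries the real content, since one must exclude \emph{excess set-theoretic} intersection: here I would argue through a generic lift, where uniqueness of $\on{det}_{\nT}(\tA)$ forces $\aA$ to be invertible so that $\bigcap_i\{\af_i=0\}=\{0\}$ tropicalizes to $\{\infty\}$, and then use \cite[Theorem 5.3.3]{osserman-payne:tropint} to bound the dimension of the tropical intersection from above by that of the stable intersection. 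Controlling that the purely set-theoretic tropical intersection does not exceed the stable one is the delicate point on which the whole proposition rests.
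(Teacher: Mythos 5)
The paper itself omits this proof (declaring it ``elementary''), so your proposal has to stand on its own; it does not, because of the gap you yourself flag in your last sentence. Everything in your plan funnels through the ``nondegenerate $\Rightarrow$ proper'' half of the key lemma (and its minor-by-minor analogue), and the mechanism you propose there cannot work. Osserman--Payne \cite[Theorem 5.3.3]{osserman-payne:tropint} identifies the \emph{stable} intersection $\trp(\aV)\cdot\trp(\aU)$ with $\trp(\aV\cap\aU')$ for a \emph{generic translate} $\aU'$; it gives no information about the set-theoretic intersection $\trp(\aV)\cap\trp(\aU)$, which contains the stable intersection and can strictly exceed it in dimension. Condition (a) is a statement about set-theoretic intersections, so no upper bound on them can be extracted this way. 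Worse, the only hypothesis your lifting argument actually consumes is that a generic lift $\aA$ is invertible, i.e.\ weak non-degeneracy $\on{det}_\nT(\tA)\neq\infty$, which is strictly weaker than (b). But weak non-degeneracy does not imply (a): for $\tf(\tx_1,\tx_2)=(\tx_1\wedge\tx_2,\,\tx_1\wedge\tx_2)$ (the paper's own example of a weakly non-degenerate, non-non-degenerate map) a generic lift is invertible, $\bigcap_i\{\af_i=0\}=\{0\}$ tropicalizes to $\{\infty\}$, and yet $\on{div}(\tf_1)=\on{div}(\tf_2)=\{\tx_1=\tx_2\}$ is a maximally improper intersection. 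So the inclusion $\trp\big(\bigcap_i\{\af_i=0\}\big)\subseteq\bigcap_i\on{div}(\tf_i)$ is strict exactly in the degenerate cases, and an argument that never uses uniqueness of the minimizing permutation (as opposed to finiteness of the determinant) cannot prove the implication. Your concession that controlling set-theoretic versus stable intersection is ``the delicate point on which the whole proposition rests'' is accurate --- but that point \emph{is} the proposition.

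What is missing is the purely combinatorial statement that carries all the content (a tropical Cramer / Richter--Gebert--Sturmfels--Theobald type result): a common point of $k$ of the tropical hyperplanes $\on{div}(\tf_i)$ forces the corresponding $k\times k$ minor to have its minimum attained by at least two permutations, and conversely a tropically singular minor produces such a common point. The first direction is proved by honest combinatorics --- normalize coordinates by the common point, form the bipartite graph of row-minimizing entries (every row has degree at least $2$), and extract a second optimal permutation from an alternating cycle --- and the second is the tropical Cramer construction, which is also what your phrase ``builds a positive-dimensional family \dots a tropical analogue of a nontrivial kernel'' is silently invoking without proof. Neither direction can be outsourced to lifts. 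Two further soft spots, less central: your dimension-drop argument in the reduction of (c) to (a) needs the successive intersections to stay nonempty in $\nR^d$, not merely to contain $\infty$ (a balanced-complex dimension bound applies only at finite points), and matrices with $\infty$ entries (empty divisors, boundary/sedentarity strata of $\nT^d$) need explicit conventions before ``codimension equals $|I|$'' is well posed.
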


Of course, the above conditions on the matrix $\tA$ of a linear map to have a unique formal inverse are stricter than the non-degeneracy condition (see Example \ref{ex:notgenericmatrix}), with the exception of dimension $2$. 
In this case they coincide.

\begin{rmk}
For a  tropical linear map $\tf: \nT^2 \to \nT^2$ the canonical formal inverse from Definition \ref{def:inverse} is actually a \emph{pseudo-inverse} in the sense of semi-groups.
This means that $\tf \circ \tg \circ \tf = \tf$ and $\tg \circ \tf \circ \tg = \tg$.
\end{rmk}

We end this section by describing contracting germs.

\begin{defi}\label{def:contracting}
A convergent germ $\tf:(\nT^d, \infty) \to (\nT^d,\infty)$ at $\infty$ is \emph{contracting} if for any neighborhood $U$ of $\infty$ there exists a open neighborhood $V\subseteq U$ of $\infty$ such that $\tf(V) \subrelcpct V$.
\end{defi}

In the classical  setting, a germ $\af:(\nA^d,0) \to (\nA^d,0)$ is contracting (the same definition as above) if and only if the eigenvalues of the linear part of $\af$ have positive valuation.
The next proposition will show the analogous property for tropical germs, and also provide a link between contracting tropical germs and contracting properties of their lifts.

\begin{prop}\label{prop:contracting}
Let $\tf:(\nT^d, \infty) \to (\nT^d,\infty)$ be a convergent germ. Denote by $\tA$ the $d \times d$ matrix with entries in $\nT$ representing the linear part of $\tf$. The following conditions are equivalent:
\begin{enumerate}[(a)]
\item $\tf$ is contracting;
\item $\tlambda > 0$ for all tropical eigenvalues $\tlambda$ of $\tA$;
\item generic lifts of $\tf$ are contracting; 
\item all lifts of $\tf$ are contracting.
\end{enumerate}
\end{prop}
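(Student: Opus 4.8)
The plan is to reduce all four conditions to a single numerical invariant of the linear part $\tA$, namely its smallest tropical eigenvalue $\mu(\tA)$, which in the min-plus setting equals the minimum cycle mean of the weighted digraph on $\{1, \dots, d\}$ with an edge $i \to j$ of weight $a_{ij}$ whenever $a_{ij} \neq \infty$. The first reduction is to pass from the germ to its linear part. By Proposition \ref{prop:normalform}, $\tf$ agrees near $\infty$ with a finite tropical polynomial map; and since for $\tx$ deep inside a neighborhood of $\infty$ the value $\tphi_I + \scalprod{I,\tx}$ of a monomial grows with $\abs{I}$, the minimum defining each coordinate of $\tf$ is attained on its degree-one terms. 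Hence on a suitable neighborhood of $\infty$ the germ $\tf$ coincides with the min-plus linear map $\tx \mapsto \tA \odot \tx$, so that the contracting property depends only on $\tA$.

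First I would prove (a) $\Leftrightarrow$ (b). Writing $\tf^{\circ n}(\tx)_i = \bigwedge_j \big((\tA^{\odot n})_{ij} + \tx_j\big)$, the entry $(\tA^{\odot n})_{ij}$ is the minimal weight of a length-$n$ walk from $i$ to $j$, and its asymptotic growth rate is governed by $\mu(\tA)$. If $\mu(\tA) > 0$ then every cycle has positive total weight, so $(\tA^{\odot n})_{ij} \to \infty$ and $\tf^{\circ n}$ pushes every bounded set towards $\infty$, producing a neighborhood $V$ of $\infty$ with $\tf(V) \subrelcpct V$; conversely a cycle of non-positive mean yields points that fail to escape to $\infty$ under iteration, so $\tf$ is not contracting. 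Since $\mu(\tA)$ is precisely the smallest root of the tropical characteristic polynomial of $\tA$, the condition $\mu(\tA) > 0$ is equivalent to all tropical eigenvalues being positive, which is (b).

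Next I would bridge to the analytic lifts through the characteristic polynomial. For a lift $\af$ with linear part $\aA$, the classical criterion recalled above says that $\af$ is contracting if and only if every eigenvalue of $\aA$ has positive valuation, and by the non-archimedean Newton-polygon description of eigenvalue valuations this holds if and only if $\nu_0(c_j) > 0$ for $j = 1, \dots, d$, where $c_j$ is the sum of the principal $j \times j$ minors of $\aA$. The tropicalizations of these minor sums are exactly the coefficients $\widetilde{c}_j$ of the tropical characteristic polynomial of $\tA$, whose positivity is (b); moreover $\nu_0(c_j) \geq \widetilde{c}_j$ always, with equality for a generic lift, since the valuation of a sum is at least the minimum of the valuations and cancellation among the minors fails only on a proper subvariety. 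Consequently $\min_i \nu_0(\lambda_i(\aA)) \geq \mu(\tA)$ for every lift, with equality for generic $\af$. This yields (b) $\Rightarrow$ (d): if all tropical eigenvalues are positive then $\nu_0(c_j) \geq \widetilde{c}_j > 0$ for every lift, so every lift is contracting. The implication (d) $\Rightarrow$ (c) is immediate, as generic lifts are lifts; and (c) $\Rightarrow$ (b) follows because a generic lift realizes the equality $\min_i \nu_0(\lambda_i(\aA)) = \mu(\tA)$, so its eigenvalue valuations are the tropical eigenvalues, which are therefore positive.

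The main obstacle is the tropical spectral step (a) $\Leftrightarrow$ (b): one must make precise that near $\infty$ the dynamics is governed purely by the minimum cycle mean of the linear part, controlling the reducible case and any rows of $\tA$ with no finite entry, and that the smallest root of the tropical characteristic polynomial indeed coincides with this cycle mean. The remaining implications are then routine consequences of the Newton-polygon dictionary between eigenvalue valuations and the minors of $\aA$, the only subtlety being the passage from the generic equality $\nu_0(c_j) = \widetilde{c}_j$ to the general inequality $\nu_0(c_j) \geq \widetilde{c}_j$.
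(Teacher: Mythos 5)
Your opening reduction --- that on a suitable neighborhood of $\infty$ the germ $\tf$ coincides with the min-plus linear map $\tx \mapsto \tA \odot \tx$ --- is false, and your entire proof of the equivalence (a) $\Leftrightarrow$ (b) rests on it. The problem is that ``close to $\infty$'' places no bound on the ratios between coordinates, so a higher-degree monomial need not dominate the linear ones. Concretely, take $\tf(\tx_1,\tx_2)=\big(\tx_1+1,\ (\tx_2+1)\wedge 2\tx_1\big)$: at the points $(M,3M)$, which lie in every neighborhood of $\infty$, the minimum in the second coordinate is attained by the degree-two monomial $2\tx_1$ and not by $\tx_2+1$, so $\tf$ agrees with its linear part on \emph{no} neighborhood of $\infty$ (here $\tf$ is still contracting, so this breaks your proof, not the proposition). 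This is precisely what Proposition \ref{prop:normalform} records: the normal form valid near $\infty$ retains \emph{all} monomials with exponents in $\VertNewton{\tf_i}$, which in general include higher-degree ones, as the example following that proposition shows. Hence the step (a) $\Leftrightarrow$ (b) is genuinely open in your write-up: one direction can be salvaged from $\tf \leq \tA$ together with monotonicity of min-plus maps, but the other requires constructing invariant neighborhoods (e.g.\ adapted to a min-plus eigenvector) on which the active higher-order monomials are controlled; this, and not the reducibility bookkeeping you defer at the end, is the real content you are missing.

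By contrast, the paper never linearizes the tropical dynamics at all: it proves (a) $\Leftrightarrow$ (c) from the identity $\nu_0 \circ \af = \tf \circ \nu_0$ near $\infty$ for a \emph{generic} lift $\af$, deduces (a) $\Rightarrow$ (d) because for special lifts ultrametric cancellations can only \emph{increase} valuations (pushing points deeper toward $\infty$, which preserves contraction), and gets (b) $\Leftrightarrow$ (c) from the fact that the tropical eigenvalues of $\tA$ are the valuations of the eigenvalues of a generic lift, combined with the classical criterion for contraction over $\nK$. Your second half is in fact sound and close to this: the Newton-polygon bridge $\nu_0(c_j) \geq \widetilde{c}_j$, with equality for generic lifts, correctly yields (b) $\Rightarrow$ (d) $\Rightarrow$ (c) $\Rightarrow$ (b), and is a legitimate variant of the paper's treatment of the analytic side. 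But since condition (a) enters your argument only through the flawed linearization, the proposal as written does not prove the proposition; you should either repair the tropical spectral step along the lines above, or, as the paper does, tie (a) to (c) directly through generic lifts.
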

\begin{proof}
Let $\tf$ be a tropical convergent germ, and $\af$ any analytic germ tropicalizing to $\tf$.
First notice that if $\af$ is a \emph{generic} lift of $\tf$, then $\nu_0 \circ \af(\ax) = \tf \circ \nu_0(\ax)$ for all $\ax \in \nA^d$ close to $\infty$.
The equivalence between (a) and (c) easily follows.

Notice that (d) implies (c).
The value $\nu_0 \circ \af(\ax)$ is constant for a generic choice of $\af$, and could increase for special $\af$.
It follows that if $\tf$ is contracting, then every lift $\af$ of $\tf$ is contracting, and (a), (c), (d) are all equivalent.

Tropical eigenvalues are the corner locus of the tropical characteristic polynomial $P_\tA(\lambda)$ of $\tA$, which is defined as the tropical determinant $\tA \wedge \lambda I$, see \cite{akian-bapat-gaubert:minpluseigenvalperturb}, \cite{cuninghamegreen:charmaxpoly}.

We have that $P_\tA$ is the tropicalization of the characteristic polynomial $\alg{P}_\aA$ for a generic lift $\aA$ of $\tA$, or equivalently for the linear part of a generic lift $\af$ of $\tf$.
Moreover, tropical eigenvalues of $\tA$ correspond to the valuation of eigenvalues of a generic choice of $\aA$.
This gives the equivalence between (b) and (c).
\end{proof}

\begin{rmk}
Given a tropical germ $\tf:(\nT^d,\infty) \to (\nT^d,\infty)$, the existence of a contracting lift $\af:(\nA^d,0) \to (\nA^d,0)$ does not imply that $\tf$ is itself contracting.
Consider for example the tropical germ in dimension $2$ given by $\tf(\tx_1, \tx_2)=(\tx_1 \wedge \tx_2, \tx_1 \wedge \tx_2)$.
This is a weakly non-degenerate germ, which is not contracting.
However, the germ $\af(\ax_1, \ax_2)=(\ax_1 + \ax_2, -\ax_1 + (t-1) \ax_2)$ tropicalizing to $\tf$ is contracting.
The reason is that for this specific choice there are cancellations of lowest order terms when computing the eigenvalues of the linear map $\af$. 
This also explains the meaning of ``generic" in Proposition \ref{prop:contracting}. 
\end{rmk}

\begin{rmk}
Given a tropical germ $\tf = (\tf_1, \ldots, \tf_d) : (\nT^d,\infty) \to (\nT^d,\infty)$, we can consider the induced local dynamical system.
When $\tf$ is contracting, forward orbits are defined in a whole neighborhood of $\infty$, since there is a base of neighborhood $\tU$ of $\infty$ so that $\tf^n(\tU) \subset \tU$ for all $n \in \nN$.
This  corresponds to the analogous property for any (generic) lift $\af$ of $\tf$.
Notice that backward orbits are not necessarily  defined everywhere, unless $\tf$ is the tropicalization of a global automorphism $\af:\nA^d \to \nA^d$.
\end{rmk}

\subsection{Tropical monomialization of weakly invertible germs}\label{sec:monomialization}

In this section, we consider how to lift a weakly invertible tropical germ $\tf: \nT^d \to \nT^d$ to an isomorphism by passing to other models of the tropical affine space. 
In other words, we seek modifications $\pi : \tX \subset \nT^n \to \nT^d$, $\eta: \tY \subset \nT^m \to \nT^d$ and a monomial map $\tF: \nT^n \to \nT^m$ such that $\tF(\tX) = \tY $ and $\eta \circ \tF= \tf \circ \pi$ on the strict transform $\Strict{\pi} \subset X$ in a suitable neighborhood of $\infty$.
Let us first start with an example. 

\begin{ex}
Let $\tf:(\nT^2, \infty) \to (\nT^2, \infty)$ be given by the linear map
$
\tf(\tx_1,\tx_2)=(\tx_1 \wedge \tx_2, \tx_2).
$

Let $A_1:=\{(\tx_1,\tx_2) \in \nT^2\ |\ \tx_1 \leq \tx_2\}$ and $A_2 = \{(\tx_1,\tx_2) \in \nT^2\ |\ \tx_2 \leq \tx_1\}$.
Then $\tf|_{A_1}(\tx_1,\tx_2)=(\tx_1,\tx_2)$ while $\tf|_{A_2}(\tx_1,\tx_2)=(\tx_2,\tx_2)$.

Consider the modification $\pi:\tX \to \nT^2$ along $\tx_1=\tx_2$.
Then $\tX \subset \nT^3$ is an instance of Example \ref{ex:permutation} (with $\eta = \pi$). 
The linear monomial map $\tF(\tx_1,\tx_2,\tu)=(\tu,\tx_2,\tx_1)$ leaves $\tX$ invariant, and satisfies $\pi \circ \tF = \tf \circ \pi$ on the strict transform $\Strict{\pi} \subset X$. 
\end{ex}

In general, it is not possible to find a modification $\pi:\tX \to \nT^d$ as above satisfying $\tF(\tX) = \tX$.
Nevertheless, we can find two different modifications $\pi:\tX \to \nT^d$ and $\eta:\tY \to \nT^d$ so that $\tf$ lifts to an isomorphism $\tF:\tX \to \tY$, such that $\eta \circ \tF= \tf \circ \pi$ on the strict transform $\Strict{\pi}$ in a suitable neighborhood of $\infty$. 

\begin{ex}\label{ex:lineartriang}
Consider the linear map $\tf(\tx_1,\tx_2)=(a_1+\tx_1, (b_1+\tx_1) \wedge (b_2+\tx_2))$, with $a_1>b_2>0$.
Let $\f $ be a generic lift of $\tf$, and $\g = \f^{-1}$, then $g = \trp(\g)$ is given by 
$$
\tg(\ty_1,\ty_2)=\big(\ty_1-a_1, (\ty_1+b_1-a_1-b_2) \wedge (\ty_2-b_2)\big).
$$
The modifications $\pi$ and $\eta$ given by Theorem \ref{thm:intromonomialization} are along $\tx_2=\tx_1+b_1-b_2$ and $\ty_2=\ty_1+b_1-a_1$ respectively.
Figure \ref{fig:lineartriang} shows the dynamics of $\tf$ on $\nT^2$, and the dynamics defined by the monomialization $F:\tX \to \tY$.
Notice how the image of the modified line (in red) differs from the projection by $\eta$ of the image by $F$ of its lift (the red-shaded area).
\end{ex}

\begin{figure}
\begin{minipage}[t]{.55\columnwidth}
	\def\svgwidth{\columnwidth}
	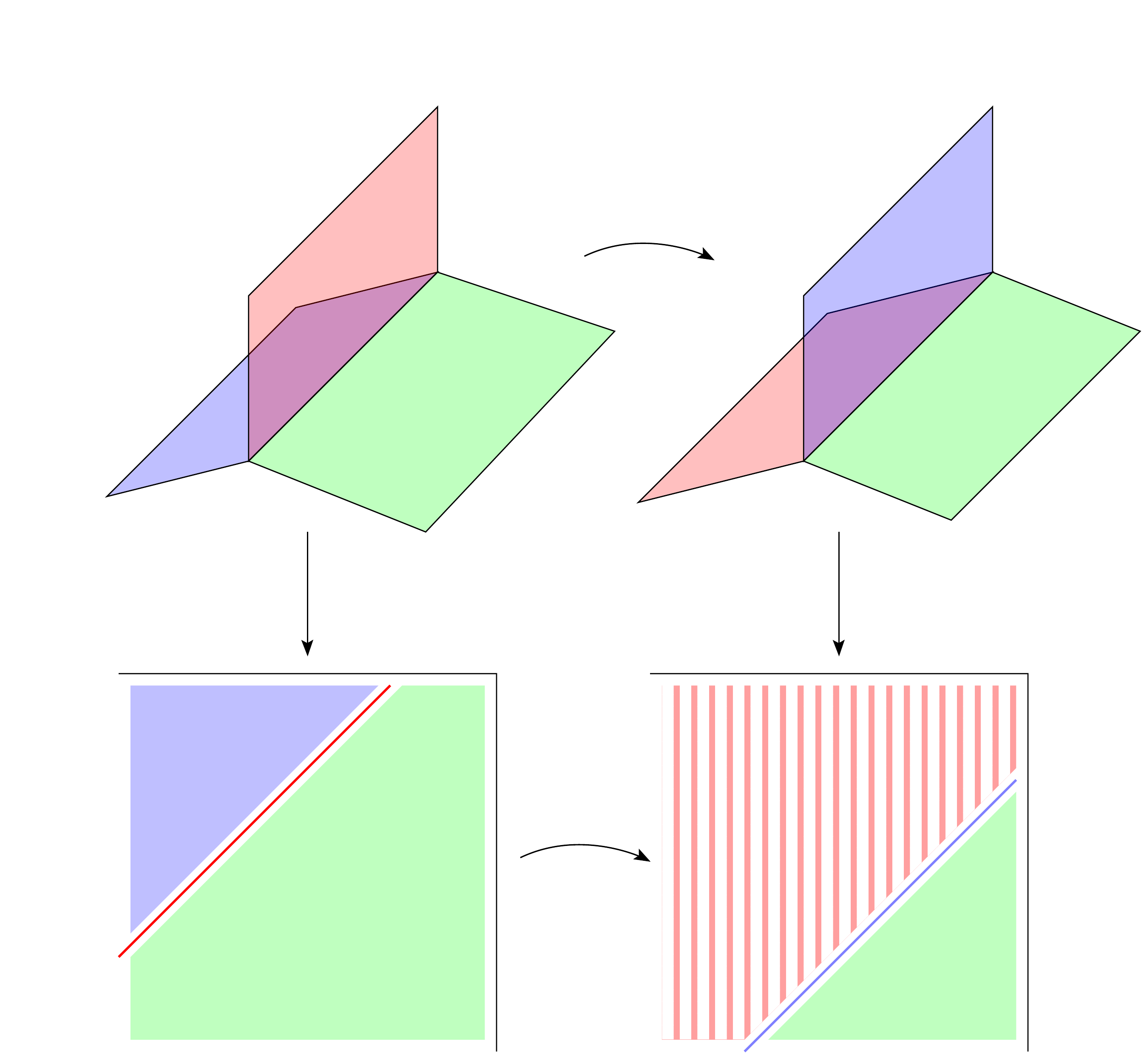
\end{minipage}
\caption{Monomialization for $\tf(\tx_1,\tx_2)=(a_1+\tx_1, (b_1+\tx_1) \wedge (b_2+\tx_2))$, with $a_1>b_2>0$.} \label{fig:lineartriang} 
\end{figure}

\begin{proof}[Proof of Theorem \ref{thm:intromonomialization}]
Given a tropical germ  $\tf: (\nT^d, \infty) \to (\nT^d, \infty)$, by Proposition \ref{prop:normalform} we may assume that, in a neighborhood $\tU$ of $\infty$, it is given by $d$ tropical polynomials which we call $\tf_1, \dots, \tf_d$.
Consider the sequence of regular modifications $\pi_i: X_i \to X_{i-1}$ where $X_{0} = \nT^d$ and $\pi_i$ is the modification along the function $\tf_i$, considered as a function $\nT^{d+i-1} \to \nT$ which is constant on all but the first $d$ variables. 
The space $\tX  \subset \nT^{2d}$ that results from this sequence of regular modifications is a balanced tropical cycle containing the strict transform $\Strict{\pi}$. 

Denote by $(\tx, \ty)$ the coordinates on $\nT^{2d}$, and consider the monomial map $\tF(\tx, \ty) = (\ty, \tx)$. 
We claim that in a suitable neighborhood $U$ of $\infty$, the tropical cycle $\tY := \tF(\tX)$ is obtained from $\nT^d$ via a tropical modification along functions $\tg_1, \dots, \tg_d$, where $\tg=(\tg_1, \ldots, \tg_d): \nT^d \to \nT^d$ is the canonical formal inverse of $\tf$.

By Corollary \ref{cor:realMultiGraph} we may find an invertible germ $\af = (\af_1, \dots , \af_d): (\nA^d,0) \to (\nA^d,0)$, defined in some neighborhood $\aU$ of $0$, satisfying $\tf = \trp(\af)$ and $\trp(\Graph{\af}) = \tX$.
Set $\ag = \af^{-1}$, and $\aF:\nA^{2d} \to \nA^{2d}$ defined by $\aF(\ax, \ay)=(\ay,\ax)$. Then on $\aU$ we have
$$
\aF \circ (\aid, \af) = (\af, \aid) = (\aid, \ag) \circ \af.
$$
Notice that the tropicalization of $(\aid,\af)$ is just the map $(\on{id},\tf): \tx \mapsto (\tx, \tf(\tx))$, which by construction satisfies $\pi \circ (\on{id},\tf)= \on{id}$.
Moreover, the restriction of $\pi:X \to \nT^d$ to $X \setminus \Exc{\pi}$ 
is invertible, and its inverse coincides with $(\on{id},\tf)$.
Upon tropicalizing  the situation, and we get
$$
\tF \circ (\on{id},\tf) = (\on{id},\tg) \circ \tf, \qquad \text{and hence }\ \eta \circ \tF = \tf \circ \pi \text{ on } 
X \setminus \Exc{\pi},
$$
for a suitable map $\eta:\tY \to (\nT^d,0)$.
By continuity, this relation extends to $\Strict{\pi}$.
Again by Corollary \ref{cor:realMultiGraph}, the modification $\eta$ must also be along the functions $\tg_i = \trp(\ag_i)$, and the theorem is proved. 
\end{proof}

\begin{ex}\label{ex:monomializationmadness}
Consider the germ
\begin{equation}
\tf(\tx_1,\tx_2)=\big((1 + \tx_1) \wedge p \tx_2, q \tx_1 \wedge (1+\tx_2)\big),
\end{equation}
where $p,q \in \nN$ satisfy $pq \geq 2$.
We shall show that this germ admits a monomialization locally at $\infty$, but not a global monomialization. This is because  there are no global automorphisms $\af:\nA^2 \to \nA^2$ tropicalizing to $\tf$.

We modify along the lines given by $f_1$ and $f_2$, obtaining a modification $\pi:\tX \to \nT^2$.
Then the strict transform of $\pi$ is given by
$$
\Strict{\pi}=\big\{(\tx_1, \tx_2, \tu_1, \tu_2) \in \nT^4\ |\ \tu_1 = (1 + \tx_1) \wedge p \tx_2, \tu_2=q \tx_1 \wedge (1+\tx_2)\big\}.
$$
The canonical formal inverse of $\tf$ is given by
$$
\tg(\ty_1, \ty_2) = \Big((\ty_1-1) \wedge \big(p \ty_2 -(p+1)\big), \big(q \ty_1-(q+1)\big) \wedge (\ty_2-1)\Big),
$$
Let $\eta:\tY \to \nT^2$ be the modification along $g_1, g_2$.
The strict transform of $\eta$ is given by
$$
\Strict{\eta}=\big\{(\ty_1, \ty_2, \tv_1, \tv_2) \in \nT^4\ |\ \tv_1 + 1 = \ty_1 \wedge (p \ty_2 - p), \tv_2 +1 = (q \ty_1-q) \wedge \ty_2 \big\}.
$$
Set $\tF(\tx_1, \tx_2, \tu_1, \tu_2)=(\tu_1, \tu_2, \tx_1, \tx_2)$.
We give a name to all facets of 
$\Strict{\pi}$ and $\Strict{\eta}$: 

\begin{minipage}{0.42 \columnwidth}
\begin{align*}
A_{++}&=\{\tu_1 = \tx_1+1 \leq p\tx_2, \tu_2 = \tx_2 + 1 \leq q\tx_1\},\\
A_{+-}&=\{\tu_1 = \tx_1+1 \leq p\tx_2, \tu_2 = q\tx_1 \leq \tx_2 + 1\},\\
A_{-+}&=\{\tu_1 = p\tx_2 \leq \tx_1+1, \tu_2 = \tx_2 + 1 \leq q\tx_1\},\\
A_{--}&=\{\tu_1 = p\tx_2 \leq \tx_1+1, \tu_2 = q\tx_1 \leq \tx_2 + 1\},\\
\end{align*}
\end{minipage}
\begin{minipage}{0.58 \columnwidth}
\begin{align*}
B_{++}&=\{\tv_1 +1 = \ty_1 \leq p\ty_2-p, \tv_2 +1 = \ty_2 \leq q\ty_1-q\},\\
B_{+-}&=\{\tv_1 +1 = \ty_1 \leq p\ty_2-p, \tv_2 +1 = q\ty_1-q \leq \ty_2\},\\
B_{-+}&=\{\tv_1 +1 = p\ty_2-p \leq \ty_1, \tv_2 +1 = \ty_2 \leq q\ty_1-q\},\\
B_{--}&=\{\tv_1 +1 = p\ty_2-p \leq \ty_1, \tv_2 +1 = q\ty_1-q \leq \ty_2\}.\\
\end{align*}
\end{minipage}
These are the facets of $\Strict{\pi}$ and $\Strict{\eta}$.
We denote by $A_{+0}$ the facets in $\tX$ that contains $A_{++} \cap A_{+-}$, and analogously for $A_{-0}$, $A_{0+}$, $A_{0-}$.
Finally, we denote by $A_{00}$ the last facet of $\tX$, generated by the directions along $\tu_1$ and $\tu_2$.
We use analogous notations for the facets in $\tY$ (see Figure \ref{fig:monomializationmadness}).

Let $c=(c_1,c_2)=\left(\frac{1+p}{pq-1}, \frac{1+q}{pq-1}\right)$. This is the intersection point of the lines along which $\pi$ is a modification of $\nT^2$.
Let $C=\{(\tx_1, \tx_2) \in \nT^2\ |\ \tx_1 \geq c_1, \tx_2 \geq c_2\}$. The image of facets in $\pi^{-1}(C)$ is depicted in Figure \ref{fig:monomializationmadness}.  By 
 construction, we have $\eta \circ \tF = \tf \circ \pi$ on $\Strict{\pi}$.
We claim that $\tF(\tX \cap \pi^{-1}(C)) \subset \tY$.

Notice that on $A_{++}$ we have $(\ty_1, \ty_2, \tv_1, \tv_2)=\tF(\tx_1, \tx_2, \tx_1+1, \tx_2+1) = (\tx_1+1, \tx_2+1, \tx_1, \tx_2)$.
It follows easily that $\tF(A_{++}) = B_{++}$.

On $A_{+-}$ we have $(\ty_1, \ty_2, \tv_1, \tv_2)=\tF(\tx_1, \tx_2, \tx_1+1, q \tx_1) = (\tx_1+1, q \tx_1, \tx_1, \tx_2)$.
In particular $\ty_2=q\ty_1-q$ and $\tv_1+1=\ty_1$.
Moreover, $q\tx_1 \leq \tx_2+1$ implies $\ty_2 \leq \tv_2+1$.
Finally, $p\ty_2-p \geq \ty_1 \Leftrightarrow pq\tx_1-p \geq \tx_1+1$, which holds if and only if $(\tx_1, \tx_2) \in C$.
It follows that $\tF(A_{+-} \cap \pi^{-1}(C)) = B_{+0}$, and analogously we get $\tF(A_{-+} \cap \pi^{-1}(C)) = B_{0+}$.

On $A_{+0}$ we have $(\ty_1, \ty_2, \tv_1, \tv_2)=\tF(\tx_1, q\tx_1-1, \tx_1+1, \tu_2) = (\tx_1+1, \tu_2, \tx_1, q\tx_1-1)$.
It is easy to check that $\tF(A_{+0}) \subset B_{+-}$ in this case.
We may argue analogously for $A_{0+}$, and we are done.

As stated above, the monomialization holds only on the preimage $\pi^{-1}(C)$, and in fact globally $\tF(\tX)$ is not contained in $\tY$.
\end{ex}

\begin{figure}
\begin{minipage}[t]{.8\columnwidth}
	\def\svgwidth{\columnwidth}
	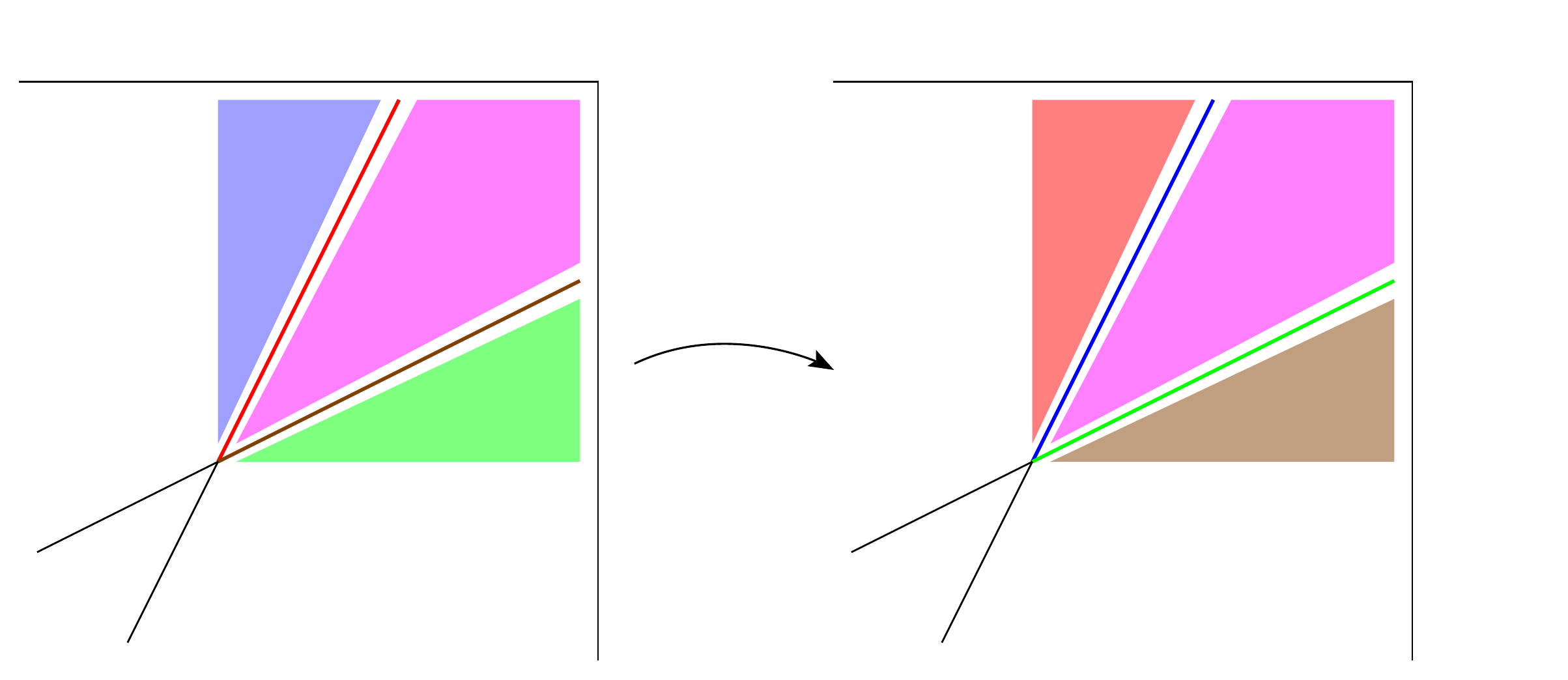
\end{minipage}
\caption{Monomialization for $\tf(\tx_1,\tx_2)=((\tx_1+1) \wedge p\tx_2, q\tx_1 \wedge (\tx_2+1))$, with $pq \geq 2$.} \label{fig:monomializationmadness} 
\end{figure}

\begin{rmk}\label{rmk:uniquenessMono}
The monomialization procedure is not unique. One reason is just that the function defining a divisor is only defined up to multiplicative constant.
So in general we could have considered $\alpha, \beta \in (\nK^*)^d$, and $\aF_{\alpha, \beta}(\ax, \ay)=(\alpha^{-1} \ay, \beta \ax)$. Here we use the following notation: if $\beta=(\beta_1, \ldots, \beta_d)$ and $\ax=(\ax_1, \ldots, \ax_d)$, then $\beta \ax = (\beta_1 \ax_1, \ldots, \beta_d \ax_d)$. Analogously for $\alpha^{-1}=(\alpha_1^{-1}, \ldots, \alpha_d^{-1})$. We now have
$$
\aF_{\alpha, \beta} \circ (\aid, \alpha \af)= (\af, \beta \aid) = (\aid, \beta \ag) \circ \af.
$$
Set $a=\nu_0(\alpha)=(\nu_0(\alpha_1), \ldots, \nu_0(\alpha_d))$, and analogously $b=\nu_0(\beta)$.
This corresponds to taking the tropical modification  $\pi$  of $\nT^d$ along $\tf_i + a_i$, the tropical modification $\eta$ of $\nT^d$ along $\tg_i + b_i$, and
$\tF(\tx, \ty) = (\ty - a, \tx+b)$.
\end{rmk}

Sometimes the \emph{monomialization} procedure may  not be unique for less trivial reasons as well. 
We say a collection of tropical divisors of functions $f_1, \dots , f_k$ intersects properly in $\nT^d$ if
$$
\on{codim} \bigcap_{i \in I} \on{div}(\tf_i) = \abs{I}
$$
for all subsets $I \subset \{1, \dots, d\}$.
If the divisors of $\tf$ do not intersect properly, then there is more freedom in choosing a monomialization than in the example above. 
Suppose we have just two functions $\af_1, \af_2$, and let $\tf_1$, $\tf_2$ denote their tropicalizations. Let $\pi_1:\tX_1 \to \nT^d$ be the modification along the function $f_1$. We then have $\trp(\Graph{\af_1}) = \tX_1$. To carry out the next modification $\pi_2: \tX_2 \to X_1$ so that $\tX_2$ is the tropicalization of the graph along both functions, we would need to know the image of the tropicalization of $(\alg{id} \times \af_1)|_{\{\af_2=0\}}$ in $\tX_1$. 
In general, this tropical subvariety may not be the divisor of a tropical polynomial function, but of a rational function.
In such a case, rational tropical modifications are required. 

If the divisors intersect properly then the tropical modifications $\pi: X \to \nT^d$ along the functions $\tf_i$ are determined up to translation of $X$ in $\nT^d$.
Otherwise, the proof of Theorem \ref{thm:intromonomialization} makes a canonical choice of the liftings of the divisors at each step, in this case all divisors are defined by tropical polynomial functions.
These modifications are always realizable as the graph of a generic function $\af:\nA^d \to \nA^d$ tropicalizing to $\tf$ by Corollary \ref{cor:realMultiGraph}.

\begin{ex}\label{ex:notgenericmatrix}
Consider the following tropical matrix on the left and the lift of it  over $\nK$ on the right 
$$
\tA = \left(\begin{array}{ccc}-1 & 0 & 0 \\\infty & 0 & 0 \\\infty & \infty  & 0\end{array}\right) \qquad \qquad \qquad \aA =  \left(\begin{array}{ccc}a & b & c \\ 0& d & e \\ 0 & 0  & f\end{array}\right).
$$
Here the valuation $\nu_0$ of all non-zero coefficients of the right matrix are $0$ except for $a$ which has valuation $-1$. The matrices represent linear maps $\tf$ and $\af$ of $\nT^3$ and $\nA^3$ respectively.
Moreover, $A$ is non-degenerate, since its tropical determinant is $-1$ which is obtained by only $1$ monomial of the expression.
The divisors of  $\tf_2=x_1 \wedge x_2$ and $\tf_3 = x_1 \wedge x_2 \wedge x_3$ do not intersect transversally, so the tropicalization of the graph of $\af$ cannot be determined from just $\tf$.
This amounts to the fact that the inverse of $\aA$ contains the expression $be - dc$ in one of the entries and for appropriate choices of coefficients we can obtain $\nu_0(be-dc) > 0$.  
\end{ex}

\subsection{Hopf manifolds and contracting germs}

Here we are interested in the case when the above monomialization procedure produces an automorphism.
This means that given a contracting germ $\tf: \nT^d \to \nT^d$, the monomialization procedure provides a modification $\pi: \tX \to \nT^d$ and an automorphism $\tF: \nT^{2d} \to \nT^{2d}$ satisfying $\pi \circ \tF = \tf \circ \pi$ on the strict transform $\Strict{\pi}$ and such that $\tF(\tX) = \tX$.
By Remark \ref{rmk:uniquenessMono} we have some freedom to translate $\tX \subset \nT^{2d}$ and adjust the maps accordingly.
We infer the following criterium on $\tf$ to get an automorphism by Theorem \ref{thm:intromonomialization}. 

\begin{prop}\label{prop:Hopfconditiondivisors}
Let $f: (\nT^d, \infty)  \to (\nT^d, \infty)$ be a weakly non-degenerate tropical germ.
Then there exists a modification $\pi:\tX \subseteq \nT^n \to \nT^d$ and a tropical contracting automorphism $\tF: \nT^n \to \nT^n$ such that $\tF(\tX)=\tX$ and $\pi \circ \tF = \tf \circ \pi$ on the strict transform  $\Strict{\pi}$ in a neighborhood of $\infty$ if and only if $\displaystyle\sum_{i=1}^d \on{div}(\tf_i) = \sum_{i=1}^d \on{div}(\tg_i)$, where $\tf^{-1} = \tg = (\tg_1, \dots, \tg_d)$ is the canonical formal inverse of $\tf$.
\end{prop}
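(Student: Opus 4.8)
The plan is to read this off the monomialization of Theorem \ref{thm:intromonomialization} and to pin down exactly when the two modifications it produces can be made to coincide. Recall that the construction there yields a modification $\pi\colon \tX \to \nT^d$ along the functions $\tf_1,\dots,\tf_d$, a modification $\eta\colon \tY \to \nT^d$ along the components $\tg_1,\dots,\tg_d$ of the canonical formal inverse $\tg=\tf^{-1}$, and a monomial isomorphism $\tF\colon \tX \to \tY$ (the coordinate swap) satisfying $\eta\circ\tF=\tf\circ\pi$ on $\Strict{\pi}$. A Hopf datum in the sense of the statement is precisely the situation in which $\tX$ and $\tY$ are one and the same modification, so that $\tF$ restricts to an automorphism of a single space with $\pi\circ\tF=\tf\circ\pi$. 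Thus the whole proposition reduces to deciding when $\pi$ and $\eta$ may be identified.

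First I would record the freedom available in the monomialization. By Remark \ref{rmk:uniquenessMono} the functions defining $\pi$ and $\eta$ are determined only up to additive constants $a,b\in\nR^d$, which turn $\tF$ into the translated swap $\tF(\tx,\ty)=(\ty-a,\tx+b)$ but leave the divisors $\on{div}(\tf_i)$ and $\on{div}(\tg_i)$ unchanged; and when these divisors fail to intersect properly (cf.\ Example \ref{ex:notgenericmatrix}) there is the additional freedom of redistributing the total branch divisor among the fibre coordinates by rational modifications. The key point, which I would isolate as a lemma, is that the relevant invariant of such a modification is its total branch divisor downstairs, namely $\sum_{i=1}^d \on{div}(\tf_i)$ for $\pi$ and $\sum_{i=1}^d \on{div}(\tg_i)$ for $\eta$ (Definition \ref{def:modification}), and that two modifications of this shape can be carried onto one another by a monomial map over $\nT^d$ exactly when these total divisors agree.

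For the backward implication I would assume $\sum_i \on{div}(\tf_i)=\sum_i \on{div}(\tg_i)=:D$ and construct a common model. Using Corollary \ref{cor:realMultiGraph} I realize $\tX=\trp(\Graph{\af})$ and $\tY=\trp(\Graph{\ag})$ for a generic invertible lift $\af$ with $\ag=\af^{-1}$; since both modifications carry the same branch divisor $D$, the freedom above lets me choose the constants $a,b$ (and, where needed, the rational liftings) so that $\tY$ is identified with $\tX$ and the projections $\pi,\eta$ become the same map. The translated swap then gives $\tF(\tX)=\tX$ and $\pi\circ\tF=\tf\circ\pi$. That $\tF$ is contracting follows because $\tF^2$ is the translation by $(b-a,b-a)$ along the ambient space, so choosing $b-a$ with strictly positive entries—possible since $\tf$ is contracting, by Proposition \ref{prop:contracting}—yields $\tF(V)\subrelcpct V$ for small neighbourhoods of $\infty$. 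For the forward implication I would begin from a Hopf datum $(\tX,\tF)$: the modification $\pi$ is forced to be the monomialization of $\tf$, so its branch divisor is $\sum_i\on{div}(\tf_i)$, while $\tF(\tX)=\tX$ together with $\pi\circ\tF=\tf\circ\pi$ exhibits the same space as the monomialization of the inverse $\tg$, whose branch divisor is $\sum_i\on{div}(\tg_i)$; equality of the two descriptions of this one divisor gives the identity.

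The main obstacle is precisely the lemma of the second step: showing that the \emph{total} branch divisor, rather than the finer data of the individual $\on{div}(\tf_i)$, is a complete invariant for identifying the two modifications. This is genuinely delicate only when the divisors do not meet properly, for then the intermediate modifications involve rational functions and the naive graph description breaks down; the content is exactly that redistribution by rational modifications lets one pass between any two factorizations of $D$ into $d$ effective pieces, so that only $D$ survives. Verifying this carefully—checking that the identifications are honest monomial maps on a neighbourhood of $\infty$, and that no extra exceptional faces obstruct $\tF(\tX)=\tX$—is where the real work lies; in the transversal case the individual divisors are already rigid and the argument collapses to the translation bookkeeping of Remark \ref{rmk:uniquenessMono}.
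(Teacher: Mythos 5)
Your overall route is the same as the paper's: the paper's proof of this proposition is exactly the reduction to Theorem \ref{thm:intromonomialization} together with the fact that $\on{div}(\tphi)=\on{div}(\tpsi)$ forces $\tphi=\tpsi+b$, and your backward implication (identify the two modifications $\pi$ and $\eta$ using the translation freedom of Remark \ref{rmk:uniquenessMono}, then observe that the translated swap is contracting) is a faithful expansion of that. However, your forward implication contains a step that is simply false. You assert that for an arbitrary Hopf datum ``the modification $\pi$ is forced to be the monomialization of $\tf$, so its branch divisor is $\sum_i\on{div}(\tf_i)$.'' The proposition quantifies over \emph{all} modifications $\pi:\tX\subseteq\nT^n\to\nT^d$, and one can always enlarge the branch divisor by modifying along additional $\tF$-invariant divisors. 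Concretely, take $\tf(\tx_1,\tx_2)=(\tx_1+a,\tx_2+a)$ with $a>0$, and let $\pi:\tX\to\nT^2$ be the modification along $\tx_1\wedge\tx_2$, so that $\tX\subset\nT^3$ is the space of Example \ref{ex:permutation}; the translation $\tF$ by $a(1,1,1)$ is a contracting automorphism with $\tF(\tX)=\tX$ and $\pi\circ\tF=\tf\circ\pi$ on all of $\tX$, yet the branch divisor of $\pi$ is the line $\{\tx_1=\tx_2\}$ while $\sum_i\on{div}(\tf_i)=0$. So the ``two descriptions of this one divisor'' that your argument compares do not exist, and the forward direction proves nothing in such cases.

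What the forward direction actually needs is the structural fact that any automorphism of $\nT^n$ is a permutation of the coordinates composed with a translation. Plugging this into $\pi\circ\tF=\tf\circ\pi$ on $\Strict{\pi}$ shows that each $\tf_i$ coincides, up to an additive constant, with either a coordinate function $\tx_j$ or one of the functions along which $\pi$ modifies; the same analysis applied to $\tF^{-1}$ (after first verifying that $\pi\circ\tF^{-1}=\tg\circ\pi$ on the strict transform, which is itself a point requiring proof since $\tg\circ\tf$ is only the identity generically) controls the $\tg_i$; one must then use the $\tF$-invariance of $\tX$ to show the two resulting sub-collections of branch divisors agree. None of this appears in your write-up (nor, admittedly, in the paper's two-line proof, but it is the missing content). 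Two smaller points: in your backward direction the vector $b-a$ is not freely choosable --- once you demand $\tf_i+a_i=\tg_i+b_i$ it is determined by the constants $c_i$ with $\tg_i=\tf_i+c_i$, and its positivity must be \emph{derived} from the contraction hypothesis on $\tf$ rather than arranged; and your ``key lemma'' that the total branch divisor is a complete invariant (needed precisely because the stated condition is an equality of sums, which permits redistributing irreducible components among the $\tf_i$ versus the $\tg_i$) is flagged but never proved, so even the backward direction is complete only under the stronger reading that the divisors match componentwise up to permutation.
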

\begin{proof}
The proof follows by the proof of Theorem \ref{thm:intromonomialization}, and from the fact that the condition $\on{div}(\tphi) = \on{div}(\tpsi)$ is equivalent to $\tphi = \tpsi + b$ for some constant $b \in \nR$.
\end{proof}

\begin{figure}
\begin{minipage}[t]{.6\columnwidth}
	\def\svgwidth{\columnwidth}
	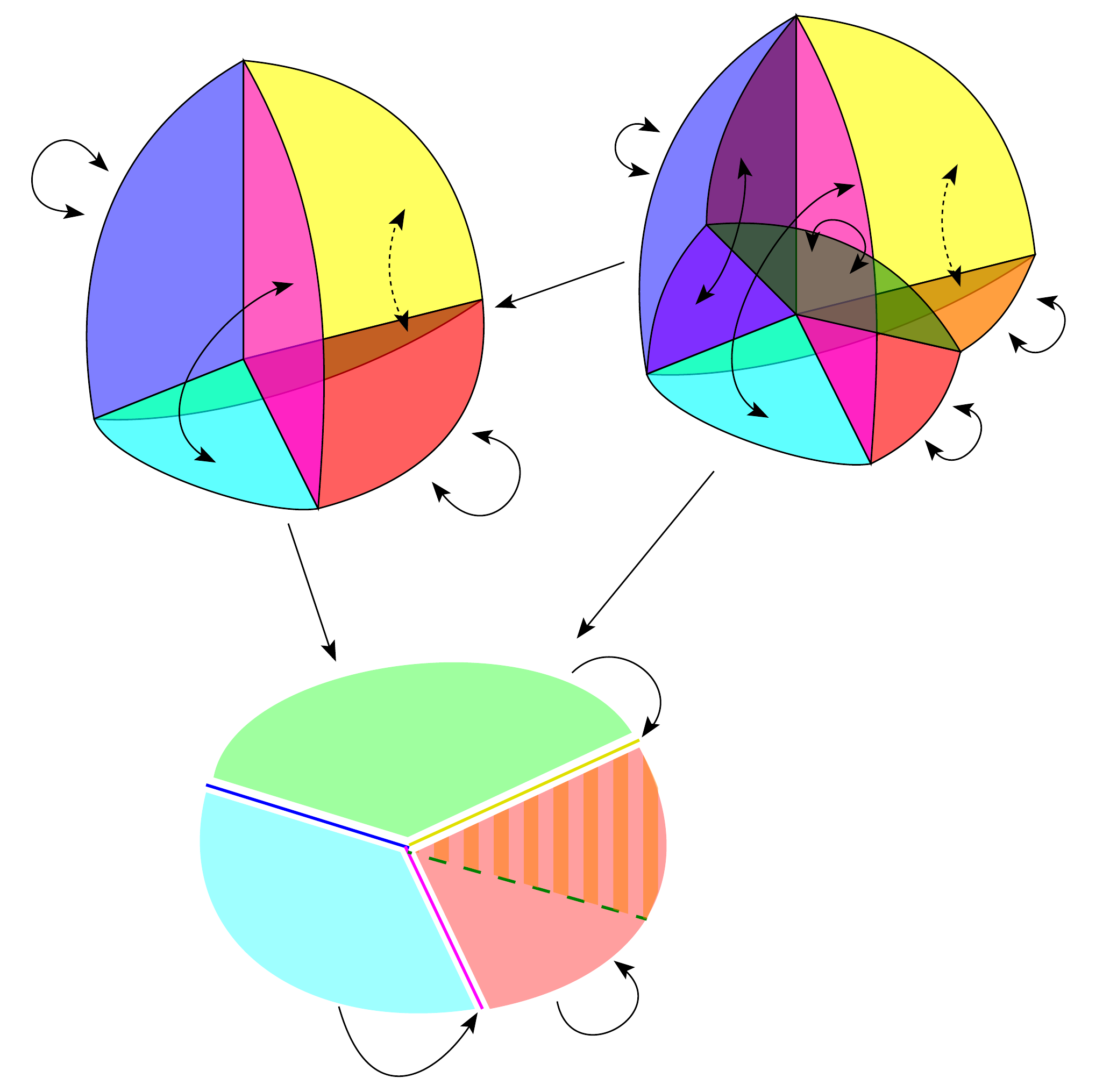
\end{minipage}
\caption{Section of the monomializations for $\tf=(\tx_1\wedge 2\tx_2 \wedge 2\tx_3, \tx_2,\tx_3)$.}\label{fig:monomialweights}
\end{figure}

Even if the monomialization procedure produces an automorphism as outlined in the above proposition,
we may still not obtain a non-singular  tropical Hopf data from a germ $\tf: \nT^d \to \nT^d$ for $d>2$, as the next example illustrates.

\begin{ex}\label{ex:weight2monom}
Consider the tropicalization of the germ $\af(\ax_1, \ax_2, \ax_3) = (\ax_1 + \ax_2^2  - \ax_3^2, \ax_2, \ax_3)$. The tropical divisor of the function $\tf_1=\tx_1 \wedge 2\tx_2 \wedge 2\tx_3$ has $3$ faces:
\begin{equation*}
F_1 := \{x_2 = x_3 \leq x_1 / 2 \}, \qquad
F_2 := \{x_1 = 2x_3 \leq 2x_2 \}, \qquad 
F_3 := \{x_1 = 2x_2 \leq 2x_3\}.
\end{equation*}
The face $F_1$ must be equipped with weight $2$, the others are of weight $1$. 
The only codimension $1$ face of $\on{div}(\tf_1)$ is given by 
$x_1 = 2x_2 = 2x_3$.  
Let $\pi: \tX \to \nT^3$ be the modification along $\tf_1$. Then $\tX$ will also have a face of weight $2$ (depicted in blue in Figure \ref{fig:monomialweights}), and thus $\tX$ is not the germ of a non-singular tropical variety. 

Recall that by Proposition \ref{lem:realGraph}, $\tX$ is the tropicalization of the graph of $\af_1(\ax_1, \ax_2, \ax_3) = (\ax_1+\ax_2^2-\ax_3^2)$.
Consider the embedding $i:\nA^3 \to \nA^5$ given by  
$$
i(\ax_1, \ax_2, \ax_3) = (\ax_1, \ax_2, \ax_3, \f_1(\ax_1, \ax_2, \ax_3), \ax_2 - \ax_3).
$$ 
It can be calculated that  $\wt{\tX}:=\trp(i(\nA^3))\subset \nT^5$ consists of $10$ faces of dimension $3$ all of which are of weight $1$.     
Moreover, by Theorem \ref{thm:intromonomialization} we can lift $\tf$ to an automorphism $\tF:\tX \to \tX$ and also to an automorphism $\wt{\tF}:\wt{\tX} \to \wt{\tX}$, whose action is described in Figure \ref{fig:monomialweights} (some intersections between faces in the picture associated to $\wt{\tX}$ do not exist; they  are only apparent due to the drop of dimension in the drawing).

Notice that a formal inverse of $\tf$ is again $\tf$.
In this case $\tf$ is not contracting, but it suffices to post-compose $\tf$ with the translation by $(1,1,1)$ to get a contracting germ with the same properties.
\end{ex}

\subsubsection*{Hopf manifolds and tropical germs in dimension $2$}

In dimension two, the study of weakly non-degenerate germs is simpler, due to the explicit normal forms that we may deduce from Proposition \ref{prop:normalform}.

\begin{prop}\label{prop:normalforms2d}
Any weakly non-degenerate germ $\tf : (\nT^2,\infty) \to (\nT^2,\infty)$ can be written (in a suitably small neighborhood $U$ of $\infty$) in the form
$$
\tf(\tx_1,\tx_2)=\big( (a_1 + p_1 \tx_1) \wedge (a_2+p_2 \tx_2), (b_1+ q_1 \tx_1) \wedge (b_2+ q_2 \tx_2)\big),
$$
with $a_1, a_2, b_1, b_2 \in \nR$, $p_1, p_2, q_1, q_2 \in \nN^* \cup \{+\infty\}$, and $p_1 \wedge p_2 = q_1 \wedge q_2 = p_1 \wedge q_1 = p_2 \wedge q_2 = 1$.
By convention, $a+p\tx \equiv \infty$ if $p = \infty$.

The linear part of $\tf$ is represented by $L=\begin{pmatrix}a_1+\delta_{p_1}^1 & b_1+\delta_{q_1}^1\\a_2+\delta_{p_2}^1 & b_2+\delta_{q_2}^1\end{pmatrix}$, where $\delta_p^1=\begin{cases}0 &\text{if }p=1\\ \infty &\text{if }p\neq 1\end{cases}$ is the tropicalization of the delta function.
Then $\tf$ is contracting if and only if $\on{det}_\nT(L)>0$ and $\on{tr}_\nT(L) > 0$.
\end{prop}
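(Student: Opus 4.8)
The plan is to prove the two assertions separately: first to derive the normal form by feeding the weak non-degeneracy hypothesis into the Newton-diagram reduction of Proposition \ref{prop:normalform}, and then to obtain the contracting criterion from the eigenvalue characterization of Proposition \ref{prop:contracting}.

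For the normal form, I would start from the corollary to Proposition \ref{prop:normalform}: after shrinking $U$, each component $\tf_k$ agrees with $\nf{\tf_k}=\bigwedge_{I\in\VertNewton{\tf_k}}(\tphi^{(k)}_I+\scalprod{I,\tx})$, whose exponents are exactly the $\preccurlyeq$-minimal points of $\mon{\tf_k}$. The key input is that weak non-degeneracy of $\tf$ is equivalent (by Definition \ref{def:matrixnondeg} and the proposition identifying weak non-degeneracy of a germ with that of its linear part) to $\on{det}_\nT(L)\neq\infty$, where $L=\tA$ is the linear matrix; this forces $L_{11}+L_{22}\neq\infty$ or $L_{12}+L_{21}\neq\infty$. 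I would split into these two symmetric cases. In the first, $\tf_1$ contains the degree-one monomial $\tx_1$ and $\tf_2$ contains $\tx_2$. Since $e_1=(1,0)$ is $\preccurlyeq$-minimal in $\nN^2\setminus\{0\}$ it is a vertex of $\VertNewton{\tf_1}$, while every other exponent $(i,j)$ of $\tf_1$ with $i\geq 1$ satisfies $e_1\preccurlyeq(i,j)$ and so cannot be a vertex; hence the surviving vertices lie on the $\tx_2$-axis, of which only the $\preccurlyeq$-minimal one remains. This collapses $\tf_1$ to $(a_1+\tx_1)\wedge(a_2+p_2\tx_2)$ with $p_1=1$, and symmetrically $\tf_2$ to $(b_1+q_1\tx_1)\wedge(b_2+\tx_2)$ with $q_2=1$. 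Because $p_1=q_2=1$ in this case (and $p_2=q_1=1$ in the other), the four conditions $p_1\wedge p_2=q_1\wedge q_2=p_1\wedge q_1=p_2\wedge q_2=1$ hold automatically, and reading off the coefficients of $\tx_1,\tx_2$ in each $\tf_k$ recovers $L$.

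For the contracting criterion I would invoke Proposition \ref{prop:contracting}, reducing to showing that all tropical eigenvalues of $L$ are positive if and only if $\on{det}_\nT(L)>0$ and $\on{tr}_\nT(L)>0$. Expanding the tropical characteristic polynomial $\on{det}_\nT(L\wedge\lambda I)$ for the $2\times 2$ matrix gives
\[
P_L(\lambda)=\on{det}_\nT(L)\wedge\big(\on{tr}_\nT(L)+\lambda\big)\wedge 2\lambda,
\]
a concave piecewise-affine function whose three terms have slopes $2,1,0$. Writing $D=\on{det}_\nT(L)$ and $T=\on{tr}_\nT(L)$, I would read off its corner locus: the two eigenvalues $T$ and $D-T$ when $2T<D$, and a single double eigenvalue $D/2$ when $2T\geq D$, the latter regime also covering the degenerate case $T=\infty$ (anti-diagonal dominant $L$), where $P_L=D\wedge 2\lambda$.

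It then remains to run the positivity check in each regime. If $2T<D$, then $T>0$ already forces $D>2T>0$, so $T$ and $D-T$ are both positive exactly when $T>0$, equivalently when $D>0$ and $T>0$; if $2T\geq D$, then $D/2>0$ is equivalent to $D>0$, and $D>0$ together with $2T\geq D$ forces $T>0$, again equivalent to $D>0$ and $T>0$. Combining the two regimes gives the stated equivalence. I expect the normal-form step to be the main obstacle: one must argue carefully that weak non-degeneracy, a condition on the linear part alone, collapses \emph{both} Newton diagrams down to two axis monomials, using the interplay between the order $\preccurlyeq$ and the minimality of the degree-one exponents. By contrast, the characteristic-polynomial computation and positivity check are routine once Proposition \ref{prop:contracting} is invoked, requiring only that the double-root and $T=\infty$ degenerations be handled uniformly.
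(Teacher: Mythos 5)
Your proof is correct, and the normal-form half is essentially the paper's own argument: weak non-degeneracy is transferred to the linear part, which forces $(1,0)\in\VertNewton{\tf_1}$ and $(0,1)\in\VertNewton{\tf_2}$ (or the swapped pair), after which each Newton diagram can only be the point $(1,0)$ or the segment from $(1,0)$ to $(0,p_2)$ (symmetrically for $\tf_2$), and Proposition \ref{prop:normalform} concludes. For the contracting criterion the paper's proof is actually silent (it is left to follow from Proposition \ref{prop:contracting}); your explicit expansion $P_L(\lambda)=\on{det}_\nT(L)\wedge\big(\on{tr}_\nT(L)+\lambda\big)\wedge 2\lambda$ with the two-regime positivity check, including the anti-triangular case $\on{tr}_\nT(L)=\infty$, correctly supplies that missing step.
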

\begin{proof}
Since $\tf$ is weakly non-degenerate, its linear part $L$ needs to be weakly non-degenerate, meaning that either $(1,0) \in \VertNewton{\tf_1}$ and $(0,1) \in \VertNewton{\tf_2}$, or $(1,0) \in \VertNewton{\tf_2}$ and $(0,1) \in \VertNewton{\tf_1}$.
Consider the first case (the second is completely analogous). We have that $\partialNewton{\tf_1}$ is either the point $(1,0)$, or a segment with endpoints $(1,0)$ and $(0,p_2)$, with $p_2 \geq 1$.
It follows that either $\VertNewton{\tf_1}=\{(1,0)\}$ or $\VertNewton{\tf_1}=\{(1,0), (0,p_2)\}$.
Analogously, either $\VertNewton{\tf_2}=\{(0,1)\}$ or $\VertNewton{\tf_2}=\{(0,1), (q_1,0)\}$.
By Proposition \ref{prop:normalform} we conclude.
\end{proof}

The normal forms given above can also be stated  in a more explicit way.
\begin{cor}\label{cor:normalforms2d}
Any weakly non-degenerate germ $\tf : (\nT^2,\infty) \to (\nT^2,\infty)$ belongs to one of the following classes.
\begin{trivlist}
\item[The \emph{linear} case:] when $p_1, p_2, q_1, q_2 \in \{1,\infty\}$. In this case we can just write
\begin{equation}\label{eqn:nf2dlinear}
\tf(\tx_1,\tx_2)=\big( (a_1 + \tx_1) \wedge (a_2+\tx_2), (b_1+\tx_1) \wedge (b_2+\tx_2)\big),
\end{equation}
where here $a_1, a_2, b_1, b_2 \in \nT$, and $a_1+b_2 \neq \infty$ or $a_2+b_1 \neq \infty$ to have a weakly non-degenerate germ.

Denote by $L=\begin{pmatrix}a_1 & b_1\\a_2 & b_2\end{pmatrix}$ the matrix representing $\tf$.
Then $\on{tr}_\nT(L)=a_1 \wedge b_2$ and $\on{det}_\nT(L)=(a_1+b_2) \wedge (a_2+b_1)$.
It follows that $\tf$ is contracting if and only if $a_1,b_2,a_2+b_1 > 0$.

\item[The \emph{positive non-linear} case:] where $p_1=q_2=1$ and $p_2$ and $q_1$ are not both in $\{1,\infty\}$. In this case we have
\begin{equation}\label{eqn:nf2dpositive}
\tf(\tx_1,\tx_2)=\big( (a_1 + \tx_1) \wedge (a_2+p_2 \tx_2), (b_1+q_1 \tx_1) \wedge (b_2+\tx_2)\big),
\end{equation}
whose linear part $L$ is triangular.
In particular $\on{tr}_\nT(L)=a_1 \wedge b_2$ and $\on{det}_\nT(L)=(a_1+b_2)$, and $\tf$ is contracting if and only if $a_1, b_2 > 0$.

\item[The \emph{negative non-linear} case:] where $p_2=q_1=1$ and $p_1$ and $q_2$ are not both in $\{1,\infty\}$. In this case we have
\begin{equation}\label{eqn:nf2dnegative}
\tf(\tx_1,\tx_2)=\big( (a_1 + p_1\tx_1) \wedge (a_2+\tx_2), (b_1+\tx_1) \wedge (b_2+q_2\tx_2)\big),
\end{equation}
whose linear part $L$ is anti-triangular (at least $1$ of the two elements in the diagonal is $\infty$).
In particular $\on{tr}_\nT(L)=(a_1 + \delta_{p_1}^1) \wedge (b_2 + \delta_{q_2}^1)$ and $\on{det}_\nT(L)=(a_2+b_1)$. Moreover $\tf$ is contracting if and only if $a_1 + \delta_{p_1}^1), b_2 + \delta_{q_2}^1, a_2 +  b_1 > 0$.
\end{trivlist}
\end{cor}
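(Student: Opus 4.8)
The plan is to obtain the three-fold classification as a purely combinatorial consequence of the constraints recorded in Proposition \ref{prop:normalforms2d}, and then to read off the contracting condition in each case by substituting the explicit shape of $L$ into the formulas for $\on{tr}_\nT(L)$ and $\on{det}_\nT(L)$ and invoking the criterion of that same proposition. Since the substantive work is already done in Proposition \ref{prop:normalforms2d}, this is a bookkeeping argument rather than a new idea, and I do not expect a genuine obstacle.

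First I would arrange the exponent data as the $2\times 2$ pattern underlying $L$, placing $p_1,q_1$ in the first row and $p_2,q_2$ in the second, so that the positions match those of $\delta_{p_1}^1,\delta_{q_1}^1,\delta_{p_2}^1,\delta_{q_2}^1$ in $L$. Because every exponent lies in $\nN^*\cup\{\infty\}$ and is therefore $\geq 1$, the four conditions $p_1\wedge p_2=q_1\wedge q_2=p_1\wedge q_1=p_2\wedge q_2=1$ say exactly that every row and every column of this pattern contains an entry equal to $1$. The key observation is the dichotomy: either $p_1=q_2=1$ (the diagonal carries the ones) or $p_2=q_1=1$ (the anti-diagonal does). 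Indeed, if $p_1\neq 1$, then the first-row condition forces $q_1=1$ and the first-column condition forces $p_2=1$; symmetrically, if $q_2\neq 1$ the second row and column force $p_2=q_1=1$. Hence, whenever the diagonal is not all ones, the anti-diagonal must be.

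Next I would split each alternative according to whether the remaining pair of exponents lies in $\{1,\infty\}$. If $p_1=q_2=1$ and $p_2,q_1\in\{1,\infty\}$, then all four exponents lie in $\{1,\infty\}$ and we are in the linear case; if instead $p_2,q_1$ are not both in $\{1,\infty\}$ (so at least one is an integer $\geq 2$), we obtain the positive non-linear case. Symmetrically, $p_2=q_1=1$ gives the linear case when $p_1,q_2\in\{1,\infty\}$ and the negative non-linear case otherwise. A short check confirms the three classes are exhaustive (by the dichotomy) and mutually exclusive: the positive and negative non-linear hypotheses together would force all four exponents equal to $1$, contradicting either non-linearity condition.

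Finally, for the contracting criterion I would substitute into $\on{tr}_\nT(L)=L_{11}\wedge L_{22}$ and $\on{det}_\nT(L)=(L_{11}+L_{22})\wedge(L_{12}+L_{21})$, using that an entry $a+\delta_p^1$ equals $\infty$ precisely when $p\neq 1$. In the positive non-linear case at least one off-diagonal entry is $\infty$, so $\on{det}_\nT(L)=a_1+b_2$, and the trace condition $a_1,b_2>0$ already forces the determinant positive, giving ``contracting iff $a_1,b_2>0$''. In the negative non-linear case at least one diagonal entry is $\infty$, so $\on{det}_\nT(L)=a_2+b_1$ while $\on{tr}_\nT(L)=(a_1+\delta_{p_1}^1)\wedge(b_2+\delta_{q_2}^1)$, yielding the three stated inequalities; the linear case follows in the same way from the full $2\times2$ formulas. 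The only point requiring care is this handling of the $\infty$ entries: an $\infty$ on the diagonal (resp.\ off-diagonal) automatically satisfies its positivity condition and collapses the corresponding term of the tropical determinant, and Proposition \ref{prop:normalforms2d} then delivers the conclusion.
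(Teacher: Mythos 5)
Your proposal is correct and matches the paper's (implicit) argument: the paper states this corollary without proof as an immediate consequence of Proposition \ref{prop:normalforms2d}, and your case analysis on the exponent constraints $p_1 \wedge p_2 = q_1 \wedge q_2 = p_1 \wedge q_1 = p_2 \wedge q_2 = 1$, followed by substitution into the tropical trace and determinant, is exactly the bookkeeping that derivation requires. The row/column dichotomy ($p_1=q_2=1$ or $p_2=q_1=1$) and the observation that an $\infty$ entry collapses the corresponding term of $\on{det}_\nT(L)$ are precisely the points the paper leaves to the reader.
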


Following the proof of Theorem \ref{thm:intromonomialization}, and to apply Proposition \ref{prop:Hopfconditiondivisors}, we need to compute formal inverses of the germs given by Corollary \ref{cor:normalforms2d}.

\begin{prop}\label{prop:inversenf2d}
Let $\tf: (\nT^2, \infty) \to (\nT^2, \infty)$ be a weakly non-degenerate germ, in its normal form as given by Corollary \ref{cor:normalforms2d}.
\begin{trivlist}
\item[The \emph{linear} case:] assume $\tf$ is linear, given by case \eqref{eqn:nf2dlinear} of Corollary \ref{cor:normalforms2d}, hence represented by the matrix $L=\begin{pmatrix}a_1&b_1\\a_2&b_2\end{pmatrix}$.
Then the canonical formal inverse $\tg$ of $\tf$ is the linear map represented by
$$
L^{-1}=\left(\begin{array}{cc}b_2 - \Delta & b_1 - \Delta\\ a_2 -\Delta & a_1 - \Delta \end{array}\right) \qquad 
\text{where} \quad 
\Delta = (a_1 + b_2) \wedge (a_2 + b_1) \text{ is the tropical determinant of } L.
$$
Moreover, $\tg$ is the unique formal inverse of $\tf$ if and only if $\tf$ is non-degenerate (i.e., if and only if $a_1+b_2 \neq a_2+b_1$).

\item[The \emph{positive non-linear} case:] $\tf$ is given as in Corollary \ref{cor:normalforms2d} by \eqref{eqn:nf2dpositive}.
Then $\tf$ has a unique formal inverse $\tg$ given by
$$
\tg(\ty_1, \ty_2) = \Big( \big(\ty_1 \wedge (a_2 + p_2\ty_2-p_2 b_2)\big) - a_1, \big(\ty_2 \wedge (b_1 + q_1\ty_1 - q_1 a_1)\big) - b_2 \Big).
$$

\item[The \emph{negative non-linear} case:] $\tf$ is given as in Corollary \ref{cor:normalforms2d} by \eqref{eqn:nf2dnegative}.
Then $\tf$ has a unique formal inverse $\tg$ given by
$$
\tg(\ty_1, \ty_2) = \Big( \big(\ty_2 \wedge (b_2 + q_2\ty_1-q_2 a_2)\big) - b_1, \big(\ty_1 \wedge (a_1 + p_1\ty_2 - p_1 b_1)\big) - a_2 \Big).
$$
\end{trivlist} 
\end{prop}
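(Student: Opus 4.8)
The plan is to compute the canonical formal inverse directly from a generic lift, exploiting Definition \ref{def:inverse}: by definition $\tg = \tf^{-1}$ is the tropicalization $\trp(\af^{-1})$ of the inverse of a generic germ $\af:(\nA^2,0)\to(\nA^2,0)$ with $\trp(\af)=\tf$, which exists and is invertible because $\tf$ is weakly non-degenerate. For a generic choice of coefficients no cancellation occurs, so the valuation of each coefficient of $\af^{-1}$ equals the value predicted by the min-plus arithmetic. Thus in each of the three cases of Corollary \ref{cor:normalforms2d} it suffices to fix a lift $\af$ with generic coefficients of the prescribed valuations, invert it as a power series fixing $0$, and read off the tropicalization; Proposition \ref{prop:normalform} then reduces $\trp(\af^{-1})$ to its germ at $\infty$, which depends only on the vertices $\VertNewton{\cdot}$ of its components.

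For the \emph{positive non-linear} case I would take the lift $\af(\ax_1,\ax_2)=(\alpha_1\ax_1+\alpha_2\ax_2^{p_2},\,\beta_1\ax_1^{q_1}+\beta_2\ax_2)$ with $\nu_0(\alpha_i)=a_i$ and $\nu_0(\beta_i)=b_i$ (a monomial with exponent $\infty$ being simply absent). Inverting the two defining relations by successive substitution gives, to lowest order, $\ax_1=\alpha_1^{-1}\ay_1-\alpha_1^{-1}\alpha_2\beta_2^{-p_2}\ay_2^{p_2}+\cdots$ and $\ax_2=\beta_2^{-1}\ay_2-\beta_2^{-1}\beta_1\alpha_1^{-q_1}\ay_1^{q_1}+\cdots$; taking valuations produces exactly the two monomials claimed for $\tg_1$ and $\tg_2$. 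The \emph{negative non-linear} case is identical after interchanging diagonal and anti-diagonal: from $\af(\ax)=(\alpha_1\ax_1^{p_1}+\alpha_2\ax_2,\,\beta_1\ax_1+\beta_2\ax_2^{q_2})$ one solves the leading anti-diagonal part first and obtains the stated $\tg$. The \emph{linear} case is the specialization $p_i,q_i\in\{1,\infty\}$, where $\af^{-1}$ is given by the classical adjugate formula $\af^{-1}=(\det\aA)^{-1}\begin{pmatrix}\beta_2&-\beta_1\\-\alpha_2&\alpha_1\end{pmatrix}$; since $\nu_0(\det\aA)=(a_1+b_2)\wedge(a_2+b_1)=\Delta$ for a generic lift, tropicalizing each entry gives $L^{-1}$ as stated.

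The point requiring care is that the full inverse series has infinitely many monomials, so I must show that only the two displayed monomials survive in the germ at $\infty$. This is a Newton-diagram computation: in $\tg_1$ the exponents $(1,0)$ and $(0,p_2)$ are the only $\preccurlyeq$-minimal ones, because the substitution shows $\ax_1$ contains no pure power $\ay_2^{j}$ with $j<p_2$, while every other monomial $\ay_1^i\ay_2^j$ has $i\geq 1$ (so it dominates $(1,0)$) or $j\geq p_2$ (so it dominates $(0,p_2)$). Hence $\VertNewton{\tg_1}=\{(1,0),(0,p_2)\}$ and Proposition \ref{prop:normalform} returns precisely the claimed expression, with the convention that the $p_2=\infty$ term disappears; the same domination argument applies to $\tg_2$ and to the negative case.

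Finally, for uniqueness I would invoke the proposition characterizing the formal inverse: $\tg$ is the unique formal inverse exactly when the minimum in $\on{det}_\nT(L)$ (and in every minor, which is automatic for the $1\times 1$ minors) is attained by a single permutation. In the positive case $\on{det}_\nT(L)=a_1+b_2$ and in the negative case $\on{det}_\nT(L)=a_2+b_1$, each a single term because the opposite product contains a factor $\delta^1=\infty$, so the inverse is unique; in the linear case the two products $a_1+b_2$ and $a_2+b_1$ coincide precisely when $\tf$ is degenerate, giving the stated equivalence. The main obstacle is the bookkeeping in the Newton-diagram step together with keeping the conventions for $p,q\in\nN^*\cup\{\infty\}$ uniform across the three cases.
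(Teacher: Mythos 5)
Your proposal is correct and follows essentially the same route as the paper: the paper's proof likewise obtains the formulas by taking a generic realization $\af$ of $\tf$, computing $\ag=\af^{-1}$, and tropicalizing, and settles uniqueness by noting that non-uniqueness forces the divisors of $\tf_1,\tf_2$ to coincide, which happens only in the degenerate linear case. Your version merely fills in the substitution/Newton-diagram details the paper leaves implicit, and phrases uniqueness through the minor criterion for the linear part rather than proper intersection of divisors — two conditions the paper itself shows are equivalent in dimension $2$.
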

\begin{proof}
In order to have a non-unique formal inverse, the divisors of the function must intersect non-properly in a neighborhood of $\infty$. In dimension $2$, this means that the divisors of $\tf_1$ and $\tf_2$ must coincide, which only happens in the linear case when the matrix of $A$ is weakly non-degenerate but not non-degenerate, 
i.e. $a_1 + b_2 =a_2 + b_1$. 
The formal inverses are found directly, by taking a generic realization $\af$ of $\tf$, computing its inverse $\ag$, and tropicalizing to obtain $\tg=\trp(\ag)$.
\end{proof}

\begin{cor}\label{cor:dynsys2d}
Let $\tf: (\nT^2, \infty) \to (\nT^2, \infty)$ be a weakly non-degenerate contracting germ.
Then the monomialization procedure applied to $\tf$ produces a dynamical system if and only if $\tf$ is of the form
$$
\tf(\tx_1, \tx_2) = \big((a_1 + \tx_1) \wedge (a_2+p_2 \tx_2), (b_1+q_1 \tx_1) \wedge (b_2+\tx_2)\big),
$$
with either:
\begin{enumerate}[(a)]
\item $p_2, q_1 \in \{1, \infty\}$ (i.e., $\tf$ is linear), and either $a_2=b_1= \infty$ ($\tf$ is diagonal), or $a_1 = b_2 \in \nT$;
\item $p_2=\infty$, $q_1 \in \nN^*\setminus\{1\}$, and $b_2=q_1a_1$;
\item $q_1=\infty$, $p_2 \in \nN^*\setminus\{1\}$, and $a_1=q_2b_2$.
\end{enumerate}
\end{cor}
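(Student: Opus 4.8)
The plan is to invoke Proposition~\ref{prop:Hopfconditiondivisors}, which reduces the question to a single cycle identity: the monomialization of $\tf$ produces a dynamical system if and only if $\sum_{i=1}^{2}\on{div}(\tf_i)=\sum_{i=1}^{2}\on{div}(\tg_i)$, where $\tg=\tf^{-1}$ is the canonical formal inverse. Since $\tf$ is weakly non-degenerate, Corollary~\ref{cor:normalforms2d} places it in one of the three normal forms (linear, positive non-linear, negative non-linear), and Proposition~\ref{prop:inversenf2d} furnishes $\tg$ explicitly in each. So I would treat the three forms in turn, read off the four divisors $\on{div}(\tf_1),\on{div}(\tf_2),\on{div}(\tg_1),\on{div}(\tg_2)$, and decide exactly when the two effective cycles coincide near $\infty$.

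The observation that organizes every computation is that in dimension $2$ each divisor is either empty (precisely when the coordinate is a single monomial, i.e.\ one exponent is $\infty$) or a single ray running to $\infty$, whose \emph{primitive direction} is fixed by the pair of exponents of that coordinate and whose \emph{offset} is fixed by the coefficients. Two sums of rays agree as cycles if and only if they agree direction-by-direction and then offset-by-offset. I would tabulate: in the linear case every nonempty divisor has direction $(1,1)$; in the positive non-linear case $\on{div}(\tf_1),\on{div}(\tg_1)$ have direction $(p_2,1)$ while $\on{div}(\tf_2),\on{div}(\tg_2)$ have direction $(1,q_1)$; in the negative non-linear case the $\tf$-divisors have directions $(1,p_1),(q_2,1)$ whereas the $\tg$-divisors have directions $(1,q_2),(p_1,1)$.

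With this bookkeeping the case analysis is mechanical. In the linear case, if $a_2=b_1=\infty$ all four divisors vanish and the identity holds automatically (the diagonal subcase for any contracting $a_1,b_2>0$); otherwise the two parallel $(1,1)$-rays must match offsets, which collapses to $a_1=b_2$, giving case~(a). In the positive non-linear case, if neither $p_2$ nor $q_1$ equals $\infty$ then the directions $(p_2,1)$ and $(1,q_1)$ are distinct, forcing \emph{both} offset matches $a_1=p_2b_2$ and $b_2=q_1a_1$; these combine to $a_1=p_2q_1a_1$, impossible because $a_1>0$ and $p_2q_1\geq 2$. Hence one exponent must be $\infty$: taking $p_2=\infty$ deletes the $(p_2,1)$-rays and leaves only $b_2=q_1a_1$ (case~(b)), while $q_1=\infty$ leaves only $a_1=p_2b_2$ (case~(c)). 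In the negative non-linear case the direction multisets $\{(1,p_1),(q_2,1)\}$ and $\{(1,q_2),(p_1,1)\}$ match only if $p_1=q_2$ (and the surviving directions are mismatched whenever an exponent is $\infty$); when $p_1=q_2$ is finite, equating offsets forces $(p_1-1)(a_2+b_1)=0$, hence $a_2+b_1=0$, contradicting the contracting condition $a_2+b_1>0$. Thus no negative non-linear germ yields a dynamical system, which is why only the positive form appears in the statement.

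I expect the main difficulty to be organizational rather than conceptual: one must extract the corner loci of the inverse germs from Proposition~\ref{prop:inversenf2d} with the correct coefficient/matrix convention, so that the offsets of $\on{div}(\tg_i)$ come out as $a_2-p_2b_2$ and $q_1a_1-b_1$ (and their linear and negative analogues) rather than their transposes, and one must carefully drop the vanishing divisors when exponents equal $\infty$. The delicate point is precisely that a genuine (finite) non-linear exponent on both coordinates produces the multiplicative resonance $a_1=p_2q_1a_1$, incompatible with contraction; this forces the ``one exponent infinite'' shape of cases~(b) and~(c) and is the structural reason behind the trichotomy. Once the directions and offsets are tabulated, the equivalence in each case is immediate, and the example of the non-monomializable germ in Example~\ref{ex:monomializationmadness} illustrates the failure when the resonance is violated.
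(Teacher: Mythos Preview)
Your proposal is correct and follows essentially the same route as the paper: reduce via Proposition~\ref{prop:Hopfconditiondivisors} to the divisor identity, use the normal forms of Corollary~\ref{cor:normalforms2d} and the explicit inverses of Proposition~\ref{prop:inversenf2d}, and then run the case analysis (linear, positive non-linear with one or both exponents finite, negative non-linear). Your direction/offset bookkeeping is a clean way to phrase exactly what the paper does when it writes out the supporting lines of each $\on{div}(\tf_i)$ and $\on{div}(\tg_i)$ and compares them; the resonance $a_1=p_2q_1a_1$ and the collapse $a_2+b_1=0$ are the same obstructions the paper isolates.
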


\begin{proof}
To prove the above statement we compare the divisors of $\tf_i$ and $\tg_i$, $i=1,2$, where $\tf=(\tf_1, \tf_2)$ is given in its normal form in Corollary \ref{cor:normalforms2d}, and $\tg=(\tg_1,\tg_2)$ is the canonical formal inverse of $\tf$ as given by Proposition \ref{prop:inversenf2d}.

For the linear case, we deduce that the statement is equivalent to having $\{a_2-a_1, b_2-b_1\} = \{a_2-b_2, a_1-b_1\}$.
If $a_2=b_1=\infty$, the condition is clearly satisfied.
Assume not both $a_2$ and $b_1$ are infinity. Then the condition is satisfied if and only if $a_1=b_2$.

Suppose $f$ is in the positive non-linear case, as in \eqref{eqn:nf2dpositive}.
In the case $p_2=\infty$, we have
\begin{align*}
\tf(\tx_1, \tx_2) &= \big(a_1 + \tx_1, (b_1+q_1 \tx_1) \wedge (b_2+\tx_2)\big),\\
\tg(\ty_1, \ty_2) &= \big(\ty_1 - a_1, \big(\ty_2 \wedge (b_1 + q_1\ty_1 - q_1 a_1)\big) - b_2 \big),
\end{align*}
where $\tg=\tf^{-1}$ is the canonical formal inverse of $\tf$.
The tropical modifications $\pi$ and $\eta$ given by the monomialization Theorem \ref{thm:intromonomialization} are obtained in this case along the tropical divisors supported on the affine lines
$$
\tx_2=q_1\tx_1 +b_1-b_2\qquad \text{and}\qquad \ty_2=q_1\ty_1+b_1-q_1a_1 \quad \text{respectively}.
$$
Since $a_1, b_1, b_2 \neq \infty$, we deduce that $\pi$ and $\eta$ coincide if and only if $b_2=q_1a_1$, and we get case (b).

The case $q_1=\infty$ is completely analogous, and gives case (c).

Assume that both $p_2$ and $q_1$ are different from $\infty$, and not both are equal to $1$.
In this case one can check that the tropical modifications $\pi$ and $\eta$ given by the monomialization Theorem \ref{thm:intromonomialization} are obtained respectively along the lines:
$$
\begin{cases}
x_2=\dfrac{x_1+a_1-a_2}{p_2},\\
x_2=q_1 x_1 + b_1 - b_2,
\end{cases}
\qquad
\begin{cases}
y_2=\dfrac{y_1+p_2b_2-a_2}{p_2},\\
y_2=q_1 y_1 + b_1 - q_1a_1.
\end{cases}
$$
These divisors correspond (in the sense of Proposition \ref{prop:Hopfconditiondivisors}) if and only if $a_1=p_2 b_2$ and $b_2=q_1 a_1$.
Hence $a_1=p_2 q_1 a_1$. Since $p_2 q_1 \geq 2$, this implies that either $a_1=0$ or $a_1 = \infty$, in contradiction with the fact that $\tf$ is contracting and non-degenerate, see Corollary \ref{cor:normalforms2d}.

Suppose finally that $f$ is in the negative non-linear case, as in \eqref{eqn:nf2dnegative}.
As before, one can check that the tropical modifications $\pi$ and $\eta$ given by the monomialization Theorem \ref{thm:intromonomialization} are obtained respectively along the lines:
$$
\begin{cases}
x_2=p_1 x_1 + a_1 - a_2,\\
x_2=\dfrac{x_1+b_1-b_2}{q_2},
\end{cases}
\qquad
\begin{cases}
y_2=q_2 y_1 + b_2 - q_2a_2,\\
y_2=\dfrac{y_1+p_1b_1-a_1}{p_1},
\end{cases}
$$
where we do not perform the modification along the line containing $p_1$ in its equation if $p_1=\infty$, and analogously for $q_2$, while they cannot both be $\infty$.
It follows that these divisors correspond if and only if $p_1=q_2 \in \nN^*\setminus \{1\}$, and $(q_2-1)a_2=b_2-a_1=-(p_1-1)b_1$.
It follows that $a_2+b_1 = 0$, contradicting that $\tf$ is  contracting.
\end{proof}

In the cases of Corollary \ref{cor:dynsys2d},  the modifications $\tX, \tY$ of $\nT^2$ can be chosen to be equal so $\tX = \tY$ and then monomialization of  the tropical polynomial function $f: \nT^2 \to \nT^2$ acts as an automorphism  $F: X \to X$. Therefore, the monomializations of these tropical polynomial functions produce Hopf data of dimension two as described in Definition \ref{cor:realMultiGraph}.
There remain cases when the modifications $\pi$ and $\eta$ do not coincide. In order to ``monomialize'' these germs in a dynamical way, an infinite number of modifications are necessary. This is postponed until Section \ref{ssec:noncompact}.

\subsubsection*{Hopf manifolds and tropical germs in higher dimensions}

Our aim is to describe an analogue of Corollary \ref{cor:dynsys2d} in higher dimensions.
By the characterization of Hopf manifolds given in Proposition \ref{prop:Hopfdatacone}, the description of normal forms given by Proposition \ref{prop:normalform}, plus the characterization of weakly non-degenerate matrices given by Definition \ref{def:matrixnondeg}, we infer the following.

\begin{prop}\label{prop:compactHopfalldim}
Let $\tf=(\tf_1, \ldots, \tf_d):(\nT^d,\infty) \to (\nT^d,\infty)$ be a contracting weakly non-degenerate germ.
Assume the monomialization procedure described in Theorem \ref{thm:intromonomialization} induces a dynamical system $\tF:\tX \to \tX$, where $\tX$ is a modification of $\nT^d$.
Then for every $j$, there exists an affine hyperplane $H_j$  in $\nR^d$ such that:
\begin{enumerate}[(a)]
\item for all $j=1, \ldots, d$, $\VertNewton{\tf_j}\subset H_j$;
\item all $H_j$ are parallel to one another;
\item $\bigcup_{j=1}^d \VertNewton{\tf_j}$ contains the canonical basis $\{e_1, \ldots, e_d\}$ of $\nN^d$.
\end{enumerate}
\end{prop}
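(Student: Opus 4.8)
The plan is to read off the statement from the cone structure of $\tX$ established in Proposition \ref{prop:Hopfdatacone}, combined with the normal-form description of $\tf$. The hypothesis that the monomialization produces a dynamical system $\tF \colon \tX \to \tX$ (with $\tF$ contracting, as in Proposition \ref{prop:Hopfconditiondivisors}, since $\tf$ is contracting) means precisely that $(\tX, \tF)$ is a (possibly singular) tropical Hopf data. By Proposition \ref{prop:Hopfdatacone}, $\tX \subseteq \nT^{2d}$ is therefore a cone along some direction $w = (w', w'') \in (\nR_{+}^{*})^{2d}$, where I split the coordinates of $\nT^{2d}$ into the base coordinates $\tx = (\tx_1, \ldots, \tx_d)$ and the $d$ coordinates $\ty = (\ty_1, \ldots, \ty_d)$ introduced by the modifications, so that the strict transform $\Strict{\pi}$ is the graph $\{(\tx, \tf(\tx))\}$ and $w'$ is the projection of $w$ to the base.

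Next I would exploit the cone condition on this graph. Fix $j$ and a top-dimensional face of $\Strict{\pi}$ on which $\tf_j$ is linear, say $\tf_j(\tx) = \tphi_{j,I} + \scalprod{I,\tx}$ for a corner $I$ of $\partialNewton{\tf_j}$. Taking a point $(\tx^0, \tf(\tx^0))$ in the relative interior of this face, the cone condition forces $(\tx^0 + \lambda w', \tf(\tx^0) + \lambda w'')$ to lie in $\tX$ for all small $\lambda$; since an interior point of a top-dimensional face has a neighborhood in $\tX$ contained in that same face, the perturbed point stays on the graph, whence $\tf_j(\tx^0) + \lambda w''_j = \tf_j(\tx^0 + \lambda w') = \tf_j(\tx^0) + \lambda \scalprod{I, w'}$, so $\scalprod{I, w'} = w''_j$. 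As the corners of $\partialNewton{\tf_j}$ are exactly the exponents whose monomial is active on a full-dimensional region near $\infty$, letting the face vary shows every corner $I$ of $\partialNewton{\tf_j}$ lies on the affine hyperplane $H_j := \{I \in \nR^d \mid \scalprod{I, w'} = w''_j\}$. Because $\partialNewton{\tf_j}$ is a union of compact faces, each spanned by corners, and $H_j$ is affine hence convex, the whole Newton diagram — in particular $\VertNewton{\tf_j}$ — is contained in $H_j$; this is (a). The hyperplanes $H_j$ share the common strictly positive normal $w'$, so they are parallel, giving (b).

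For (c) I would invoke weak non-degeneracy. By the proposition identifying non-degeneracy of $\tf$ with that of its linear part $\tA$, we have $\on{det}_{\nT}(\tA) \neq \infty$, so there is a permutation $\sigma \in S_d$ with $a_{i,\sigma(i)} \neq \infty$ for all $i$, i.e. $e_{\sigma(i)} \in \mon{\tf_i}$. Since $\tf$ fixes $\infty$ there is no constant term, every exponent has degree $\geq 1$, and a degree-one exponent $e_{\sigma(i)}$ is necessarily a vertex of $\Newton{\tf_i}$ lying on the Newton diagram, so $e_{\sigma(i)} \in \VertNewton{\tf_i}$. As $\sigma$ is a permutation of $\{1, \ldots, d\}$, the sets $\VertNewton{\tf_i}$ together contain $\{e_1, \ldots, e_d\}$, proving (c).

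The main obstacle I anticipate is the second step: faithfully translating the cone condition on the abstract variety $\tX$ into the linear condition $\scalprod{I, w'} = w''_j$ on corners. One must verify that the cone-direction perturbation of an interior point of a graph face stays on the strict transform rather than sliding onto an exceptional face, and that \emph{every} corner of $\partialNewton{\tf_j}$ (not just one) is realized as the active monomial on a top-dimensional cone near $\infty$, so that the conclusion propagates to all of $\VertNewton{\tf_j}$ by convexity.
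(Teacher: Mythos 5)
Your proposal is correct, and for parts (a)--(b) it takes a genuinely different route from the paper's, while part (c) coincides with the paper's argument. Both proofs launch from Proposition \ref{prop:Hopfdatacone}: the hypothesis makes $(\tX,\tF)$ a (possibly singular) Hopf data, so $\tX$ is a cone along a strictly positive vector $w$. For (c) you and the paper use the same fact --- weak non-degeneracy yields a permutation $\sigma$ with $e_{\sigma(i)} \in \mon{\tf_i}$, and a degree-one exponent is automatically an extreme point of $\Newton{\tf_i}$, hence lies in $\VertNewton{\tf_i}$; the paper merely phrases this contrapositively (a missing $e_i$ would make a column of the linear part identically $\infty$, forcing $\on{det}_\nT = \infty$). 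The divergence is in (a)--(b): the paper projects the cone structure down to $\nT^d$, observing that every sedentarity-$\emptyset$ one-dimensional face of each $\on{div}(\tf_j)$ is parallel to the projection of $w$, and then invokes tropical duality between the hypersurface and the Newton subdivision to conclude that all bounded codimension-one faces of $\partialNewton{\tf_j}$ are parallel to a fixed hyperplane, so the diagram sits in a translate of it. You instead stay upstairs in $\nT^{2d}$ and evaluate the cone condition directly on the strict transform, extracting the explicit equation $\scalprod{I,w'} = w''_j$ for each exponent active on a top-dimensional graph face, and then propagate to all of $\partialNewton{\tf_j}$ via the corner-realization fact and convexity. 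The trade-off: the paper's duality argument is shorter and conceptual; yours is more elementary and self-contained, and it pins down the hyperplanes $H_j$ exactly (common normal $w'$, offsets $w''_j$) rather than only their common direction. The two verification points you flag are both sound and are precisely the facts the paper itself relies on: a point in the relative interior of a facet of a polyhedral complex lies on no other face, so a small neighborhood of it in $\tX$ stays in that facet; and every vertex of $\partialNewton{\tf_j}$ is uniquely active at points arbitrarily close to $\infty$ (this is the observation about the corner points in Example \ref{ex:phinewton}, and follows since the normal cone of $\Newton{\tf_j}$ at a vertex is a full-dimensional subcone of $\nR_+^d$ and hence contains a strictly positive direction).
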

\begin{proof}
By hypothesis, $(\tX, \tF)$ defines a (possibly virtually regular) Hopf data. By Proposition \ref{prop:Hopfdatacone}, all $1$-dimensional faces of $X$ of sedentarity $\emptyset$ are parallel to a vector $w \in \nR^d$.
By the proof of Theorem \ref{thm:intromonomialization}, the variety $\tX$ is obtained from $\nT^d$ by modification along the divisors $\on{div}(\tf_j)$.
By projection on $\nT^d$, all (sedentarity $\emptyset$) $1$-dimensional faces of $\on{div}(\tf_j)$ are parallel to the projection of  $w$.
By duality, this means that all (bounded) $1$-codimensional faces of $\partialNewton{\tf_j}$ are parallel to the hyperplane $H$ orthogonal to $v$.
By definition of $\VertNewton{\tf_j}$, assertions \textit{(a)} and \textit{(b)} follow.

If by contradiction there exists $i \in \{1, \ldots, d\}$ so that $e_i \not \in \bigcup_{j=1}^d \VertNewton{\tf_j}$, then the $i$-th column of the linear part $A$ of $\tf$ would be $(\infty, \ldots, \infty)$.
It follows that $\on{det}_\nT(A) = \infty$, which is a contradiction since $\tf$ is weakly non-degenerate. 
\end{proof}

\begin{rmk}
Proposition \ref{prop:compactHopfalldim} does not hold without the contracting assumption, not even in dimension $d=2$.
As an example, consider the germ $\tf(\tx_1, \tx_2)=(\tx_1 \wedge p_2 \tx_2, q_1 \tx_1 \wedge \tx_2)$, with $p_2, q_1 \in \nN$ and $p_2q_1 \geq 2$.
The linear part of $\tf$ is $\begin{pmatrix}0 & \infty \\ \infty & 0\end{pmatrix}$, hence $\tf$ is not contracting.
By Proposition \ref{prop:inversenf2d}, the canonical formal inverse of $\tf$ is $\tf^{-1} = \tf$.
It follows that the monomialization procedure of Theorem \ref{thm:intromonomialization} induces a dynamical system $\tF:\tX \to \tX$, in a neighborhood of $\infty$ but the conclusions of Proposition \ref{prop:compactHopfalldim} do not hold for $\tf$, since we modified along two non-parallel lines $\{\tx_1 = p_2 \tx_2\}$ and $\{\tx_2 = q_1 \tx_1\}$.
\end{rmk}

As we have already seen in dimension $2$ by Corollary \ref{cor:dynsys2d}, the conditions given by Proposition \ref{prop:compactHopfalldim} are not sufficient to guarantee we have a Hopf data defined by the monomialization process.
In higher dimensions, explicit necessary and sufficient conditions are very complicated to formulate, as suggested by the following examples.

\begin{ex}
Consider the linear map $\tf:\nT^d \to \nT^d$ defined by the matrix
$$
A=
\begin{pmatrix}
a & b & \cdots & b\\
b & \ddots & \ddots & \vdots \\
\vdots & \ddots & \ddots & b \\
b & \cdots &  b & a
\end{pmatrix},
$$
with $a,b \in \nT$.
By direct computation, its tropical inverse is given by
$$
A^{-1}=
\begin{pmatrix}
a' & b' & \cdots & b'\\
b' & \ddots & \ddots & \vdots \\
\vdots & \ddots & \ddots & b' \\
b' & \cdots &  b' & a'
\end{pmatrix}
- \on{det}_\nT(A),
\qquad
\begin{array}{l}
a'=(d-1)(a \wedge b),\\
b'=b+(d-2)(a \wedge b),\\
\on{det}_\nT (A) = d(a \wedge b).
\end{array}
$$

It follows that the monomialization of $\tf$ defines a dynamical system if and only if $a \leq b$.
\end{ex}

\begin{ex}\label{ex:PDhigherdim}
Let $\af:(\nA^d,0) \to (\nA^d,0)$ be a contracting (local) automorphism.
Denote by $\nu_0$ the $t$-adic valuation on $\nK=\nk(t^\nR)$, and $\abs{\,\cdot\,}_0=e^{-\nu_0(\cdot)}$ the induced norm on $\nK$.
Up to conjugacy, we may assume that $\af$ is in Poincar\'e-Dulac normal form (see \cite{sternberg:localcontractions,rosay-rudin:holomorphicmaps,berteloot:methodeschangementechelles}):
$$
\af(\ax_1, \ldots, \ax_d)=\big(\alambda_1 \ax_1, \alambda_2 \ax_2 + \aP_2(\ax_1), \ldots, \alambda_d \ax_d + \aP_d(\ax_1, \ldots, \ax_{d-1})\big), 
$$
where $1 > \abs{\alambda_1}_0\geq \cdots \geq \abs{\alambda_d}_0 > 0$.
The polynomials $\aP_j$ have only \emph{resonant monomials}: a monomial $\displaystyle \prod_{h=1}^{j-1} \ax_h^{i_h}$ appears in $\aP_j$ only if $\alambda_j=\displaystyle \prod_{h=1}^{j-1} \alambda_h^{i_h}$.

The tropicalization of $\af$ is of the form
$$
\tf(\tx_1, \ldots, \tx_d)=\big(\tlambda_1 + \tx_1, (\tlambda_2 + \tx_2) \wedge \tP_2(\tx_1), \ldots, (\tlambda_d + \tx_d) \wedge \tP_d(\tx_1, \ldots, \tx_{d-1})\big), 
$$
where $0 < \tlambda_1 \leq \cdots \leq \tlambda_d < \infty$.
The resonance condition says that a monomial $\displaystyle \sum_{h=1}^{j-1} i_h \tx_h$ appears in $\tP_j$ only if $\tlambda_j = \displaystyle \sum_{h=1}^{j-1} i_h \tlambda_h$.

Denote by $\wt{\tf}_{j} : (\nT^j, \infty) \to (\nT^j, \infty)$ the map given by the first $j$ coordinates of $\tf$.
A calculation shows that the canonical formal inverse of $\tf$ is given by
$$
\tf^{-1}(\ty_1, \ldots, \ty_d)=\Big(\ty_1-\tlambda_1, \big(\ty_2 \wedge \tP_2(\wt{\tf}_1^{-1}(\ty_1))\big)-\tlambda_2, \ldots, \big(\ty_d \wedge \tP_d\big(\wt{\tf}_{d-1}^{-1}(\ty_1, \ldots, \ty_{d-1})\big)-\tlambda_d\Big).
$$
Denote by $\tg_1, \ldots, \tg_d$ the coordinate functions  of $\tg=\tf^{-1}$.
By induction, one can show that $\VertNewton{\tf_j}=\VertNewton{\tg_j}$ for all $j$.
But in general $\sum_{j=1}^d \on{div}(\tf_j) \neq \sum_{j=1}^d \on{div}(\tg_j)$.

As an example, consider the case $d=3$, and  $\tlambda_3=\tlambda_2=2\tlambda_1$. We denote $\tlambda=\tlambda_1$. Then we have
\begin{align*}
\tf(\tx_1, \tx_2, \tx_3)&=\big(\tx_1+\tlambda, (\tx_2+2\tlambda) \wedge (\eps + 2\tx_1), (\tx_3 + 2\tlambda) \wedge (\eps_1 + \tx_2) \wedge (\eps_0 + 2\tx_1)\big),\\
\tg(\ty_1, \ty_2, \ty_3)&=\Bigg(\ty_1-\tlambda, \big(\ty_2 \wedge (\eps + 2\ty_1-2\tlambda))\big)-2\tlambda, \\
&\qquad\qquad\Big(\ty_3 \wedge \big((\eps_1 + \ty_2 -2\lambda) \wedge (\eps_1+\eps + 2\ty_1-4\tlambda)\big) \wedge \big(\eps_0 + 2\ty_1-2\lambda\big)\Big)-2\tlambda\Bigg).
\end{align*}
Hence we get that $\on{div}(\tf_j)=\on{div}(\tg_j)$ for $j=1, 2$, while $\on{div}(\tf_3)=\on{div}(\tg_3)$ if and only if $\eps_0 \leq \eps+\eps_1-2\lambda$.
\end{ex}

\begin{ex}
Let $\tf:(\nT^3, \infty) \to (\nT^3, \infty)$ be a contracting weakly non-degenerate germ of the form
$$
\tf(\tx_1, \tx_2, \tx_3)=\big((\tx_1 + a) \wedge (\tx_2 + b), (\tx_1+c) \wedge (\tx_2 + a), (\tx_3 + e) \wedge (u\tx_1 + \eps_1) \wedge (u\tx_2 + \eps_2)\big).
$$
Here $a,b+c,e > 0$, $\Delta:= 2a \wedge (b+c) \in (0,\infty)$, and $u \in \nN^*$.

The canonical formal inverse $\tg=\tf^{-1}$ of $\tf$ is given by
\begin{align*}
\tg(\ty_1, \ty_2, \ty_3)&=\Bigg(\big((\ty_1 + a) \wedge (\ty_2 + b)\big)-\Delta, \big((\ty_1+c) \wedge (\ty_2 + a)\big)-\Delta,\\
&\hspace{-1cm} \Big(\ty_3 \wedge \Big(u\ty_1-u\Delta + \big((au + \eps_1)\wedge(cu+\eps_2)\big)\Big) \wedge \Big(u\ty_2-u\Delta + \big((bu + \eps_1)\wedge(au+\eps_2)\big)\Big)\Big) -e \Bigg).
\end{align*}

We deduce that $\on{div}(\tf_j)=\on{div}(\tg_j)$ for $j=1,2$ (it also follows from Corollary \ref{cor:dynsys2d}), while $\on{div}(\tf_3)=\on{div}(\tg_3)$ if and only if
\begin{equation}\label{eqn:exmixedresonance}
\begin{cases}
\eps_1-e+u\Delta = (au + \eps_1)\wedge(cu+\eps_2), \\
\eps_2-e+u\Delta = (bu + \eps_1)\wedge(au+\eps_2).
\end{cases}
\end{equation}
For example, if $cu+\eps_2 \leq au+\eps_1$ and $bu+\eps_1 \leq au+\eps_2$, then $cu-au +\eps_2 \leq \eps_1 \leq au-bu+\eps_2$, and $\Delta = b+c$.
In this case the condition \eqref{eqn:exmixedresonance} is satisfied if and only if $2e=u(b+c)$, i.e, if $e/u$ is a (tropical) eigenvalue for the matrix $A=\begin{pmatrix}a&c\\b&a\end{pmatrix}$, which represents the linear part of $\tf$ restricted to the first two coordinates.
This situation is analogous to a classical dynamical situation of resonances. See \cite{ruggiero:rigidgerms} for other examples of resonances where the linear part is not in diagonal or triangular form.
\end{ex}

\subsection{Non-compact tropical Hopf manifolds}\label{ssec:noncompact}

In the previous subsections we have seen how, starting from a contracting germ $\tf:(\nT^d,\infty) \to (\nT^d,\infty)$, we can construct in a canonical way its monomialization.
In fact, Theorem \ref{thm:intromonomialization} provides two modifications $\pi : \tX \to \nT^d$ and $\eta : \tY \to \nT^d$, and an isomorphism 
$\tF: \nT^{2d} \to \nT^{2d}$ which is an isomorphism restricted to suitable neighborhoods of $X$ and $Y$ at $\infty$ which lifts 
  $\tf$ in the sense  that $\eta \circ \tF = \tf \circ \pi$ on the strict transform $\Strict{\pi}$.
When the two modifications coincide (up to translation by a scalar), we say that this data is a ``dynamical monomialization'' of $\tf$, we get a tropical Hopf manifold.

If the two modifications $\pi$ and $\eta$ do not coincide,   we can sometimes still find a dynamical monomialization of $\tf$, by performing an infinite (countable) number of monomialization.
As the proof of Theorem \ref{thm:intromonomialization} suggests, to do this we need $\tf$ to be obtained as the tropicalization of a \emph{global} automorphism of $\nA^d$ (which is contracting at $\infty$).
As  the next example shows, the latter condition is not invariant up to equivalence of tropical germs.

\begin{ex}\label{ex:twiceHenon}
We have seen that the tropical germ $\tf:(\nT^2,\infty)\to (\nT^2,\infty)$ considered in Example \ref{ex:monomializationmadness} is not the tropicalization of a global automorphism.
Nevertheless, $\tf$ is equivalent to 
$$
\wt{\tf}(\tx_1, \tx_2)=\Big((1+\tx_1) \wedge p \big(\tx_2 \wedge (q\tx_1-1)\big), q\tx_1 \wedge (\tx_2+1)\Big),
$$
which is the tropicalization of a global automorphism $\wt{\af}:\nA^2 \to \nA^2$.
In fact, $\wt{\af}$ can be taken as the composition $\aH_2 \circ \aH_1$ of the two H\'enon maps (see \cite{friedland-milnor:dynpolyauto}):
$$
\aH_1(\ax_1, \ax_2)= (t\ax_2 + \ax_1^q, \tx_1), \qquad \aH_2(\ax_1, \ax_2)=(t\ax_2 + t^{-p}\ax_1^p, \tx_1).
$$
\end{ex}

We now describe the strategy to prove Theorem \ref{thm:intrononcompact}.
Assume we are in the situation described above. 
To obtain a tropical Hopf data  in the sense of Definition \ref{def:tropHopfdata} we wish to find a modification of $\nT^d$ dominating both $\tX$ and $\tY$, i.e.~$\mu:\tW \to \nT^d$ and a monomial function $\thh: \tW \to \tW$ such that:
$$
\xymatrix@R20pt@C20pt{
& {\tW}\ar[dl]_{\wt{\eta}}\ar[dr]^{\wt{\pi}}\ar@(r,u)[]_>>>>>{\thh} &\\
{\tX}\ar[rr]^\tF\ar[dr]_{\pi} & & {\tY}\ar[dl]^{\eta}\\
& {\nT^d}\ar@(dr,ur)[]_>>>>>{\tf} &
}
$$

A dominating modification always exists, namely we can take the modification along the union of the divisors of the modification $\pi$ and $\eta$. 
Of course such a $\tW$ is not unique; any modification of $\tW$ could also produce such a commutative diagram.
There is a minimal such $\tW$.
Assuming the divisors are all disjoint, then $\mu: W \to \nT^d$ is the minimal such $\tW$.
If some of the divisors happen to coincide  then we only require modification once along each distinct divisor.

Notice that $\thh: \tW \to \tW$ is not an automorphism.
It is neither injective nor surjective, as some faces of $\tW$ are contracted to edges in the image and some faces of $\tW$  are not in the image of $\thh$.  
To rectify this, the idea is to continue to modify $\tW$ to $\tW^{(2)}$, adding two types of faces.
The first type of face is added to $\tW$ so that the contracted faces of $\tW$ have somewhere to land, and another kind of face is added so that they are sent to the faces of $\tW$ which are not in the image of $\thh$.
However, upon passing to this space $\tW^{(2)}$ this problem persists but now for different faces, and it requires us to modify again and again.
Ultimately an infinite polyhedral complex is obtained (thus non-compact) $\tW^{(\infty)}$ in order to monomialize such a map.

\begin{proof}[Proof of Theorem \ref{thm:intrononcompact}]
Let $\tf:\nT^d \to \nT^d$ be a weakly non-degenerate polynomial, obtained as the tropicalization of a contracting global automorphism $\af:\nA^d \to \nA^d$.
Denote by $\tf^n=(\tf_1^{(n)}, \ldots, \tf_d^{(n)})$ the $n$-th iterate of $\tf$, with $n \in \nZ$ (in the case of $n < 0$, $\tf^n = \tg^{-n}$ where $\tg$ is the canonical formal inverse of $\tf$).
We may pick $\af$ generic so that $\tf^n$ is the tropicalization of $\af^n$ for all $n \in \nZ$.

The monomialization result from Theorem \ref{thm:intromonomialization} is obtained by taking the modifications $\pi:\tX \to \nT^d$ along $\tf_1^{(1)}, \ldots, \tf_d^{(1)}$ and $\eta:\tY \to \nT^d$ along $\tf_1^{(-1)}, \ldots, \tf_d^{(-1)}$.
The monomial map is given by $\tF(\tx_1, \ldots, \tx_d, \tu_1, \ldots, \tu_d) = (\tu_1, \ldots, \tu_d,\tx_1, \ldots, \tx_d)$. To simplify notation we write $\tF(\tx,\tu)=(\tu,\tx)$.

The modification $\mu:\tW \to \nT^d$ that dominates both $\pi$ and $\eta$ is given by modifying along $\tf_j^{(1)}$ and $\tf_j^{(-1)}$, with $j=1, \ldots, d$ so that $W \subset \nT^{3d}$.
Denote the coordinates on $\nT^{3d}$ by
$$
(\tu^{(-1)}_1, \dots , \tu^{(-1)}_d, \tx_1, \dots, \tx_d, \tu^{(1)}_1, \dots \tu^{(1)}_d)
$$
and for simplicity by $(\tu^{(-1)}, \tx, \tu^{(1)})$.
We will suppose that the modification $\tW$ is given by taking the graph of $\tf_i^{(-1)}$ in the $\tu^{(-1)}_i$ coordinate and the graph of $\tf_i^{(1)}$ in the $\tu^{(1)}_i$ coordinate.

Set $\tW^{(1)}=\tW$, then the action $\thh^{(1)}:\tW^{(1)} \to \tW^{(1)}$ is given by $\thh(\tu^{(-1)},\tx, \tu^{(1)})=(\tx,\tu^{(1)},\tf(\tu^{(1)}))$.
Notice that $\thh^{(1)}$ is not an automorphism.
We can iterate this procedure to obtain modifications $\mu^{(n)}:\tW^{(n)} \to \nT^d$ and maps $\thh^{(n)}:\tW^{(n)} \to \tW^{(n)}$, where $\tW^{(n)} \subset \nT^{d(2n+1)}$ is obtained by modifying along $\tf_i^{(k)}$ for all $1\leq i \leq d$ and $\abs{k} \leq n$.
The map $\thh^{(n)}$ is of the form $\thh^{(n)}(\tu^{(-n)},\ldots \tu^{(n)})=(\tu^{-(n-1)},\ldots, \tu^{(n)},\tf(\tu^{(n)}))$.
Taking an inverse limit for $n \to \infty$, we get $\thh^{(\infty)}:\tW^{(\infty)}\to\tW^{(\infty)}$ defined by a shift on the coordinates, that is an isomorphism.
\end{proof}

\begin{rmk}
Alternatively, it is possible to  construct the infinite polyhedral complex from above by a gluing procedure. Notice that the map $\thh$ still defines a bijection  $\thh:A \to B$, where $A, B \subset \tW$ denote the strict transforms associated to the modifications $\wt{\eta}$ and $\wt{\pi}$ respectively. 
We then consider an infinite number of copies $W_n$, $A_n$, $B_n$, $n \in \nZ$, and consider the space
$$
\wt{W}=\bigsqcup_{n \in \nZ} W_n/\sim,
$$
where $W_n \supset A_n \ni \tu_n \sim \tu_{n+1} \in B_{n+1} \subset W_{n+1}$ if $\tu_{n+1} = \thh(\tu_n)$.

The action $\wt{\thh}:\wt{W} \to \wt{W}$ given by the shift $W_n \ni \tu \mapsto \tu \in W_{n+1}$ then corresponds to the action $\thh^{(\infty)}:\tW^{(\infty)}\to\tW^{(\infty)}$.
\end{rmk}

\begin{rmk}
The assumption that $\tf$ is  the tropicalization of a \emph{global} automorphism is necessary.
The same procedure above can be applied for any weakly non-degenerate germ $\tf:(\nT^d,\infty)\to (\nT^d,\infty)$,
but in this case, the map $\thh^{(1)}$ would be defined only on the preimage by $\mu^{(1)}$ of a suitable neighborhood $\tU$ of $\infty \in \nT^d$.
By iterating the process, $\thh^{(n)}$ would be defined only on the preimage by $\mu^{(n)}$ of $\tf^{n-1}(\tU)$, which tends to $\{\infty\}$, since $\tf$ is contracting.

In dimension $2$ nevertheless, using the normal forms given by Corollary \ref{cor:normalforms2d}, it is easy to show that any weakly non-degenerate map $\tf:(\nT^2,\infty) \to (\nT^2,\infty)$ is equivalent to another map $\wt{\tf}:(\nT^2,\infty)\to (\nT^2, \infty)$ which is the tropicalization of a global automorphism (see Example \ref{ex:twiceHenon}).

For any contracting germ $\tf:(\nT^2, \infty) \to (\nT^2, \infty)$ we can hence always construct dynamical monomializations.
To construct the non-compact tropical Hopf surface associated to these data, the global automorphism $\af$ tropicalizing to a tropical map equivalent to $\tf$ does not need to be globally attracting (as in the automorphism $\tilde{\af}$ Example \ref{ex:twiceHenon}).
It suffices to consider the action of $\thh^{(\infty)}$ on a small (punctured) neighborhood of $\infty$.
\end{rmk}

We conclude this section with a specific example of the dynamical monomialization by an infinite number of modifications.

\begin{ex}\label{ex:noncompactHopf}
Let $\tf(\tx_1,\tx_2)=(a_1+\tx_1, (b_1+\tx_1) \wedge (b_2+\tx_2) )$ with $a_1 > b_2 > 0$.
Let $\pi : X \to \nT^2$ and $\eta : Y \to \nT^2$ be the modifications constructed in the previous section and $\tF:\tX \to \tY$ be a lift of $\tF$.
Both modifications are along a single divisor and they are both depicted in Figure \ref{fig:lineartriang}.
The modification $\eta: \tW \to \nT^2$ dominating both $\pi$ and $\eta$ is depicted in Figure \ref{fig:noncompact1}. 
The color coding of this map shows which faces of $\tW$ are contracted to edges by the map $\thh$ (here depicted in light blue) and which faces of $\tW$ are not in the image of $\thh$ (depicted in white).

\begin{figure}
\begin{minipage}[t]{.5\columnwidth}
	\def\svgwidth{\columnwidth}
	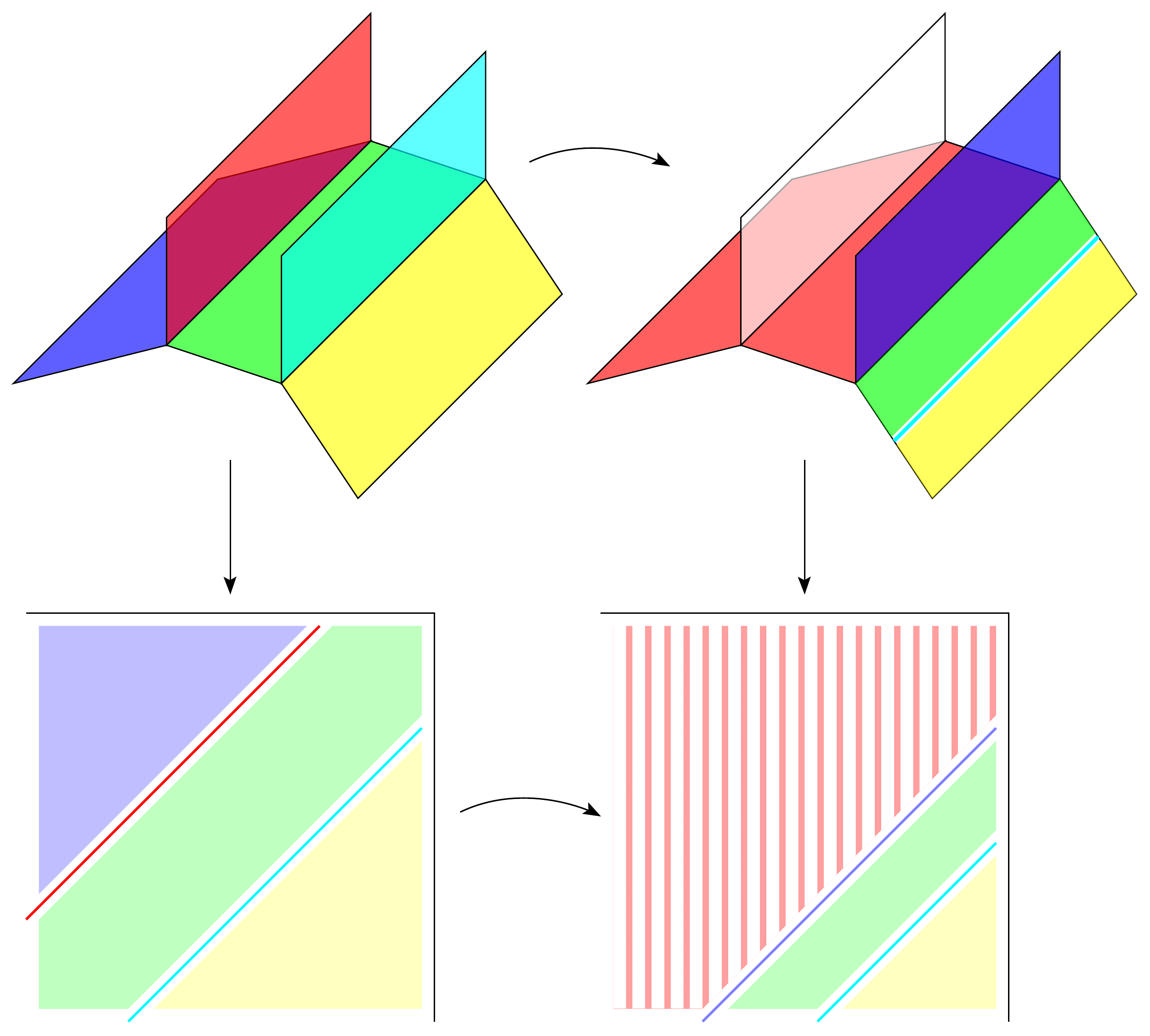
\end{minipage}
\caption{Construction of $\tW$ for $\tf(\tx_1,\tx_2)=(a_1+\tx_1, (b_1+\tx_1) \wedge (b_2+\tx_2))$, with $a_1>b_2>0$.} \label{fig:noncompact1} 
\end{figure}

In Figure \ref{fig:noncompact2}, we modify along the image by $\thh$ of the contracted face (the line depicted in light blue), and the preimage of the line obtained as the intersection between the face not contained on the image of $\thh$ and $\thh(\tW)$, obtaining a modification $\mu^{(2)}:\tW^{(2)} \to \nT^2$ dominating $\mu$, and a map $\thh^{(2)}:\tW^{(2)}\to\tW^{(2)}$.
This map satisfies $\xi^{(2)} \circ \thh^{(2)} =\thh \circ \xi^{(2)}$ on the strict transform $\Strict{\xi^{(2)}}$, where $\mu^{(2)}=\mu \circ \xi^{(2)}$.
The color coding shows how also in this case we have a map $\thh^{(2)}$ which contracts a face (depicted in brown) and does not contain a face in its image (depicted in white).

\begin{figure}
\begin{minipage}[t]{.6\columnwidth}
	\def\svgwidth{\columnwidth}
	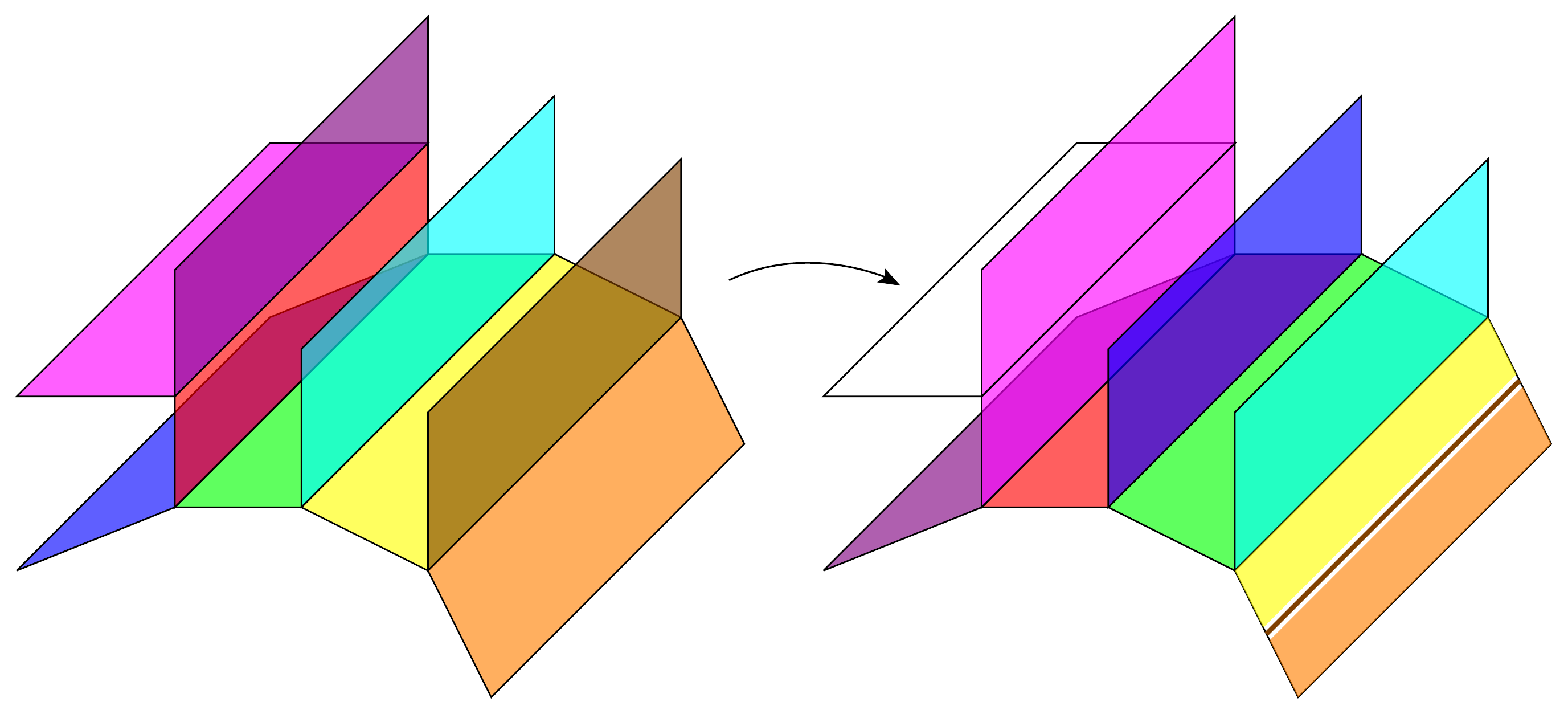
\end{minipage}
\caption{Construction of $\tW^{(2)}$ for $\tf(\tx_1,\tx_2)=(a_1+\tx_1, (b_1+\tx_1) \wedge (b_2+\tx_2))$, with $a_1>b_2>0$.} \label{fig:noncompact2} 
\end{figure}

We iterate this procedure, obtaining an infinite complex $\tW^{(\infty)}$ and an action $\thh^{(\infty)}:\tW^{(\infty)}\to\tW^{(\infty)}$, that acts as a shift on the faces of $\tW^{(\infty)}$.
In this case, topologically $\tW^{(\infty)}$ is a cone at $\infty$ over an infinite tree as in Figure \ref{fig:noncompact3}.
The action of $\thh^{(\infty)}$ restricted to the tree is given by a shift of the faces in the tree. 
The color coding describes the projection $\mu^{(\infty)}:\tW^{(\infty)} \to \nT^2$.
Notice that there is an infinite number of faces (the red ones) projecting to a single line (given by $\tx_2=\tx_1+b_1-b_2$).

\begin{figure}
\begin{minipage}[t]{.6\columnwidth}
	\def\svgwidth{\columnwidth}
	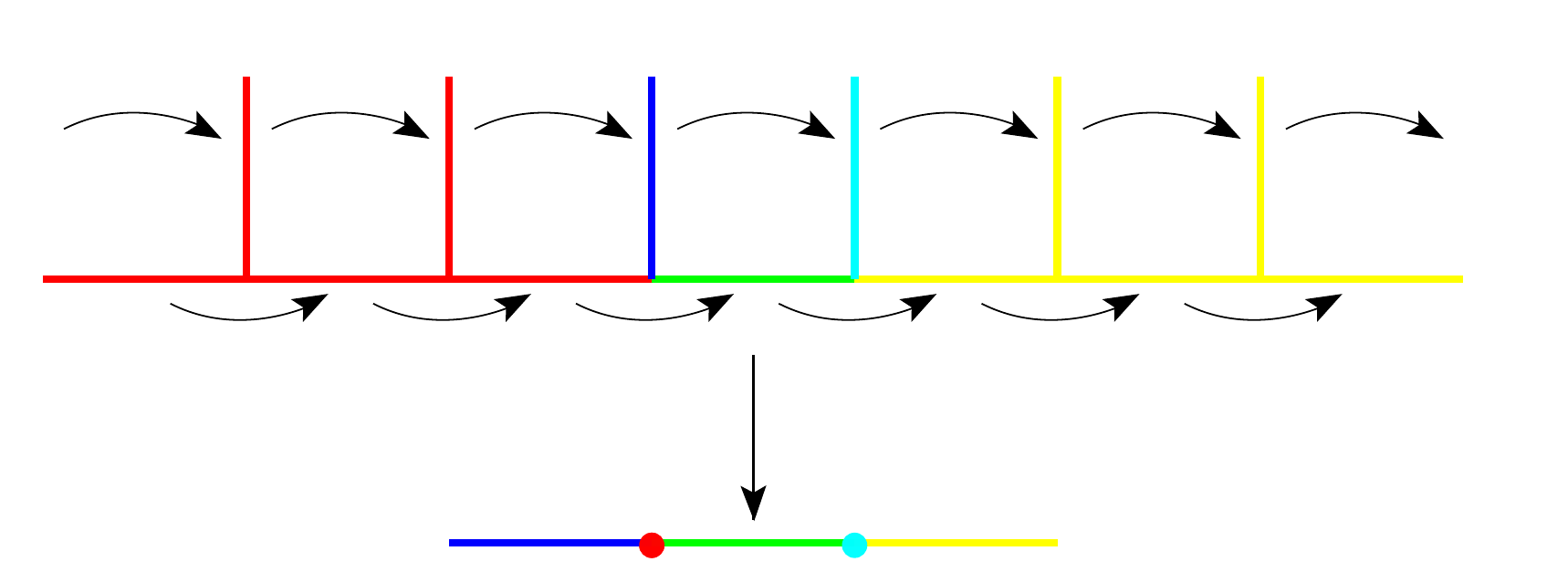
\end{minipage}
\caption{Section of $\tW^{(\infty)}$ for $\tf(\tx_1,\tx_2)=(a_1+\tx_1, (b_1+\tx_1) \wedge (b_2+\tx_2))$, with $a_1>b_2>0$.} \label{fig:noncompact3} 
\end{figure}

\end{ex}

\section{Tropical Hopf data and Analytification}\label{sec:analytification}

This section is devoted to studying the links between tropical Hopf data, tropical Hopf manifolds and the analogous objects in rigid-analytic geometry in the sense of Tate and Berkovich (see \cite{voskuil:nonarchhopf}).
We start by recalling some basics on Berkovich and valuation spaces (see \cite{berkovich:book}, \cite{favre-jonsson:valtree}, \cite{jonsson:berkovich}), and their connection to tropical spaces (see \cite{payne:analitlimittrop}).
We then study which (compact) tropical Hopf data are realized by classical Hopf data (see Definition \ref{def:realizeHopf}).
At last, we study skeleta of rigid-analytic Hopf manifolds, and their relations to tropical Hopf manifolds.

\subsection{Berkovich spaces and tropicalizations}\label{ssec:berktrop}

In the following, $\nK$ will denote an algebraically closed field of characteristic $0$, endowed with a non-trivial rank-one valuation $\nu_0$, whose value group is $\nR$.
Notice that the latter condition can be always achieved up to field extensions.
As an example, take the field of transfinite series $\nK=\nC((t^\nR))$, i.e., series of the form $\sum_{k \in \nR} \phi_k t^k$, where $\phi_k \in \nC$, and the set $\{k \in \nR\ |\ \phi_k \neq 0\}$ is well ordered, endowed with the $t$-adic valuation $\nu_0$, uniquely determined by $\nu_0(t)=1$.
We refer the reader back to Section \ref{sec:basics} for the conventions and definitions regarding tropicalizations.

\newcommand{\Ad}{{\nA^{\!d}}}

The analytification $\nA^d_{an}$ (in the sense of Berkovich) of the affine space $\Ad$ is given by the set of seminorms $\abs{\,\cdot\,}$ on $\nK[\ax_1, \ldots, \ax_d]$ extending $\abs{\,\cdot\,}_0=e^{-\nu_0(\cdot)}$.
We denote by $\mc{V}(\Ad)$ the space $\nA^d_{an}$ seen as semi-valuations $\nu:\nK[\ax_1, \ldots, \ax_d] \to (-\infty, +\infty]$ extending $\nu_0$,
the correspondence given by $\nu(\cdot) = - \log \abs{\,\cdot\,}$.

It was shown in \cite{payne:analitlimittrop} that the analytification of $\Ad$ is homeomorphic to the limit of an inverse system of tropicalizations. 
Here we outline the construction we need in our setting. We refer to Payne's work for details.

For each embedding $i: \Ad \to \nA^n$, there is a map $\rho_i: \mc{V}(\Ad) \to \mc{T}_i(\Ad)$ given by:
$$
\rho_i(\nu)=\big(\nu(\ay_1 \circ i), \ldots, \nu(\ay_n \circ i)\big),
$$
where $(\ay_1, \ldots, \ay_n)$ are the natural coordinates in $\nA^n$.
The set of affine embeddings of $\Ad$, denoted by $\mc{I}(\Ad)$, forms a poset with the order relation induced by equivariant morphisms $\nA^n \to \nA^m$. 
Define
$$
\mc{T}(\Ad) = \lim_{\substack{\longleftarrow\\i \in \mc{I}(\Ad)}} \mc{T}_i(\Ad).
$$
The homeomorphism from \cite{payne:analitlimittrop} is then given by, 
$$
\rho =\!\! \lim_{\substack{\longleftarrow\\i \in \mc{I}(\Ad)}}\!\! \rho_i\ : \mc{V}(\Ad) \to \mc{T}(\Ad).
$$

The map $\rho_i : \mc{V}(\Ad)  \to \mc{T}_i(\Ad)$ is related to the usual tropicalization map in the following way. 
There is the standard inclusion $\Ad \to \mc{V}(\Ad)$, which sends a point $\ax \in \Ad$ to the semi-valuation defined by $\nu_{\ax}(\phi) = \nu_0(\phi(\ax))$ where $\phi \in \nK[\ax]$.
Then the following diagram commutes, 
$$\xymatrix@R30pt@C30pt{
{\Ad\ }\ar@{^{(}->}[d]^i \ar@{^{(}->}[r] & {\mc{V}(\Ad)}\ar[d]^{\rho_i}\\
\nA^n \ar[r]^{\trp} & {\mc{T}_i(\Ad)}.
}
$$

\begin{ex}\label{ex:valline}
Consider the Berkovich line over $\nK$, in its valuation incarnation $\mc{V}(\nA^1_\nK)$, see e.g., \cite[Section 3]{jonsson:berkovich}, \cite[Chapter 4]{favre-jonsson:valtree}.
It is possible to describe all  valuations in $\mc{V}(\nA^1_\nK)$ in the following way. 
Let $\aalpha= \sum_{k \in \nR} a_k t^k \in \nK$, and $k \in \nR \cup \{+\infty\}$.
Now construct a valuation  $\nu_{\aalpha,r}$ by first decomposing  a polynomial $P \in \nK[\az]$ as a product of linear factors of the form $\az-\abeta$, for $\abeta \in \nK$.
We set $\nu_{\aalpha,r}|_\nK^* \equiv \nu_0$,
$$
\nu_{\aalpha,r}(\az-\abeta)=\min\{\nu_0(\aalpha-\abeta),r\},
$$
and we extend by linearity to any polynomial in $\nK[\az]$.
This valuation corresponds to the ball of center $\aalpha$ and radius $e^{-r}$ with respect to the non-archimedean norm $\abs{\,\cdot\,}_0 = e^{-\nu_0(\cdot)}$.
This representation is not unique, and in fact $\nu_{\aalpha,r}=\nu_{\abeta,r}$ if and only if $r \leq \nu_0(\aalpha - \abeta)$.

With respect to classical notations for Berkovich lines, the valuation $\nu_{\aalpha,r}$ is of type $1$ if $r = +\infty$, and of type $2$ if $r \in \nR$.
Notice that there are no points of type $3$, since $\nu_0(\nK^*)=\nR$, and no points of type $4$, since $\nK$ is spherically complete (the intersection of any decreasing family of non-empty balls is non-empty).

We can define a partial order on $\mc{V}(\nA^1_\nK)$ by saying that $\nu_{\aalpha,r} \leq \nu_{\abeta,s}$ whenever $r \leq s$ and $\nu_{\aalpha,r} = \nu_{\abeta,r}$.
This partial order induces a structure of $\nR$-tree on $\mc{V}(\nA^1_\nK)$, see again \cite{favre-jonsson:valtree}. All points of type $1$ are ends of the tree, and all other points are branching points.

Given a subset $S \subset \mc{V}(\nA^1_\nK)$, we denote by $\mc{V}(S)$ the convex hull of $S$, given by the union of all segments between elements in $S$.
\end{ex}

\begin{ex}\label{ex:valprojline}
We can consider what is sometimes called the \emph{Berkovich projective line} over $\nK$, denoted $\mc{V}(\nP^1)$, by adding a point $\nu_\infty$, corresponding to the limit of $\nu_{\aalpha,r}$ for $r \to -\infty$, for any $\aalpha \in \nK$.
Depending on the context, this valuation will be denoted in one of the following ways  $\nu_\infty = \nu_{\infty, +\infty} = \nu_{\aalpha,-\infty}$ for all $\aalpha \in \nK$. 
The definitions of the partial order, tree structure, segments and convex hulls naturally extend to this setting.

Consider the case of a finite set $S \subset \nP^1$. We define the convex hull of $S$ as the convex hull of $\{\nu_{\aalpha, +\infty}\ |\ \aalpha \in S\}$.
Then we can define a retraction $r_S:\mc{V}(\nP^1) \to \mc{V}(S)$ as follows.

Let $\nu=\nu_{\abeta,t} \in \mc{V}(\nP^1)$, with $\abeta \in \nK$ and $t \in [-\infty,+\infty]$.
Set $s = \max\{\nu_0(\abeta-\aalpha)\ |\ \aalpha \in S\}$, where $\nu_0(\abeta-\infty) = - \nu_0 (\abeta)$.
Then $r_S(\nu)=\nu_{\aalpha, s}$, where $\aalpha \in S$ is so that $\nu_0(\abeta-\aalpha)$ realizes the maximum in the expression for $s$.
Notice that $r_S$ is the identity on $\mc{V}(S)$.
\end{ex}

\subsection{Realizing compact tropical Hopf surfaces}

Starting with a tropical variety $\tX \subset \nT^n$ it may already be difficult to determine when it arises as the tropicalization of some variety $\aX \subset \nA^n$. 
Here we are also asking about the realization of a map between tropicalizations $\tX \subset \nT^n$ and $\tY \subset \nT^m$.
Recall that such a map is given by the extension of an integer affine map $\tF: \nR^n \to \nR^m$  which preserves the sedentarity. 
One way of asking for a realization of $\tF: \tX \to \tY$ is to seek embeddings $i :\nA^d \to \nA^n$ and $j: \nA^d \to \nA^m$ and a map $\aF: \nA^n \to \nA^m$ such that  $\mc{T}_i(\nA^d) = \tX$, $\mc{T}_j(\nA^d) = \tY$ and $\tF \circ \trp = \trp \circ \aF$.
Suppose $\tF: \tX \to \tY$ is the restriction of $F: \nT^n \to \nT^m$ which is a linear invertible monomial map. 
If $\tX = \mc{T}_i(\nA^d)$ for some $i: \nA^d \to \nA^n$, then clearly $\text{Trop}(\aF(i(\nA^d))) = \tY$, and $\aF(i(\nA^d))$ is the image of an embedding $j=\aF \circ i : \nA^d \to \nA^m$. 

Alternatively, we may begin with fixed affine embeddings so that $\mc{T}_i(\nA^d) = \tX \subset \nT^n$ and $\mc{T}_j(\nA^d) = \tY \subset \nT^m$, and given a tropical map $\tF: \tX \to \tY$ we may ask if there is a lift $\aF$ of $\tF$ such that $\aF(i(\nA^d)) = j(\nA^d)$. This turns out to be a more delicate question, which better corresponds to the dynamical setting.

Indeed, in the dynamical situation, we start with an automorphism  $\tF: \tX \to \tX$ of a tropical variety $\tX \subset \nT^n$, where $\tF$ is an integer affine mapping. 
In this case, we want to find an embedding $i:\nA^d \to \nA^n$ satisfying $\tX=\mc{T}_i(\nA^d)$, and a lifting $\aF:\nA^n \to \nA^n$ of the map $\tf$, satisfying the property that $\aF(i(\nA^d)) =i(\nA^d)$.
We can think of this as an automorphism $\af: \nA^d \to \nA^d$, defined by the relation $i \circ \af = \aF \circ i$.

\begin{defi}\label{def:realizeHopf}
A \emph{realization} of a tropical Hopf data  $(\tX, \tF)$ is a pair $(i, \aF)$ where  $i: \nA^d \to \nA^n$ is an embedding such that $\mc{T}_i(\nA^d) = X$, $\trp(\aF) = \tF$ and $\aF(i(\nA^d)) = i(\nA^d)$. If such a pair $(i,\aF)$ exists we say that $(\tX, \tF)$ is \emph{realizable}. 
\end{defi}

\begin{ex}\label{ex:realizeHopf1}
We start from $\tX\subset \nT^3$ which is the hypersurface defined by the tropical polynomial $\tu \wedge (\tx_1 + b) \wedge (\tx_2 + a)$, where on $\nT^3$ we consider the coordinates $(\tx_1,\tx_2,\tu)$.
In other words it is a single modification of $\nT^2$ along the function $(\tx_1 + b) \wedge (\tx_2 + a)$.
It follows that the possible embeddings $i:\nA^2 \to \nA^3$ such that $\tX = \mc{T}_i(\nA^2)$ are of the form
$$
i(\ax_1,\ax_2)=(\ax_1,\ax_2,\gamma\ax_1 + \delta \ax_2),
$$
with $\nu_0(\gamma)=b$ and $\nu_0(\delta)=a$.
The automorphism of $\nT^3$ defined by
$$
\tF(\tx_1, \tx_2, \tu) = (\tx_1 + a , \tu, \tx_2 + 2a)
$$ 
leaves the space $\tX$ invariant.
The space $\tX$ has three top dimensional faces, and one is left invariant while the other two are exchanged (the situation is similar to the one described by Figure \ref{fig:noncompact1}, with $a_1=b_2=a$ and $b_1=b$).
Moreover $\tF$ is a contracting automorphism. 
The tropical space $\tX$ and the map $\tF$ come from the dynamical monomialization of the tropical germ $\tf(\tx_1, \tx_2) = (a+\tx_1, b+\tx_1 \wedge a+ \tx_2)$. 
We can ask if the above tropical Hopf data $(\tX, \tF)$ is realizable by a Hopf data over $\nK$.  

Since the map $\tF$ is integer affine linear, any $\aF$ such that $\trp(\aF) = F$ has the form: 
$$
\aF(\ax_1, \ax_2, \au) = (M_1 \ax_1, M_2 \au, M_3 \ax_2),
$$ 
where $\nu_0(M_1) = a, \nu_0(M_2) = 0$ and $\nu_0(M_3) = 2a$. 

So we are seeking coefficients $M_i$ and $\gamma$, $\delta$, so that $\aF(i(\nA^2)) = i(\nA^2)$.
It turns out that there is not much choice.     
By direct computation,
$$
\aF(i(\ax_1, \ax_2)) = (M_1 \ax_1, M_2(\gamma \ax_1 + \delta \ax_2), M_3 \ax_2),
$$
whose image can be written as
$$
\left\{(\ax_1, \ax_2, \au) \ | \ \au = \frac{M_3}{M_2 \delta}\ax_2 - \frac{M_3 \gamma}{M_1 \delta}\ax_1\right\}.
$$
This coincides with $i(\nA^2)$ if and only if $M_1 = - \delta M_2$.

The embedding $i$ and the map $\aF$ induce an action $\af:\nA^2 \to \nA^2$ given by
$$
(\ax_1, \ax_2) \mapsto (M_1 \ax_1, \gamma M_2 \ax_1 -M_1 \ax_2).
$$
Hence the automorphism $\af$ of $\nA^2$ which realizes the above tropical model is of a specific conjugacy class: a linear map with eigenvalues $M_1$ and $-M_1$.

\end{ex}

\medskip

The next proposition shows that only very specific tropical Hopf data in dimension two  are realizable. The point is that if a $2$-dimensional tropical Hopf data $(\tX, \tF)$ is realizable by a pair $(i, \aF)$ there must exist curves in $\nA^2$ with periodic orbits under the action of the germ $\f$ induced by $i$ and $\aF$.

\begin{prop}\label{prop:realizinggerms}
Given a $2$-dimensional Hopf data $(\tX, \tF)$ where $\tX \subset \nT^n$ and $\tF$ is an invertible contracting germ.
Suppose $\tX$ is not homeomorphic to $\nT^2$ and that $(\tX, \tF)$ is realizable by a Hopf surface defined over $\nK$ by the germ $\af$.
Then $\af$ is conjugate to $\af(\ax_1, \ax_2) = (\alambda \ax_1, \amu \ax_2)$ where $\alambda^k = \amu^l$ for some $k, l \in \nN^*$. 
\end{prop}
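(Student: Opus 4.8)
\emph{Setup.} The plan is to convert the realizability of $(\tX,\tF)$ into the existence of $\af$-periodic algebraic curve germs through $0\in\nA^2$, and then to apply the classification of invariant curves of a contracting germ. Write the embedding as $i=(i_1,\dots,i_n):\nA^2\to\nA^n$ with $i_j=\ay_j\circ i\in\nK[\ax_1,\ax_2]$, normalised so that $i_1=\ax_1$ and $i_2=\ax_2$. Since $\tF$ is a linear monomial automorphism, any lift with $\trp(\aF)=\tF$ satisfies $\ay_j\circ\aF=c_j\,\ay_{\rho(j)}$ for a permutation $\rho$ of $\{1,\dots,n\}$ and units $c_j\in\nK^*$. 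The realization identity $\aF\circ i=i\circ\af$ then reads, coordinate by coordinate,
\begin{equation*}
i_j\circ\af=c_j\,i_{\rho(j)},\qquad j=1,\dots,n .
\end{equation*}
Hence $\af^*$ permutes the $i_j$ up to units, so $\af$ permutes the curves $D_j:=\{i_j=0\}\subset\nA^2$ via $\af(D_{\rho(j)})=D_j$.

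\emph{Producing a non-axis periodic curve.} The curves $D_1=\{\ax_1=0\}$ and $D_2=\{\ax_2=0\}$ are the coordinate axes. Because $\tX$ is obtained from $\nT^2$ by a nontrivial modification — this is precisely the hypothesis $\tX\not\cong\nT^2$ — at least one $i_j$ with $j\ge3$ fails to be a monomial: a modification altering the homeomorphism type is taken along a tropical polynomial with at least two monomials, and by Proposition \ref{lem:realGraph} its realization $i_j$ has at least two terms. The cone structure from Proposition \ref{prop:Hopfdatacone} forces the corresponding divisor through $\infty$, so $i_j$ may be taken with no constant term and $D_j$ passes through $0$; being non-monomial, $D_j$ is distinct from both axes. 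Letting $s$ be the order of $\rho$, the germ $g:=\af^s$ is again a contracting automorphism and fixes every $D_k$ (since $\af^{-s}(D_k)=D_{\rho^s(k)}=D_k$). Thus $D_1,D_2,D_j$ are three distinct $g$-invariant curve germs at $0$.

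\emph{Classification and descent.} Put $g$ in Poincar\'e--Dulac normal form (see \cite{sternberg:localcontractions,rosay-rudin:holomorphicmaps,berteloot:methodeschangementechelles}), with eigenvalues $\Lambda,M$, $\nu_0(\Lambda)\le\nu_0(M)$, both positive, so $g(\ax_1,\ax_2)=(\Lambda\ax_1,\,M\ax_2+c\,\ax_1^p)$, the resonant term occurring only if $M=\Lambda^p$. Solving the functional equation for an invariant graph $\{\ax_2=\phi(\ax_1)\}$ or $\{\ax_1=\psi(\ax_2)\}$ shows that the invariant curve germs of $g$ are exactly the two axes in the non-resonant case, only $\{\ax_1=0\}$ when $c\neq0$, and the pencil $\{\ax_2=a\,\ax_1^p\}_{a\in\nK}$ together with $\{\ax_1=0\}$ when $M=\Lambda^p$ and $c=0$. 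A \emph{third} invariant curve therefore forces $c=0$ and $M=\Lambda^p$ (or $\Lambda=M$): $g$ is linearizable and conjugate to $\mathrm{diag}(\Lambda,M)$ with a multiplicative relation between its eigenvalues. Since in characteristic $0$ a nonzero resonant coefficient of $\af$ propagates to $g=\af^s$ (one computes the resonant coefficient of $\af^s$ to be $s\,c\,\Lambda^{p(s-1)}\neq0$), linearizability of $g$ yields linearizability of $\af$; the action of $\rho$ on the eigendirections of $g$ at worst interchanges the eigenvalues $\alambda,\amu$ of $\af$, which over $\nK$ keeps $\af$ diagonalizable. From $\amu^s=M=\Lambda^p=\alambda^{ps}$ we obtain $\alambda^k=\amu^l$ with $k=ps$, $l=s$, so $\af$ is conjugate to $(\ax_1,\ax_2)\mapsto(\alambda\ax_1,\amu\ax_2)$ with $\alambda^k=\amu^l$, as claimed.

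\emph{Main obstacle.} The delicate step is the second one: extracting from the purely topological hypothesis $\tX\not\cong\nT^2$ an honest algebraic curve germ at $0$ that is genuinely distinct from the two axes and is permuted by $\af$. This amounts to matching the branching of the tree $T=\nP_w(\tX)$ (Remark \ref{rmk:metrictree}) with the non-monomial coordinate functions of the embedding, and verifying through the cone structure that the resulting curves are centred at the origin. Once this correspondence is established, the invariant-curve classification and the descent from $g=\af^s$ to $\af$ are routine.
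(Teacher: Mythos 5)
Your route is, at bottom, the same as the paper's: convert realizability together with $\tX \not\cong \nT^2$ into the existence of at least three $\af$-periodic curve germs at $0 \in \nA^2$, then invoke the Poincar\'e--Dulac normal form to force $\af$ to be diagonal with a multiplicative relation between its eigenvalues. Your derivation of the identity $i_j \circ \af = c_j\, i_{\rho(j)}$ from $\aF \circ i = i \circ \af$ is a correct and usefully explicit version of what the paper states as ``the boundary curves of $S(\tX,\tF)$ correspond to $\tf$-periodic curves in $\nA^2$''.

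There are, however, two real problems. First, your classification of invariant curves is wrong: invariant curve germs of a contracting germ need not be graphs, so the functional-equation argument does not enumerate them. For $g(\ax_1,\ax_2)=(t^2\ax_1,\,t^3\ax_2)$, which is non-resonant ($\amu\neq\alambda^p$ for every integer $p$, since $t^3\ne t^{2p}$), every cuspidal curve $\{\ax_2^2=\aalpha\,\ax_1^3\}$, $\aalpha\in\nK$, is invariant; thus ``exactly the two axes in the non-resonant case'' is false, and your deduction that a third invariant curve forces $M=\Lambda^p$ (or $\Lambda=M$) is contradicted by the paper's own Example \ref{ex:GraphA}, which realizes a Hopf data with $\tX\not\cong\nT^2$ by precisely this germ. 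The correct dichotomy --- the one the paper uses --- is: if $\aeps\neq 0$ there is a single periodic curve, and if $\af$ is diagonal with $\alambda^k\neq\amu^l$ for all $k,l\in\nN^*$ there are exactly two (the axes); a third periodic curve therefore forces $\aeps=0$ together with \emph{some} relation $\alambda^k=\amu^l$, and nothing stronger. Your final statement survives only because it is weaker than your (false) intermediate one; as written, the proof passes through an incorrect claim. Second, the step you yourself flag as the main obstacle is genuinely incomplete: a non-monomial $i_j$ with vanishing constant term need not produce a curve germ at $0$ different from the axes --- for instance $i_j=\ax_1\ax_2(1+\ax_1)$ is non-monomial and has no constant term, yet its zero locus has germ at $0$ equal to the union of the two axes, and germs at $0$ are what the Poincar\'e--Dulac argument sees. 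Excluding this requires the cone structure of Proposition \ref{prop:Hopfdatacone} to force each $i_j$ to be weighted homogeneous, so that every component of $\{i_j=0\}$ passes through the origin; this is exactly the analysis the paper carries out, but only later, in the proof of Theorem \ref{thm:listfinitetrophopf}.
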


\begin{proof}
If $(\tX, \tF)$ is realizable, then the boundary of the quotient $\partial S(\tX, \tF)$ are tropical curves which are realized by curves in $S(\aX, \aF)$.
They correspond to $\tf$-periodic curves in $\nA^2$.
The assumption of $\tX$ not being homeomorphic to $\nT^2$ assures that there are at least $3$ such curves.

By the Poincar\'e-Dulac classification of contracting germs $\af: (\nA^2, 0) \to(\nA^2, 0)$, we have
$$
\af(\ax_1, \ax_2) = (\alambda \ax_1, \amu \ax_2 + \aeps \ax_1^m),
$$
where $0 < \nu_0(\alambda) \leq \nu_0(\amu) < +\infty$ and $\aeps(\amu - \alambda^m) = 0$.
If $\aeps \neq 0$ there is only $1$ curve in $\nA^2$ with periodic orbit under the action of $\af$, it is defined by $\ax_1 =0$, and it is invariant under the action of $\af$. Similarly if $\alambda^k \neq \amu^l$ for all positive integers $k, l$, there are $2$ curves in $\nA^2$  periodic under the action of $\af$. They are defined by $\ax_1 =0$ and $\ax_2 =0$, and again they are invariant.  In either case we obtain a contradiction, which completes the proof.  
\end{proof}

From Proposition \ref{prop:realizinggerms} we directly infer:

\begin{cor}\label{cor:finitemodels}
A contracting invertible germ $\af: (\nA^2 , 0) \to (\nA^2, 0)$ realizes a compact tropical Hopf data $(\tX, \tF)$ if and only if it is conjugate to a diagonal linear map. 
\end{cor}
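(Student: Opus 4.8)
The plan is to deduce this corollary from Proposition \ref{prop:realizinggerms}, splitting into the two implications and treating separately the degenerate case in which the total space is a plane, which is precisely the case the proposition excludes.

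For the forward implication I would start from a realization $(i,\aF)$ of a compact tropical Hopf data $(\tX,\tF)$, with $i:\nA^2 \to \nA^n$ as in Definition \ref{def:realizeHopf}. When $\tX$ is not homeomorphic to $\nT^2$, Proposition \ref{prop:realizinggerms} applies directly and already gives that $\af$ is conjugate to a diagonal linear map. The remaining case is $\tX \cong \nT^2$: here I would argue that, since $\tX$ arises from a modification $\pi:\tX \to \nT^2$ and any modification with non-empty divisor changes the topology, the hypothesis $\tX \cong \nT^2$ forces $\pi$ to be an isomorphism, so that $i(\nA^2)$ is, up to the embedding, a linearly embedded plane. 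The lift $\aF$ of the linear monomial contracting map $\tF$ is then a contracting monomial automorphism of $\nA^n$, hence diagonalizable over $\nK$, and it preserves $i(\nA^2)$; the induced germ $\af$ is therefore conjugate to the restriction of a diagonalizable linear map to an invariant linear subspace, which is again diagonalizable. Thus $\af$ is conjugate to a diagonal linear map in this case as well.

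For the backward implication I would simply produce a realization. If $\af$ is conjugate to $D(\ax_1,\ax_2)=(\alambda\ax_1,\amu\ax_2)$, then contraction gives $\nu_0(\alambda),\nu_0(\amu)>0$, so $\tf=\trp(D)$ is a contracting diagonal monomial map of $\nT^2$, and $(\nT^2,\tf)$ is a compact diagonal Hopf data realized by the pair $(\on{id},D)$. I would then use that realizability is a conjugacy invariant (the Hopf construction being local) to transport this realization to $\af$ itself, taking $i$ to incorporate the conjugating map; when that map is a global automorphism this is immediate, and otherwise one appeals to locality.

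I expect the main obstacle to be the $\tX \cong \nT^2$ boundary case of the forward implication. The delicate points are to justify rigorously that a topologically trivial tropical modification is an isomorphism, so that the embedded plane is genuinely linear, and to verify that pulling back the diagonalizable monomial map $\aF$ along $i$ yields a germ conjugate to a diagonal linear map rather than merely a linearizable one; establishing the clean conjugacy-invariance of realizability used in the converse is a secondary technical point.
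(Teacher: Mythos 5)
Your skeleton (apply Proposition \ref{prop:realizinggerms} when $\tX$ is not homeomorphic to $\nT^2$, treat the plane case separately, build the diagonal realization for the converse) is the natural fleshing-out of what the paper leaves implicit, and your backward implication is fine: realization data are germs, so conjugating $\af$ to the diagonal map $D$ by a local isomorphism $\ah$ yields the realization $(\ah, D)$ of the diagonal Hopf data $(\nT^2,\trp(D))$. The genuine gap is in the $\tX \cong \nT^2$ branch of the forward implication, at the step ``$\tX\cong\nT^2$ forces $i(\nA^2)$ to be a linearly embedded plane.'' This inference is false: the tropicalization only records valuations, so an affine-plane tropicalization does not make the image of $i$ a linear subspace. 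Concretely, take $i(\ax_1,\ax_2)=(\ax_1,\ax_2,\ax_1^2)$ and $\aF(\ay_1,\ay_2,\ay_3)=(t\ay_1,t^2\ay_2,t^2\ay_3)$: then $\aF(i(\nA^2))=i(\nA^2)$ and $\mc{T}_i(\nA^2)$ is the affine plane $\{\tx_3=2\tx_1\}$, homeomorphic to $\nT^2$, yet $i(\nA^2)$ is a quadric cylinder, not a plane. Consequently your reduction to ``the restriction of a diagonalizable linear map to an invariant \emph{linear} subspace'' is unjustified; whether the restriction of a linear map to an invariant analytic surface germ is linearizable is precisely the point that requires proof, and your argument assumes it away.

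The repair is to run the mechanism of the proof of Proposition \ref{prop:realizinggerms} itself, which for this weaker conclusion needs no hypothesis on the topology of $\tX$. Since $i$ is an embedding with $i(0)=0$, some pair of components $(i_{j_1},i_{j_2})$ gives local coordinates at $0$; since $\aF$ is a linear monomial map permuting the coordinate hyperplanes and preserving $i(\nA^2)$, the germ $\af$ permutes the curves $\{i_j=0\}$, so each of them is $\af$-periodic. Hence $\af$ admits at least two periodic curves through $0$ meeting transversally. By Poincar\'e--Dulac, a contracting germ of $(\nA^2,0)$ not conjugate to a diagonal linear map has the form $(\alambda\ax_1,\amu\ax_2+\aeps\ax_1^m)$ with $\aeps\neq 0$ (so $\amu=\alambda^m$), and its only periodic curve is $\{\ax_1=0\}$ --- a contradiction. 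This uniform argument covers both branches at once (two transverse periodic curves suffice for diagonality; the third curve in the Proposition is only needed to force the resonance $\alambda^k=\amu^l$, which the Corollary drops), and it is what the paper's ``we directly infer'' actually rests on.
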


We can go further and describe the tropical Hopf data $(\tX, \tF)$ of dimension $2$ which are realizable.
Recall that by Proposition \ref{prop:Hopfdatacone}, the tropical variety $\tX$ is the cone over a tree $T$, and all $1$-valent vertices of $T$ are adjacent to unbounded edges.

\begin{thm}\label{thm:listfinitetrophopf}
For a germ $\af(\ax_1, \ax_2) = (\alambda \ax_1, \amu \ax_2)$ let $D$ be the minimal positive integer such that there exists coprime $k, l \in \nN^*$  with $\alambda^{kD} = \amu^{lD}$. If no such $D$ exists set $D =0$. 
Then a tropical Hopf data $(\tX, \tF)$ of dimension $2$ is realizable by $\af$ (up to conjugacy) if an only if the underlying tree $T$ and action $h: T \to T$ induced by $F$ satisfy one  of the following: 

\begin{enumerate}[(a)]
\item $D = 0$ and $T = [-\infty, \infty]$ with $h = id$;  
\item $D = 1$ and $T $ is any metric tree with $h = id$;
\item $D \geq 2$ and there exists a maximal segment $I \subseteq T$ (possibly a point) where $h|_I = \on{id}_I$  such that every point in $T \setminus I$ has exact order $D$. 
\end{enumerate}
\end{thm}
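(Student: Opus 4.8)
The plan is to reduce the classification to a study of the $\af$-periodic curve germs at $0\in\nA^2$ and to translate their combinatorics into the pair $(T,h)$. By Corollary \ref{cor:finitemodels} (together with Proposition \ref{prop:realizinggerms}), any realizable datum forces $\af$, up to conjugacy, to be the diagonal linear map $\af(\ax_1,\ax_2)=(\alambda\ax_1,\amu\ax_2)$, so I may assume this form and use the integer $D$ attached to $\alambda,\amu$ as in the statement. The first step is to set up the dictionary coming from Proposition \ref{prop:Hopfdatacone} and Remark \ref{rmk:metrictree}: $\tX$ is the cone over the metric tree $T=\nP_w(\tX)$, its leaves (unbounded edges) correspond to the $\af$-periodic curve germs through $0$ that the embedding $i$ realizes, the bounded part of $T$ records their mutual contact, and the induced tree automorphism $h$ sends the leaf of a curve $C$ to the leaf of $\af(C)$. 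Since $T$ is a finite tree, $h$ has finite order, so every leaf of $T$ is necessarily a periodic curve.

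The key computation is the determination of these periodic curves. A short argument with the resonance relations (the diagonal case of Poincar\'e--Dulac theory) shows the periodic germs are exactly: the two coordinate axes $\{\ax_1=0\}$ and $\{\ax_2=0\}$, which are always invariant; and, when $D\ge 1$, the monomial curves $C_c=\{\ax_2^{l}=c\,\ax_1^{k}\}$ for $c\in\nK^*$, where $(k,l)$ is the unique coprime pair with $k\nu_0(\alambda)=l\nu_0(\amu)$. A direct substitution gives $\af(C_c)=C_{\zeta^{-1}c}$ with $\zeta=\alambda^{k}\amu^{-l}$, and the valuation balance forces $\zeta$ to be a root of unity of order exactly $D$. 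Thus if $D=0$ there are no periodic resonant curves; if $D=1$ every $C_c$ is invariant; and if $D\ge 2$ the $C_c$ split into $\af$-orbits of size exactly $D$. Here I would use that $\nK$ has residue characteristic $0$ (Section \ref{ssec:berktrop}): distinct powers of $\zeta$ have distinct reductions, so $\nu_0(\zeta^{a}-1)=0$ for $a\not\equiv 0\ (\mathrm{mod}\ D)$, which is precisely what guarantees orbits of \emph{exact} order $D$ rather than a proper divisor.

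For the forward implication I would work in the valuative-tree picture of Section \ref{ssec:berktrop}: the two invariant axes give two $h$-fixed leaves, and a tree automorphism fixing two points fixes the geodesic between them pointwise, so the maximal $h$-fixed segment $I$ contains this axis-geodesic. All resonant curves $C_c$ share the leading slope $(k,l)$, hence branch off a single monomial valuation $\nu_{k/l}$ lying on $I$; above $\nu_{k/l}$ the action $c\mapsto\zeta^{-1}c$ is an isometry of the Berkovich disk of the parameter $c$ (Examples \ref{ex:valline}--\ref{ex:valprojline}), fixing the diameter (which is part of $I$) and permuting everything strictly above it in orbits of exact order $D$ by the char-$0$ remark. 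This yields case (a) when $D=0$ (only the two axes survive, so $T$ is a segment and $h=\on{id}$), case (b) when $D=1$ (all periodic curves are fixed, so $h=\on{id}$ on all of $T$), and case (c) when $D\ge 2$, with $I$ the fixed segment and $T\setminus I$ of exact order $D$. For the converse I would construct, for each admissible $(T,h)$, an embedding $i:\nA^2\to\nA^n$ and a monomial $\aF$ realizing $\tF$: choose finitely many periodic curves $C_{c_1},\dots,C_{c_m}$ whose Berkovich convex hull reproduces the prescribed tree type and edge lengths, adjoin to $\ax_1,\ax_2$ defining functions cutting them out, and let $\aF$ permute the new coordinates as $\af$ does. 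The freedom in the valuations $\nu_0(c_i-c_j)$ is what realizes an arbitrary metric tree in case (b) and arbitrary branch lengths off $I$ in case (c).

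The main obstacle I expect is the converse (realizability) direction, and specifically the precise translation between the abstract metric tree with its isometry $h$ and an honest configuration of curve germs. One must check that tropicalizing the chosen embedding reproduces the prescribed combinatorial and metric tree exactly, rather than a refinement or a space carrying higher weights (compare Example \ref{ex:weight2monom}), and that the induced $h$ has exactly the prescribed fixed segment $I$ and order-$D$ behaviour off it, including the degenerate bookkeeping when $I$ reduces to a point. This is delicate because the divisors cutting out the chosen curves need not meet properly (compare Example \ref{ex:notgenericmatrix} and the discussion following Remark \ref{rmk:uniquenessMono}), so that the tropicalization of an embedding defined by several functions must be controlled directly; the Berkovich-tree description of Section \ref{ssec:berktrop} and the graph-tropicalization results (Proposition \ref{lem:realGraph} and Corollary \ref{cor:realMultiGraph}) are the tools I would use to make this rigorous.
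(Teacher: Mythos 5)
Your proposal follows essentially the same route as the paper's proof: reduction to a diagonal germ via Proposition \ref{prop:realizinggerms}, classification of the periodic curves $H_1$, $H_2$, $C_\aalpha$, identification of the tree $T$ with the Berkovich convex hull of the corresponding parameters (the paper's Lemma \ref{lem:convexhullolS} and Proposition \ref{prop:convexhullS}), analysis of the induced action $c \mapsto \zeta c$ to obtain the fixed segment $I$ and exact order $D$ off it, and explicit embeddings for the converse (the paper's Examples \ref{ex:GraphA}--\ref{ex:GraphD}). Two small inaccuracies do not affect the argument: the curves $C_c$ attach to the path $[\nu_{H_1},\nu_{H_2}]$ at points depending on $\nu_0(c)$ rather than all at one monomial valuation, and the axes need not appear as leaves of $T$ (the embedding's coordinates need not include $\ax_1,\ax_2$), but your residue-characteristic-zero orbit computation still identifies $\on{Fix}(h)$ with the intersection of $T$ with that path, exactly as in the paper.
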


As an easy consequence of this theorem, we get

\begin{cor}
For any tropical Hopf data $(\tX, \tF)$ of dimension $2$, there exists $n \in \nN^*$ so that $(\tX, \tF^n)$ is realizable.
\end{cor}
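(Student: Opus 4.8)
The plan is to take the exponent $n$ equal to the order $s$ of the permutation part of $\tF$, and to check that $(\tX, \tF^s)$ always falls into one of the realizable cases classified by Theorem \ref{thm:listfinitetrophopf}. Writing $\tF = \sigma \circ \tau$ with $\sigma$ a coordinate permutation of order $s$ and $\tau$ a translation, I would first recall from the proof of Proposition \ref{prop:Hopfdatacone} that $\tF^s$ is the translation by the cone direction $w$ of $\tX$. Passing to the tree $T = \nP_w(\tX)$ of Remark \ref{rmk:metrictree} and the tree automorphism $h$ induced by $\tF$, the key observation is that $\tF^s$, being a translation along the very direction quotiented out to form $T$, descends to the identity on $T$; that is, $h^s = \mathrm{id}_T$. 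Thus $(\tX, \tF^s)$ has trivial induced action on its tree.

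Next I would feed this into Theorem \ref{thm:listfinitetrophopf}. Since $h^s = \mathrm{id}$, every point of $T$ has order $1$, so case (c) (which needs points of exact order $D \geq 2$) can only intervene degenerately; we are free to realize $(\tX, \tF^s)$ by a diagonal germ $\af(\ax_1, \ax_2) = (\alambda \ax_1, \amu \ax_2)$ tropicalizing to $\tF^s$, i.e.\ with $\nu_0(\alambda) = w_1$ and $\nu_0(\amu) = w_2$, and I just need the correct value of the invariant $D$. When $\tX$ is homeomorphic to $\nT^2$ we have $T = [-\infty, \infty]$, and any such germ works: it lands in case (a) if $w_1/w_2 \notin \nQ$ (so $D = 0$) and in case (b) otherwise.

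The hard part, and the step I expect to be the main obstacle, is the case $\tX \not\cong \nT^2$: here $h^s = \mathrm{id}$ on a nontrivial tree rules out cases (a) and (c) and forces case (b), which demands $D = 1$, i.e.\ $\alambda^k = \amu^l$ for some coprime $k, l \in \nN^*$, and this is possible only if $w_1/w_2 \in \nQ$. I would resolve this using the rationality of $\tX$: when $\tX \not\cong \nT^2$ its linearity space is $1$-dimensional (a $2$-dimensional tropical variety with $2$-dimensional linearity space is a coordinate image of $\nT^2$), so by the remark following Definition \ref{def:conealongw} the cone direction $w$ may be taken to be a primitive integer vector, whence $w_1/w_2 \in \nQ$. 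Choosing coprime $k, l$ with $k w_1 = l w_2$ and $\af$ with $\alambda = t^{w_1}$, $\amu = t^{w_2}$ gives $\alambda^k = \amu^l$ and $D = 1$, so case (b) of Theorem \ref{thm:listfinitetrophopf} applies and $(\tX, \tF^s)$ is realizable.

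In summary, the two facts to establish rigorously are that $\tF^s$ descends to $\mathrm{id}_T$ (immediate from the cone structure) and that the cone direction is rational whenever $\tX \not\cong \nT^2$ (the delicate point, handled by the linearity-space dichotomy); everything else is bookkeeping against the classification, and taking $n = s$ then yields the claim.
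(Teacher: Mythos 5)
Your proof is correct and follows exactly the route the paper intends: the corollary is presented as an immediate consequence of Theorem \ref{thm:listfinitetrophopf}, and your argument (pass to $\tF^s$ so that, being the translation by the cone direction $w$ from Proposition \ref{prop:Hopfdatacone}, it descends to the identity on $T$, then match against cases (a) and (b) of the classification) is precisely that consequence spelled out. The two supporting facts you isolate --- $h^s = \on{id}_T$ from the cone structure, and rationality of $w$ when $\tX \not\cong \nT^2$ via the one-dimensional linearity space (the remark following Definition \ref{def:conealongw}), which is what makes a realizing germ with $D=1$ available --- are exactly the details the paper leaves implicit, and both are justified correctly.
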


Before proceeding with the proof of Theorem \ref{thm:listfinitetrophopf}, we need a lemma.

\begin{lem}\label{lem:newEmbedding}
If $(i, \aF)$ is a realization of a $2$-dimensional  tropical Hopf data $(X, F)$, then there is another embedding $i'$ and monomial map $\aF'$ realizing a tropical Hopf data $(\tX', \tF')$ such that:
\begin{itemize}
\item $\{i'_j = 0\}$ are irreducible curves;
\item $\tX'$ is a cone over a tree $T'$ which is isomorphic to $T$ over which $X$ is a cone;
\item the action $h':T' \to T'$ induced by $\tF'$ is conjugate to the action of $h$ on $T$ induced by $F$. 
\end{itemize}
\end{lem}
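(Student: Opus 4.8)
The plan is to lean on the rigidity of the induced germ. Since $(i,\aF)$ is a realization, the germ $\af$ determined by $i\circ\af=\aF\circ i$ is, by Corollary \ref{cor:finitemodels}, conjugate to a diagonal linear map; after precomposing $i$ with the conjugating automorphism I may assume $\af(\ax_1,\ax_2)=(\alambda\ax_1,\amu\ax_2)$. As $\trp(\aF)=\tF$ is a coordinate permutation followed by a translation, $\aF$ permutes the coordinate hyperplanes $\{\ay_j=0\}$ up to monomial rescaling, so $\af$ permutes the curves $C_j:=\{i_j=0\}\subset\nA^2$. Each $C_j$ is therefore $\af$-periodic, and the only $\af$-periodic irreducible curves are the two axes together with the monomial curves $\{\ax_1^l=c\,\ax_2^k\}$ ($\gcd(k,l)=1$, $c\in\nK^*$) arising in the resonant case. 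Thus each irreducible component of each $C_j$ is one of these.

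First I would define $i'$ by replacing each coordinate $i_j$ with the reduced defining polynomials $g_{j,1},\dots,g_{j,m_j}$ of the distinct irreducible components of $C_j$. By non-singularity of the Hopf data the $C_j$ are reduced, so $i_j=u_j\prod_\ell g_{j,\ell}$ for a constant $u_j\in\nK^*$; hence the old coordinates are unit multiples of monomials in the new ones, $i=\psi\circ i'$ for a monomial map $\psi$, and since $i$ is a closed immersion so is $i'$. By construction every $\{i'_j=0\}$ is irreducible. The germ acts monomially on the new generators: for $g=\ax_1^l-c\,\ax_2^k$ one has $\af^*g=\alambda^l\bigl(\ax_1^l-c(\amu^k/\alambda^l)\,\ax_2^k\bigr)$, a scalar multiple of the defining polynomial of another component curve, and $\af^*\ax_i$ is a scalar multiple of $\ax_i$. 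This produces a monomial map $\aF'$ permuting the new coordinates with exactly these scalars and satisfying $i'\circ\af=\aF'\circ i'$, whence $\aF'(i'(\nA^2))=i'(\nA^2)$; its tropicalization $\tF'$ is again a permutation composed with a translation, so $(\tX',\tF')$ with $\tX'=\mc{T}_{i'}(\nA^2)$ is a tropical Hopf data realized by $(i',\aF')$.

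It remains to match trees and actions, which I expect to be the main obstacle. The ends of $T=\nP_w(\tX)$ correspond to the rays of $\tX$ entering the boundary strata, namely the tropicalizations $\trp(C_{j,\ell})$; non-singularity forces all weights to be $1$, so distinct components give distinct rays and no ray is doubled. Passing from $i$ to $i'$ therefore neither creates nor destroys ends---it only separates a reducible direction $j$ into the coordinate directions of the rays $\trp(C_{j,\ell})$ it already contained---so the leaves of $T'$ biject with those of $T$, and since the bounded part of each tree (its branch points and edge lengths) is governed by the same valuation data of the constants $c$, one obtains a metric isomorphism $T'\cong T$. Under this identification the permutation of ends induced by $\tF'$ equals that induced by $\tF$, both being the permutation of periodic curves effected by $\af$, so $h'$ and $h$ are conjugate. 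The delicate point throughout is controlling $\mc{T}_{i'}(\nA^2)$ precisely enough to guarantee that this refinement reproduces $T$ rather than subdividing faces or introducing spurious sedentary leaves; this is exactly where non-singularity and the explicit monomial description of the periodic curves are needed.
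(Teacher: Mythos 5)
Your proposal is correct in substance but takes a genuinely different route from the paper's. The paper's proof never invokes the classification of the induced germ: it is purely embedding-theoretic. It forms the extended embedding $\wt{i}:\nA^2\to\nA^N$ whose coordinates are the original $i_j$ together with functions cutting out the irreducible components of the curves $\{i_j=0\}$, shows $\wt{T}\cong T$ because the projection forgetting the new coordinates contracts no leaves and no bounded edges of $\wt{T}$ (a contracted bounded edge would produce a locally reducible vertex of $T$, contradicting non-singularity of $\tX$), and then takes $i'$ to be the projection of $\wt{i}$ onto the component coordinates, noting that $\mc{T}_{i'}$ is non-singular since it is a modification of $\nT^2$ along irreducible non-singular curves, so that $T'\cong\wt{T}\cong T$ by the same argument; the induced actions then visibly coincide. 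You instead front-load Corollary \ref{cor:finitemodels}: diagonalize $\af$, classify its periodic curves, factor $i=\psi\circ i'$ through a monomial map $\psi$, and build $\aF'$ explicitly from the scalars by which $\af^*$ permutes the irreducible factors $g_{j,\ell}$. This is legitimate (the corollary precedes the lemma, so there is no circularity), and it buys something the paper's proof actually glosses over: an explicit monomial map $\aF'$ with $i'\circ\af=\aF'\circ i'$, i.e.\ a verification that $(i',\aF')$ really is a realization. In fact your construction of $\aF'$ needs less than you use: since $\aF$ is monomial, $\af$ is a polynomial automorphism permuting the finite set of irreducible components, and the pullback of a reduced irreducible $g_{j,\ell}$ under an automorphism is automatically a nonzero scalar times another such factor, so diagonality is not required for that step (only, as you say, for the metric control of the tree).

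The step you yourself flag as delicate, namely $T'\cong T$ as metric trees, is the one place where your write-up substitutes an assertion (``the bounded part \dots is governed by the same valuation data'') for an argument, and it is exactly where the paper's mechanism is cleaner. Your factorization $i=\psi\circ i'$ gives a surjection $\trp(\psi):\mc{T}_{i'}\to\mc{T}_i$ respecting the cone structures, hence a surjection $T'\to T$ which you show is bijective on leaves; but bijectivity on leaves plus non-contraction does not by itself exclude a folding of interior edges. The missing sentence is the paper's: since $\mc{T}_i=\trp(\psi)_*\mc{T}_{i'}$ as weighted cycles, any folding would produce a face of weight at least $2$ (and any contracted bounded edge a locally reducible vertex) in $\tX$, contradicting non-singularity; a locally injective, non-contracting surjection of finite trees is then an isomorphism. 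Adding that observation closes your proof.
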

\begin{proof}Consider the functions whose divisors are the irreducible components of the divisors of functions $i_j$  and the embedding 
$\wt{i}: \nA^2 \to \nA^N$ extending $i$ given by also embedding along those additional functions.
The tropicalizations $\mc{T}_{\wt{i}}$ and $\mc{T}_{i}$ yield isomorphic metric trees $\wt{T}$ and $T$.
The projection map onto the first $n$ coordinates contracts no leaves of $\wt{T}$.
It also contracts no bounded edges since if it did there would be a vertex of $T \subset \nR^{n-1}$ which is locally reducible, again contradicting the fact that $X$ is non-singular.

Now the projection of $\nA^n$ onto the last $N - n$ coordinates gives an embedding $i'$ satisfying that $i_j = 0$ are irreducible curves. Also $\mc{T}_{i'}$ is non-singular since it can be obtained by modification along a collection of irreducible non-singular tropical curves.  By the same argument as above, since $\mc{T}_{i'}$ is non-singular, the tree $T'$ associated to the tropicalization of $\mc{T}_{i'}$ is again isomorphic to $\wt{T}$ and hence to $T$. 
The induced actions $h, \wt{h}$ and $h'$ all coincide. 
\end{proof}

\begin{proof}[Proof of Theorem \ref{thm:listfinitetrophopf}]
The first case is clear, since if $D = 0$ the only periodic curves under the action of $\af$ are $H_1 = \{x_1 = 0\}$ and $H_2 = \{x_2 = 0\}$. Conversely, every diagonal tropical Hopf surface given by the data $X = \nT^2$ and $F$ a translation is realizable. 

Now suppose that $(\tX, \tF)$ is a tropical Hopf data realized by an embedding $i: \nA^2 \to \nA^n$ and a monomial map $\aF: \nA^n \to \nA^n$ whose action corresponds to that of a germ $\af$ with $D \geq 1$. 
Consider the projection of $X \cap \nR^n$ in the cone direction. This quotient is a tropical curve $T$ in $\nR^{n-1} \cong \nR^n / \nR (v_1, \dots , v_n)$.
The tree $T$ is isomorphic to the compactification of this tropical curve given by adding a point at the end of each unbounded ray.
The fact that $\tX$ is non-singular implies that the tropical curve $T$ is locally irreducible. 

The ends of the leaves of $T$ correspond to the tropicalization of the intersection between $\aX=i(\nA^2)$ and coordinate hyperplanes $\{\ay_j=0\}$. 
They correspond to (not necessarily irreducible) curves in $\nA^2$, given by $\{i_j(\ax_1 ,\ax_2)=0\}$, where $i=(i_1, \ldots, i_n)$.

If the curve defined by $\{i_j = 0\}$ is not irreducible, each component must still be reduced otherwise the tropicalization $\mc{T}_i$ would have weights, implying  that $X$ is not non-singular. 
Notice also that the tropicalization of any two irreducible components of $\{i_j = 0\}$ must be distinct curves in $\nT^n_{\{j\}}$ otherwise again $\mc{T}_i$ would have faces with weights greater than $1$. By Lemma \ref{lem:newEmbedding}, we can assume that the curves defined by $\{i_j = 0\}$ are reduced and irreducible. 

We now proceed with the assumption that $\{i_j = 0\}$ is an irreducible curve for every $j  = 1, \dots , n$.  Since $i$ is an embedding (on a neighborhood of $0$), we must have that there exists $j_1, j_2$ so that $(i_{j_1}, i_{j_2})$ give local coordinates for $\nA^2$ at $0$.
In Proposition \ref{prop:realizinggerms}, we deduced that up to conjugacy $\af(\ax_1,\ax_2)=(\alambda \ax_1, \amu \ax_2)$, and also that there exists a minimal $D \in \nN^*$ for which $\alambda^{kD} = \amu^{lD}$ for some $k,l \in \nN^*$ coprime. 
The only periodic curves in $\nA^2$ for $\af$ are $H_1=\{\ax_1 = 0\}$, $H_2=\{\ax_2 = 0\}$ and $C_\aalpha=\{\ax_2^l = \aalpha \ax_1^k\}$ for $\aalpha \in \nK^*$.
Set
$$
P_\aalpha(\ax_1, \ax_2)=\ax_2^l - \aalpha \ax_1^k, \qquad P_0(\ax_1, \ax_2) = \ax_2, \qquad P_\infty(\ax_1, \ax_2)=\ax_1,
$$
and $\ol{\nK} = \nK \cup \{\infty\}.$
Then for any $j=1, \ldots, n$, we have $i_j$ is equal up to constant to some $P_{\alpha}$ with $\alpha \in \ol{\nK}$.

Notice that if we endow the coordinate $\ax_1$ with the weight $l$ and $\ax_2$ with the weight $k$, then $\ax_1$, $\ax_2$ and $P_\aalpha$ with $\aalpha \in \nK^*$ are weighted homogeneous polynomials of degree $l$, $k$ and $kl$ respectively.
This weighted homogeneous structure induces the cone structure on $\tX=\mc{T}_i(\nA^2)$.

Up to rescaling the coordinates corresponding to $\ax_1$ and $\ax_2$, i.e., up to replacing $\ax_1$ and $\ax_2$ by $\ax_1^k$ and $\ax_2^l$, we may assume that all components of $i$ are weighted homogeneous of degree $kl$. This does not change the structure of $T$ as a metric tree (although it changes the metric itself, the two metrics are equivalent). 

Denote by $\on{pr}:\tX \to T$ the natural projection.
Then $T=\on{pr} \circ \nu(\hat{i}(\nK))$ and $T$, where $\hat{i}=(\hat{i}_1, \ldots, \hat{i}_n)$, and if $i_j = P_\aalpha$, then $\hat{i}_j = \hat{P}_\aalpha$, with
$$
\hat{P}_\infty(\az)=1, \qquad \hat{P}_\aalpha(\az)=\az-\aalpha \text{ for all } \aalpha \in \nK.
$$
Let $S$ be the set of all $\aalpha \in \nP^1$ so that $\hat{i_j} = \hat{P}_\aalpha$ for some $j$, and $\ol{S}=S \cup \{\infty\}$. We now require the following lemma. 

\begin{lem}\label{lem:convexhullolS}
The tropicalization $\nu_0(\hat{i}(\nK))$ is isomorphic (as a metric tree) to the convex hull $\mc{V}(\ol{S})$.
\end{lem}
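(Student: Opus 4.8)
The plan is to realize both sides as images of the Berkovich projective line under the two maps already introduced---the tropicalization $\rho_{\hat i}$ of Payne and the retraction $r_{\ol S}$ of Example \ref{ex:valprojline}---and to show that $\rho_{\hat i}$ identifies $\mc V(\ol S)$ with $\nu_0(\hat i(\nK))$. Recall that the coordinate functions of $\hat i$ are the $\hat P_\aalpha$, $\aalpha\in S$, where $\hat P_\infty\equiv 1$ and $\hat P_\aalpha(\az)=\az-\aalpha$ for $\aalpha\in\nK$. By Payne's theorem \cite{payne:analitlimittrop}, the tropicalization $\nu_0(\hat i(\nK))$ is the image $\rho_{\hat i}(\mc V(\nP^1))$ of the Berkovich projective line (the end $\nu_\infty$ accounting for the unbounded ray coming from $\az\to\infty$). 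Using the description of the points $\nu_{\abeta,t}$ from Example \ref{ex:valline}, the first computation is
\[
\rho_{\hat i}(\nu_{\abeta,t}) = \big(\nu_{\abeta,t}(\hat P_\aalpha)\big)_{\aalpha\in S},\qquad \nu_{\abeta,t}(\hat P_\aalpha)=\min\{\nu_0(\abeta-\aalpha),\,t\},
\]
with the convention that the coordinate attached to $\aalpha=\infty$ (if present) is the constant $0$.

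First I would show that $\rho_{\hat i}$ factors through the retraction, i.e.\ $\rho_{\hat i}=\rho_{\hat i}\circ r_{\ol S}$. This is a short non-archimedean computation: if $\nu_{\abeta,t}$ lies outside $\mc V(\ol S)$ and $\aalpha_0\in\ol S$ realizes $s=\max_{\gamma\in\ol S}\nu_0(\abeta-\gamma)$, then for every $\aalpha\in S$ the ultrametric inequality forces $\nu_0(\aalpha_0-\aalpha)=\nu_0(\abeta-\aalpha)$ when $\nu_0(\abeta-\aalpha)<s$ and $\nu_0(\aalpha_0-\aalpha)\ge s$ otherwise; comparing with $r_{\ol S}(\nu_{\abeta,t})=\nu_{\aalpha_0,s}$ yields $\min\{\nu_0(\abeta-\aalpha),t\}=\min\{\nu_0(\aalpha_0-\aalpha),s\}$ in all cases. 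Since $r_{\ol S}$ is the identity on $\mc V(\ol S)$, it follows that $\nu_0(\hat i(\nK))=\rho_{\hat i}(\mc V(\nP^1))=\rho_{\hat i}(\mc V(\ol S))$, so $\rho_{\hat i}$ restricts to a continuous surjection $\mc V(\ol S)\to \nu_0(\hat i(\nK))$.

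Next I would prove this restriction is injective and metric-preserving. For injectivity, root the tree $\mc V(\ol S)$ at $\infty$; then $\nu_{\abeta,t}(\hat P_\aalpha)$ is exactly the height of the meet of $\nu_{\abeta,t}$ with the leaf $\aalpha$, and the non-constant coordinates are indexed by $\ol S\setminus\{\infty\}$, i.e.\ by \emph{all} leaves other than the root. Since a point of a finite $\nR$-tree is determined by the collection of its meet-heights with the leaves (the standard tree-reconstruction argument), the map is injective; the approach to the root $\nu_\infty$ is faithfully recorded by the diagonal direction along which every coordinate equals $t$, even when no non-constant $\infty$-coordinate is present. For the metric, on each edge of $\mc V(\ol S)$ the parameter $t$ varies monotonically and each coordinate is locally either constant or equal to $t$; hence $\rho_{\hat i}$ maps the edge affinely along a primitive integer vector $(1,\ldots,1,0,\ldots,0)$ at unit speed in $t$, so the lattice length of the image equals the $\nR$-tree length of the edge. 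Thus $\rho_{\hat i}$ is an isomorphism of metric trees (up to the harmless rescaling of $\ax_1,\ax_2$ already noted, which only rescales the metric).

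I expect the main obstacle to be the factorization identity $\rho_{\hat i}=\rho_{\hat i}\circ r_{\ol S}$ together with the bookkeeping at the leaf $\infty$: one must check that including $\infty$ in $\ol S$---even when $\hat P_\infty$ contributes only a constant, or is absent from the coordinates---produces exactly the unbounded end of $\nu_0(\hat i(\nK))$, so that the convex hull $\mc V(\ol S)$ and the tropicalization have precisely the same leaves. Once the retraction identity and this leaf-matching are in place, the injectivity and the edgewise isometry are routine.
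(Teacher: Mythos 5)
Your proposal is correct and follows essentially the same route as the paper: both arguments proceed by (a) showing that $\rho_{\hat{i}}$ factors through the retraction $r_{\ol{S}}$ of Example \ref{ex:valprojline}, so that $\rho_{\hat{i}}(\mc{V}(\nP^1))=\rho_{\hat{i}}(\mc{V}(\ol{S}))$, and (b) proving injectivity of $\rho_{\hat{i}}$ on $\mc{V}(\ol{S})$ via the coordinate formula $\nu_{\abeta,t}(\hat{P}_\aalpha)=\min\{\nu_0(\abeta-\aalpha),t\}$. Your explicit ultrametric computation and the meet-height (tree-reconstruction) packaging of injectivity simply fill in details that the paper's two-case check states more tersely, and your edgewise unit-speed remark makes explicit the metric part of the isomorphism that the paper leaves implicit.
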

\begin{proof}
Consider the tropicalization map $\rho_{\hat{i}}:\mc{V}(\nP^1) \to \nT^n$ defined in Section \ref{ssec:berktrop}.
We need to show that:
\begin{enumerate}[(a)]
\item $\rho_{\hat{i}}(\mc{V}(\nP^1))=\rho_{\hat{i}}(\mc{V}(\ol{S}))$, and
\item $\rho_{\hat{i}}$ is injective on $\mc{V}(\ol{S})$.
\end{enumerate}

Let $\nu$ be an element in $\mc{V}(\nP^1)$, and consider the retraction $r_{\ol{S}}:\mc{V}(\nP^1) \to \mc{V}(\ol{S})$, defined in Example \ref{ex:valprojline}.
Then $\rho_{\hat{i}}(\nu)=\rho_{\hat{i}}(r_{\ol{S}}(\nu))$ for all $\nu \in \mc{V}(\nP^1)$, and (a) is verified.

Consider now two valuations $\nu_1 \neq \nu_2 \in \mc{V}(\ol{S})$. Then we are in one of the following situations:
\begin{itemize}
\item $\nu_1= \nu_{\aalpha,r}$ and $\nu_2 = \nu_{\aalpha,s}$, and $r \neq s \in [-\infty, +\infty]$.
In this case, $\nu_1(\az-\aalpha)= r \neq s = \nu_2(\az-\aalpha)$, and $\rho_{\hat{i}}(\nu_1)\neq \rho_{\hat{i}}(\nu_2)$.
\item $\nu_1=\nu_{\aalpha,r}$ and $\nu_2=\nu_{\abeta,s}$, with $\aalpha \neq \abeta \in S \cap \nK$, and $r,s \geq \nu_0(\abeta-\aalpha)=:t$.
In this case $\nu_1(\az-\aalpha)=r$, $\nu_2(\az-\aalpha)=t$ and $\nu_1(\az-\abeta)=t$,  $\nu_2(\az-\abeta)=s$.
The values for these two polynomials coincide for $\nu_1$ and $\nu_2$ if and only if $r=s=t$, which would imply $\nu_1 = \nu_2$.
We infer that $\rho_{\hat{i}}(\nu_1)\neq \rho_{\hat{i}}(\nu_2)$.
\end{itemize}
Hence (b) is satisfied, and the proof of the lemma is complete.
\end{proof}

\begin{prop}\label{prop:convexhullS}
The metric tree $T$ is isomorphic to the convex hull $\mc{V}(S)$.
\end{prop}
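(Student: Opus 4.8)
The plan is to identify the cone $\tX$ and its base $T$ with the tropicalization of the affine parametrization $\hat{i}$, and then to read off the effect of the projection $\on{pr}$ from the explicit description of $\mc{V}(\ol{S})$ provided by Lemma \ref{lem:convexhullolS}.

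First I would record the consequence of weighted homogeneity. Since every component $i_j$ is weighted homogeneous of degree $kl$, writing $\az = \ax_2^l/\ax_1^k$ gives $i_j(\ax_1,\ax_2) = \ax_1^k\,\hat{i}_j(\az)$ for every $j$ (for $\aalpha\in\nK$ this is $P_\aalpha = \ax_1^k(\az-\aalpha)$, and for $\aalpha=\infty$ it is $P_\infty = \ax_1^k\cdot 1$), whence
\[
\nu_0\big(i(\ax_1,\ax_2)\big) = k\,\nu_0(\ax_1)\,w + \nu_0\big(\hat{i}(\az)\big), \qquad w=(1,\dots,1).
\]
As $\ax_1$ ranges over $\nK^*$ the value $\nu_0(\ax_1)$ ranges over $\nR$, and as $\ax$ ranges over $\{\ax_1\neq 0\}$ the ratio $\az$ ranges over all of $\nK$ (using that $\nK$ is algebraically closed). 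Hence the empty-sedentarity locus $\tX\cap\nR^n$ is exactly the $w$-saturation of $\nu_0(\hat{i}(\nK))$, and consequently $T$ is the compactification of $\on{pr}\big(\nu_0(\hat{i}(\nK))\big)$, where $\on{pr}$ is the quotient by $\nR w$. This reduces the statement to understanding $\on{pr}$ on the tree $\nu_0(\hat{i}(\nK))$.

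Next I would feed in Lemma \ref{lem:convexhullolS}, which identifies $\nu_0(\hat{i}(\nK))$ with $\mc{V}(\ol{S})$ via $\rho_{\hat{i}}$, so that it suffices to analyze the composite $\on{pr}\circ\rho_{\hat{i}}$ restricted to $\mc{V}(\ol{S})$. The crucial point is that the cone direction $w$ corresponds precisely to the direction towards $\nu_\infty$. On the coordinates $\rho_{\hat{i},j}(\nu_{\aalpha,r}) = \min\{\nu_0(\aalpha-\aalpha_j),r\}$ for a finite $\aalpha_j$, and $\equiv 0$ for the constant component $\hat{P}_\infty$, moving down an edge (decreasing $r$) changes a coordinate at unit rate exactly when $r<\nu_0(\aalpha-\aalpha_j)$. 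I would then show that all coordinates move at the common unit rate — i.e. the displacement lies in $\nR w$ — only along the segment lying below every finite leaf, namely the segment joining the root of $\mc{V}(S\cap\nK)$ to $\nu_\infty$; this is exactly the part of $\mc{V}(\ol{S})$ absent from $\mc{V}(S)$ when $\infty\notin S$, while if $\infty\in S$ a constant component stays fixed and nothing is collapsed (and there $\mc{V}(\ol{S})=\mc{V}(S)$). On every other edge at least one coordinate remains fixed while another moves — or, on an edge towards a leaf $\aalpha_0\in S$, only the coordinate indexed by $\aalpha_0$ moves — so the displacement is not a multiple of $w$. Combined with the injectivity of $\rho_{\hat{i}}$ on $\mc{V}(\ol{S})$ from the Lemma, this shows $\on{pr}\circ\rho_{\hat{i}}$ is injective on $\mc{V}(S)$ and collapses $\mc{V}(\ol{S})\setminus\mc{V}(S)$ to a point.

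Finally I would match the compactifications: the unbounded rays of $\on{pr}(\nu_0(\hat{i}(\nK)))$ are precisely the images of the edges towards the type-$1$ leaves $\aalpha\in S$, so adding one point at the end of each such ray reproduces the leaves of $\mc{V}(S)$; this identifies $T$ with $\mc{V}(S)$ as metric trees (the metric being the one induced by $\on{pr}$, which is equivalent to the Berkovich metric). The step I expect to be the main obstacle is the rate-of-change analysis in the third paragraph: one must verify not only that the $w$-direction towards $\nu_\infty$ is collapsed, but, just as importantly, that no other edge of $\mc{V}(S)$ is collapsed, so that $T$ is genuinely $\mc{V}(S)$ and not some further quotient of it.
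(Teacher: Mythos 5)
Your proposal is correct and takes essentially the same approach as the paper: reduce via Lemma \ref{lem:convexhullolS} to studying $\on{pr}\circ\rho_{\hat{i}}$ on $\mc{V}(\ol{S})$, dispose of the case $\infty\in S$ immediately, and collapse the segment $I=[\nu_\infty,\nu_{\aalpha,t}]$ through the computation $\rho_{\hat{i}}(\nu_{\aalpha,r})=(r,\ldots,r)$ for $r\leq t$. In fact the paper's proof ends with exactly that computation, leaving implicit the point you flag as the main obstacle (that $\on{pr}\circ\rho_{\hat{i}}$ identifies nothing inside $\mc{V}(S)$); your edge-by-edge rate analysis addresses this, though for genuine global injectivity one should also compare pairs of points directly rather than edgewise --- e.g.\ for incomparable $\nu_1,\nu_2$ the coordinates indexed by leaves of $S$ lying above $\nu_1$ and above $\nu_2$ change with opposite signs along the path joining them --- since per-edge non-collapsing together with injectivity of $\rho_{\hat{i}}$ does not by itself rule out two distant points having images differing by a multiple of $w$.
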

\begin{proof}
If $\infty \in S$, we get $\ol{S} = S$ and we are done by Lemma \ref{lem:convexhullolS}.
Suppose $\infty \not \in S$. Set $t = \min\{\nu_0(\aalpha-\abeta)\ |\ \aalpha, \abeta \in S\}$, which corresponds to $-\log \on{diam}(S)$.
Let $I=[\nu_\infty, \nu_{\aalpha,t}]$, where $\aalpha$ is any element in $S$. Notice that $I = \ol{\mc{V}(\ol{S}) \setminus \mc{V}(S)}$.
We want to show that $\on{pr}(\rho_{\hat{i}}(I)) = \{\on{pr}(\rho_{\hat{i}}(\nu_{\aalpha,t}))\}$.
In fact, for any $r \in [-\infty, t]$, we have that $\rho_{\hat{i}}(\nu_{\aalpha,r})=(r, \ldots, r)$, and its projection on $T$ does not depend on $r$.
\end{proof}

Proposition \ref{prop:convexhullS} describes the structure of the tree $T$.
The action $h$ induced on $T$ by $\tf$ corresponds to the action induced by $\af$ on $\mc{V}(\nP^1)$; it 
is given by the action of $\af$ on the curves $\{H_1, H_2, C_\aalpha, \aalpha \in \nK^*\}$.
Notice that $H_1$ and $H_2$ are fixed by $\af$, while $\af(C_\aalpha) = C_{\aalpha'}$, with $\aalpha'=\frac{\amu^l}{\alambda^k}\aalpha$. In particular $C_\aalpha$ is periodic of exact period $D$.

If $D=1$, all curves are fixed, and $h$ is the identity: we get case (b).
If $D \geq 2$, we get case (c).
In fact, the set of fixed points $I$ of $h$ corresponds to the set $\mc{V}(S)\cap [\nu_{H_1},\nu_{H_2}]$, where $\nu_{H_j}$ is the curve valuation associated to $H_j$.

This concludes the description of necessary conditions to be realizable.
Notice that if $h$ and $T$ satisfy one of the conditions (a,b,c), then $(\tX, \tf)$ is clearly realizable. See Examples \ref{ex:GraphA}, \ref{ex:GraphB}, \ref{ex:GraphC}, \ref{ex:GraphD} for explicit constructions.
\end{proof}

\begin{rmk}
Suppose that $D \geq 2$ in the statement of Theorem \ref{thm:listfinitetrophopf}.
If $I \neq [-\infty, \infty]$, then we must have at least one curve of the form $C_{\aalpha}$ which is smooth at $0$, i.e., we must have either $k$ or $l$ equal to $1$.
If moreover $I$ is bounded, we must have two such curves $C_{\aalpha}$ and $C_{\abeta}$ so that $(\ax_2^l-\aalpha\ax_1^k,\ax_2^l-\abeta\ax_1^k)$ are local coordinates of $\nA^2$ at $0$. This implies $k=l=1$.
\end{rmk}

\begin{ex}
Not all tropical Hopf data are realizable.
Consider the Hopf data $(\tX,\tF)$ given by Example \ref{ex:permutation}.
Then $(\tX,\tF)$ is realizable if and only if $\tF$ is the composition of a permutation $\sigma$ and a translation, where $\sigma$ is either the identity, or has at most two fixed points 
and the other periods of $\sigma$ are all of the same size. 

As an example, we may take $n=5$ and $\sigma=(1 2)(3 4 5)$.
Notice that $(\tX, \tF^2)$, $(\tX, \tF^3)$, and also $(\tX, \tF^6)$ are realizable. However, $(\tX, \tF)$ is not. 
\end{ex}

\begin{ex}\label{ex:GraphA}
Consider the germ $\af(\ax_1, \ax_2) = (\alambda \ax_1, \amu \ax_2)$ with $\alambda = t^2$ and $\amu = t^3$, so that $\alambda^3=\amu^2$.
With respect to the notations used in the proof of Theorem \ref{thm:listfinitetrophopf}, we have $k=3,l=2,D=1$.

The curves which are periodic under the action of $\af$ are  $H_1=\{\ax_1=0\}$, $H_2=\{\ax_2 = 0\}$, and $C_\aalpha = \{ \ax_2^2 = \aalpha \ax_1^3\}$. In fact, all of these curves are fixed by $\af$. 
Consider now as an example the embedding $i:\nA^2 \to \nA^{7}$ given by
$$
i(\ax_1, \ax_2)=\left(\ax_1,\ax_2, \ax_2^2 - \ax_1^3, \ax_2^2 -(1+t) \ax_1^3, \ax_2^2 - (1+t+t^3)\ax_1^3, \ax_2^2 - (1-t)\ax_1^3, \ax_2^2 - t\ax_1^3\right).
$$

Set $\tX=\mc{T}_i(\nA^2)$ the tropicalization of $\nA^2$ with respect to $i$. Then $\tX$ is a cone over a tree, depicted in Figure \ref{fig:realizabletreefixed}.
Let $\aF:\nA^7 \to \nA^7$ be the map
$$
\aF(\ay_1, \ldots, \ay_7)=\left(t^2\ay_1, t^3\ay_2, t^6\ay_3, t^6\ay_4, t^6\ay_5, t^6\ay_6, t^6\ay_7\right),
$$
and $\tF$ its tropicalization. Then $(\tX,\tF)$ is a Hopf data, realized by $(i, \aF)$.
\end{ex}

\begin{figure}
\begin{subfigure}{.23\columnwidth}
\begin{minipage}[t]{\columnwidth}
	\def\svgwidth{\columnwidth}
	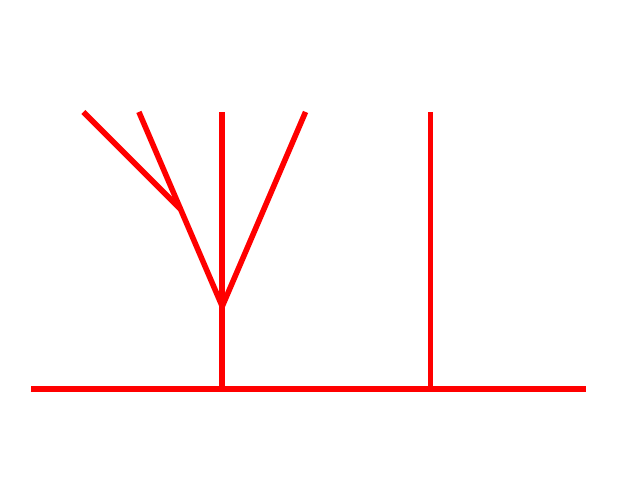
\end{minipage}
\caption{}
\label{fig:realizabletreefixed}
\end{subfigure}
\begin{subfigure}{.23\columnwidth}
\begin{minipage}[t]{1\columnwidth}
	\def\svgwidth{\columnwidth}
	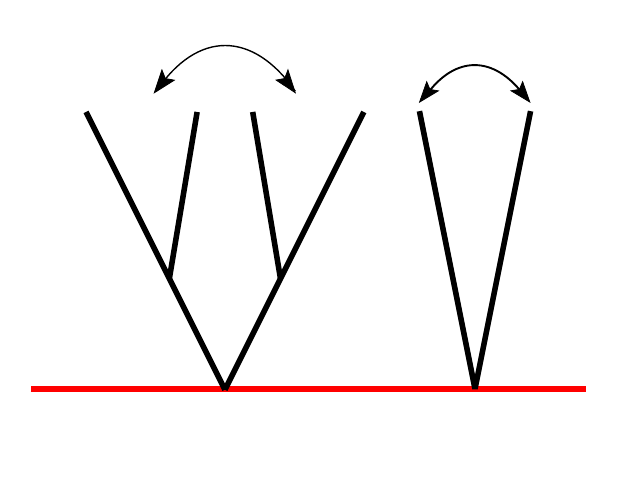
\end{minipage}
\caption{}
\label{fig:realizabletreea}
\end{subfigure}
\begin{subfigure}{.23\columnwidth}
\begin{minipage}[t]{\columnwidth}
	\def\svgwidth{\columnwidth}
	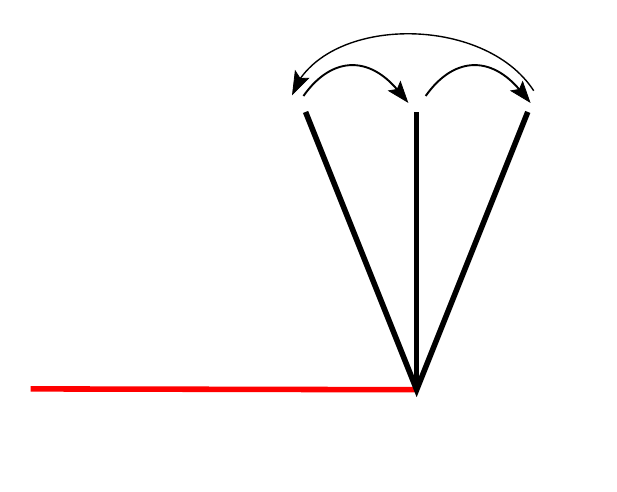
\end{minipage}
\caption{}
\label{fig:realizabletreeb1}
\end{subfigure}
\begin{subfigure}{.23\columnwidth}
\begin{minipage}[t]{1\columnwidth}
	\def\svgwidth{\columnwidth}
	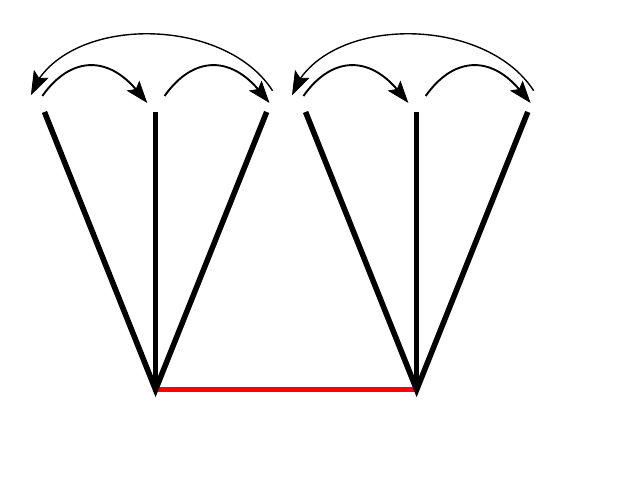
\end{minipage}
\caption{}
\label{fig:realizabletreeb2}
\end{subfigure}
\caption{Graphs over which $\tX$ is the cone for a two dimensional realizable tropical Hopf data $(\tX, \tF)$ from Examples \ref{ex:GraphA}, \ref{ex:GraphB}, \ref{ex:GraphC}, \ref{ex:GraphD}.}
\label{fig:realHopf}
\end{figure}

\begin{ex}\label{ex:GraphB}
Consider the germ $\af(\ax_1, \ax_2) = (\alambda \ax_1, \amu \ax_2)$ with $\alambda = t$ and $\amu = -t^2$, so that $\alambda^4=\amu^2$.
With respect to the notations used in the proof of Theorem \ref{thm:listfinitetrophopf}, we have $k=2,l=1,D=2$.

The periodic curves by $\af$ are $H_1=\{\ax_1=0\}$, $H_2=\{\ax_2 = 0\}$, and $C_\aalpha = \{ \ax_2 = \aalpha \ax_1^2\}$.
Notice that $\af(C_\aalpha)=C_{-\aalpha}$.
Consider now as an example the embedding $i:\nA^2 \to \nA^{8}$ given by
$$
i(\ax_1, \ax_2)=\left(\ax_1,\ax_2, \ax_2 - \ax_1^2, \ax_2 + \ax_1^2, \ax_2 - (1+t)\ax_1^2, \ax_2 + (1+t)\ax_1^2, \ax_2 - t\ax_1^2, \ax_2 + t\ax_1^2\right).
$$

Set $\tX=\mc{T}_i(\nA^2)$ the tropicalization of $\nA^2$ with respect to $i$. Then $\tX$ is a cone over a tree, depicted in Figure \ref{fig:realizabletreea}.
Let $\aF:\nA^8 \to \nA^8$ be the map
$$
\aF(\ay_1, \ldots, \ay_8)=\left(t\ay_1, -t^2\ay_2, -t^2\ay_4, -t^2\ay_3, -t^2\ay_6, -t^2\ay_5, -t^2\ay_8, -t^2\ay_7\right),
$$
and $\tF$ its tropicalization. Then $(\tX,\tF)$ is a Hopf data, realized by $(i, \aF)$.
\end{ex}

\begin{ex}\label{ex:GraphC}
Consider the germ $\af(\ax_1, \ax_2) = (\alambda \ax_1, \amu \ax_2)$ with $\alambda = t$ and $\amu = \zeta t^3$, where $\zeta$ is a primitive $D$-root of unity, $D \geq 2$.
Then $\alambda^{3D} = \amu^D$, and with respect to the notations used in the proof of Theorem \ref{thm:listfinitetrophopf}, we have $k=3,l=1$.

The periodic curves by $\af$ are $H_1=\{\ax_1=0\}$, $H_2=\{\ax_2 = 0\}$, and $C_\aalpha = \{\ax_2 = \aalpha \ax_1^3\}$.
Notice that $\af(C_\aalpha)=C_{\zeta \aalpha}$.

Consider now as an example the embedding $i:\nA^2 \to \nA^{1+D}$ given by
$$
i(\ax_1, \ax_2)=\left(\ax_1,\ax_2 - t^2\ax_1^3, \ax_2 - \zeta t^2\ax_1^3, \ldots, \ax_2 - \zeta^{D-1} t^2\ax_1^3\right).
$$

Set $\tX_i=\mc{T}_i(\nA^2)$ the tropicalization of $\nA^2$ with respect to $i$. Then $\tX$ is a cone over a tree, depicted in Figure \ref{fig:realizabletreeb1}.
Let $\aF:\nA^{1+D} \to \nA^{1+D}$ be the map
$$
\aF(\ay_1, \ldots, \ay_{1+D})=\left(t\ay_1, \zeta t^3 \ay_{1+D}, \zeta t^3 \ay_2, \ldots, \zeta t^3 \ay_D\right),
$$
and $\tF$ its tropicalization. Then $(\tX,\tF)$ is a Hopf data, realized by $(i, \aF)$.
\end{ex}

\begin{ex}\label{ex:GraphD}
Similarly to Example \ref{ex:GraphC}, we consider the germ $\af(\ax_1, \ax_2) = (\alambda \ax_1, \amu \ax_2)$ with $\alambda = t$ and $\amu = \zeta t$, where $\zeta$ is a primitive $D$-root of unity, $D \geq 2$.
In this case $\alambda^{D} = \amu^D$, and we have $k=l=1$.
Consider the embedding $i:\nA^2 \to \nA^{2D}$, given by
$$
i(\ax_1, \ax_2)=\left(\ax_2 - \ax_1, \ax_2 - \zeta \ax_1, \ldots, \ax_2 - \zeta^{D-1}\ax_1, \ax_2 - 2t^u \ax_1, \ax_2 - 2\zeta t^u\ax_1, \ldots, \ax_2 - 2\zeta^{D-1} t^u\ax_1 \right),
$$
where $u \in \nR$.
The tropicalization $\tX=\mc{T}_i(\nA^2)$ is a cone over a tree $T$, depicted in Figure \ref{fig:realizabletreeb2}.
Let $\aF:\nA^{2D} \to \nA^{2D}$ be the map
$$
\aF(\ay_1, \ldots, \ay_{2D})=\left(\zeta t \ay_{D}, \zeta t \ay_1, \ldots, \zeta t \ay_{D-1}, \zeta t \ay_{2D}, \zeta t \ay_{D+1}, \ldots, \zeta t \ay_{2D-1}\right),
$$
and $\tF$ its tropicalization. Then $(\tX,\tF)$ is a Hopf data, realized by $(i, \aF)$.
Notice that the segment $I = \on{Fix}(h)$ is a bounded segment for all $u \in \nR$, and reduces to a point when $u=0$.
\end{ex}

\begin{rmk}
Notice that tropical Hopf manifolds that cannot be realized by primary Hopf manifolds, cannot be realized even by secondary ones.
Secondary Hopf manifolds are obtained by considering the quotient of $\nA^d \setminus \{0\}$ by the cocompact, free and properly discontinuous action of a group $G$.
Assume we have an embedding $i:\nA^d \to \nA^n$, and that the action of $G$ extends to an action on $\nA^n$ that leaves $\aX = i(\nA^d)$ invariant.
By a theorem of Kodaira's, $G$ contains a contraction $\af$, which generates an infinite cyclic subgroup of $G$ of finite index.
In particular $G$ is generated by $\af$ and some torsion elements $\ag$.
The tropicalization $\tg$ of $\ag$ on $\nT^n$ will act as a permutation of coordinates.
In particular, the action induced by $\tg$ is not free.
Denote by $\tX$ the tropicalization of $\aX$. Then the quotient of $\tX \setminus \{\infty\}$ by the action of $G$ cannot produce a tropical manifold. 
\end{rmk}

\begin{rmk}
In this section we used the fact that $\nK$ has characteristic $0$.
In positive characteristic, normal forms of contracting automorphisms $\af:(\nA^d,0) \to (\nA^d,0)$ coincide with the ones given in characteristic $0$ (see \cite[Theorem 1.1]{voskuil:nonarchhopf}, \cite[Theorem 2.7]{ruggiero:rigidgerms}, \cite[Remark 7.2]{ruggiero:superattrdim1charp}).
In dimension $d=2$, the study of periodic curves of a germ  $\af$ defined over a field of positive characteristic is similar to the situation in characteristic $0$.
Notice that a family of periodic curves for $\af$, would induce a family of non-constant meromorphic functions on the Hopf surface induced by $\af$.
By \cite[Theorem 1.3]{voskuil:nonarchhopf}, the new phenomenon in characteristic $p$ is given by the maps $\af(\ax_1, \ax_2)=(\alambda \ax_1, \alambda^m \ax_2 + \aeps \ax_1^m)$, where we get the family of meromorphic functions
$$
Q_\aalpha = \ax_2^p - \aalpha \ax_1^m - \aeps^{p-1} \ax_1^{m(p-1)} \ax_2,
$$
for some $\aalpha \in \nK$.
Denote by $C_\aalpha$ the curve defined by $\{Q_\aalpha=0\}$.
It is easy to show that $\af(C_\aalpha)= C_{\aalpha \alambda^{m(p-1)}}$.
Since $\nu_0(\alambda)>0$, the only curve of this form that is periodic by $\af$ is given by $\aalpha = 0$. 
Therefore, for fields of positive characteristic one can make statements similar to  Theorem \ref{thm:listfinitetrophopf}.
\end{rmk}

\begin{rmk}\label{rmk:noncompactreal}
Realizations can be considered also for non-compact tropical Hopf varieties.
In this case, however, in general there is no cone structure as given by Proposition \ref{prop:Hopfdatacone} (see Example \ref{ex:globalautonocone}), and the geometry of the total space $W^{(\infty)}$ considered in Subsection \ref{ssec:noncompact} is much more complicated than the compact case.

In the case of surfaces, using the normal forms described by Proposition \ref{prop:normalforms2d}, we can notice that if $\tf:\nT^2 \to \nT^2$ is the tropicalization of  a global automorphism $\af$, then the corresponding $W^{(\infty)}$ has a cone structure.
In fact, the only contracting weakly non-degenerate germs which do not come from a global automorphism are of the form \eqref{eqn:nf2dpositive} with $p_2q_1 \geq 2$, or \eqref{eqn:nf2dnegative} with $p_1q_2 \geq 2$, see Example \ref{ex:monomializationmadness}. 

So in dimension $2$ we may apply the same arguments used in the proof of Theorem \ref{thm:listfinitetrophopf} to get a description of the geometry of $W^{(\infty)}$ as a cone over some tree $T$ with infinitely-many leaves.
Leaves of $T$ correspond to a set of irreducible curves, given by (the irreducible components of) the intersection between $i^{(\infty)}(\nA^2)$ and coordinate hyperplanes.
The cone structure implies that there exists weights $l,k$ on $\ax_1, \ax_2$ coordinates in $\nA^2$ so that any such irreducible component is a (irreducible) weighted homogeneous polynomial in $\ax_1, \ax_2$.
Up to multiplying by a constant, they are either $P_\infty=\ax_1$, $P_0=\ax_2$ or of the form $P_\aalpha = \ax_2^k - \aalpha \ax_1^l$ for some $\aalpha \in \nK^*$.
Set $C_\aalpha = \{P_\aalpha = 0\}$ for all $\aalpha \in \ol{\nK}$, and denote by $S$ the set of $\aalpha \in \ol{\nK}$ so that $C_\aalpha$ is one of the curves described above.
As before, the shift $h^{(\infty)}:W^{(\infty)}\to W^{(\infty)}$ induces an automorphism on $S$.

Now, notice that if $\af(\ax_1, \ax_2)=(\alambda \ax_1, \amu \ax_2)$ is diagonal, then for all $\aalpha \in\nK^*$ we get $\af(C_\aalpha) = C_{\aalpha'}$, where $\aalpha'=\frac{\amu^l}{\alambda^k}\aalpha$, while $\af(C_0)=C_0$ and $\af(C_\infty) = C_\infty$.

If $k\nu_0(\alambda)=l \nu_0(\amu)$, we may either have only periodic curves, when $\alambda^k= \amu^l$ as in Theorem \ref{thm:listfinitetrophopf}, or we may have no periodic curves, even though $\nu_0(\aalpha)=\nu_0(\aalpha')$.

If $k\nu_0(\alambda) \neq l \nu_0(\amu)$, we have $\nu_0(\aalpha) \neq \nu_0(\aalpha')$. In this case we need to have $0$ and $\infty$ in $S$, to ensure the existence of the retraction to the convex hull of $S$.

In the non-diagonalizable case $\af(\ax_1, \ax_2)=(\alambda \ax_1, \alambda^m \ax_2 + \aeps \ax_1^m)$, we get $\af(C_\aalpha)= C_{\aalpha'}$ with $\aalpha'=\aalpha + \alambda^{-m} \aeps$ for all $\aalpha \in \nK$, and the only periodic curve is $C_\infty=\af(C_\infty)$.

Considering the iterates of $\af$, we get $\af^n(C_\aalpha)=C_{\aalpha_n}$, where $\aalpha_n = \aalpha + n \alambda^{-m} \aeps$.
Notice that $\nu_0(\aalpha_n)$ is bounded away from $-\infty$, and the convex hull of $S$ is closed in this case.
\end{rmk}

\begin{ex}\label{ex:globalautonocone}
Consider the global automorphism $\af:\nA^3 \to \nA^3$ given by
\begin{equation}
\af(\ax_1,\ax_2,\ax_3)=\big(t \ax_1, t\ax_2+\ax_1^2,t\ax_3 + \ax_1 + \ax_2\big),
\end{equation}
and its tropicalization $\tf$.
The modification $\pi:\tX \to \nT^3$ of Theorem \ref{thm:intromonomialization} is obtained along two divisors, one whose support is  $\{\tx_2+1=2\tx_1\}$, and the other is supported on the union of $\{\tx_3+1=\tx_1\}$, $\{\tx_3+1=\tx_2\}$ and $\{\tx_1=\tx_2\}$. Notice that the direction of the intersection between the last $3$ planes is $(1,1,1)$, which is not tangent to the first plane.
It follows that $\tX$ is not a cone, and consequently $W^{(\infty)}$ is not a cone.
\end{ex}

\begin{rmk}\label{rmk:PDisok}
By Poincar\'e-Dulac theorem, we know that any contracting automorphism is analytically conjugated to a germ $\af:(\nA^d,0) \to (\nA^d,0)$ in Poincar\'e-Dulac normal form. These normal forms are triangular, in the sense that the $k$-th coordinate of $\af$ only depends on the first $k$ coordinates.
In particular, $\af$ defines a global automorphism, and $\af^{-1}$ is also triangular.
Denote by $\alambda_1, \ldots, \alambda_d$ the eigenvalues of the linear part of $\af$, and set
$$
V=\bigcap_{n \in \nZ^d, \alambda^n = 1} \{x \in \nR^d\ |\ \scalprod{n,x}=0\}.
$$
This is a vector space of dimension $\geq 1$ containing a vector $a \in \nN^d$.
Then it is easy to check that every divisor of $\tf_k$ contains the line generated by $a$ for any $a \in V$.
It follows that $W^{(\infty)}$ has a cone structure whenever $\tf$ is the tropicalization of a Poincar\'e-Dulac normal form.
\end{rmk}

\subsection{Connections to rigid analytic Hopf manifolds} 

Here we make a link between tropical Hopf surfaces and non-Archimedean Hopf surfaces which were studied in \cite{voskuil:nonarchhopf}.

An automorphism $\af: \Ad \to \Ad$ induces an automorphism $\af_{\ast} : \mc{V}(\Ad) \to \mc{V}(\Ad)$ by $\af_{\ast} \mu(\aphi) = \mu(\aphi \circ \af)$.
The action of $\af_{\ast}$ on $\mc{V}(\Ad)$ can be connected to tropicalizations as follows. 
Given embeddings $i : \Ad \to \nA^n$ and $j : \Ad \to \nA^n$,  let $\aF : \nA^n \to \nA^n$ be a regular map such that $\aF \circ i  = j \circ \af$. 
Then it is easily shown that the following diagram commutes even if $\aF$ is not equivariant with respect to the natural action of $(\nK^*)^n$, 
$$
\xymatrix@R25pt@C25pt{
{\mc{V}(\Ad)} \ \ar[r]^{\af_{\ast}}\ar[d]_{\rho_i}  & {\mc{V}(\Ad)}\ar[d]^{\rho_j}\\
 {\mc{T}_i (\Ad)} \ar[r]^{\trp(\aF)} &  {\mc{T}_j(\Ad)}
}
$$

In cases of particularly nice tropicalizations, the maps  $\rho_{i}, \rho_j$ admit  sections $s_{i}, s_{j}$. 
For a point $\tx \in \mc{T}_i(\Ad)$, let $m_i(\tx) = m_{i(\Ad)}(\tx)$ denote the multiplicity of the tropicalization $\mc{T}_i$ at $x$ (see Definition \ref{def:tropmult}).
The following theorem is phrased in order to apply it to non-singular tropical varieties described in Definition \ref{def:nonsingular}. In general, the conditions of the intersection of the tropicalization with the boundary strata are weaker.  

\begin{thm}[{\cite[Theorem 10.6]{gubler-rabinoff-werner:skeletons}, \cite[Corollary 8.15]{gubler-rabinoff-werner:tropicalskeletons}}]\label{thm:sectionTrop}
Let $Z \subset \mc{T}_i(\Ad) \subset \nT^n$  be such that the multiplicity of the tropicalization,   $m_i(\tx) = 1$ for all $\tx \in Z$, and that $Z \cap \nR^n_I$ is of the expected dimension for all subsets $I \subset \{1, \dots, n\}$. 
Then the map $\tx \to s_i(\tx)$, where $s_i(\tx)$ is the unique Shilov boundary point of $\rho^{-1}_i(\tx)$, defines a continuous section $Z \to \mc{V}(\Ad)$.
If $Z$ is contained in the closure of its interior in $\mc{T}_i(\Ad) \cap \nR^{n_i}$ then $s_i$ is the unique continuous section of $\rho_i$ defined on $Z$.
\end{thm}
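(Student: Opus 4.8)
The plan is to reduce the theorem to the dictionary between Shilov boundary points of an analytic fiber and the irreducible components of an initial degeneration, and then to read off both existence and uniqueness of the distinguished point from the hypothesis $m_i(\tx)=1$. First I would fix a point $\tx \in Z$ of sedentarity $I$ and describe the fiber $\rho_i^{-1}(\tx)$ concretely. Restricting to the torus orbit $\nR^n_I$ and passing to the initial ideal $\on{In}_\tx(\mc{I}) \subset \nK[\ax_j^{\pm}\mid j \nin I]$, the fiber is identified (after the monomial shift determined by $\tx$) with the analytic space attached to the initial degeneration $\aV_{\tx} \subset (\nK^*)^{n-\abs{I}}$ appearing in Definition \ref{def:tropmult}; since the value group of $\nu_0$ is all of $\nR$, every such point is realized by a $\nK$-point and the fiber is a reduced affinoid whose canonical (Berkovich) reduction is precisely the special fiber of $\aV_{\tx}$. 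The key structural input I would invoke is Berkovich's theorem that the Shilov boundary of a reduced affinoid is in canonical bijection with the irreducible components of its canonical reduction, each component contributing exactly one boundary point.

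Granting this, the multiplicity hypothesis does the work. By the very definition of $m_i(\tx)$ as the number of irreducible components of that special fiber counted with multiplicity, the assumption $m_i(\tx)=1$ forces the reduction to be irreducible and reduced, hence to have a single Shilov point. This defines $s_i(\tx)$ unambiguously as that point and gives $\rho_i \circ s_i = \on{id}$ on $Z$ pointwise. The hypothesis that $Z \cap \nR^n_I$ has the expected dimension for every $I$ is what guarantees the boundary initial degenerations $\aV_{\tx}$ are themselves of the expected dimension, so that the same component-counting applies uniformly as $\tx$ ranges over the different sedentarity strata meeting $Z$.

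Second, I would establish continuity of the map $\tx \mapsto s_i(\tx)$, and I expect this to be the crux of the argument. The strategy is local and combinatorial: around a point of $Z$ one selects a polyhedral neighborhood on which the combinatorial type of $\mc{T}_i(\Ad)$ — and hence the family of initial degenerations up to monomial twist — is constant, and one trivializes the family of affinoid fibers over it. Because $m_i \equiv 1$ on $Z$, the Shilov point in each fiber is cut out as the unique point simultaneously maximizing the coordinate seminorms $\abs{\,\cdot\,}$ evaluated on $\ay_j \circ i$, and the weak topology on $\mc{V}(\Ad)$ reduces continuity of $s_i$ to continuity of each evaluation $\nu \mapsto \nu(\ay_j \circ i)$ along the section, which is forced by $\rho_i \circ s_i = \on{id}$ together with the expected-dimension condition preventing any collapse of the section as $\tx$ approaches a boundary stratum $\nR^n_I$. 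This is exactly the content of \cite[Theorem 10.6]{gubler-rabinoff-werner:skeletons} and \cite[Corollary 8.15]{gubler-rabinoff-werner:tropicalskeletons}, whose proofs run through the continuity of the reduction map and the theory of skeletons.

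Finally, for the uniqueness clause I would argue that on the relative interior of $Z$ inside the open stratum, where the fibers are affinoid tori of the expected dimension, any continuous section of $\rho_i$ must realize the tropical coordinates and therefore must land at the Shilov point, since that is the only point of the fiber at which all coordinate seminorms are jointly maximized. Hence two continuous sections agree on the interior of $Z$, and the hypothesis that $Z$ is contained in the closure of its interior promotes this to agreement on all of $Z$ by continuity. The main obstacle throughout is the continuity in the third paragraph, in particular its stability across strata of differing sedentarity; the identification of $s_i(\tx)$ and its well-definedness are formal once the correspondence ``Shilov points $\leftrightarrow$ components of the reduction'' and Definition \ref{def:tropmult} are in hand.
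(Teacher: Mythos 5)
First, a point of comparison: the paper does not prove this statement at all --- it is quoted directly from Gubler--Rabinoff--Werner (\cite[Theorem 10.6]{gubler-rabinoff-werner:skeletons}, \cite[Corollary 8.15]{gubler-rabinoff-werner:tropicalskeletons}), so there is no in-paper argument to measure yours against; the relevant yardstick is the proof in those references. Your first two paragraphs do follow their strategy: identify the fiber $\rho_i^{-1}(\tx)$ with an affinoid whose canonical reduction is the initial degeneration attached to $\on{In}_\tx(\mc{I})$, invoke Berkovich's bijection between Shilov boundary points of a reduced affinoid and irreducible components of its canonical reduction, and conclude from $m_i(\tx)=1$ that the fiber has a unique Shilov point. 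That part is sound, and it is exactly how $s_i$ is defined.

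The genuine gap is in your continuity and uniqueness arguments, and it is the same error in both. The topology on $\mc{V}(\Ad)$ is the weak topology generated by the evaluations $\nu \mapsto \nu(\aphi)$ for \emph{all} $\aphi \in \nK[\ax_1, \ldots, \ax_d]$, not just the $n$ pulled-back coordinates $\ay_j \circ i$. By the definition of $\rho_i$, every point of the fiber $\rho_i^{-1}(\tx)$ takes the same values on $\ay_1 \circ i, \ldots, \ay_n \circ i$, namely $\tx_1, \ldots, \tx_n$; hence continuity of these $n$ evaluations along a section holds for \emph{any} set-theoretic section, and your proposed reduction would prove that every section of $\rho_i$ is continuous, which is false. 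The same confusion sinks the uniqueness argument: the Shilov point is not ``the only point of the fiber at which all coordinate seminorms are jointly maximized,'' since all points of the fiber have identical coordinate seminorms; what characterizes the Shilov point is that it maximizes $\abs{\aphi}$ for \emph{every} function $\aphi$ regular on the affinoid fiber. The actual content of the cited theorems is precisely the step you are missing: showing that $\tx \mapsto s_i(\tx)(\aphi)$ is continuous for every polynomial $\aphi$, which in Gubler--Rabinoff--Werner is done by controlling the variation of sup-norms of $\aphi$ over the fibers (via models and a careful analysis across sedentarity strata) and using $m_i \equiv 1$ to identify the sup with the value at the unique Shilov point; the uniqueness clause likewise needs an argument pinning down any continuous section on the dense interior, not the coordinate-maximization claim. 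As written, your proposal defines $s_i$ correctly but proves neither continuity nor uniqueness.
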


$$
\xymatrix@R25pt@C25pt{
{\mc{V}(\Ad)} \ \ar[r]^{\af_{\ast}}\ar[d]_{\rho_i}  & {\mc{V}(\Ad)}\ar[d]^{\rho_j}\\
{\mc{T}_i (\Ad)} \ar@(l,l)[u]^{s_i} \ar[r]^{\trp(\aF)} &  {\mc{T}_j(\Ad)} \ar@(r,r)[u]_{s_j}
}
$$

When we have such sections, we can consider the action induced by $\af_{\ast}$ on $s_{i}(\mc{T}_i(\Ad))$.
In general however, $\af_\ast$ does not define a dynamical system on $s_i(\mc{T}_i(\Ad))$, not even if $\mc{T}_i(\Ad) = \mc{T}_j(\Ad)$.

\begin{prop}\label{prop:sectionAction}
Let $\af :\Ad \to \Ad$ be a globally contracting automorphism and $i: \Ad \to \nA^n$ be an embedding such that $i(\Ad)$ is not contained in a subspace of $\nA^d$ and there exists a map $\aF: \nA^n \to \nA^n$ satisfying $\aF \circ i = i \circ \af$. 
Suppose that $\rho_i : \mc{V}(\Ad) \to \mc{T}_i(\Ad)$ admits a section $s_i$ on a neighborhood $U$ of $\infty$, given by Theorem \ref{thm:sectionTrop}.  
Then $\af_{\ast}(s_i(U)) \subset s_i(U)$ if and only if $\aF$ is an invertible linear monomial map. 
\end{prop}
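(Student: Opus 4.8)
The plan is to turn the set-theoretic containment into an intertwining identity and then decide when $\af_\ast$ preserves the skeleton carved out by the section. First I would use the commutative square, which with $j=i$ reads $\rho_i\circ\af_\ast=\trp(\aF)\circ\rho_i$. Since $\rho_i\circ s_i=\on{id}$, the section $s_i$ is injective, so the containment $\af_\ast(s_i(U))\subset s_i(U)$ is equivalent to the pair of conditions
$$
\af_\ast\circ s_i=s_i\circ\trp(\aF)\ \text{ on } U,\qquad\qquad \trp(\aF)(U)\subset U.
$$
In these terms the assertion becomes: $\af_\ast$ maps the skeleton $\Sigma:=s_i(U)\subset\mc{V}(\Ad)$ into itself, carrying Shilov points to Shilov points, if and only if $\aF$ is an invertible linear monomial map.

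For the implication ($\Leftarrow$) I would argue as follows. If $\aF$ is invertible linear monomial — a generalized permutation with coefficients in $\nK^\ast$ — then $\trp(\aF)$ is an integer-affine automorphism of $\nT^n$ preserving sedentarity, and under $\trp(i)$ its restriction to $\mc{T}_i(\Ad)$ is conjugate to $\tf=\trp(\af)$, which is contracting toward $\infty$ by Proposition \ref{prop:contracting}; shrinking $U$ gives $\trp(\aF)(U)\subset U$. Because $\trp(\aF)$ is a tropical \emph{isomorphism}, $\af_\ast$ restricts to an isomorphism of the analytic fibers $\rho_i^{-1}(\tx)\to\rho_i^{-1}(\trp(\aF)(\tx))$; these fibers carry a unique Shilov point once $m_i=1$ (Theorem \ref{thm:sectionTrop}), and an isomorphism of analytic fibers carries that Shilov point to the Shilov point. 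Hence $\af_\ast(s_i(\tx))=s_i(\trp(\aF)(\tx))\in s_i(U)$.

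For the converse ($\Rightarrow$), the input is the equivariance $\af_\ast\circ s_i=s_i\circ\trp(\aF)$, which says that $\af_\ast$ sends every monomial (Shilov) valuation $s_i(\tx)$ to another one and so respects the sedentarity stratification of $\Sigma$. Evaluating on coordinate functions gives $\trp(\aF)(\tx)_k=s_i(\tx)(\aF_k\circ i)$, and the demand that the image again be a monomial valuation should force each pullback $\aF_k\circ i$ to be a single tropical monomial on $\Sigma$. The boundary strata pin down the combinatorics: a point of $\Sigma$ of sedentarity $\{k\}$ is a semivaluation centered on the divisor $\{i_k=0\}\subset\Ad$, so preservation of these strata forces $\af$ to permute the divisors $\{i_k=0\}$; by the periodic-curve analysis behind Proposition \ref{prop:realizinggerms} these are the invariant coordinate directions, and matching the corresponding monomial valuations forces $\aF$ to carry each $\ay_k$ to a scalar multiple of a single coordinate $\ay_{\sigma(k)}$. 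Using the hypothesis that $i(\Ad)$ lies in no coordinate subspace (no coordinate is redundant) together with invertibility of $\af$, I would conclude that $\aF$ is a generalized permutation matrix with entries in $\nK^\ast$, i.e.\ an invertible linear monomial map.

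The step I expect to be the main obstacle is the implication ``$\af_\ast s_i(\tx)$ monomial for all $\tx$ $\Rightarrow$ each component $\aF_k$ is a single monomial''. If some $\aF_k$ carried two or more monomials I would need to exhibit a $\tx\in U$ at which the pushforward $\phi\mapsto s_i(\tx)(\phi\circ\af)$ fails to be the Shilov point of its fiber, i.e.\ fails to be monomial with respect to $i$. This requires a local model of the section near $\infty$ — using non-singularity and multiplicity one there (Proposition \ref{prop:multiplicities}), so that the fibers are tori and their Shilov points are genuine monomial valuations — together with the fact that transporting a monomial valuation by a genuinely non-monomial $\af$ breaks the monomial relations among the $\ay_k\circ i$. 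Arranging that the test point $\tx$ stays in the locus where the section is unique, and ruling out accidental non-degenerate cancellations, is the technical heart of the argument.
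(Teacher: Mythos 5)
Your forward implication is essentially the paper's own argument: from the square $\rho_i \circ \af_{\ast} = \trp(\aF)\circ \rho_i$ one gets that $\af_{\ast}$ maps the fiber $\rho_i^{-1}(\tx)$ onto $\rho_i^{-1}(\trp(\aF)(\tx))$, and being an isomorphism it carries the unique Shilov point of one fiber to that of the other, hence $\af_{\ast}(s_i(\tx))=s_i(\trp(\aF)(\tx))$. This matches the paper. One caveat: you justify $\trp(\aF)(U)\subset U$ by citing Proposition \ref{prop:contracting} in the direction ``contracting lift $\Rightarrow$ contracting tropicalization'', which is exactly the implication the paper's remark following that proposition warns is false in general; it is harmless here only because for an invertible linear monomial map the tropical eigenvalues equal the valuations of the eigenvalues of any lift (no cancellation can occur), and that observation needs to be made explicitly.

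The genuine gap is the converse. You reduce it to the implication ``$\af_{\ast} s_i(\tx)$ is a Shilov point of its fiber for all $\tx\in U$ $\Rightarrow$ each component $\aF_k$ is a single monomial,'' and then you explicitly leave this step unproved, calling it ``the main obstacle'' and ``the technical heart''; so the only-if direction is simply not established. Moreover, the two tools you propose for closing it do not apply. Proposition \ref{prop:realizinggerms} is a statement about $2$-dimensional Hopf data and periodic curves of planar contracting germs, while the present proposition is in arbitrary dimension $d$. More seriously, the relation $\aF\circ i = i\circ \af$ determines $\aF$ only on the subvariety $i(\nA^d)\subset \nA^n$; consequently, knowing that $\af$ permutes the divisors $\{i_k=0\}$ (information entirely about the action on $i(\nA^d)$) can never, by itself, force the polynomials $\aF_k$ on $\nA^n$ to be monomials -- one can always add to $\aF_k$ terms vanishing on $i(\nA^d)$. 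Any correct argument must therefore work with $\trp(\aF)$ as a map near the whole tropicalization, which is what the paper does: it sets $\tF=\rho_i\circ\af_{\ast}\circ s_i$, observes that the containment makes $\tF$ a self-map of $\mc{T}_i(\nA^d)$ near $\infty$ coinciding there with $\trp(\aF)$, and then invokes the hypothesis that $i(\nA^d)$ is not contained in an equivariant (torus-invariant) subspace of $\nA^n$ to conclude that a non-monomial $\aF$ could not leave a neighborhood of $\mc{T}_i(\nA^d)$ invariant. That global invariance-plus-nondegeneracy step is the paper's key idea for the converse, and it has no counterpart in your strata/divisor-permutation plan.
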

\begin{proof}
Suppose that the map  $\aF$ is equivariant, and let $\tF = \text{Trop}(\aF)$.
We can write $\aF(\ax_1, \ldots, \ax_n) = (\alpha_1 \ax_{\sigma(1)}, \ldots, \alpha_n \ax_{\sigma(n)})$.
Since $\rho_i \circ \af_\ast = \tF \circ \rho_i$, we infer $\af_{\ast}(\rho^{-1}_i(\tx)) = \rho_i^{-1}(\tF(\tx))$ for a point $\tx \in U$.
In particular, if $\mu \in \rho_i^{-1}(\tx)$ then
$$
\nu_0(\alpha_k) + \tx_{\sigma(k)} = \af_{\ast} \mu(i_k) = \mu(i_k \circ \af) = \mu(\ax_k \circ \aF \circ i) = \mu(\alpha_k i_{\sigma(k)})
$$
for $1\leq k \leq n$.
Finally if $\mu$ is a Shilov boundary point of $\rho^{-1}(\tx)$, then $\af_{\ast} \mu \in \rho^{-1}(\tF(\tx))$ is a Shilov boundary point of $\rho^{-1}(\tF(\tx))$.
It follows that $\af^{\ast}(s_i(U)) \subset s_i(U)$.

Suppose now that $\af_{\ast}(s_i(U)) \subset s_i(U)$, and set $\tF=\rho_i \circ \af_\ast \circ s_i$, which leaves $\tX=\mc{T}_i(\Ad)$ invariant. Then $F = \trp(\aF)$. 
If $\af_{\ast}(s_i(U)) \subset s_i(U)$ then $\tF = \trp(\aF)$ must be a monomial map.
Otherwise a neighborhood of  $\mc{T}_i(\Ad)$ could not be invariant under $\tF$, since $i(\Ad)$ is not contained in an equivariant subspace of $\nA^n$. 
\end{proof}

\begin{proof}[Proof of Theorem \ref{thm:introantotrop}]
If $\af$ is a \emph{diagonalizable} contracting automorphism, we take coordinates $(\ax_1, \ax_2)$ so that $\af$ is a diagonal linear map on such coordinates.
By applying Proposition \ref{prop:sectionAction} to the trivial embedding $\on{id}:\nA^2 \to \nA^2$, we get a section $s:\nT^2 \to \mc{V}(\nA^2)$. The map $r=s \circ \rho : \mc{V}(\nA^2) \to s(\nT^2)$ is then a retraction, that induces a retraction on the level of Hopf surfaces.

If $\af$ is a \emph{resonant} contracting automorphism, we take coordinates so that $\af$ is in Poincar\'e-Dulac normal form $\af(\ax_1, \ax_2) = (\alambda \ax_1, \alambda^m \ax_2 + \aeps \ax_1^m)$ with $\eps \neq 0$.
Its tropicalization $\tf(\tx_1, \tx_2)=\big(\tlambda + \tx_1, (m \tlambda + \tx_2) \wedge (\eps + m\tx_1)\big)$ satisfies the hypotheses of Corollary \ref{cor:dynsys2d}, so there exists a modification $\pi:\tX \to \nT^2$, and an automorphism $\tF:\nT^3 \to \nT^3$ so that $\tf \circ \pi= \pi \circ \tF$ on $\Strict{\pi}=\{\tu_1=(m \tlambda + \tx_2) \wedge (\eps + m\tx_1)\}$.
Notice that all points in $\tX$ have multiplicity $1$. Explicitly, $\tF(\tx_1, \tx_2, \tu_1)=(\tx_1+\tlambda, \tu_1, \tx_2 + m\tlambda)$.

By Theorem \ref{thm:introrealHopf}, a resonant  germ $\af$ does not realize any tropical Hopf data.
This corresponds to the fact that for any choice of $\aF$ tropicalizing to $\tF$, and of an embedding $i(\ax_1, \ax_2)=(\ax_1, \ax_2, \ax_2 - \aalpha \ax_1^m)$ with $\aalpha \in \nK^*, \nu_0(\aalpha)=\eps-m\tlambda$, we have that $\aF \circ i (\nA^2)$ is not contained in $i(\nA^2)$.
In fact, for any $n \in \nZ$, we get
$$
\af^n(\ax_1, \ax_2)=\big(\alambda^n \ax_1, \alambda^{nm}(\ax_2 + n \aeps \alambda^{-m} \ax_1^m)\big).
$$
Nevertheless we can proceed as in Subsection \ref{ssec:noncompact} and Remark \ref{rmk:noncompactreal}.
We consider the sequence of embeddings $i_n:\nA^2 \to \nA \times \nA^{2n+1}$, $n \in \nN$, given by
$$
i_n(\ax_1, \ax_2)=\big(\ax_1, \ax_2 - n \aeps \alambda^{-m} \ax_1^m, \ldots, \ax_2 + n \aeps \alambda^{-m} \ax_1^m\big).
$$
Denote by $(\ax, \ay_{-n}, \ldots, \ay_{n})$ the coordinates of $\nA \times \nA^{2n+1}$.
For any $n \geq n'$, there are natural projections $\on{pr}_{n,n'}:\nA \times \nA^{2n+1}\to \nA \times \nA^{2n'+1}$ so that $i_{n'}=\on{pr}_{n,n'} \circ i_{n}$.
We may consider the inverse limit $i_\infty:\nA^2 \to \nA \times \nA^\nZ$. If $(\ax, \ay_{n})_{n \in \nZ}$ are coordinates of $\nA \times \nA^{\nZ}$, then $i_\infty$ is defined by $\ax \circ i_\infty (\ax_1, \ax_2)=\ax_1$, and $\ay_n \circ i_\infty(\ax_1, \ax_2)=\ax_2-n \aeps \alambda^{-m} \ax_1^m$ for all $n \in \nZ$.
The map $\aF_\infty: \nA \times \nA^\nZ \to \nA \times \nA^\nZ$ given by $\ax \circ \aF_\infty = \alambda \ax$ and $\ay_n \circ \aF_\infty = \alambda^m \ay_{n+1}$, satisfies $i_\infty \circ \af = \aF_\infty \circ i_\infty$.

Consider $\tX_\infty = \mc{T}_{i_\infty}(\nA^2)$, and let $\rho_\infty : \mc{V}(\nA^2) \to \tX_\infty$ be the tropicalization obtained as the inverse limit of the $\rho_n = \rho_{i_n}$ for $n \to \infty$.
Since all $\tX_n = \mc{T}_{i_n}(\nA^2)$ are regular (all points have multiplicity $1$), by Theorem \ref{thm:sectionTrop}, $\rho_n$ admits a section $s_n:\tX_n \to \mc{V}(\nA^2)$.
Its inverse limit $s_\infty : \tX_\infty \to \mc{V}(\nA^2)$ is a section of $\rho_\infty$.
Again, the map $r_\infty=s_\infty \circ \rho_\infty : \mc{V}(\nA^2) \to s_\infty(\tX_\infty)$ is then a retraction.
We clearly have $s_\infty \circ \tF_\infty = f_\ast \circ s_\infty$, where $\tF_\infty$ is the tropicalization of $\aF_\infty$.
It follows that $r_\infty$ induces a retraction from the Hopf surface associated to $\af$ to the non-compact tropical Hopf surface induced by $(\tX_\infty, \tF_\infty)$.
\end{proof}

\begin{rmk}
The same argument used in the proof of Theorem \ref{thm:introantotrop} can be used to study connections  between analytic and tropical Hopf surfaces in other situations.
For example, if $\af$ is diagonalizable, Theorem \ref{thm:listfinitetrophopf} gives a list of tropical Hopf data $(\tX, \tF)$ that are realizable by $\af$. If $\tX$ has multiplicity $1$, we can use Theorem \ref{thm:sectionTrop} to construct a continuous section $s:\tX \to \mc{V}(\nA^2)$, and conclude as above that the Hopf surface associated to $\af$ contracts to 
the image of the section $s(\tX)$. 
\end{rmk}

\begin{rmk}
Theorem \ref{thm:sectionTrop} and Proposition \ref{prop:sectionAction} hold in all dimensions. To apply them and obtain a retraction of the analytifications of Hopf manifolds to tropical manifolds in higher dimension, some difficulties arise.
\begin{itemize}
\item As noticed in Example \ref{ex:PDhigherdim}, there could be no (compact) tropical Hopf data attached to a contracting \emph{resonant} germ $\af$ in dimension $\geq 3$. This problem can be solved by considering non-compact tropical Hopf data, which always exists, since Poincar\'e-Dulac normal forms are global automorphisms (see Remark \ref{rmk:PDisok}).
\item Even if we can find a dynamical monomialization of a tropicalization $\tf$ of a germ $\af$, we could end up with a tropical space $\tX$ where not all points have multiplicity $1$ (see Example \ref{ex:weight2monom}).
This provides added interest to resolving tropicalizations of multiplicity greater than $1$ by further modifications, as in \cite{cueto-markwig:repairtropplanecurve}. 
\end{itemize}
\end{rmk}

\section{Geometry of tropical Hopf manifolds}\label{sec:invariants}

We finish by discussing the geometry and topology of tropical Hopf manifolds and a comparison to their classical counterparts. 
Firstly, the topology of a tropical Hopf manifold $S(\tX, \tF)$ depends on both $\tX$ and $\tF$, however, it is clear that we have the following: 

\begin{prop}\label{prop:homotopytype}
Every tropical Hopf manifold $S(\tX, \tF)$ has homotopy type $\nS^1$. 
\end{prop}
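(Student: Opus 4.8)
The plan is to realize $S(\tX,\tF)$ as the quotient of a contractible space by a free, properly discontinuous action of $\nZ$, so that it is a $K(\nZ,1)$ and hence has the homotopy type of $\nS^1$.

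First I would unwind the geometry of $\tX$. By Proposition \ref{prop:Hopfdatacone} (together with the remark immediately following it, which lets us treat $\tX$ as a genuine cone tropical variety rather than merely a germ) the space $\tX$ is a cone along a direction $w$ with strictly positive entries, and $\tF$ acts on it by automorphism. Concretely, every point $p \neq \infty$ lies on a unique line $\{p+\lambda w\ |\ \lambda \in \nR\}$ which accumulates to the apex $\infty$ as $\lambda \to +\infty$, and quotienting by the $\nR$-action $p \mapsto p+\lambda w$ produces the base $T=\nP_w(\tX)$ of Remark \ref{rmk:conesasweightedprojspaces}. Since $w$ has strictly positive entries this $\nR$-action is free and proper, so the projection $\tX\setminus\{\infty\}\to T$ is a trivial $\nR$-bundle and $\tX\setminus\{\infty\}\cong T\times\nR$. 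By Remark \ref{rmk:metrictree} the base $T$ is homotopic to a point (a tree when $d=2$, a contractible $(d-1)$-complex in general), so $\tX\setminus\{\infty\}$ is contractible.

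Next I would verify that $\langle\tF\rangle\cong\nZ$ acts freely and properly discontinuously on $\tX\setminus\{\infty\}$. For freeness, suppose $\tF^k(p)=p$ for some $k\neq 0$ and some $p\neq\infty$; applying $\tF^{-k}$ shows we may take $k>0$, and then $\tF^k$ is contracting, so the constant orbit $\tF^{kn}(p)=p$ would have to converge to $\infty$, a contradiction. Hence $\tF$ has infinite order and the action is free. Proper discontinuity follows from the fundamental domain $W\cap\tX$ constructed in Subsection \ref{ssec:monoHopf}: because $\tF$ is contracting, the iterates $\tF^k(W\cap\tX)$ tile $\tX\setminus\{\infty\}$ in a locally finite fashion, which is exactly the assertion that $W\cap\tX$ is a fundamental domain for $\tF|_\tX$.

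Finally, a free and properly discontinuous action of $\nZ$ on the contractible space $\tX\setminus\{\infty\}$ exhibits $S(\tX,\tF)=(\tX\setminus\{\infty\})/\langle\tF\rangle$ as an aspherical space with $\pi_1\cong\nZ$, that is, a $K(\nZ,1)$; since $\nS^1$ is also a $K(\nZ,1)$ and both are CW complexes, they are homotopy equivalent, which is the claim. I expect the only point needing care to be the identification $\tX\setminus\{\infty\}\cong T\times\nR$ together with the contractibility of $T$, both of which rest on Proposition \ref{prop:Hopfdatacone} and Remark \ref{rmk:metrictree}; in particular for $d\geq 3$ one must use that $T$ is a contractible higher-dimensional complex and not simply a tree. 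As a consistency check, for the iterate $\tF^s$, which is a translation by $w$, the product description gives directly $S(\tX,\tF^s)\cong T\times\nS^1\simeq\nS^1$, compatibly with the $s$-fold covering $S(\tX,\tF^s)\to S(\tX,\tF)$ of Proposition \ref{prop:finitequotient}.
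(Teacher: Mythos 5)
Your argument is correct and is, in substance, the argument the paper intends: the paper states this proposition without proof (``it is clear that we have the following''), and the ingredients you invoke are exactly those it sets up --- the cone structure of Proposition \ref{prop:Hopfdatacone}, the contractible base $T=\nP_w(\tX)$ of Remarks \ref{rmk:conesasweightedprojspaces} and \ref{rmk:metrictree}, and the fundamental domain of Subsection \ref{ssec:monoHopf} --- packaged through the standard $K(\nZ,1)$ argument. Two steps are worth tightening: the trivialization $\tX\setminus\{\infty\}\cong T\times\nR$ can be made explicit by setting $s(p)=\min_i(p_i/w_i)$, which is continuous on $\tX\setminus\{\infty\}$ and satisfies $s(p+\lambda w)=s(p)+\lambda$, so that $p\mapsto([p],s(p))$ is a homeomorphism; and for freeness it is cleaner to observe that $\tF^{ks}$ is translation by the strictly positive vector $kw$, hence fixes no point of $\tX\setminus\{\infty\}$, rather than to argue that a periodic orbit ``must converge to $\infty$'' (the contracting property, as defined, only constrains the dynamics near $\infty$).
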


\subsection{Picard group and divisors}

Complex primary Hopf manifolds have a Picard group isomorphic to $\nC^*$ (see \cite[Theorem 3]{ise:geomhopfmfld}, or \cite[Section V.18]{barth-hulek-peters-vanderven:compactcomplexsurfaces}).
In the setting of rigid geometry (see \cite[Proposition 3.1]{voskuil:nonarchhopf}), an analogous statement holds: the Picard group of a analytic primary Hopf surface over a field $\nK$ is isomorphic to $\nK^*$.
Here we prove the analogous statement in the tropical setting.

We take as the definition of the tropical Picard group $\on{Pic}(S):=H^1(S, \mathcal{O}^{\ast}_S)$.
Here $\mathcal{O}^{\ast}_S$ is the sheaf of groups given by tropical invertible regular functions on $S$. 
For $U \subset \nT^d$, $\mathcal{O}^{\ast}(U)$ consists of functions which coincide with integer affine functions on $U \cap \nR^d \to \nR$ and which do not attain the value $\infty$ when extended to $U$.
Equivalently, if $I$ is the union of sedentarities of all points $x \in U$ then $\mathcal{O}^{\ast}(U)$ consists of integer affine functions given by tropical Laurent monomials not involving $x_i$ for $i \in I$. 

\begin{ex}
Let $U_1, U_2$ be an open cover of $\nT^2 \setminus \{\infty\}$ given by 
$$
U_1 = \{(\tx_1, \tx_2) \ | \ \tx_2 < \tx_1+1\} \text{ and } U_2 = \{ (\tx_1, \tx_2 \ | \ \tx_2 > \tx_1 - 1 \}.
$$
Then 
$$
\mathcal{O}^*(U_1) =   \{ a + k x_2  \ | \ a \in \nR \text{ and } k \in \nZ \}
\text{ and } 
\mathcal{O}^*(U_2) = \{ a + k x_1  \ | \ a \in \nR \text{ and } k \in \nZ \}
$$
$$
\mathcal{O}^*(U_1 \cap U_2) = \{ a + kx_1 + l x_2  \ | \ a \in \nR \text{ and } k, l \in \nZ \}.
$$
\end{ex}

\begin{prop}
Let $S$ be a diagonal tropical Hopf manifold. Then
$$
\on{Pic}(S) \cong \nR.
$$   
\end{prop}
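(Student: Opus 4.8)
The plan is to compute $\on{Pic}(S)=H^1(S,\mathcal{O}^*_S)$ by passing to the universal cover and combining the exponential sequence with the covering-space (Cartan--Leray) spectral sequence. Write $\widetilde S=\nT^d\setminus\{\infty\}$ for the universal cover of a diagonal Hopf manifold $S$; its deck group is $G=\langle\tau_a\rangle\cong\nZ$, acting freely and properly, and $\widetilde S$ is contractible (under the coordinatewise homeomorphism $\nT\cong(0,1]$ sending $\infty\mapsto 1$, the space $\widetilde S$ becomes the convex set $(0,1]^d$ with the single corner $(1,\dots,1)$ deleted, which is star-shaped from its barycentre). Since $\mathcal{O}^*_S$ pulls back to $\mathcal{O}^*_{\widetilde S}$, the Cartan--Leray spectral sequence reads $E_2^{p,q}=H^p\!\big(G,H^q(\widetilde S,\mathcal{O}^*_{\widetilde S})\big)\Rightarrow H^{p+q}(S,\mathcal{O}^*_S)$. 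As $\nZ$ has cohomological dimension $1$, the plan is to show the two inputs $H^0(\widetilde S,\mathcal{O}^*_{\widetilde S})=\nR$ with \emph{trivial} $G$-action and $H^1(\widetilde S,\mathcal{O}^*_{\widetilde S})=0$; then $E_2^{0,1}=0$ while $E_2^{1,0}=H^1(\nZ,\nR)$ survives, giving $\on{Pic}(S)\cong H^1(\nZ,\nR)=\nR$.

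Everything thus reduces to two cohomological facts on $\widetilde S$, which I would extract from the exponential sequence of sheaves
\[0\longrightarrow\underline{\nR}\longrightarrow\mathcal{O}^*_{\widetilde S}\xrightarrow{\,d\,}\mathcal{L}\longrightarrow 0,\]
where $d$ records the integer linear part (slope) of a regular invertible function, $\underline{\nR}$ is the locally constant sheaf of tropical constants, and $\mathcal{L}=\mathcal{O}^*_{\widetilde S}/\underline{\nR}$ is the slope sheaf. The map $d$ is surjective since $\langle k,x\rangle$ realizes any admissible slope $k$, and its kernel is the constants. Because $\widetilde S$ is contractible, $H^{\geq 1}(\widetilde S,\underline{\nR})=0$, so the long exact sequence produces a short exact sequence $0\to\nR\to H^0(\widetilde S,\mathcal{O}^*)\to H^0(\widetilde S,\mathcal{L})\to 0$ together with an isomorphism $H^1(\widetilde S,\mathcal{O}^*)\cong H^1(\widetilde S,\mathcal{L})$. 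Hence it suffices to prove $H^0(\widetilde S,\mathcal{L})=H^1(\widetilde S,\mathcal{L})=0$.

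The heart of the matter, and the main obstacle, is that $\mathcal{L}$ is \emph{not} a local system: the definition of $\mathcal{O}^*$ forbids the monomial $x_i$ on any open set meeting the boundary divisor $D_i=\{x_i=\infty\}\cap\widetilde S$, so the slope in direction $e_i$ is allowed only away from $D_i$. I would make this precise by identifying $\mathcal{L}\cong\bigoplus_{i=1}^d (j_i)_!\,\underline{\nZ}$, with $j_i:\widetilde S\setminus D_i\hookrightarrow\widetilde S$ the open inclusion and $(j_i)_!$ extension by zero; this is exactly the statement that a constant slope $e_i$ exists over a connected $U$ precisely when $U\cap D_i=\emptyset$. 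For each $i$ the open--closed decomposition gives
\[0\longrightarrow (j_i)_!\,\underline{\nZ}\longrightarrow\underline{\nZ}\longrightarrow (\iota_i)_*\,\underline{\nZ}\longrightarrow 0,\]
where $\iota_i:D_i\hookrightarrow\widetilde S$ is the closed inclusion. Now $\widetilde S$ is contractible and, in the coordinate model, $D_i\cong\nT^{d-1}\setminus\{\infty\}$ is again contractible for $d\geq 2$; thus the restriction $H^0(\widetilde S,\underline{\nZ})=\nZ\to\nZ=H^0(D_i,\underline{\nZ})$ is an isomorphism and the higher groups vanish. The long exact sequence then forces $H^0(\widetilde S,(j_i)_!\underline{\nZ})=H^1(\widetilde S,(j_i)_!\underline{\nZ})=0$, and summing over $i$ gives the required vanishing of $H^0(\widetilde S,\mathcal{L})$ and $H^1(\widetilde S,\mathcal{L})$.

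Assembling the pieces: $H^0(\widetilde S,\mathcal{O}^*)=\nR$ consists exactly of the tropical constants, because the only slope admissible on all of $\widetilde S$ is $0$ (the cover meets every $D_i$); these constants are fixed by $\tau_a^*$, so the $G$-action is trivial, while $H^1(\widetilde S,\mathcal{O}^*)\cong H^1(\widetilde S,\mathcal{L})=0$. Feeding this into the spectral sequence yields $\on{Pic}(S)\cong H^1(\nZ,\nR)=\nR$. I would close with the remark that this argument locates precisely where the hypothesis $d\geq 2$ is used: for $d=1$ there is no boundary divisor, $\mathcal{O}^*(\widetilde S)=\nR\oplus\nZ$ carries the nontrivial translation action $(c,k)\mapsto(c+ka,k)$, and the same computation returns $\on{Pic}(S)\cong\nZ\times\nR/a\nZ$, the Picard group of a tropical elliptic curve.
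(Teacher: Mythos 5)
Your proof is correct for $d\geq 2$, but it takes a genuinely different route from the paper's. The paper works directly with \v{C}ech cocycles: it pulls a cocycle back to $\wt{S}=\nT^d\setminus\{\infty\}$, asserts (without proof) that every line bundle on $\wt{S}$ is trivial, and defines the period $l(\tau)=T(\tF(\tx))-T(\tx)$; this gives a well-defined injective homomorphism $H^1(S,\mathcal{O}^*_S)\to\nR$, and surjectivity is then established by exhibiting, for each $r\in\nR$, an explicit effective divisor $Z_r$, the push-forward of $\on{div}(\Phi_r)$ where $\Phi_r(\tx)=(\tx_1\wedge\cdots\wedge\tx_d\wedge r)-(\tx_1\wedge\cdots\wedge\tx_d\wedge 0)$, whose class has period exactly $r$. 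Your Cartan--Leray argument is the abstract packaging of the same reduction to the universal cover: the period map is (up to normalization) the edge morphism onto $E_2^{1,0}=H^1(\nZ,H^0(\wt{S},\mathcal{O}^*_{\wt{S}}))=H^1(\nZ,\nR)$, and your two cohomological inputs replace the paper's injectivity and surjectivity steps (in fact only the five-term inflation--restriction sequence is needed, not the full spectral sequence). Your approach buys two things the paper does not supply: first, an actual proof of $H^1(\wt{S},\mathcal{O}^*_{\wt{S}})=0$ via the slope-sheaf decomposition $\mathcal{L}\cong\bigoplus_{i}(j_i)_!\,\underline{\nZ}$ --- a statement the paper uses twice, once explicitly (``all bundles on $\nT^n\setminus\{\infty\}$ are trivial'') and once implicitly when it concludes that $T$ and $T\circ\tF$ ``differ by a constant'', which requires $H^0(\wt{S},\mathcal{O}^*_{\wt{S}})=\nR$ and hence $d\geq 2$ --- and second, a precise localization of that dimension hypothesis, including the correct $d=1$ exception $\on{Pic}\cong\nZ\times(\nR/a\nZ)$, which the paper never addresses. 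What the paper's explicit route buys, and yours does not, is the corollary that follows it: since its generators $Z_r$ are effective tropical divisors, every line bundle on $S$ is represented by a divisor, i.e.\ has a tropical section; your argument yields the abstract isomorphism only, so that corollary would require a separate construction on your end.
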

\begin{proof}
Recall that a diagonal tropical Hopf manifold is obtained as the quotient of $\nT^d \setminus \{\infty\}$ by the action of a translation $\tF:\nT^d \to \nT^d$ by a positive vector $a=(a_1, \ldots, a_d) \in (\nR_+^*)^d$. Denote by $\on{pr}:\nT^d\setminus \{\infty\} \to S$ the natural projection.

Any $1$-cocycle $\tau$ on $S$ with coefficients in $\mathcal{O}_S^*$ can be pulled back to a $1$-cocycle $\wt{\tau}$ on $\nT^d \setminus \{\infty\}$ in the standard way.
To do this, let us suppose that $\tau$ is defined on the covering $\{U_\alpha\}$ of $S$ and the value on $U_\alpha \cap U_\beta$ is $\tau_{\alpha\beta}$. 
This covering induces a covering $\{\wt{U}_{\alpha}\}$ of $\nT^d\setminus \{\infty\}$, where $\wt{U}_{\alpha} = \on{pr}^{-1}(U_\alpha)$ for all $\alpha$. 
Then the value of $\wt{\tau}$ on $\wt{U}_{\alpha} \cap \wt{U}_{\beta} \neq \emptyset$ is $\wt{\tau}_{\alpha\beta} = \tau_{\alpha\beta} \circ \on{pr}$.

The $1$-cocycle $\wt{\tau}$ is a coboundary since all bundles on $\nT^n \setminus \{\infty\}$ are trivial. Hence there exists a function $T: \nT^d \setminus \{\infty\} \to  \nT^d \setminus \{\infty\}$ so that $\coboundary T = \wt{\tau}$. The map $T$ is determined up to additive constant, and is unique if we impose $T(0,\dots ,0)=0$.
We now define the map $l$ from $1$-cocycles to $\nR$, as $l(\tau)=T(\tF(\tx)) - T(\tx)$ for any $x \in \nT^d \setminus \{\infty\}$.
Since $\tau$ is a $1$-cocycle in $S$, its lift $\wt{\tau}$ is invariant by $\tF$, hence $\wt{\tau}=\coboundary T=\coboundary (T \circ \tF)$, and $T$ and $T \circ \tF$ differ by a constant, hence $l(\tau)$ does not depend on $\tx$, nor on the choice of $T$.

The map $l:C^1(S, \mc{O}_S^{\ast})\to \nR$ descends to cohomology as an injective morphism, since if $\tau = \coboundary \sigma$ for 
$\sigma = C^0(S, \mc{O}_S^{\ast})$, then $\wt{\tau} = \coboundary \wt{\sigma}$, where $\wt{\sigma}$ is the lift of $\sigma$, satisfying $\wt{\sigma} \circ \tF = \wt{\sigma}$.
Hence $l(\tau)=\wt{\sigma}(\tF(\tx)) - \wt{\sigma}(\tx)=0$.

\begin{figure}
\begin{minipage}[t]{.8\columnwidth}
	\def\svgwidth{\columnwidth}
	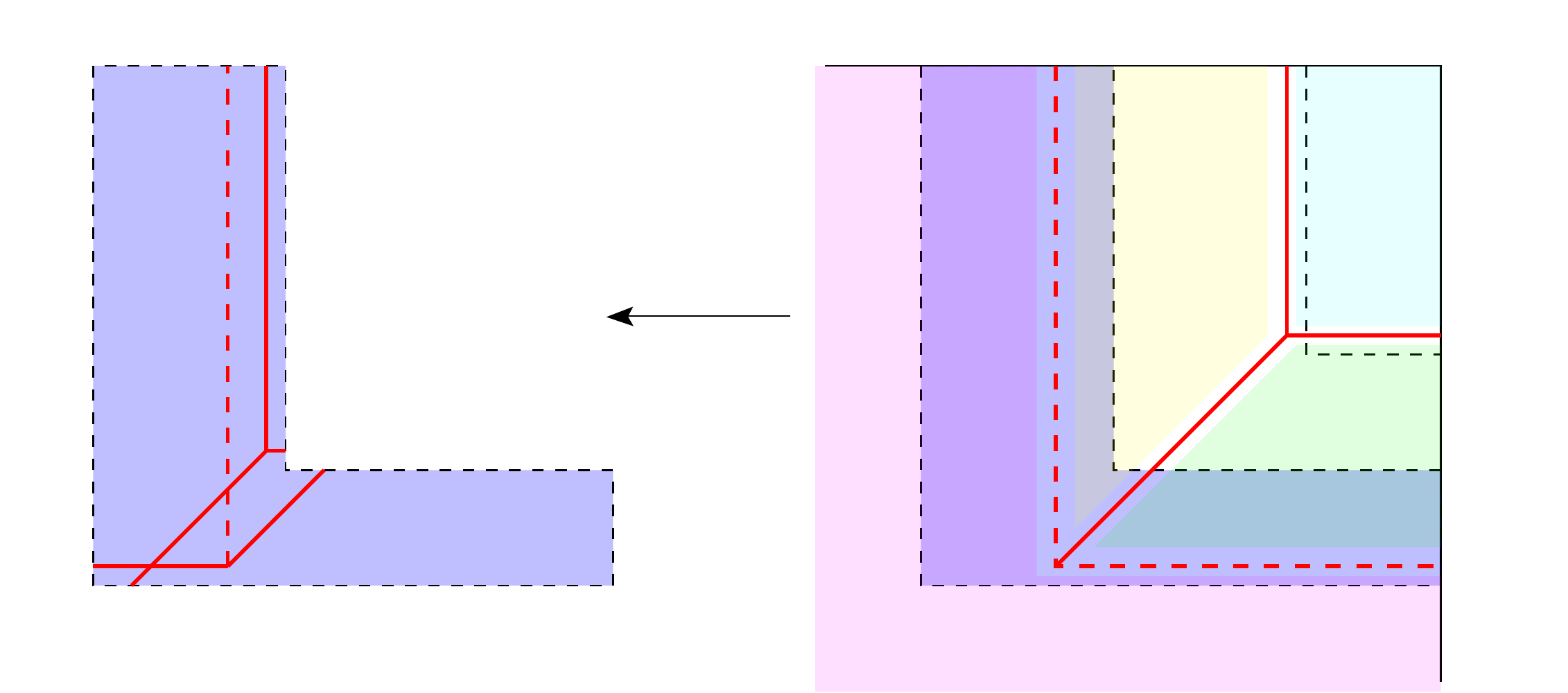
\end{minipage}
\caption{Construction of the cycle $Z_r$ in dimension $d=2$.}\label{fig:picard}
\end{figure}

Lastly, we show that the map $l:H^1(S, \mc{O}_S^{\ast})\to \nR$ is surjective.
To do this, we first construct a tropical cycle $Z'_r$ in $\nT^d$ for any $r \in \nR$, as $Z'_r = \on{div}_{\nT^d}(\Phi_r)$, where
$$
\Phi_r(\tx_1 , \dots \tx_d) = (\tx_1 \wedge \dots \tx_d \wedge r) - (\tx_1 \wedge \dots \tx_d \wedge 0),
$$
see the right side of Figure \ref{fig:picard}, where the values of $\Phi_r$ are explicitly indicated.

Denote by $Z_r=\on{pr}_*Z'_r$ the push forward of $Z'_r$ on $S$ (the left side of Figure \ref{fig:picard}), and let $\tau_r=[Z_r]$ be the $1$-cocycle associated to $Z_r$ on $S$.
We claim that $l(\tau_r)=r$.

In fact, its lift $\wt{\tau}_r$ is the $1$-cocycle associated to the divisor $\sum_{n \in \nZ} (\tF^n)^* Z'_r$. It follows that $\wt{\tau}_r=\coboundary T_r$, where
$
T_r(\tx) = \displaystyle \sum_{n \in \nZ} \big(\Phi_r \circ \tF^n(\tx) - \Phi_r \circ \tF^n(0)\big).
$
Hence
$$
l(\tau_r)= \sum_{n \in \nZ} \big(\Phi_r \circ \tF^{n+1}(\tx) - \Phi_r \circ \tF^n(\tx)\big) = \lim_{n \to +\infty} \Big(\Phi_r\big(\tF^n(\tx)\big)-\Phi_r\big(\tF^{-n}(\tx)\big)\Big) = r-0 = r,
$$
and we are done.
\end{proof}

From the last part of the above proof we obtain the next corollary which says that every element of $\on{Pic}(S)$ can be represented by a codimension $1$-tropical cycle in $S$.
This is not the case for classical Hopf surfaces over a field (see \cite[Theorem 3.1]{dabrowski:modulihopfsurfaces}).
\begin{cor}
Every line bundle on a (diagonal) tropical Hopf manifold $S(\tX, \tF)$ has a tropical section, i.e., can be represented by a tropical divisor. 
\end{cor}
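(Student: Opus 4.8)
The plan is to read the corollary straight off the surjectivity part of the proof of the preceding proposition, where every real number $r$ was already realized as $l(\tau_r)$ for the class $\tau_r=[Z_r]$ of an \emph{explicit} tropical cycle. First I would recall that, by that proposition, the map $l\colon \on{Pic}(S)=H^1(S,\mc{O}_S^{\ast})\to \nR$ is an isomorphism, and in particular injective. So the entire content of the corollary is to exhibit a divisorial representative for an arbitrary class, and such representatives are exactly the $Z_r$ produced in the surjectivity argument.

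Next, given an arbitrary line bundle on $S$, represented by a $1$-cocycle $\tau\in H^1(S,\mc{O}_S^{\ast})$, I would set $r:=l(\tau)\in\nR$ and invoke the cycle $Z_r=\on{pr}_{\ast}Z'_r$ built in the proof, where $Z'_r=\on{div}_{\nT^d}(\Phi_r)$ and $\Phi_r(\tx)=(\tx_1\wedge\cdots\wedge\tx_d\wedge r)-(\tx_1\wedge\cdots\wedge\tx_d\wedge 0)$. Since its class satisfies $l([Z_r])=l(\tau_r)=r=l(\tau)$ and $l$ is injective, I would conclude $\tau=[Z_r]$ in $\on{Pic}(S)$. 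Thus the given line bundle is the class of the codimension-$1$ tropical cycle $Z_r$, i.e.\ it is represented by a tropical divisor; moreover the piecewise-integer-affine potential $T_r(\tx)=\sum_{n\in\nZ}\bigl(\Phi_r\circ\tF^n(\tx)-\Phi_r\circ\tF^n(0)\bigr)$ from the proposition, which trivialises the lift $\wt{\tau}_r$, descends to exhibit $Z_r$ as the divisor associated with $\tau$, giving the tropical section.

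The step I expect to require the most care is pinning down the word \emph{section}, which the statement identifies with being represented by a tropical divisor: concretely, I would make explicit that $Z_r=\on{pr}_\ast\on{div}_{\nT^d}(\Phi_r)$ really is a well-defined codimension-$1$ tropical cycle on the compact quotient $S$ (the bounded edge joining the images of $0$ and $(r,r)$ together with the rays reaching the boundary strata, as in Figure~\ref{fig:picard}), and that its cohomology class is $\tau_r$ with the cocycle/coboundary bookkeeping matching $\coboundary T_r=\wt{\tau}_r$. Beyond this the corollary is purely formal, since the surjectivity computation already did the real work. This is also the point where the tropical picture departs from the classical one: over $\nK$ a general element of the Picard group of a Hopf surface need not come from any divisor \cite[Theorem 3.1]{dabrowski:modulihopfsurfaces}, whereas here the explicit $Z_r$ exhibits every class as a divisor class, hence as admitting a tropical section.
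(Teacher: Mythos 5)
Your proposal is correct and follows exactly the paper's intended argument: the paper derives the corollary directly from the surjectivity computation in the proof of the Picard group proposition, where every $r\in\nR$ is realized as $l(\tau_r)$ for the class $\tau_r=[Z_r]$ of the explicit divisor $Z_r=\on{pr}_*\on{div}_{\nT^d}(\Phi_r)$, and injectivity of $l$ then identifies an arbitrary class with some $[Z_r]$. Your write-up simply makes explicit the bookkeeping that the paper leaves implicit in the phrase ``from the last part of the above proof,'' so there is nothing to correct.
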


\subsection{$(p, q)$-homology of Hopf manifolds}
Lastly we look at the tropical $(p, q)$-homology groups of a diagonal Hopf manifold $S$ of dimension $d$.
Definitions of tropical $(p, q)$-homology can be found in \cite{itenberg-katzarkov-mikhalkin-zharkov:trophomology}, \cite{mikhalkin-zharkov:eigenwave}, or the more introductory \cite{brugalle-itenberg-mikhalkin-shaw:gokova}.

For tropical (diagonal) Hopf manifolds, we get the following statement.

\begin{prop}\label{prop:homologydiag}
Let $S$ be a tropical diagonal Hopf manifold of dimension $d$.
Then we have
$$
H_{p,q}(S) \cong \begin{cases}
                  \nR & \text{if } (p,q) \in \{(0,0), (0,1), (d,d-1), (d,d)\},\\
                  0 & \text{otherwise}.
                 \end{cases}
$$
\end{prop}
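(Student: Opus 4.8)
The plan is to present $S$ as the base of an infinite cyclic covering and reduce the whole computation to a local one at $\infty$. First I would exploit that the deck group $\langle\tF\rangle\cong\nZ$ acts freely on $Y:=\nT^d\setminus\{\infty\}$ with quotient $S$, and that the coefficient sheaf $\mathcal{F}_p$ on $S$ pulls back to the corresponding sheaf on $Y$ (a covering of tropical spaces is a local integral-affine isomorphism, so it identifies the two multi-tangent sheaves). Since in the diagonal case $\tF$ is a translation by the positive vector $a$, it is isotopic through translations to the identity and fixes every tangent direction, so it acts as the identity on each group $H_{p,q}(Y)$; as these groups will turn out to be at most one-dimensional, the monodromy is forced to be $+1$ in any event. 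The Cartan--Leray spectral sequence for the $\nZ$-cover $Y\to S$ then degenerates, because $\nZ$ has homological dimension $1$, into short exact sequences
\begin{equation*}
0\to H_{p,q}(Y)\to H_{p,q}(S)\to H_{p,q-1}(Y)\to 0 .
\end{equation*}
So everything reduces to computing $H_{p,q}(Y)$.

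For the homology of $Y$ I would use the long exact sequence of the pair $(\nT^d,Y)$,
\begin{equation*}
\cdots\to H_{p,q}(Y)\to H_{p,q}(\nT^d)\to H_{p,q}(\nT^d,Y)\to H_{p,q-1}(Y)\to\cdots ,
\end{equation*}
together with the standard vanishing $H_{p,q}(\nT^d)=\nR$ for $(p,q)=(0,0)$ and $0$ otherwise, the tropical reflection of the fact that $\nC^d$ has trivial Dolbeault cohomology away from bidegree $(0,0)$. The remaining input is the local tropical homology at $\infty$, i.e. the relative term $R_{p,q}:=H_{p,q}(\nT^d,Y)$, for which the key claim is that $R_{p,q}\cong\nR$ if $(p,q)=(d,d)$ and $R_{p,q}=0$ otherwise. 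Substituting this into the two exact sequences gives $H_{p,q}(Y)=\nR$ exactly for $(p,q)\in\{(0,0),(d,d-1)\}$, and then the covering sequence produces nonzero groups precisely at $(0,0),(0,1),(d,d-1),(d,d)$, all others vanishing, which is the statement.

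The computation of $R_{p,q}$ is the technical heart. Near $\infty$ the cells of $\nT^d$ are the sedentarity strata $\sigma_I$ indexed by $I\subseteq\{1,\dots,d\}$, with $\dim\sigma_I=d-|I|$, and by the convention for $\mathcal{F}_p$ at points of sedentarity one has $\mathcal{F}_p(\sigma_I)=\bigwedge\nolimits^{p}\nR^{\{1,\dots,d\}\setminus I}$, the maps toward faces being the coordinate projections that annihilate the newly infinite direction. Hence for each fixed $p$ the relative complex computing $R_{p,\bullet}$ is the Koszul-type complex
\begin{equation*}
C_{p,q}=\bigoplus_{|I|=d-q}\ \bigwedge\nolimits^{p}\nR^{\{1,\dots,d\}\setminus I},\qquad \partial=\textstyle\sum\pm(\text{projections}),
\end{equation*}
and I would show by a direct linear-algebra argument (or induction on $d$) that this complex is exact except when $p=q=d$, where the only surviving summand is $\bigwedge\nolimits^{d}\nR^{d}=\nR$ at $I=\emptyset$. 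This concentrates the local homology in bidegree $(d,d)$, which is the tropical shadow of $\infty$ being a smooth point.

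The main obstacle I anticipate is not any single step but the bookkeeping needed to make these three tools interact cleanly in a non-compact, stratified setting: justifying the Cartan--Leray short exact sequence and the long exact sequence of the pair for tropical $(p,q)$-homology with sheaf coefficients (including the treatment of unbounded cells and the verification that $\tF$ acts trivially), and fixing the convention for $\mathcal{F}_p$ at the deepest stratum so that the differential above is exactly the projection map. Once these are in place the exactness of the Koszul complex, and hence the proposition, follows formally. A fully self-contained alternative would be to build an explicit cell structure on $S$ from the fundamental domain $W$ of Subsection \ref{ssec:monoHopf} and compute the resulting tropical cellular complex directly, trading the spectral sequence for a longer but elementary calculation.
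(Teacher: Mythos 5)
Your proposal is correct in substance but follows a genuinely different route from the paper. The paper argues case by case in $p$: for $p=0$ it uses the homotopy equivalence of $S$ with $\nS^1$; for $p=d$ it observes that $\mathcal{F}_d$ vanishes on all boundary strata, so $H_{d,q}(S)\cong H_q(S,\partial S)$, and computes this from $S\cong \nS^1\times\nD^{d-1}$; and for $0<p<d$ it runs Mayer--Vietoris on two overlapping open sets coming from fundamental domains, after computing directly that such a domain has the tropical homology of $\nT^d\setminus\{\infty\}$, namely $\nR$ in bidegrees $(0,0)$ and $(d,d-1)$. You instead treat all $p$ uniformly: the Wang/Cartan--Leray sequence for the $\nZ$-cover $Y=\nT^d\setminus\{\infty\}\to S$, the long exact sequence of the pair $(\nT^d,Y)$, and a local computation at $\infty$ identifying $H_{p,q}(\nT^d,Y)$ with the homology of the complex $\bigoplus_{|J|=q}\Lambda^p\nR^J$ with projection differentials. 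That Koszul-type claim is correct and elementary: the complex splits over the basis monomials $e_K$, $|K|=p$, into augmented simplicial chain complexes of the simplices on the vertex sets $\{1,\dots,d\}\setminus K$, which are exact unless $K=\{1,\dots,d\}$, leaving exactly $\nR$ in bidegree $(d,d)$. The two remaining inputs you invoke --- the vanishing $H_{p,q}(\nT^d)=0$ for $(p,q)\neq(0,0)$ (provable by K\"unneth from the case $d=1$, or by the same simplex argument) and the validity of the Wang sequence for sheaf-coefficient cellular homology, which only needs a free $\nZ$-equivariant cell structure on $Y$ --- both hold. Your route buys uniformity in $p$, makes explicit that the local homology at $\infty$ is concentrated in $(d,d)$ (the homological expression of $\infty$ being a smooth point), and is better suited to generalization, e.g.\ to quotients of cone varieties or to monomial Hopf data, where the monodromy action genuinely enters through invariants and coinvariants; the paper's route avoids all covering-space machinery at the price of three separate arguments.

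One genuine error must be repaired: your fallback assertion that one-dimensionality of $H_{p,q}(Y)$ forces the monodromy to be $+1$ is false, since an automorphism of $\nR$ is multiplication by an arbitrary nonzero scalar, and infinite cyclic covers do in general carry nontrivial monodromy on one-dimensional groups (Alexander modules). So your primary argument has to carry the weight, and it should be made chain-level rather than quoted as homotopy invariance of tropical homology (which is not available off the shelf). This is easy here: the only nonzero groups are $H_{0,0}(Y)$, on which $\tF_*$ is obviously the identity, and $H_{d,d-1}(Y)\cong\nR$, which via the connecting isomorphism from $H_{d,d}(\nT^d,Y)$ is generated by the link of $\infty$ at level $c$, i.e.\ the chain $\sum_j\{x_j=c,\ x_i\geq c\ \forall i\neq j\}$ with coefficient $e_1\wedge\cdots\wedge e_d$. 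This chain is compact and contained in $Y$ (only the single point $\infty$ is removed), and the region between the links at levels $c$ and $c+a$, carrying the same coefficient, is likewise a compact chain in $Y$: its closure lies in $[c,\infty]^d$ and avoids $\infty$ because at least one coordinate stays finite. Its boundary is the difference of the two links, since all faces lying in sedentarity strata receive coefficient $0$ under the projections $\mathcal{F}_d\to 0$. Hence $\tF_*$ fixes the generator, the Wang sequence splits into the short exact sequences you wrote, and the rest of your computation goes through.
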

\begin{rmk}
In particular, the numbers $h_{p,q}(S)  = \dim (H_{p, q}(S))$ can be arranged in the diamond shape found below. 
These numbers are the same as the dimensions of Dolbeaut cohomology groups of a complex Hopf manifold (see \cite[Theorem 4(c)]{ise:geomhopfmfld}). 
Because of the lack of symmetry about the vertical axis in the diamond, the tropical Hopf surface provides a simple counter-example to Conjecture 5.3 from \cite{mikhalkin-zharkov:eigenwave}. 
$$
\xymatrix@R0pt@C2pt{
& & & & 1 & & & &\\
& & & 0 & & 1 & & &\\
& & 0 & & 0 & & 0 & &\\
& \udots & \vdots & \ddots & \vdots & \ddots & \vdots & \ddots & \\
0 & \cdots & 0 & & 0 & & 0 & \cdots & 0 \\
& \ddots & \vdots & \ddots & \vdots & \ddots & \vdots & \udots & \\
& & 0 & & 0 & & 0 & &\\
& & & 1 & & 0 & & & \\
& & & & 1 & & & & \\
}
$$
\end{rmk}
\begin{proof}
The manifold $S$ is obtained as a quotient of $\nT^d\setminus \{\infty\}$ by the action of a translation $\tf$ by a positive vector $a \in (\nR_+^*)^d$.
The stratification of $\nT^d$ induces a stratification on $S$.
We now describe all faces of $S$.
Then for any $I \neq \{1, \ldots, d\}$, we have a face $\tau_I$, of dimension $d-\abs{I}$, obtained by quotienting $\nT^d_I \setminus \{\infty\}$ by the action induced by $\tf$.
A stratum $\tau_{I'}$ contains another stratum  $\tau_I$ in its closure if and only if $I' \subseteq I$.
Strata are depicted by Figure \ref{fig:homology3d} when $d=3$.

Denote by $\mc{F}_p(\tau_I)$ the $p$-tangent space of $S$ at the face $\tau_I$, and by $\rho_p(I',I):\mc{F}_p(\tau_{I'}) \to \mc{F}_p(\tau_I)$ the maps between $p$-tangent spaces associated to a stratum $\tau_{I'}$ dominating $\tau_I$.
It is easy to check that
$$
\mc{F}_p(\tau_I) = \Lambda^p\big(\nR^n_I\big) \qquad \text{ and } \quad \rho_p(I',I):\Lambda^p\big(\nR^{J'}\big) \to \Lambda^p\big(\nR^n_I\big)
$$
is the natural projection.
In particular $\mc{F}_p(\tau_I) = 0$ and $\rho_p(I',I) = 0$ whenever $\abs{I} > d-p$.
See Figure \ref{fig:homology2d} for multi-tangent spaces in dimension $d=2$.

For $p=0$, we have $H_{0,q}(S) \cong H_q(S, \nR)$. Since $S$ is homotopically equivalent to $\nS^1$ (see Proposition \ref{prop:homotopytype}), we get $H_{0,q}(S) \cong \nR$ for $q=0,1$, and it is null otherwise.

For $p=d$, we get $\mc{F}_p(\tau_I)=0$ for all $I \neq \emptyset$. It follows that $H_{d,q}(S) \cong H_q(S, \partial S)$.
Notice that $S$ is homeomorphic to $\nS^1 \times \nD^{d-1}$, where $\nD^{d-1}$ is a closed $(d-1)$-dimensional ball. It follows that $\partial S$ is homeomorphic to $\nS^1 \times \nS^{d-2}$.
By a direct computation using the long exact sequence for the homology of pairs, we get $H_{d,q}(S) \cong \nR$ if $q=d,d-1$, and $0$ otherwise.

For $p=1, \ldots, d-1$, we proceed as follows.
Let $\wt{A}$ be the interior of a fundamental domain for $\tF$ in $\nT^d$, and $\wt{B}$ be the translation of $\wt{A}$ by $\ta/2$, where $\tF$ is the translation by $\ta \in \nR^d$.
Let $A$ and $B$ be the images of $\wt{A}$ and $\wt{B}$ by the natural projection $\nT^d\setminus \{\infty\} \to S$.
Then $S = A \cup B$ and $A \cap B$ consist of  two connected components.

Firstly, $A$ and $B$ are two copies of the interior of a fundamental domain  $W^o$ of the action of $F$. Moreover, the two connected components of $A \cap B$ are $\wt{W}^o$ where $\wt{W}$ is, up to translation, the fundamental domain rescaled by some homothety. Therefore, the subsets $A$, $B$ and each connected component of $A \cap B$ all have the same $(p,q)$-homology groups.
Directly calculating the tropical homology groups of $A$ we obtain, 
$H_{p,q}(A) = \nR$  for  $(p,q) = (0,0)$ or $(d, d-1)$, and $H_{p,q}(A) = 0$ otherwise. 
Therefore, we also have $H_{p,q}(B) = H_{p,q}(A \cap B) = 0$ for $p = 0$ or $d$.
It can be checked that these are the same homology groups as for $\nT^d \setminus \{\infty\}$. 

For a fixed $p$ there is a Mayer-Vietoris long exact sequence for the $(p, q)$-homology groups,
$$
\cdots \to H_{p,q}(A \cap B) \to H_{p,q}(A) \oplus H_{p,q}(B) \to H_{p,q}(S) \to H_{p,q-1}(A \cap B) \to \cdots
$$
Using the above long exact sequence and the above calculations we have that 
$H_{p,q}(S) = 0$ for all $q$ when $p \neq 0,d$. 
This completes the calculation of all tropical homology groups. 
\end{proof}

\begin{figure}
\begin{subfigure}{.49\columnwidth}
\begin{minipage}[t]{\columnwidth}	
	\def\svgwidth{\columnwidth}
	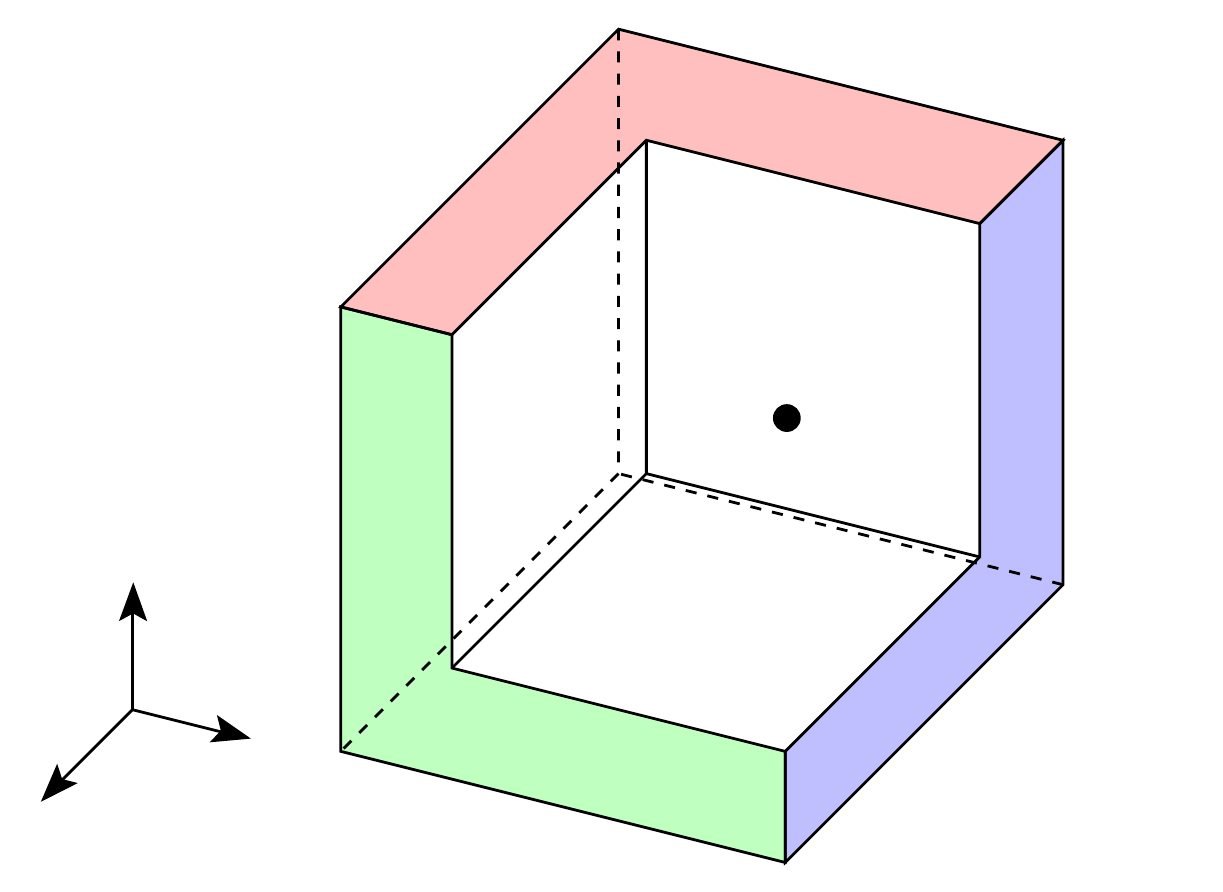
\end{minipage}
\caption{Strata for $d=3$}\label{fig:homology3d}
\end{subfigure}
\begin{subfigure}{.49\columnwidth}
\begin{minipage}[t]{.88\columnwidth}
	\def\svgwidth{\columnwidth}
	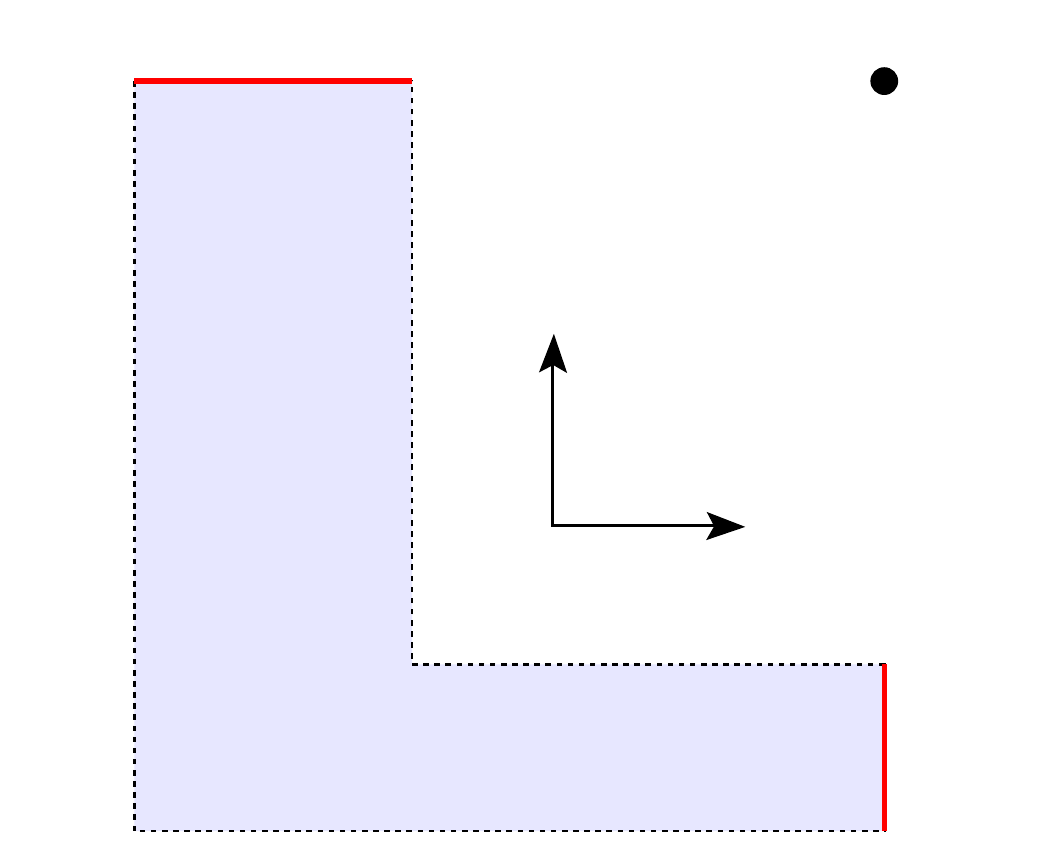
\end{minipage}
\caption{Strata and multi-tangent spaces for $d=2$}\label{fig:homology2d}
\end{subfigure}
\caption{Strata and multi-tangent spaces on a fundamental domain for a tropical diagonal map.\vspace{-1mm}}\label{fig:homology}
\end{figure}

\begin{rmk}
Tropical homology groups remain invariant under modifications of non-singular tropical manifolds, see \cite{shaw:surfaces}. 
Therefore the statement in Proposition \ref{prop:homologydiag} remains valid for Hopf data which are modifications of a diagonal Hopf data. 
We expect that taking finite quotients also does not effect the above calculation, which would imply that all tropical Hopf manifolds have the same tropical Betti numbers. 
\end{rmk}

\bibliographystyle{alpha}
\bibliography{../biblio}

\end{document}